\newtheorem{theorem}{Theorem}[subsection]
\newtheorem{proposition}[theorem]{Proposition}
\newtheorem{corollary}[theorem]{Corollary}
\newtheorem{lemma}[theorem]{Lemma}
\newtheorem*{lemma*}{Lemma}
\theoremstyle{definition}
\newtheorem{definition}[theorem]{Definition}
\newtheorem{notation}[theorem]{Notation}
\newtheorem{example}[theorem]{Example}
\newtheorem{conjecture}[theorem]{Conjecture}
\theoremstyle{remark}
\newtheorem{remark}[theorem]{Remark}
\DeclareMathOperator{\Aut}{Aut}
\DeclareMathOperator{\Mat}{Mat}
\DeclareMathOperator{\Span}{Span}
\DeclareMathOperator{\GL}{GL}
\DeclareMathOperator{\SL}{SL}
\DeclareMathOperator{\SO}{SO}
\DeclareMathOperator{\Spin}{Spin}
\DeclareMathOperator{\Cliff}{Cliff}
\DeclareMathOperator{\Gal}{Gal}
\DeclareMathOperator{\val}{val}
\DeclareMathOperator{\Cl}{Cl}
\DeclareMathOperator{\Spec}{Spec}
\DeclareMathOperator{\supp}{supp}
\DeclareMathOperator{\gcl}{gcl}
\DeclareMathOperator{\wid}{wid}
\DeclareMathOperator{\Sym}{Sym}
\renewcommand{\O}{\operatorname{O}}
\title{Conjugacy width in uniform higher rank arithmetic groups}
\author{Nir Avni and Chen Meiri}
\begin{document}

\maketitle

\begin{abstract} We study widths of conjugacy classes in anisotropic higher rank $S$-arithmetic groups of orthogonal type. Assuming the GRH, we prove that many such groups have bounded conjugacy width. For example, this holds if the degree is greater or equal to $ 17$ and $S$ contains a non-archimedean place. To the best of our knowledge, this is the first boundedness result proved for anisotropic groups.

The proof uses ideas from the Congruence Subgroup Problem. In particular, we define and compute a non standard version of the metaplectic kernel. Conversely, we prove that a quantitative bound on the width of conjugacy classes implies the CSP.

The machinery we develop can also be used for other width questions. For example, in \cite{AM25} we prove, unconditional on GRH, new cases of bounded generation of arithmetic groups.
\end{abstract}

\section{Introduction}


\subsection{Main Result}

Let $\Gamma$ be a group. For a subset $X \subseteq \Gamma$, we say that the width of $X$ is $n$ if every element in the subgroup generated by $X$ is a product of at most $n$ elements of $X \cup X ^{-1}$ and $n$ is the minimal such natural number. We say that the width of $X$ is infinite if no such $n$ exist. The width of $X$ in $\Gamma$ is denoted by $\wid_\Gamma(X)$. 

Group theorists have been studying width for a long time. Of special interest are determining the widths of verbal sets (see Definition \ref{def:w}), conjugacy classes, and generating sets. For example: \begin{itemize} 
\item A classical theorem is that the width of the set of reflections in an orthogonal group over a field is equal to the degree.
\item A conjecture of Thompson \cite[9.24]{KhMa20} states that every finite simple group has a non-trivial conjugacy class whose width is 2.
\item A theorem of Borel \cite{Bor83} implies that the width of any verbal subset in an algebraic group over an algebraically closed field is at most 2.
\item  Roman'kov \cite{Rom} proved that the width of any verbal subset in a finitely generated nilpotent group is finite.
\item In \cite{NiSe07a} and \cite{NiSe07b}, Nikolov and Segal studied widths of verbal subsets in general finite groups. 
For example, they proved that if $G$ is a finite group and $w$ is a power word or the commutator word, then the width of $w(G)$ is bounded by a function of the number of generators of $G$. This result is the key step in proving Serre's conjecture that a finite index subgroup of a finitely generated profinite group is open. 
\item Larsen and Shalev \cite{LaSh09} studied widths of verbal subsets in finite simple groups. They proved that, for every word $w$, $\wid_G(w(G))\le 2$ if $G$ is a finite simple group of sufficiently large size.
\item In \cite{Jai08}, Jaikin-Zapirain determined which  words have finite width in free pro-$p$ groups.
\end{itemize}

In this paper, we study width in $S$-arithmetic groups. As is well known, there is a sharp contrast between arithmetic groups of $S$-rank one and arithmetic groups of higher $S$-rank. The following results suggest that this contrast is present also in the context of width:
\begin{itemize}
\item Bestvina, Bromberg, and Fujiwara show in \cite{BBF19} that the width of any verbal subset in an acylindrically hyperbolic group is either 1 or infinite (see \cite{Rhe68,MyNi14} for the cases of free groups and hyperbolic groups, respectively). In particular, this applies to lattices in rank one Lie groups.
\item Carter and Keller proved in \cite{CK83} that the width of the set of elementary matrices in $\SL_n(\mathbb{Z})$, $n \geq 3$, is finite. It follows that the width of the conjugacy class of the elementary matrix $e_{1,n}$ is finite. From this, one can deduce that the width of any conjugacy class and every verbal subset of $\SL_n(\mathbb{Z})$ is finite, see \cite{AM19}. The same holds for other Chevalley groups by \cite{Tav90}.
\end{itemize} 

For non-Chevalley groups, finite width of the set of elementary matrices was conjecturally replaced by Bounded Generation, i.e., the property \begin{itemize}
\item[(BG)] There exist elements $g_1,\ldots,g_n\in G$ such that $\langle g_1 \rangle \cdots \langle g_n \rangle = G$.
\end{itemize}
For many years, proving BG for non-Chevalley groups was a central open problem in the theory of arithmetic groups, a main motivation being that BG  implies the Congruence Subgroup Problem (see \cite{PlRa93,Lub95}). For other consequences of BG, see, for example, \cite{Sha01}.

Bounded Generation for various isotorpic orthogonal groups of higher rank was proved in \cite{ER06}, but there was no example of a non-isotropic arithmetic group with BG. It came as a surprise that BG fails for non-isotropic groups. In fact, Corvaja--Rapinchuk--Ren--Zannier prove in \cite{CRRZ22} that BG does not hold (even for Chevalley groups) unless some of the $g_i$s are virtually unipotent.

A main idea in this paper is to replace BG by finite width of conjugacy classes. 
\begin{itemize}
\item For higher rank isotropic groups, these properties are closely related. For example, in $\SL_n(\mathbb{Z})$ ($n \geq 3$), BG (i.e., finite width of the set of elementary matrices) is equivalent to finite width of the conjugacy class of $e_{1,n}(1)$. The implication `finite width of elementary matrices implies finite width of the conjugacy class of $e_{1,n}$' is a simple computation; the converse was proved by Suslin in \cite{Sus77}.
\item For anisotropic groups, BG fails. In contrast, we prove that, under the GRH, finite conjugacy width holds in many uniform higher rank arithmetic groups. Moreover, a quantitative version of finite conjugacy width (see Theorem \ref{thm:main}) shares many consequences with BG, including the Congruence Subgroup Property. We stress that all the cases where CSP in higher rank groups is unknown are anisotropic.\\
\end{itemize} 

To state our first result, we use the following notations: For an element $g$ in a group $G$, the generalized conjugacy class of $g$, denoted by $\gcl_G(g)$, is the union of the conjugacy class of $g$, the conjugacy class of $g ^{-1}$, and $\left\{ 1 \right\}$. We say that a group $G$ has finite conjugacy width if the width of every generalized conjugacy class in $G$ is finite. We denote the Witt index of a quadratic space $(V,q)$ by $i_q(V)$. 

\begin{theorem} \label{thm:main} Assume the Generalized Riemann Hypothesis holds. Let $K$ be a number field, let $S$ be a finite set of places of $K$ containing all archimedean places, let $n \geq 17$, let $f_a$ be an anisotropic quadratic form on $K^n$, and let $\Gamma \subseteq \Spin_{f_a}(K)$ be an $S$-arithmetic subgroup. Assume that one of the following holds: \begin{enumerate}
\item $S$ contains a nonarchimedean place.
\item $f_a$ splits over the ring of $S$-adeles, the $S$-congruence kernel of $\Spin_{f_a}$ is trivial, and $S$ contains a place $w_0$ such that $i_{f_a}(K_{w_0}^n) \geq 2$.
\end{enumerate}
Then $\Gamma$ has finite conjugacy width. Moreover, there is a constant $C$ such that, for every $\gamma \in \Gamma$,
\[
\wid_\Gamma (\gcl_\Gamma (\gamma)) \leq C \cdot \max \left\{ \wid_{\Spin_{f_a}(K_v)}\left( \gcl_{\Spin_{f_a}(K_v)} (\gamma) \right) \mid K_v\cong \mathbb{R} \text{ and $f_a$ is definite over $K_v$} \right\}.
\]
\end{theorem} 

We conjecture that Theorem \ref{thm:main} holds unconditionally and more generally:

\begin{conjecture} \label{conj:local.to.global.one.group} Let $K$ be a global field, let $S$ be a finite set of places of $K$ containing all archimedean places, and denote the ring of $S$-integers of $K$ by $O_S$. Let $\mathbf{G}$ be a connected and simply connected semisimple $K$-group of $S$-rank at least two and let $\Gamma$ be a congruence subgroup of $\mathbf{G}(O_S)$. There is a constant $C$ such that, for every $\gamma \in \Gamma $, 
\begin{equation} \label{eq:local.to.global.conj}
\wid_\Gamma \left( \gcl_\Gamma(\gamma) \right) \leq C \cdot \max \left\{ \wid_{\mathbf{G}(K_v)} \left( \gcl_{\mathbf{G}(K_v)}(\gamma) \right) \mid \text{$v|\infty$, and $\mathbf{G}(K_v)$ compact}\right\}.
\end{equation}
\end{conjecture}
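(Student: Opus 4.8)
\medskip
\noindent\textbf{A strategy for Conjecture \ref{conj:local.to.global.one.group}.}
We outline the program we expect to prove the conjecture; as with Theorem \ref{thm:main}, several of the inputs are at present available only under GRH (or in special cases), so what follows is a plan rather than a proof. \emph{Step 1 (reductions).} Writing $\mathbf{G}$ up to central isogeny as $\prod_i \operatorname{Res}_{L_i/K}\mathbf{G}_i$ with each $\mathbf{G}_i$ absolutely almost simple and simply connected over a finite separable extension $L_i$ of $K$, and using that width is well behaved under finite direct products and under passage to a finite-index normal subgroup, one reduces to the case $\mathbf{G}$ absolutely almost simple and $\Gamma=\mathbf{G}(O_S)$; a general congruence subgroup is then handled by Reidemeister--Schreier bookkeeping, at the cost of an extra multiplicative constant depending on the index. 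One then partitions $S=S_{\mathrm{an}}\sqcup S_{\mathrm{is}}$ according to whether $\mathbf{G}$ is $K_v$-anisotropic or $K_v$-isotropic; since the $S$-rank is at least two, $S_{\mathrm{is}}\neq\emptyset$.

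\emph{Step 2 (building blocks).} Fix a finite family $u_1,\dots,u_r\in\Gamma$ of ``building blocks'': root elements attached to a place in $S_{\mathrm{is}}$ when $\mathbf{G}$ is $K$-isotropic, and suitably chosen semisimple elements (playing the role that root elements play in the isotropic case) when $\mathbf{G}$ is $K$-anisotropic and hence $\Gamma$ has no unipotent elements at all. The conjecture follows from two assertions. First, \emph{bounded normal generation by conjugates of the $u_i$}: there is a constant $C_0$ such that every element of every normal subgroup $N\trianglelefteq\Gamma$ which is generated by a single generalized conjugacy class is a product of at most $C_0$ elements, each a $\Gamma$-conjugate of some $u_i^{\pm1}$ lying in $N$; the clause ``lying in $N$'' is exactly what makes the assertion descend to deep congruence subgroups, where $\langle\gcl_\Gamma(\gamma)\rangle$ is a proper normal subgroup. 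Second, \emph{efficient realization of the $u_i$ from $\gamma$}: if $\gamma\in\Gamma$ is non-central with $w:=\max_{v\in S}\wid_{\mathbf{G}(K_v)}\bigl(\gcl_{\mathbf{G}(K_v)}(\gamma)\bigr)<\infty$, then each $u_i$ that lies in $\langle\gcl_\Gamma(\gamma)\rangle$ is a product of at most $C_1 w$ $\Gamma$-conjugates of $\gamma^{\pm1}$. Composing the two gives $\wid_\Gamma(\gcl_\Gamma(\gamma))\le C_0C_1 w$, which is \eqref{eq:local.to.global.conj}; the remaining cases ($w=\infty$, or $\gamma$ central) are immediate.

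\emph{Step 3 (the analytic input).} The starting point for the second assertion is purely local: at a place $v\in S_{\mathrm{is}}$ the hypothesis on $\gamma$ already says that $u_i$ is a product of at most $w$ $\mathbf{G}(K_v)$-conjugates of $\gamma^{\pm1}$, and one must then perform a \emph{local-to-global descent}, replacing the $\mathbf{G}(K_v)$-conjugators by $\Gamma$-conjugators at the cost of a bounded multiplicative factor. This descent, together with the bounded normal generation of the first assertion in the anisotropic case, is where effective strong approximation enters: one first shows that a bounded number of generalized conjugacy classes of the $u_i$ already surjects onto every congruence quotient of $\Gamma$ --- via property $(\tau)$ along the congruence tower, sharpened by effective forms of these approximation statements --- and then upgrades surjectivity to a genuine identity in $\Gamma$ by a compactness (ultraproduct) argument of the kind used in first-order rigidity of higher rank arithmetic groups. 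For general $\mathbf{G}$ the requisite quantitative equidistribution of $\Gamma$-orbits currently rests on inputs known only under GRH (bounds towards the Ramanujan conjecture, effective Chebotarev); over function fields these are unconditional by Weil--Deligne, and over number fields they are unconditional for inner forms of $\SL_n$ and a handful of other cases, so there the conjecture should be within reach unconditionally.

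\emph{Main obstacle.} The crux is the $K$-anisotropic case that becomes higher rank only non-archimedeanly: there $\Gamma$ contains no unipotent elements, the classical elementary-generation machinery is unavailable, and ``bounded generation'' must be recast as bounded generation by $\Gamma$-conjugates of explicit semisimple elements --- establishing this, and carrying out the descent of conjugators from $\mathbf{G}(K_v)$ down to $\Gamma$, seems to require effective equidistribution that, with present technology, is conditional on GRH. A secondary, more technical, difficulty is making the constant $C$ uniform over all congruence subgroups of $\mathbf{G}(O_S)$ and, for number fields, over the inner forms within a fixed type.
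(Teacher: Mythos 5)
This statement is a conjecture in the paper, not a theorem: the authors prove only the single case $\Gamma=\Spin_{f_a}\left(\mathbb{Z}\left[\frac12\right]\right)$ (Theorem \ref{thm:main}), under GRH, and your text itself concedes it is ``a plan rather than a proof.'' So there is a genuine gap, and it sits exactly where your Step 2 and Step 3 place the weight. The two assertions of Step 2 --- bounded normal generation of $\langle\gcl_\Gamma(\gamma)\rangle$ by $\Gamma$-conjugates of finitely many fixed (in the anisotropic case, semisimple) building blocks $u_i$, and the ``efficient realization'' of each $u_i$ from $\gamma$ --- are not reductions of the conjecture; they essentially \emph{are} the conjecture, restated. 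In particular the proposed ``local-to-global descent,'' replacing $\mathbf{G}(K_v)$-conjugators by $\Gamma$-conjugators at the cost of a bounded factor, is not a known technique: density of the projection of $\Gamma$ in $\mathbf{G}(K_v)$ only lets you approximate the local identity $u_i=\prod_j g_j\gamma^{\pm1}g_j^{-1}$ up to an error small in the $v$-adic topology, and forcing that error to be literally trivial in $\Gamma$ is precisely the Diophantine content of the problem. Likewise, the Step 3 upgrade ``surjectivity onto every congruence quotient, then a compactness/ultraproduct argument'' cannot close the loop: bounded width in all congruence quotients only gives bounded width in the congruence completion $\overline{\Gamma}$, and the passage from $\overline{\Gamma}$ (equivalently, from $\widehat{\Gamma}$) back to $\Gamma$ is where all the difficulty lies. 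Even granting property $(\tau)$, effective Chebotarev, and Ramanujan-type bounds, no argument is given (and none is known) that converts such equidistribution statements into identities in $\Gamma$ with uniformly bounded length.

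It is also worth noting that the paper's own treatment of its one proved case does not follow your outline at all. There is no appeal to equidistribution or to bounded normal generation by explicit semisimple elements; instead the authors pass to an ultrapower $\Gamma^*$, introduce the $(w,J')$-``commgruence'' topology, reduce finite width to the continuous splitting of a discrete metaplectic extension of $\Spin_{f_a}(\mathbb{K})$, transport it to the split form, and kill the extension by proving all Steinberg symbols $[h_1(a_1):h_2(a_2)]$ are trivial --- using Matsumoto's theorem, delicate approximation lemmas in norm-one tori, class-field-theoretic inputs, and Lenstra's GRH-conditional theorem on Artin-type primitive roots (Sections \ref{sec:word_comp}--\ref{sec:6} and Appendix \ref{sec:spin}). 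That machinery is needed even for this single anisotropic form over $\mathbb{Z}\left[\frac12\right]$, which is a good measure of how far your two ``assertions'' are from being routine consequences of effective strong approximation. As a research program your sketch is reasonable and correctly identifies the anisotropic, unipotent-free case as the crux, but as it stands it does not prove the conjecture, and its central steps are unsubstantiated.
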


Our second result relates finite conjugacy width and the Congruence Subgroup Property:

\begin{theorem}\label{thm:main2} Let $\Gamma$ be as in Conjecture \ref{conj:local.to.global.one.group} and assume that $\mathbf{G}(K_v)$ is noncompact for every nonarchimedean place $v\in S$. Then $\Gamma$ has the Congruence Subgroup Property if and only if its profinite completion $\widehat{\Gamma}$ has finite conjugacy width.
\end{theorem}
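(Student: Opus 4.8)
The plan is to prove each direction separately, using the standard description of the congruence kernel. Recall that $\Gamma$ has the Congruence Subgroup Property (CSP) precisely when the congruence kernel $C(\mathbf{G}, S) = \ker\left( \widehat{\Gamma} \to \overline{\Gamma} \right)$ is finite, where $\overline{\Gamma}$ is the congruence completion; by the fundamental work of Serre and others, under the standing hypotheses (simply connected, $S$-rank $\geq 2$) this kernel is always \emph{central} in $\widehat{\Gamma}$, whether or not it is finite. This centrality is the structural input I would use throughout.

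For the easy direction, suppose $\widehat{\Gamma}$ has finite conjugacy width. I would argue that then the congruence kernel $C = C(\mathbf{G},S)$ must be finite. The idea: since $C$ is central in $\widehat{\Gamma}$ and $\Gamma$ is dense in $\widehat{\Gamma}$ (and Zariski-dense in $\mathbf{G}$), pick any noncentral $\gamma_0 \in \Gamma$; its generalized conjugacy class $\gcl_{\widehat{\Gamma}}(\gamma_0)$ generates an open — hence finite-index — subgroup $H$ of $\widehat{\Gamma}$ (using that $\Gamma$ has finite abelianization, which follows from higher rank, so the normal closure of $\gamma_0$ has finite index). By the finite width hypothesis, $H$ is a bounded product of copies of $\gcl_{\widehat{\Gamma}}(\gamma_0)$. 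Now intersect with the situation over the congruence quotients: the image of $\gamma_0$ in the congruence completion $\overline{\Gamma}$ controls the image of $H$, and because $C$ is central, $C \cap H$ is a central subgroup through which every element of $H$ — being a short product of conjugates of $\gamma_0$ — is "visible" only through $\overline{\Gamma}$. More precisely, I would show that finite width forces $C$ to be topologically finitely generated (a bounded product of a single conjugacy class whose image in $\overline\Gamma$ is understood pins down $C$ up to finitely many parameters), and a topologically finitely generated central congruence kernel satisfying the known metaplectic-type constraints must be finite. The precise bookkeeping here — translating "bounded product of one conjugacy class" into finiteness of the central kernel — is where I expect to invoke Theorem \ref{thm:main}-style local input or an explicit generator count; this is the first genuinely technical point.

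For the main direction, assume $\Gamma$ has CSP, i.e.\ $C(\mathbf{G},S)$ is finite, so $\widehat{\Gamma}$ is a central extension of the congruence completion $\overline{\Gamma} = \prod_{v \notin S}' \mathbf{G}(O_v) \times (\text{finite adjustment})$ by a finite group. It suffices to bound the width of $\gcl_{\widehat\Gamma}(\gamma)$ for each $\gamma$, and since a finite central extension changes width only by an additive constant, it suffices to do this in $\overline{\Gamma}$, and in fact in each local factor $\mathbf{G}(O_v)$ and archimedean factor $\mathbf{G}(K_v)$, $v \in S$, uniformly. For the non-archimedean congruence factors one uses strong approximation plus the fact that over a local field of residue characteristic not dividing relevant invariants, a single noncentral conjugacy class in the simply connected group $\mathbf{G}(O_v)$ has width bounded independently of $v$ — this is a Nikolov–Segal / Liebeck–Shalev type statement for the finite simple quotients combined with a Lie-algebra filtration argument for the pro-$p$ part. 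For the $S$-arithmetic part one reduces to the real (and $p$-adic for $v \in S$) factors, where $\gcl_{\mathbf{G}(K_v)}(\gamma)$ has some width $w_v(\gamma)$; collecting, $\wid_{\widehat\Gamma}(\gcl(\gamma)) \leq C \cdot \max_v w_v(\gamma)$. The main obstacle in this direction is the \emph{uniformity} over the infinitely many congruence factors $v \notin S$: one needs that a \emph{fixed} element $\gamma \in \Gamma$, viewed in $\mathbf{G}(O_v)$ for all but finitely many $v$, has conjugacy width bounded by an absolute constant (not depending on $v$), which requires the $v$-adic analogue of the Liebeck–Shalev bound $\wid \leq 2$ for large finite simple groups together with uniform control of the bounded pro-$p$ contribution; I would isolate this as a lemma and expect its proof to be the crux.
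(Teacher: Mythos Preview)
Your proposal has the two directions reversed in difficulty, and more importantly contains a genuine gap in the direction you call ``easy.''

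\textbf{The direction CSP $\Rightarrow$ FCW.} Here your outline is essentially correct and matches the paper: once the congruence kernel is finite, $\widehat{\Gamma}$ is a finite central extension of $\overline{\Gamma}\cong\prod_{v\notin S}\mathbf{G}(O_v)$, and one reduces to uniform conjugacy-width bounds in the local factors. The paper simply cites \cite[Lemmas 3.20 and 3.22]{AM22} for this and treats it as the easy direction.

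\textbf{The direction FCW $\Rightarrow$ CSP: the gap.} Your argument rests on the assertion that the congruence kernel $C$ is \emph{always central} in $\widehat{\Gamma}$ under the standing hypotheses. This is not known in general: centrality of $C$ is itself one of the two hard steps in the classical approach to CSP, and it is open precisely in the anisotropic cases (e.g.\ norm-one groups of division algebras) that motivate the paper. You cannot assume it as input. Even granting centrality, your passage from ``$H$ is a bounded product of one conjugacy class'' to ``$C$ is topologically finitely generated, hence finite'' is not an argument; for instance, a central kernel isomorphic to $(\mathbb{Z}/p)^{\infty}$ is not ruled out by your bookkeeping.

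\textbf{What the paper actually does.} The paper proves the contrapositive $\neg$CSP $\Rightarrow$ $\neg$FCW and does \emph{not} assume centrality. Instead it uses a version of Rapinchuk's lemma (Lemma~\ref{lem:Rapinchuk}): if $C$ is infinite, one passes to the quotient by $C_0=[\widehat{G(K)},C]$; the resulting central extension is metaplectic, so $C/C_0$ is finite by the known finiteness of the metaplectic kernel, hence $C_0$ is infinite. From $C_0$ one extracts a quotient $H$ of $\widehat{\Gamma}$ sitting in a short exact sequence $1\to F^{\infty}\to H\to X\to 1$ with $F$ finite simple and $\Gamma$ acting faithfully on $F^{\infty}$. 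One then \emph{constructs} an explicit element of $H$ whose conjugacy class has infinite width: in the nonabelian case via unbounded orbit sizes in the permutation action on the factors; in the abelian case $F=C_p$ via a growth comparison, using that a compact $p$-adic subgroup $L\leq X$ has images in $\GL_N(\mathbb{F}_p)$ of size $O(N^{\dim\mathbf{G}})$ (Lemma~\ref{lem:small.p.adic}), which is too small to cover $C_p^{\infty}$ by bounded products of a single orbit. This explicit construction is the missing idea in your proposal.
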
 

\begin{remark} \label{rem:CSPandFCW} \begin{enumerate}
\item By \cite{AM22}, the right hand side of Inequality \eqref{eq:local.to.global.conj} is finite.
\item Conjecture \ref{conj:local.to.global.one.group} implies the Congruence Subgroup Problem for higher rank groups. We will prove in Proposition \ref{prop:Conj.implies.FCW} that if $\Gamma$ is as in Conjecture \ref{conj:local.to.global.one.group} and Inequality \eqref{eq:local.to.global.conj} holds, then $\widehat{\Gamma}$ has finite conjugacy width. Under the assumptions of Theorem \ref{thm:main2}, $\Gamma$ has the Congruence Subgroup Property.
\end{enumerate} 
\end{remark} 

\begin{remark} If $u$ is a unipotent element, then the map $n\mapsto u^n$ is polynomial. Thus, Bounded Generation with unipotents is a question about solvability of polynomial Diophantine equations. For a semisimple element $g$, the map $n\mapsto g^n$ is an exponential map in $n$, and Bounded Generation with semisimple elements is a question about solvability of exponential Diophantine equations. Exponential Diophantine equations are rarely solvable, and this is a bird-eye reason for the failure of Bounded Generation without unipotents.

On the other hand, for every element $g$, either unipotent or semisimple, the map $h \mapsto h g h ^{-1}$ is a polynomial map in the entries of $h$, so finite conjugacy width is a question about solvability of polynomial Diophantine equations. Thus, finite conjugacy width is closer to Bounded Generation with unipotents, lending support to Conjecture \ref{conj:local.to.global.one.group}.
\end{remark} 

\subsection{Applications}

\subsubsection{Bounded Generation}

While it is believed that all higher rank isotropic groups boundedly generated, this conjecture is only known in few cases (see \cite{CK83} and \cite{Tav90} for Chevalley groups and \cite{ER06} for groups of orthogonal type). The methods we develop in this paper can be used to prove, unconditonal on GRH, new cases of BG; in particular, in \cite{AM25} we treat special linear groups over orders in  division algebras. That paper can also be read as a simple introduction to the techniques we develop in the present paper.

\subsubsection{Verbal width} \label{ssec:verbal}

\begin{definition}\label{def:w}
\begin{enumerate}[(a)]
\item A word $w=u(x_1,\ldots,x_k)$ on $k$ letters in an element of the free group  $F(x_1,\ldots,x_k)$  with basis $x_1,\ldots,x_k$. For example, $x_1x_2^{-2}x_3$ is a word. 
\item Let $\Gamma$ be a group and let $w$ be a non-trivial word on $k$ letters. For every $\bar{g}=(g_1,\ldots,g_k) \in \Gamma^k$ we denote by $w(\bar{g})$ the element of $\Gamma$  obtained by substituting  each $x_i$ appearing in $w$ by $g_i$. For example, if $w=x_1x_2^{-2}x_3$ then $w(\bar{g})=g_1g_2^{-2}g_3$.
\item If $\Gamma$ is a group and $w$ is a word, the verbal subset of $w$ is the set $w(\Gamma):=\{w(\bar{g})^\epsilon \mid \bar{g}\in \Gamma^k,\epsilon=\pm 1\}$. The verbal subgroup of $w$ is the subgroup $\Gamma_w:=\langle w(\Gamma)\rangle$. 
\item The width $\wid_w(\Gamma)$ of $w$ in $\Gamma$ is defined to be $\wid_\Gamma(w(\Gamma))$.
\end{enumerate}
\end{definition}

Theorem \ref{thm:main} gives a new result about verbal width, namely, that for many orthogonal groups $G$, the width of every verbal set in every lattice of $G$ is finite (assuming GRH). More precisely, Theorem \ref{thm:main} implies the following:

\begin{corollary}\label{cor:width} Assume the GRH. Let $K$, $S$, $f_a$, and $\Gamma$ be as in Theorem \ref{thm:main}. There exists a constant $C$  such that, for every word $w$,
\[
\wid_{\Gamma}(w(\Gamma)) \le C \cdot \max \left\{ \wid_{\Spin_{f_a}(K_v)}\big(w(\Spin_{f_a}(K_v))\big) \mid K_v\cong \mathbb{R} \right\}.
\]
\end{corollary} 
Corollary \ref{cor:width} leads to the following conjecture: 
\begin{conjecture}\label{conj:word1} Let $\Gamma$ be as in Conjecture \ref{conj:local.to.global.one.group}. There exists a constant $C$ such that, for every word $w$,
$$\wid_{\Gamma}(w(\Gamma)) \le C \cdot \max \left\{ \wid_{\mathbf{G}(K_v)}\big(w(\mathbf{G}(K_v)\big) \mid K_v \cong \mathbb{R} \right\} $$
\end{conjecture}
\begin{remark}
	In \cite{BuMo99}, Burger and Monod asked if in every higher rank arithmetic group, the commutator word has a finite width. Corollary  \ref{cor:width} is the first solution of this question  (positive or negative) for a uniform higher rank arithmetic group. 
\end{remark}

\begin{remark} \begin{enumerate}
\item The main theorem of \cite{AM19} implies that Conjecture \ref{conj:word1} holds for $\Gamma = \SL_n(\mathbb{Z})$, if $n \geq 3$. 
\item Note the dichotomy between rank 1 groups (where the width is usually infinite by \cite{BBF19}) and higher rank groups (Conjecture \ref{conj:word1}).
\end{enumerate} 
\end{remark} 

\subsubsection{Model Theory of arithmetic groups} 

Theorem \ref{thm:main} also has applications to the model theory of lattices in orthogonal groups. Namely, if a group $\Gamma$ satisfies the conclusion of Theorem \ref{thm:main}, then $\Gamma$ is bi-interpretable with the ring $\mathbb{Z}$.

The general notion of bi-interpretability is explained in \cite{AM22}. It is an equivalence relation on the class of all structures of all first-order languages. Roughly, two structures are bi-interpretable if their categories of definable sets are equivalent.

Bi-interpretation with $\mathbb{Z}$ (as a structure of the language of rings) has several interesting consequences. For example, a corollary of \cite{Khe07} is that if a finitely generated group $\Gamma$ is bi-interpretable with $\mathbb Z$, then there exists a first order sentence in the  language of groups such that $\Gamma$ is the only finitely generated group which satisfies it. This property was called quasi-finitely axiomatizable in \cite{Nie07}. Another consequence is that if $\Gamma$ is a finitely generated group which is bi-interpretable with $\mathbb Z$, then every finitely generated subgroup of $\Gamma$ is  definable.  

We proved in \cite{AM22} that if $\Gamma$ is a higher rank arithmetic group of orthogonal type which satisfies some technical conditions, then $\Gamma/Z(\Gamma)$ is bi-interpretable with $\mathbb Z$. Moreover, $\Gamma$ itself is bi-interpretable with $\mathbb{Z}$ if and only if the power word $x^{|Z(\Gamma)|}$ has finite width in $\Gamma$. By Theorem \ref{thm:main} we get

\begin{corollary} \label{cor:for} Assume the GRH and that $\Gamma$ is as in Theorem \ref{thm:main}. Then $\Gamma$ is bi-interpretable with $\mathbb{Z}$. In particular, $\Gamma$ is quasi-finitely axiomatizable and every finitely generated subgroup of $\Gamma$ is definable. 
\end{corollary}

\begin{remark} Again, note the contrast to rank 1 groups: by \cite{Sel09}, if $\Delta$ is a torsion-free word hyperbolic group, then $\Delta$ and the free product $\Delta * \mathbb{Z}$ satisfy the same first-order sentences. Moreover, the only non-trivial, proper, and definable subgroups of $\Delta$ are cyclic.
\end{remark}

\subsubsection{Stability} A famous question of Ulam \cite[Chapter VI]{Ula60} asks what happens to algebraic notions if we relax their defining properties to only hold approximately, rather than on the nose. Specifying this question to the notion of a homomorphism, the question is whether every almost-homomorphism is close to an actual homomorphism. We use the following definition from \cite{BeCh}:

\begin{definition} For every $n$, let $d_n$ be the normalized Hamming distance on $S_n$:
\[
d_n(\sigma,\tau)=\frac1n \# \left\{ i\in [n] \mid \sigma(i) \neq \tau(i) \right\}.
\]
We say that a group $G$ is flexibly stable if for any $\epsilon >0$ there is $\delta >0$ such that if $f:G \rightarrow S_n$ is any function that satisfies
\begin{equation} \label{eq:almost.hom}
\left( \forall x,y \in G \right) \quad d_n \left( f(xy),f(x) f(y) \right)< \delta
\end{equation}
then there exists a natural number $n \leq N \leq n(1+\epsilon)$ and a homomorphism $F:G \rightarrow S_N$ such that
\begin{equation} \label{eq:close.to.hom}
\left( \forall x \in G \right) \quad d_N \left( f(x),F(x) \right)< \epsilon.
\end{equation}
\end{definition} 
In the definition, \eqref{eq:almost.hom} is the condition that $f$ is almost a homomorphism and \eqref{eq:close.to.hom} is the condition that $f$ is close to a homomorphism. The flexibility pertains to the ability to replace $n$ by $N$ in the definition.

In \cite{BeCh}, Becker and Chapman used Bounded Generation to prove that $\SL_n(\mathbb{Z})$ are flexibly stable. As a corollary of Theorem \ref{thm:main} we show:

\begin{theorem} \label{thm:stability} Assume the GRH and that $\Gamma$ is as in Theorem \ref{thm:main}. Then $\Gamma$ is flexibly stable.
\end{theorem} 

We prove Theorem \ref{thm:stability} in \S\ref{sec:stability}. Conjecture \ref{conj:local.to.global.one.group} will imply that higher-rank lattices are flexibly stable.

\begin{conjecture} \label{conj:stable} Let $\Gamma$ be as in Conjecture \ref{conj:local.to.global.one.group}. Then $\Gamma$ is flexibly stable.
\end{conjecture}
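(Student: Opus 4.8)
The plan is to combine Theorem~\ref{thm:main} with Kazhdan's property (T) and to run the argument of Becker--Chapman~\cite{BeCh} for $\SL_n(\mathbb{Z})$, with \emph{bounded generation} replaced throughout by \emph{finite conjugacy width}. So the first task is to record the two structural inputs. First, $\Gamma$ has property (T): over $\mathbb{Q}_2$ the form $f_a$ has Witt index at least $2$ (its anisotropic kernel has at most four variables), so $\Spin_{f_a}(\mathbb{Q}_2)$ is an absolutely simple $2$-adic group of rank $\geq 2$ and hence has property (T); since $\Spin_{f_a}(\mathbb{R})=\Spin(n)$ is compact, the cocompact lattice $\Gamma\leq\Spin_{f_a}(\mathbb{R})\times\Spin_{f_a}(\mathbb{Q}_2)$ inherits property (T). (Property (T) alone needs far fewer than $20$ variables; the bound $n\geq 20$, i.e. at least $2\cdot 10$ variables, is only there to apply Theorem~\ref{thm:main}, together with its extension to odd-dimensional forms announced in the introduction.) Second, by Theorem~\ref{thm:main} and \cite{BIP08}, $\Gamma$ has finite conjugacy width, equivalently $\Gamma$ is bounded.

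The next step is to turn finite conjugacy width into a conjugacy analogue of bounded generation. Fix a finite generating set $s_1,\dots,s_d$ of $\Gamma$ and put $N_i=\langle\gcl_\Gamma(s_i)\rangle$. Each $N_i$ is a normal subgroup of finite width $C_i$, and since the $N_i$ are normal their product $N_1\cdots N_d$ is a subgroup containing every $s_i$, hence equals $\Gamma$. Therefore there is a constant $C=C_1+\dots+C_d$ such that every $\gamma\in\Gamma$ is a product of at most $C$ elements, each of which is a conjugate of some $s_i^{\pm1}$. This ``bounded generation by conjugates'' is the replacement for ordinary bounded generation in the Becker--Chapman scheme.

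Now one runs that scheme. Given a sequence of asymptotic homomorphisms $f_m\colon\Gamma\to S_{k_m}$, take an ultralimit to obtain a genuine homomorphism $\varphi$ of $\Gamma$ into the universal sofic group (the metric ultraproduct of the $S_{k_m}$ with normalized Hamming length). Property (T) rigidifies $\varphi$ exactly as in \cite{BeCh}: the restriction of $\varphi$ to each cyclic subgroup $\langle s_i\rangle$ is an asymptotic representation of $\mathbb{Z}$, hence flexibly close to a genuine one, and property (T) forces these $d$ local corrections to be simultaneously compatible, using the freedom to enlarge $k_m$ to some $N_m\leq(1+\epsilon)k_m$ (the standard ``adding free points'' construction). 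The one place where \cite{BeCh} invokes bounded generation is the passage from the generators to all of $\Gamma$; here the conjugacy version suffices, because conjugation preserves cycle type, so $f_m(h\,s_i^{\pm1}\,h^{-1})$ is within $o(1)$ of an honest conjugate of the corrected value of $s_i$, and only $C$ such factors need to be multiplied, keeping the total Hamming error bounded. Unwinding the ultralimit then yields, for every $\epsilon>0$, a $\delta>0$ and genuine homomorphisms $F\colon\Gamma\to S_N$ with $N\leq(1+\epsilon)n$, which is flexible stability.

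The main obstacle is making the previous paragraph precise: one must isolate each use of bounded generation in \cite{BeCh} and verify that the conjugacy-bounded-generation statement above is enough for all of them, and one must handle the interaction with the profinite completion. The Becker--Chapman analysis of $\SL_n(\mathbb{Z})$ passes through the quasirandomness of the congruence quotients, and the analogous input for $\Gamma$ requires the Congruence Subgroup Property, which is known for $\Gamma=\Spin_{f_a}(\mathbb{Z}[\tfrac12])$; together with Theorem~\ref{thm:main2} this also identifies the finite conjugacy width of $\widehat{\Gamma}$ as the completion-level form of the hypothesis being used. The hardest single point is checking that the local corrections on the $\langle s_i\rangle$ glue, i.e. that the relevant obstruction to lifting $\varphi$ vanishes; this is precisely where property (T) does the work, and it is the step that would demand the most care in a full write-up.
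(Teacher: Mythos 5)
The statement you are proving is stated in the paper as a \emph{conjecture}, and the paper offers no proof of it; your argument does not close it either, because both of your structural inputs fail in the stated generality. First, finite conjugacy width for a general $\Gamma$ as in Conjecture \ref{conj:local.to.global.one.group} is exactly what Conjecture \ref{conj:local.to.global.one.group} would deliver and is open: Theorem \ref{thm:main} covers only $\Spin_{f_a}(\mathbb{Z}[\frac12])$ (conditionally on GRH), so the sentence ``by Theorem \ref{thm:main} and \cite{BIP08}, $\Gamma$ has finite conjugacy width'' is unavailable for the groups the conjecture is actually about. Second, property (T) is not a consequence of the hypotheses of Conjecture \ref{conj:local.to.global.one.group}: the only assumption is $\rank_S \mathbf{G}=\sum_{v\in S}\rank(\mathbf{G}_{K_v})\geq 2$, which admits, e.g., $\SL_2(\mathbb{Z}[\frac1p])$ (rank one at each of the two places), a group without property (T). So even the skeleton of your argument only has a chance for the special case treated by the unnumbered stability theorem preceding the conjecture, not for the conjecture itself.

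Even in that special case there is a genuine gap in the step where you swap bounded generation for conjugacy width. Becker--Chapman use that $\Gamma$ is a bounded product of \emph{cyclic subgroups} $\langle g_i\rangle$: the restriction of an almost-homomorphism to each $\langle g_i\rangle\cong\mathbb{Z}$ can be corrected to a genuine homomorphism \emph{uniformly on that whole subgroup}, and only then does multiplying $C$ corrected factors keep the error controlled. A generalized conjugacy class carries no subgroup structure, so there is no analogous ``correct $f_m$ on $\gcl_\Gamma(s_i)$ all at once'' step. Writing $\gamma=\prod_j h_j s_{i_j}^{\pm1}h_j^{-1}$ and comparing $F(\gamma)=\prod_j F(h_j)F(s_{i_j})^{\pm1}F(h_j)^{-1}$ with $f_m(\gamma)\approx\prod_j f_m(h_j)f_m(s_{i_j})^{\pm1}f_m(h_j)^{-1}$ requires $F(h_j)$ to be Hamming-close to $f_m(h_j)$ for the arbitrary conjugators $h_j$ produced by the width bound --- but that is precisely the statement you are trying to prove, so the argument is circular. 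Your observation that $f_m(hs h^{-1})$ has approximately the right cycle type constrains conjugacy classes in $S_{k_m}$, not the actual permutations, and is not enough to pin down $F(\gamma)$. To make this route work you would need a substitute for the ``correction on amenable pieces'' step adapted to conjugacy classes (or a reduction back to genuinely cyclic, e.g.\ virtually unipotent, pieces), and that idea is missing from the proposal.
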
 

\subsection{Sketch of the proof: Verbal width and CSP in non-standard models} \label{subsec:sketch}


A key idea of this paper is to relate width in an arithmetic group $\Gamma$ to variants of the Congruence Subgroup Problem for an ultrapower of $\Gamma$.

Since it plays an important role in this paper, we briefly survey the Congruence Subgroup Problem. Let $\mathbf{G}$ be a connected, simply connected, and simple algebraic group over a number field $K$ and let $S$ be a finite set of places of $K$ containing all archimedean ones. Assume that $\mathbf{G}(K_v)$ is not compact, for every finite $v\in S$. Denote the ring of $S$-integers in $K$ by $O_S$ and let $\Gamma = \mathbf{G}(O_S)$ be an arithmetic group. For a non-zero ideal $I$ of $O_S$, the principal congruence subgroup $\mathbf{G}(O_S)[I]$ is the kernel of the homomorphism $\mathbf{G}(O_S) \rightarrow \mathbf{G}(O_S/I)$. We say that $\Gamma$ has the Congruence Subgroup Property if there is a constant $C$ with the following property: if $\Delta \subseteq \Gamma$ is a finite-index subgroup, then there is an ideal $I$ such that $\big[ \mathbf{G}(O_S)[I] : \Delta \cap \mathbf{G}(O_S)[I] \big] \leq C$. The Congruence Subgroup Problem is the conjecture that $\Gamma$ has the Congruence Subgroup Property if the $S$-rank of $\mathbf{G}$ is at least two. It is known that most families of higher rank arithmetic groups have the Congruence Subgroup Property. The main case which is still open is when $\mathbf{G}$ is the norm one elements of a division algebra over $K$.

In the modern proofs, which stem from \cite{BLS64,BMS67}, the Congruence Subgroup Problem is reformulated as follows. Let $\widehat{\mathbf{G}(O_S)}$ be the profinite completion of $\mathbf{G}(O_S)$. By the assumptions on $\mathbf{G}$, the congruence completion of $\mathbf{G}(O_S)$ is $\mathbf{G} \left( \widehat{O_S} \right)$. Since $\widehat{\mathbf{G}(O_S)}$ is compact, the map $\widehat{\mathbf{G}(O_S)} \rightarrow \mathbf{G} \left( \widehat{O_S} \right)$ is onto. Consider the short exact sequence

\begin{equation} \label{eq:SES.classical}
1 \rightarrow C \rightarrow \widehat{\mathbf{G}(O_S)} \rightarrow \mathbf{G} \left( \widehat{O_S} \right) \rightarrow 1.
\end{equation}
The Congruence Subgroup Problem is equivalent to the claim that $C$ is finite.

There are two parts of the proof of the Congruence Subgroup Property for $\mathbf{G}(O_S)$. The first part is to show that the short exact sequence \eqref{eq:SES.classical} is a central extension. The second part is to show that \eqref{eq:SES.classical} has a continuous splitting. By Property (T), the abelization of $\mathbf{G}(O_S)$ is finite, so the finiteness of $C$ follows from the continuous splitting of \eqref{eq:SES.classical}.

The proof of Theorem \ref{thm:main} also starts by reformulating finite conjugacy width in terms of a short exact sequence, now related to an ultrapower of $\Gamma$. As a part of the proof, we show that finite conjugacy width in $\Spin_q(O_S)$ follows from a uniform bound on the width of the commutator word in all congruence subgroups of $\Spin_q(O_S)$. We describe now how to bound these widths. For simplicity, we restrict to the case $O_S=O:=\mathbb{Z} \left[ \frac12 \right]$, $q=f_a:=x_1^2+\cdots+x_{2n}^2$, and only consider the width of the commutator word in $\Spin_{f_a}(O)$. 

Denote $\Gamma=\Spin_{f_a}(O)$. Let $\Gamma^*$ be the ultrapower of $\Gamma$ taken along a non-principal ultrafilter and let $O^*$ be the ultrapower of $O$ along the same ultrafilter. It is easy to see that $\Gamma^*=\Spin_{f_a}(O^*)$. A standard model theoretic argument implies that the commutator word has a finite width in $\Spin_{f_a}(O)$ if and only if the abelianization of $\Gamma^*$ is finite. 

The congruence subgroups $\Gamma ^*[I]:=\Spin_{f_a}(O^*)[I]$ of $\Gamma^*=\Spin_{f_a}(O^*)$, where $I$ ranges over all non-zero ideals of $O^*$, form a basis of identity neighborhoods for a topology on $\Gamma ^*$. We refer to this topology as the congruence topology. Denote the completion of $\Gamma^*$ with the congruence topology by $\overline{\Gamma^*}$. We define a stronger topology on $\Gamma^*$, in which sets of the form $\Gamma ^*[I] \cap (\Gamma^*)'$, where $(\Gamma ^*)'$ is the derived subgroup, form a basis of identity neighborhoods. We call this topology the commgrunece topology. Denote the completion of $\Gamma ^*$ with respect to the commgruence topology by $\widehat{\Gamma^*}$. 

We show that the natural map $\varphi:\widehat{\Gamma ^*} \rightarrow \overline{\Gamma ^*}$ is surjective and that the short exact sequence
\begin{equation} \label{eq:SES.ultrapower}
1 \rightarrow C \rightarrow \widehat{\Gamma ^*} \rightarrow \overline{\Gamma ^*} \rightarrow 1
\end{equation}
is a split topological  central extension with a discrete center. In contrast to the standard case, neither $\widehat{\Gamma ^*}$ nor $\overline{\Gamma ^*}$ is compact, so the surjectivity in \eqref{eq:SES.ultrapower} is not obvious. Also, $\Gamma^*$ does not have Property (T), so the splitting of \eqref{eq:SES.ultrapower} does not imply the finiteness of $C$. However, once the splitting of \eqref{eq:SES.ultrapower} is established, it is easy to see that $C$ is isomorphic to the abelization of $\Gamma ^*$ and  $\widehat{\Gamma^*}\cong C \times \overline{\Gamma^*} $, so the centrality of \eqref{eq:SES.ultrapower} follows.

In general we need to show that $C$ is finite, but, in the case we discuss in the introduction, we will show that $C$ is trivial. Let $K^*$ be the ultrapower of $K$ over the same ultrafilter. It is easy to see that the congruence topology extends to a group topology on $\Spin_{f_a}(K^*)$. We show the same for the commgruence topology. The argument here is more involved than in the proof of the CSP and, in a sense, replaces the centrality argument in the classical case.

At this point, we get a short exact sequence
\begin{equation} \label{eq:SES.ultrapower.K}
1 \rightarrow C \rightarrow \widehat{\Spin_{f_a}(K^*)} \rightarrow \overline{\Spin_{f_a}(K^*)} \rightarrow 1
\end{equation}
with the same kernel $C$ of \eqref{eq:SES.ultrapower}. The advantage of \eqref{eq:SES.ultrapower.K} is that $\Spin_{f_a}(K^*)$ is perfect, which we prove in Corollary \ref{cor:spin*.perfect}. Thus, in order to show that $C$ is trivial, it is enough to prove that \eqref{eq:SES.ultrapower.K} admits a continuous splitting with a dense image.

At this point we know that \eqref{eq:SES.ultrapower.K} has a continuous splitting over the open subgroup $\overline{\Gamma ^*}$ and has an abstract (not necessarily continuous) splitting over the subgroup $\Spin_{f_a}(K^*)$ with a dense image. We call a topological central extension 
\begin{equation} \label{eq:metaplectic.def}
1 \rightarrow D \rightarrow E \rightarrow \overline{\Spin_{f_a}(K^*)} \rightarrow 1
\end{equation}
discrete metaplectic if $D$ is discrete, there is an abstract section of \eqref{eq:metaplectic.def} over $\Spin_{f_a}(K^*)$ whose image is dense, and there is a continuous splitting of \eqref{eq:metaplectic.def} over an open subgroup of $\overline{\Spin_{f_a}(K^*)}$. To finish the proof of Theorem \ref{thm:main}, we show that every discrete metaplectic extension has a continuous splitting with dense image. For the case we consider in this sketch, we can assume that the continuous splitting of \eqref{eq:metaplectic.def} is over all of $\overline{\Gamma^*}$.

Let $\overline{K^*}$ be the congruence completion of $K^*$ and let $f_s$ be the sum of $n$ hyperbolic planes. Since $\overline{\Spin_{f_a}(K^*)}=\Spin_{f_a} \left( \overline{K^*} \right)$ is isomorphic to the split form $\Spin_{f_s} \left( \overline{K^*} \right)$, we can rewrite \eqref{eq:metaplectic.def} as
\begin{equation} \label{eq:metaplectic.split}
1 \rightarrow D \rightarrow E \rightarrow \overline{\Spin_{f_s}(K^*)} \rightarrow 1
\end{equation}
and note that \eqref{eq:metaplectic.split} has a continuous splitting over $\overline{\Spin_{f_s}(O^*)}\cong \overline{\Spin_{f_a}(O^*)}$ and an abstract splitting with dense image over $\overline{\Spin_{f_a}(K^*)}$.

In order to show that \eqref{eq:metaplectic.split} splits, it is enough to show that it has an abstract splitting over $\Spin_{f_s}(K^*)$. Indeed, let $\alpha$ be such a splitting. Since every root element is a commutator, Bounded Generation for $\Spin_{f_s}(O)$ implies that $\Spin_{f_s}(O^*)$ is perfect. Thus, every splitting of \eqref{eq:metaplectic.split} over $\Spin_{f_s}(O^*)$ coincides with $\alpha$. Since there is a continuous splitting over $\overline{\Spin_{f_s}(O^*)}$, it agrees with $\alpha$ over $\Spin_{f_s}(O^*)=\Spin_{f_s}(K^*) \cap \overline{\Spin_{f_s}(O^*)}$. Thus, $\alpha$ is continuous and extends to a splitting of \eqref{eq:metaplectic.split}. In addition, we show that $\Spin_{f_a}(K^*)$ is perfect, so $\alpha$ coincides with the abstract section and hence has a dense image.

Let $h_1,h_2$ be two coroots of $\Spin_{f_s}$ corresponding to two roots of angle $120^\circ$. For every $a_1,a_2\in (K^*)^ \times$, the elements $h_1(a_1),h_2(a_2)$ commute. It follows from \cite[Theorem 8.2]{Mat69} and the perfectness of $\Spin_{f_s}(K^*)$ that the central extension \eqref{eq:metaplectic.split} has a splitting over $\Spin_{f_s}(K^*)$ if some lifts of $h_1(a_1),h_2(a_2)$ to $E$ commute. 

One way to ensure that some lifts of $h_1(a_1),h_2(a_2)$ commute is to find a pair $(g_1,g_2)$ of commuting elements in $\Spin_{f_a}(K^*)$ that is sufficiently close to $(h_1(a_1),h_2(a_2))$. Indeed, let $\mu:\Spin_{f_a}(K^*) \rightarrow E$ be a section. If $g_1,g_2\in \Spin_{f_a}(K^*)$ commute, then the lifts $\mu(g_1),\mu(g_2)$ also commute. If $(h_1(a_1),h_2(a_2))$ is close to $(g_1,g_2)$, then one can choose the lifts of $h_1(a_1),h_2(a_2)$ to be close to $\mu(g_1),\mu(g_2)$, so their commutator is close to the identity. Since $D$ is discrete, if the lifts of $h_1(a_1),h_2(a_2)$ are chosen close enough to $\mu(g_1),\mu(g_2)$, then $h_1(a_1)$ and $h_2(a_2)$ commute.   

In general, we will construct, for every $a_1,a_2$, a sequence $(g_{1,1},g_{2,1}),\ldots,(g_{1,m},g_{2,m})$ of commuting pairs in $\Spin_{f_a}(K^*)$ and lifts $\widetilde{h_1(a_1)},\widetilde{h_2(a_2)}$ such that
\begin{equation} \label{eq:successive.approximation}
\left[ \widetilde{h_1(a_1)},\widetilde{h_2(a_2)}\right] = \left[ \mu(g_{1,1}),\mu(g_{2,1}) \right] \cdots \left[ \mu(g_{1,m}),\mu(g_{2,m}) \right]=1.
\end{equation}
A main difficulty in finding the sequence of commuting pairs, not present in the classical case, is that, a-priori, $D$ may have a divisible factor. It is in this point of the proof that we use the Generalized Riemann Hypothesis. Specifically, we use a variant of Artin's Primitive Root Conjecture from \cite{Lens77} in the argument. We believe that results similar to \cite{Hea86} could give an unconditional result.

We end this section with a weaker version of Conjecture \ref{conj:local.to.global.one.group} which resembles Margulis' Normal Subgroup Theorem.

\begin{conjecture} Let $\Gamma$ be as in Conjecture \ref{conj:local.to.global.one.group}. Fix a non-principal ultrafilter $\mathcal{N}$ on the natural numbers. Let $O^*,K^*$ denote the $\mathcal{N}$-ultrapowers of $O$ and $K$, respectively. 

For every $g\in \mathbf{G}(O^*)$ such that the normal subgroup of $\mathbf{G}(K^*)$ generated by $g$ is $\mathbf{G}(K^*)$, the normal subgroup of $\mathbf{G}(O^*)$ generated by $g$ contains a finite index subgroup of a principal congruence subgroup.
\end{conjecture}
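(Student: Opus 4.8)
Since the final statement is a conjecture, I outline the line of attack I would take; it runs parallel to the proof of Theorem~\ref{thm:main} and would in particular follow from the methods behind Conjecture~\ref{conj:local.to.global.one.group}. \textbf{Step 1 (saturation: $g$ has finite conjugacy width over $K^*$).} The ultrapower $\mathbf{G}(K^*)$ is $\aleph_1$-saturated. For $n\in\mathbb{N}$ let $X_n\subseteq\mathbf{G}(K^*)$ be the set of products of exactly $n$ elements of $\gcl_{\mathbf{G}(K^*)}(g)$; each $X_n$ is definable with parameter $g$, and $X_n\subseteq X_{n+1}$ because $1\in\gcl_{\mathbf{G}(K^*)}(g)$. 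The hypothesis says $\bigcup_n X_n=\mathbf{G}(K^*)$. If every $X_n$ were proper, the countable type $\{\,x\notin X_n : n\in\mathbb{N}\,\}$ over $g$ would be finitely satisfiable, hence realized, contradicting $\bigcup_n X_n=\mathbf{G}(K^*)$. So there is $m$ with $\wid_{\mathbf{G}(K^*)}(\gcl(g))\le m$. A nearly identical argument, using that the relevant local completions of $\mathbf{G}$ have no proper noncentral normal subgroup, gives a uniform width bound at each of the finitely many places of $S$; this single bound $m$ is what, in the general case, plays the role of the local width input of Theorem~\ref{thm:main}.

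\textbf{Step 2 (the normal closure is open in the congruence topology).} Let $N\trianglelefteq\mathbf{G}(O^*)$ be the normal closure of $g$ and let $\overline{N}$ be its closure in the congruence completion $\overline{\mathbf{G}(O^*)}=\mathbf{G}(\overline{O^*})$. Decomposing $\mathbf{G}(\overline{K^*})$ via strong approximation as (a restricted product of) local-type factors, each projection of the dense subgroup $\mathbf{G}(K^*)$ is dense, so the image of $g$ normally generates — by local simplicity — the whole of each factor at which its image is noncentral. The width bound $m$ forces $g$ to be noncentral modulo all but a bounded set of primes of $O^*$ (a fixed noncentral element of $\mathbf{G}(K^*)$, being a product of at most $m$ conjugates of $g^{\pm1}$, cannot be central modulo a prime at which $g$ is central), and it bounds the congruence depth needed at the remaining factors. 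Hence $\overline{N}$ is open: $\overline{N}\supseteq\mathbf{G}(\overline{O^*})[I]$ for some nonzero ideal $I$ of $O^*$.

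\textbf{Step 3 (descending to $\mathbf{G}(O^*)$).} I would then run the reduction of Theorem~\ref{thm:main}: feed the data into the split topological central extension $1\to C\to\widehat{\mathbf{G}(O^*)}\to\overline{\mathbf{G}(O^*)}\to 1$ of Lemma~\ref{lemma:int_seq}, pass to $K^*$-points where the group is the split form and is perfect, and reduce the residual gap between $N$ and $\mathbf{G}(O^*)\cap\overline{N}$ to the size of $C$. Once one knows (as in Theorem~\ref{thm:main}, using the GRH/Artin-primitive-root input, or conjecturally in general) that $C$ is finite, one concludes $[\,\mathbf{G}(O^*)[I]:N\cap\mathbf{G}(O^*)[I]\,]<\infty$, which is the assertion.

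\textbf{Main obstacle.} The crux is the passage in Steps~2–3 from ``finite conjugacy width over $K^*$'' to ``$N$ is open in $\mathbf{G}(O^*)$'', i.e.\ the clearing of denominators: the finite-width statement writes an element of a deep congruence subgroup of $\mathbf{G}(O^*)$ as a bounded product of $\mathbf{G}(K^*)$-conjugates of $g$ with a priori unbounded denominators, and one must replace these by $\mathbf{G}(O^*)$-conjugators, uniformly across the many (including nonstandard) ideals of $O^*$, at the cost of one further congruence level and a finite-index loss. This is the ultrapower analogue of the arithmetic core of the Normal Subgroup Theorem and is governed precisely by the finiteness of the obstruction $C$ in \eqref{eq:SES.ultrapower}; accordingly this conjecture does not appear to be any cheaper than the circle of ideas around Theorem~\ref{thm:main}.
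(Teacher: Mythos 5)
The statement you are addressing appears in the paper only as a conjecture: the authors introduce it as ``a weaker version of Conjecture \ref{conj:local.to.global.one.group} which resembles Margulis' Normal Subgroup Theorem'' and give no proof. So there is no argument of theirs to measure yours against, and your proposal is, by your own account, a program rather than a proof.

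On the substance: Step 1 is sound. The $\aleph_1$-saturation of the ultrapower does convert the hypothesis that $g$ normally generates $\mathbf{G}(K^*)$ into a finite bound $\wid_{\mathbf{G}(K^*)}(\gcl_{\mathbf{G}(K^*)}(g))\le m$; the sets $X_n$ are internal and increasing, so if all were proper the type $\{x\notin X_n : n\in\mathbb{N}\}$ would be realized. The genuine gap is that Steps 2 and 3 do not go beyond restating the difficulty. In Step 2 the width bound you have is with respect to $\mathbf{G}(K^*)$-conjugators, while the subgroup $N$ in the conclusion is generated by $\mathbf{G}(O^*)$-conjugates of $g$; the passage from one to the other (your ``clearing denominators'') is precisely the arithmetic content of the conjecture, and you defer it rather than supply it, so even the openness of $\overline{N}$ claimed in Step 2 is not established. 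In Step 3, the reduction ``once one knows that $C$ is finite'' is not available: Lemma \ref{lemma:int_seq} and Proposition \ref{prop:final_seq} control the kernel only for the $(w,J')$-topologies built from verbal subgroups of congruence subgroups (which requires the inputs of Setting \ref{setting:2}), and the finiteness of that kernel is proved in the paper only for $\Spin_{f_a}\left(\Z\left[\frac12\right]\right)$ under GRH via the Steinberg-symbol computation; for a general $\Gamma$ as in Conjecture \ref{conj:local.to.global.one.group}, and for the topology generated by a single normal subgroup $N$ rather than a verbal filtration, no such finiteness is known. Your outline correctly locates the conjecture inside the paper's framework, but it does not close the gap the authors themselves leave open.
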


\subsection{Acknowledgements} We thank Danny Neftin, Eugene Plotkin, Pavel Gvozdevsky, Peter Sarnak, and V.N. Venkataramana for their help. N.A. was supported by NSF grant DMS-1902041, C.M. was supported by ISF grants 1226/19 and 872/23 and BSF grant 2020119.

\section{Notations} \label{sec:notations}

For the rest of the paper, we fix a number field $K$, a finite set $S$ of valuations of $K$ containing all archimedean valuations, and an anisotropic quadratic form $f_a$ on $K^n$. We will use the following notation:
\begin{itemize}
\item $S_{def}$ is the set of real valuations $v$ in $S$ such that $f_a$ is definite over $K_v$.
\item $O$ is the ring of $S$-integers of $K$.
\item $\mathcal{I}^K$ is the set of non-zero ideals of $O$.
\item $\mathcal{P} ^K$ is the set of non-zero prime ideals of $O$.
\item For $P\in \mathcal{P} ^K$, $\val_P:K^ \times \rightarrow \mathbb{Z}$ is the $P$-adic valuation. $K_P$ and $O_P$ are the $P$-adic completions of $K$ and $O$, respectively.
\item For $I\in \mathcal{I} ^K$, denote $\mathcal{P} ^K(I):=\left\{ P\in \mathcal{P} ^K \mid I \subseteq P \right\}$.
\item For $\mathcal{Q} \subseteq \mathcal{P} ^K$, denote $\mathcal{I} ^K(\mathcal{Q}):=\left\{ I\in \mathcal{I} ^K \mid \left( \forall P\in \mathcal{P} ^K \smallsetminus \mathcal{Q} \right) I+P=O \right\}$.
\item $\mathbf{O}:=\prod_{P\in \mathcal{P} ^K}O_P$ is the profinite completion of $O$ and $\mathbf{A} := K \cdot \mathbf{O}$ is the ring of finite $S$-adeles.
\end{itemize} 

In addition, we fix a non-principal ultrafilter $\mathcal{U}$ on $\mathbb{N}$ and make the following notations:
\begin{itemize}
\item For a set $X$, $X^*$ is the $\mathcal{U}$-ultrapower of $X$.
\item If $R$ is a ring and $V$ is a scheme over a ring $R$, we will identify $V(R^*)$ with the ultrapower of $V(R)$.
\item If $x_1,x_2,\ldots$ is a sequence of elements of $X$, $[x_n]\in X^*$ is the equivalence class of $(x_n)$. 
\item If $A_1,A_2,\ldots$ is a sequence of subsets of $X$, $[A_n] \subseteq X^*$ is the image of $\prod_n A_n$ in $X^*$.
\item $\leq$ is the linear order on $\mathbb{Z}^*$ which is the ultrapower of the standard order. We extend $\leq$ to $\mathbb{Z}^*\cup \left\{ \infty \right\}$ by declaring $m \leq \infty$, for every $m\in \mathbb{Z} ^*$.
\end{itemize} 
\newpage

\section{Congruence topologies on ultraproducts}

\subsection{Internal sets and ultraproduct topologies}

\begin{definition} Let $X$ be a set and $Y \subseteq X^*$ a subset. \begin{enumerate}
\item $Y$ is {\bf internal} if there is a sequence $A_1,A_2,\ldots \subseteq X$ of subsets of $X$ such that $Y=[A_n]$.
\item $Y$ is {\bf small internal} if there is a sequence $A_1,A_2,\ldots \subseteq X$ of finite subsets of $X$ such that $Y=[A_n]$.
\end{enumerate}  
\end{definition} 

\begin{remark} \label{rem:internal.sets} If $A_1,A_2,\ldots$ and $B_1,B_2,\ldots$ are sequences of subsets of $X$ then $[A_n]=[B_n]$ if and only if the set $\left\{ n \mid A_n=B_n \right\}$ belongs to $\mathcal{U}$. Thus, the map $(A_n) \mapsto [A_n]$ descends to an embedding of $\left( 2^{X} \right)^*$ (the ultrapower of the power set of $X$) into $2^{X^*}$ (the power set of $X^*$). The image of this map is the collection of internal sets.
\end{remark} 

\begin{lemma} \label{lem:internal.saturation} Let $X$ be a set. \begin{enumerate}
\item If $A,B \subseteq X^*$ are internal subsets such that $A \subseteq B$ and $B$ is small internal, then $A$ is small internal.
\item If $A,B \subseteq X^*$ are internal, then $A\cap B, A\cup B, X^* \smallsetminus A$ are internal.
\item The collection of internal subsets of $X^*$ is closed to finite unions, finite intersections, and complements. The collection of small internal subsets of $X^*$ is closed to finite unions and finite intersections.
\item (saturation) If $Y_1,Y_2,\ldots \subseteq X^*$ is a sequence of internal subsets such that the union $\cup_{n=1}^\infty Y_n$ is internal, then there is a natural number $N$ such that $\cup_{n=1}^\infty Y_n = \cup_{n \leq N} Y_n$.
\end{enumerate} 
\end{lemma} 

\begin{proof} \begin{enumerate}
\item By the definition of (finite) internal sets, there are subsets $A_n,B_n \subseteq X$ such that $A=[A_n]$, $B=[B_n]$, and all $B_n$ are finite. By assumption, the set $\left\{ n \mid A_n \subseteq B_n \right\}$ belongs to $\mathcal{U}$. Define
\[
A_n'=\begin{cases} A_n & \text{$A_n \subseteq B_n$} \\ \emptyset & \text{else} \end{cases} .
\]
For every $n$, the set $A_n'$ is finite. Since $\left\{ n \mid A_n=A_n' \right\}$ belongs to $\mathcal{U}$, $A=[A_n]=[A_n']$ is small internal.
\item If $A=[A_n]$ and $B=[B_n]$ are internal, then, arguing similarly to the previous claim, $A\cup B=[A_n \cup B_n]$, $A\cap B=[A_n \cap B_n]$, and $X^* \smallsetminus A=[X \smallsetminus A_n]$.
\item Follows from the previous claim.
\item Assume that $Y_1,Y_2,\ldots$ are internal subsets whose union $Z:=\cup Y_i$ is also internal.  Write $Y_i=[Y_{i,n}]$ and $Z=[Z_n]$. We can assume that $Y_{1,n}=\emptyset$, for every $n$. Replacing $Y_{i,n}$ by $Y_{i,n}\cap Z_n$ (which has no effect on $Y_i$), we can also assume that $Y_{i,n} \subseteq Z_n$, for every $i$ and $n$. Define $f: \mathbb{N} \rightarrow \mathbb{N}$ as follows: \begin{enumerate}
\item If $\cup_{j=1}^n Y_{j,n} \neq Z_n$, let $f(n)=n$.
\item Otherwise, let $f(n)$ be the maximal integer $i$ such that $\cup_{j=1}^i Y_{j,n} \neq Z_n$.
\end{enumerate} 
If $f$ is $\mathcal{U}$-essentially bounded by $N$, i.e., $f(n)<N$ for $\mathcal{U}$-almost every $n$, then $\cup_1^N Y_i=Z$. Otherwise, for every $n$ choose $x_n \in Z_n \smallsetminus \cup_{j=1}^{f(n)} Y_{j,n}$. For every $i$, the set $\left\{ n \mid x_n \in Y_{i,n} \right\} \subseteq \left\{ n \mid f(n) \leq i \right\}$ is $\mathcal{U}$-negligible, so $[x_n]\notin Y_i$. Thus $[x_n]\in Z \smallsetminus \cup_{i=1}^\infty Y_i$, a contradiction.
\end{enumerate} 
\end{proof} 

\begin{remark} The type of argument in the proof of the first claim of Lemma \ref{lem:internal.saturation} is called a coordinate-wise argument. In the rest of the paper, we will omit the details of coordinate-wise arguments that only use well-known facts.
\end{remark}

\begin{definition} Let $X$ be a set and $\tau$ be a topology on $X$. The collection $\tau ^*=\left\{ [O_n] \mid O_n \in \tau \right\}$ is a base for a topology on $X^*$ which we call the ultrapower topology.
\end{definition} 

\begin{lemma} \label{lem:ultraproduct.topology.base} If $\mathcal{B} \subseteq \tau$ is a base for $\tau$ then $\mathcal{B}^*$ is a base for the ultrapower topology.
\end{lemma} 

\begin{proof} Suppose that $U \subseteq X^*$ is open and $x\in U$. By definition there is a sequence $x_n\in X$ such that $x=[x_n]$ and there is a sequence $O_n \in \tau$ such that $x\in [O_n] \subseteq U$. By changing $O_n$ for a $\mathcal{U}$-negligible set of indexes $n$, we can assume that $x_n\in O_n$ for every $n$. Let $B_n\in \mathcal{B}$ be such that $x_n\in B_n \subseteq O_n$. Then $[B_n]\in \mathcal{B} ^*$ is a neighborhood of $x$ that is contained in $U$.
\end{proof} 

\begin{lemma} \label{lem:closure.in.ultraproduct.topology} If $X$ is a topological space and $A_n \subseteq X$, then $\overline{[A_n]}=\left[ \overline{A_n} \right]$.
\end{lemma} 

\begin{proof} A coordinate-wise argument.
\end{proof} 

\subsection{Topological groups---general properties}

We start by quoting several well known results.

\begin{lemma} \label{lemma:con_hom} Let $\Gamma$ and $\Lambda$ be Hausdorff topological groups. Let $A \subseteq \Gamma$ be a dense subgroup and let $U \subseteq \Gamma$ be an open subgroup. Let $\alpha:A \rightarrow \Lambda$ be an abstract homomorphisms and let $\beta:U \rightarrow \Lambda$ be a continuous homomorphism. Assume that $\alpha \restriction_{A\cap U}=\beta \restriction_{A\cap U}$. Then there is a unique continuous homomorphism $\gamma:\Gamma \rightarrow \Lambda$ extending both $\alpha$ and $\beta$.
\end{lemma}

\begin{corollary} \label{lemma:con_isom} Let $\Gamma$ and $\Lambda$ be Hausdorff topological groups. Let $A \le \Gamma$ and $B \le \Lambda$ be dense subgroups and let $\alpha:A \rightarrow B$ be an isomorphism of abstract groups. Let $U \le \Gamma$ and $V \le \Lambda$ be open subgroups and let $\beta:U \rightarrow V$ be an isomorphism of  topological groups. Assume that $\alpha\restriction_{A \cap U}=\beta\restriction_{A \cap U}$. Then there exists a unique continuous isomorphism of topological groups  $\gamma:\Gamma\rightarrow \Lambda$ such that
$\gamma\restriction_A=\alpha$ and $\gamma\restriction_U=\beta$. \end{corollary}

\begin{lemma}\label{lemma:top_base}
	Let $\Gamma$ be a group and let $\mathcal{B}$ be a collection of subgroups of $\Gamma$ with the following properties:
	\begin{enumerate}
		\item\label{item:top_base1} $\cap_{U \in \mathcal{B}}U=1$.
		\item\label{item:top_base2} For every $U_1,U_2 \in \mathcal{B}$ there exists $U_3 \in \mathcal{B}$ such that $U_3 \subseteq U_1\cap U_2$. 
		\item\label{item:top_base3} For every $g \in \Gamma$ and every $U_1 \in \mathcal{B}$ there exists $U_2 \in \mathcal{B}$ such that $gU_2g^{-1} \subseteq U_1$.
	\end{enumerate}
	There exists a unique topology on $\Gamma$, called the topology generated by $\mathcal{B}$, under which $G$ is a Hausdorff topological group and $\mathcal{B}$ is a base of open neighborhoods of the identity. 
\end{lemma}

\begin{definition} We say that a topology on $\Gamma$ is subnormally generated if there is an open subgroup $\Delta \subseteq \Gamma$ and a base of identity neighborhoods consisting of normal subgroups of $\Delta$.
\end{definition} 

Next, we discus completions in topological groups. A standard reference is \cite[\S3]{Bour98}. Recall that a Hausdorff topological group $\Gamma$ has two uniform structures. The left uniform structure is generated by the entourages
\[
\left\{ (x,y)\in \Gamma^2 \mid x ^{-1} y \in U \right\} 
\]
where $U$ ranges over a base of symmetric neighborhoods of 1. The right uniform structure is generated by the entourages
\[
\left\{ (x,y)\in \Gamma^2 \mid x y ^{-1} \in U \right\}.
\]
In general, these uniform structures are different. In such cases, it might happen that the map $x \mapsto x ^{-1}$ is uniformly continuous with respect to, say, the left uniform structure and the completion of $\Gamma$ is not a topological group. However, if the topology of $\Gamma$ is subnormally generated, the left and right uniform structures coincide and the completion of $\Gamma$ is a topological group (see \cite[III \S3 Theorem 1 and Exercise 7]{Bour98}). This will be the case for all groups in this paper.

\begin{notation} For a topological group $(\Gamma,\tau)$, we denote the completion of $\Gamma$ in the topology $\tau$ by $\widehat{\Gamma_\tau}$.
\end{notation}

The following follows directly from \cite[III \S7.3 Corollary 1]{Bour98}

\begin{lemma} \label{lem:Bourbaki.completion.inverse.limit} Let $(\Gamma,\tau)$ be a Hausdorff topological group and let $\mathcal{B}$ be a base of identity neighborhoods consisting of normal subgroups. Then the embedding $\Gamma \rightarrow \lim_{\Delta \in \mathcal{B}} \Gamma / \Delta$ extends to an isomorphism of topological groups between $\widehat{\Gamma_\tau}$ and $\lim_{\Delta\in\mathcal{B}} \Gamma / \Delta$.
\end{lemma} 

\subsection{Internal ideals and sets of ideals}

\begin{lemma} \label{lem:internal.ideals} Let $I \subseteq O^*$ be an ideal. The following are equivalent: \begin{enumerate}
\item $I$ is generated by two elements.
\item $I$ is finitely generated.
\item $I$ is an internal set of $O^*$.
\item There is a sequence $I_1,I_2,\ldots$ of ideals of $O$ such that $I=[I_n]$.
\end{enumerate} 
\end{lemma} 

\begin{proof} $ $ \begin{enumerate}
\item[$1. \implies 2.$] Clear.
\item[$2. \implies 3.$] If $I=a_1O^*+\cdots+a_kO^*$, write $a_i=[a_{i,n}]$ and, for each $n$, let $I_n$ be the ideal generated by $a_{1,n},\ldots,a_{k,n}$. Then $I=[I_n]$.

\item[$3. \implies 4.$] If $I=[A_n]$ then $[A_n+A_n]=I+I=I=[A_n]$ and $[A_n \cdot O]=I \cdot O^*=I=[A_n]$, so $A_n$ is an ideal for $\mathcal{U}$-almost every $n$. Define a sequence of ideals $I_n$ by $I_n=A_n$ if $A_n$ is an ideal and $I_n=O$ if $A_n$ is not an ideal. Then $I=[I_n]$.
\item[$4. \implies 1.$] Each $I_n$ is generated by two elements, say $I_n=a_nO+b_nO$. Then $I$ is generated by the elements $[a_n]$ and $[b_n]$.
\end{enumerate} 
\end{proof} 

Similarly,

\begin{lemma} Let $P \subseteq O^*$ be an internal ideal. Then $P$ is prime if and only if there are prime ideals $P_1,P_2,\ldots \subseteq O^*$ such that $P=[P_n]$.
\end{lemma} 

\begin{definition} Let $\mathcal{I} ^{K^*}$ be the collection of internal non-zero ideals of $O^*$ and let $\mathcal{P}^{K^*}$ be the collection of internal non-zero prime ideals of $O^*$.
\end{definition} 

\begin{remark} If $I\in \mathcal{I} ^{K^*}$, when we write $I=[I_n]$ we will assume that each $I_n$ is a non-zero ideal of $O$. Similarly, if $P\in \mathcal{P} ^{K^*}$, when we write $P=[P_n]$ we will assume that each $P_n$ is a non-zero prime ideal of $O$.
\end{remark} 

\begin{remark} By Remark \ref{rem:internal.sets} and Lemma \ref{lem:internal.ideals}, $\mathcal{I} ^{K^*}$ is identified with the ultraproduct $\left( \mathcal{I}^K \right)^*$ and $\mathcal{P} ^{K^*}$ is identified with the ultraproduct $\left( \mathcal{P} ^K \right) ^*$. In particular, we can talk about internal and small internal subsets of $\mathcal{I} ^{K^*}$ and $\mathcal{P} ^{K^*}$.
\end{remark} 

\begin{definition} For every $I\in \mathcal{I} ^{K^*}$ and every $\mathcal{Q} \subseteq \mathcal{P}^{K^*}$, denote
\[
\mathcal{P} ^{K^*}(I):= \left\{ P\in \mathcal{P} ^{K^*} \mid I \subseteq P \right\}
\]
and
\[
\mathcal{I} ^{K^*}(\mathcal{Q}):=\left\{ I \in \mathcal{I} ^{K^*} \mid \left( \forall P\in \mathcal{P} ^{K^*} \smallsetminus \mathcal{Q} \right) I+P=O^* \right\}.
\]
\end{definition} 

\begin{lemma} \label{lem:small.is.P(I)} A subset $\mathcal{Q} \subseteq \mathcal{P} ^{K^*}$ is small internal if and only if there is $I\in \mathcal{I} ^{K^*}$ such that $\mathcal{Q}=\mathcal{P} ^{K^*}(I)$.
\end{lemma} 

\begin{proof} In one direction, let $I=[I_n]\in \mathcal{I} ^{K^*}$. For every $n$, let $\mathcal{Q}_n:=\left\{ P\in \mathcal{P} ^K \mid I_n \subseteq P \right\}$. Then each $\mathcal{Q}_n$ is finite and $\mathcal{Q}=\mathcal{P} ^{K^*}(I)=[\mathcal{Q}_n]$ is small internal.

In the other direction, assume that $\mathcal{Q}=[\mathcal{Q}_n]$, where each $\mathcal{Q}_n \subseteq \mathcal{P} ^K$ is finite. For every $n$, let $I_n:=\prod_{P\in \mathcal{Q}_n} P$ (and $I_n=O$ if $\mathcal{Q}_n=\emptyset$). Then $\mathcal{Q} = \mathcal{P} ^{K^*}([I_n])$.
\end{proof} 

\subsection{Valuations}

\begin{lemma} \label{lem:val.ideals} $ $ \begin{enumerate}
\item If $P=[P_n]\in \mathcal{P} ^{K^*}$ and $I=[I_n]\in \mathcal{I} ^{K^*}$, then the element $[\val_{P_n}(I_n)]\in \mathbb{N} ^*$ depends only on $P$ and $I$. We denote it by $\val_P(I)$.
\item For every $I,J\in \mathcal{I} ^{K^*}$, $\val_P(I J)=\val_P(I)+\val_P(J)$.
\item For every $I\in \mathcal{I} ^{K^*}$ and $m\in \mathbb{N}^*$, the set $\left\{ P \in \mathcal{P} ^{K^*} \mid \val_P(I)>m \right\}$ is small internal.
\end{enumerate} 
\end{lemma} 

\begin{proof} A coordinate-wise argument.
\end{proof} 

Note that $\val_P(I)=0$ if and only if $I$ is not contained in $P$.

\begin{definition} Let $P=[P_n]\in \mathcal{P} ^{K^*}$ and let $a=[a_n]\in K^*$. By a coordinate-wise argument, the element $\left[ \val_{P_n}(a_n) \right]\in \left( \mathbb{Z} \cup \left\{ \infty \right\} \right)^*=\mathbb{Z}^* \cup \left\{ \infty \right\}$ depends only on $P$ and $a$. We denote it by $\val_P(a)$. 
\end{definition} 

\begin{lemma} $ $ \begin{enumerate}
\item For every $P\in \mathcal{P} ^{K^*}$, the function $\val_P$ is a non-archimedean valuation on $K^*$ with value group $\mathbb{Z} ^*$.
\item If $P\in \mathcal{P} ^{K^*}$ and $a\in O^*$, then $\val_P(a)=\val_P(a O^*)$.
\item For every $a\in K^* \smallsetminus 0$ there is $m\in \mathbb{N} ^*$ such that $-m < \val_Q(a) < m$, for every $Q\in \mathcal{P} ^{K^*}$.
\end{enumerate} 
\end{lemma} 

\begin{proof} A coordinate-wise argument.
\end{proof} 

\begin{proposition}[Strong Approximation Theorem  for $K^*$]\label{prop:strong_approximation_K} Let $k$ be a natural number. Let $m_1,\ldots,m_k\in \mathbb{N} ^*$, $I_1,\ldots,I_k\in \mathcal{I} ^{K^*}$, and $a_1,\ldots,a_k\in K^*$. Assume that $\mathcal{P}^{K^*}(I_i) $ and $\mathcal{P}^{K^*}(I_j)$ are disjoint, for every $1 \le i \ne j \le k$. There exists an element $a \in K^*$ such that:
\begin{enumerate}
	\item For every $1 \le i \le k$ and every $P\in \mathcal{P}^{K^*}(I_i)$,  $\val_P(a-a_i) \ge m_i$.
	\item For every $P \in \cap_{i=1}^k \mathcal{P}(I_i)^c$, $\val_P(a)\ge 0$.
\end{enumerate}
\end{proposition}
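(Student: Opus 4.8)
The plan is to prove the proposition by a coordinate-wise argument, with the classical strong approximation theorem for $K$ supplying the construction at each coordinate. First I would fix representing sequences $m_i=[m_{i,n}]_n$, $I_i=[I_{i,n}]_n$, $a_i=[a_{i,n}]_n$ with $m_{i,n}\in\N$, $I_{i,n}\in\mathcal{I}_K$ and $a_{i,n}\in K$ for $\mathcal{N}$-almost all $n$ and every $1\le i\le k$. By the proof of Lemma \ref{lemma:small_int}, $\mathcal{P}_{\QU}(I_i)=[\mathcal{Q}^{(i)}_n]_n$ where $\mathcal{Q}^{(i)}_n:=\{P\in\mathcal{P}_K\mid I_{i,n}\subseteq P\}$, and by Lemma \ref{lemma:bol_int} we get $\mathcal{P}_{\QU}(I_i)\cap\mathcal{P}_{\QU}(I_j)=[\mathcal{Q}^{(i)}_n\cap\mathcal{Q}^{(j)}_n]_n$.

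Since this last set is empty (the disjointness hypothesis) and $k$ is a standard integer, intersecting finitely many sets in $\mathcal{N}$ produces $X\in\mathcal{N}$ such that for every $n\in X$ the chosen data satisfy $m_{i,n}\in\N$, $I_{i,n}\in\mathcal{I}_K$, $a_{i,n}\in K$ for all $i$, and the finite sets $\mathcal{Q}^{(1)}_n,\dots,\mathcal{Q}^{(k)}_n$ are pairwise disjoint, i.e.\ the ideals $I_{1,n},\dots,I_{k,n}$ are pairwise comaximal. Now fix $n\in X$ and put $T_n:=\bigcup_{i=1}^k\mathcal{Q}^{(i)}_n$, a finite subset of $\mathcal{P}_K$; by comaximality each $P\in T_n$ lies in exactly one $\mathcal{Q}^{(i)}_n$, and I write $i(P)$ for that index. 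Since $S$ contains an archimedean place it is non-empty, so the strong approximation theorem gives that $K$ is dense in $\fa=\prod'_{v\notin S}K_v$. Apply this to the basic open subset of $\fa$ which at each $P\in T_n$ is $\{x\in K_P\mid \val_P(x-a_{i(P),n})\ge m_{i(P),n}\}$ and at every other place is $O_v$; this set is non-empty because it contains the finite adele that equals $a_{i(P),n}$ at $P\in T_n$ and $0$ elsewhere. This yields $a_n\in K$ with $\val_P(a_n-a_{i(P),n})\ge m_{i(P),n}$ for all $P\in T_n$ and $\val_P(a_n)\ge 0$ for all $P\in\mathcal{P}_K\smallsetminus T_n$. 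For $n\notin X$ set $a_n:=0$, and let $a:=[a_n]_n\in\QU$.

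It then remains to verify the two conclusions, which is a routine coordinate-wise check via Lemma \ref{lemma:val_properties}. If $P=[P_n]_n\in\mathcal{P}(I_i)$, then $I_{i,n}\subseteq P_n$ for $\mathcal{N}$-almost all $n$; intersecting with $X$, for such $n$ we get $P_n\in\mathcal{Q}^{(i)}_n\subseteq T_n$ with $i(P_n)=i$, hence $\val_{P_n}(a_n-a_{i,n})\ge m_{i,n}$, and therefore $\val_P(a-a_i)\ge m_i$. If $P=[P_n]_n\in\bigcap_{i=1}^k\mathcal{P}(I_i)^c$, then for each of the finitely many indices $i$ we have $I_{i,n}\not\subseteq P_n$ for $\mathcal{N}$-almost all $n$; intersecting these $k$ sets with $X$ shows $P_n\notin T_n$ for $\mathcal{N}$-almost all $n$, hence $\val_{P_n}(a_n)\ge 0$, and therefore $\val_P(a)\ge 0$.

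The step requiring the most care — and the only one that is not a mechanical translation of standard facts about $K$, $\fa$ and Dedekind domains — is the passage from the disjointness hypothesis to coordinate-wise comaximality of the $I_{i,n}$. This comaximality is exactly what makes the family of approximation targets at the primes of $T_n$ well defined (each prime of $T_n$ is constrained by a single $I_i$); once it is in place, the existence of $a_n$ is just the additive strong approximation theorem for $K$, which is a well-known property and which I would invoke without proof in the spirit of the coordinate-wise arguments already suppressed in this section.
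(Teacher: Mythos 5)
Your proof is correct and is exactly the argument the paper intends: the paper's own proof of Proposition \ref{prop:strong_approximation_K} is the one-line remark that it ``follows from the Strong Approximation Theorem for $K$ via a simple coordinate-wise argument,'' and your writeup is a faithful, complete instantiation of that coordinate-wise argument (including the one non-mechanical point, transferring the disjointness of the $\mathcal{P}_{\QU}(I_i)$ to coordinate-wise comaximality on a set in $\mathcal{N}$).
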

\begin{proof} A coordinate-wise argument.
\end{proof}

\subsection{The congruence topology on $\mathbf{A} ^*$}

The next lemma extends the map $\val_P$ from $K^*$ to $\mathbf{A} ^*$.

\begin{lemma}[The valuation map]\label{lemma:val_properties} Let $P \in \mathcal{P}^{K^*}$. 
\begin{enumerate}
\item\label{item:mac1} There exists a function $\val_{P}:\mathbf{A} ^*\rightarrow \mathbb{Z} ^* \cup \{\infty\}$ such that, if $P=[P_n]_n$ and $a=[a_n]_n \in \mathbf{A} ^*$, then $\val_P(a)=[\val_{P_n}(a_n)]_n \in \mathbb{Z} ^* \cup \{\infty\}$.
\item\label{item:mac2} For every $a,b \in \mathbf{A} ^*$, $\val_{P}(a+b) \ge \min(\val_P(a),\val_P(b))$ and $\val_{P}(ab) =\val_P(a)+\val_P(b)$. If $\val_P(a)<\val_{P}(b)$, then $\val_{P}(a+b) = \val_P(a)$.
\item\label{item:mac3} For every $m \in \mathbb{N} ^*$ and  $a \in \mathbf{A} ^*$, $\{P\in \mathcal{P}^{K^*} \mid \val_P(a) <-m\}$ is a small internal subset and  $\{P\in \mathcal{P}^{K^*} \mid \val_P (a) >m \}$ is an internal subset.
\item\label{item:mac4} If $a\in K^* \subseteq \mathbf{A} ^*$, then the two definitions of $\val_P(a)$ agree.
\item\label{item:mac5} If $a\in \mathbf{A} ^*$ then there is $m\in \mathbb{N} ^*$ such that $\val_P(a)>-m$, for every $P\in \mathcal{P} ^{K^*}$.
\end{enumerate}
\end{lemma}

\begin{proof} A coordinate-wise argument. 
\end{proof} 

\begin{lemma}\label{lemma:ideal_pow} For every $m=[m_n] \in \mathbb{N} ^*$, there is a well defined $m$-th power map ${}^m:\mathcal{I}^{K^*}\rightarrow \mathcal{I}^{K^*}$ defined by  $[I_n]_n^m=[I_n^{m_n}]_n$. If $m,m'\in \mathbb{N} ^*$, then
\begin{enumerate}
\item\label{item:ideal_pow1} $I^mI^{m'}=I^{m+m'}$.
\item\label{item:ideal_pow2}  $(I^m)^{m'}=I^{mm'}$.
\item For every $a \in O ^*$ and $P \in \mathcal{P}^{K^*}$, $\val_{P}(a)=\max\{m \in \mathbb{N}^* \mid a \in P^m\}$.
\end{enumerate}
\end{lemma}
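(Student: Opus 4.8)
The statement is a bundle of coordinate-wise facts, so the plan is to reduce each assertion to the corresponding elementary statement about the Dedekind domain $O$ (and about $\fa$) and transfer it, exactly in the style of the coordinate-wise arguments the paper has agreed to suppress. First I would record the normal form that makes everything go: for a sequence $(I_n)_n\in(\mathcal{I}_K)^\N$, the set $[I_n]_n\subseteq\QU$ lies in $\mathcal{I}_{\QU}$. Indeed, since $O$ is Dedekind, write $I_n=a_nO+b_nO$ with $a_n,b_n\in O$ (and $a_n,b_n\neq 0$ on the $\mathcal{N}$-large set where $I_n\neq 0$); a routine coordinate-wise argument gives $[I_n]_n=[a_n]_n\ZU+[b_n]_n\ZU$, so $[I_n]_n\in\mathcal{I}_{\QU}$. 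Conversely, every element of $\mathcal{I}_{\QU}$ has such a representative, and, by the same type of argument, $[I_n]_n\cdot[J_n]_n=[I_nJ_n]_n$ for $(I_n)_n,(J_n)_n\in(\mathcal{I}_K)^\N$ (using that $I_nJ_n$ is generated by the four pairwise products of two generators of $I_n$ and of $J_n$), in analogy with the already-noted identity $[I_n]_n\cap[J_n]_n=[I_n\cap J_n]_n$.

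Granting these normal-form facts, \emph{well-definedness of} ${}^m$ is immediate. If $(I_n)_n,(J_n)_n$ both represent $I$ and $(m_n)_n,(m'_n)_n$ both represent $m$, then $\{n\mid I_n=J_n\}\in\mathcal{N}$ and $\{n\mid m_n=m'_n\}\in\mathcal{N}$ (by the remark following the definition of $[A_n]_n$ and the definition of $\NU$), hence $\{n\mid I_n^{m_n}=J_n^{m'_n}\}\in\mathcal{N}$, so $[I_n^{m_n}]_n=[J_n^{m'_n}]_n$. Since each $I_n^{m_n}$ is again a non-zero (on an $\mathcal{N}$-large set) ideal of $O$, hence $2$-generated, the first paragraph gives $I^m=[I_n^{m_n}]_n\in\mathcal{I}_{\QU}$. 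Thus ${}^m\colon\mathcal{I}_{\QU}\to\mathcal{I}_{\QU}$ is well defined.

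Properties \eqref{item:ideal_pow1} and \eqref{item:ideal_pow2} then follow by bookkeeping. Writing $m=[m_n]_n$, $m'=[m'_n]_n$, so that $m+m'=[m_n+m'_n]_n$ and $mm'=[m_nm'_n]_n$, and using $[I_n]_n\cdot[J_n]_n=[I_nJ_n]_n$ from the first paragraph,
\[
I^mI^{m'}=[I_n^{m_n}]_n\cdot[I_n^{m'_n}]_n=[I_n^{m_n}I_n^{m'_n}]_n=[I_n^{m_n+m'_n}]_n=I^{m+m'},
\]
and, applying the definition of ${}^{m'}$ to the representative $(I_n^{m_n})_n$ of $I^m$,
\[
(I^m)^{m'}=[(I_n^{m_n})^{m'_n}]_n=[I_n^{m_nm'_n}]_n=I^{mm'}.
\]

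For the last assertion I would start from the elementary fact that in $\ia=\prod_{Q}\mathcal{O}_Q$ (equivalently, in the Dedekind domain $O$), for a non-zero prime $\mathfrak{p}$, an element $x\in\ia$, and $k\in\N$, one has $x\in\mathfrak{p}^k\ia\iff\val_{\mathfrak{p}}(x)\ge k$. Coordinate-wise, with $P=[P_n]_n$, $a=[a_n]_n$, $m=[m_n]_n$, this says that $a\in P^m$ if and only if $\{n\mid\val_{P_n}(a_n)\ge m_n\}\in\mathcal{N}$, which by Lemma \ref{lemma:val_properties}\eqref{item:mac1} and the definition of $\le$ on $\ZZU\cup\{\infty\}$ is precisely $m\le\val_P(a)$. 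Hence $\{m\in\NU\mid a\in P^m\}=\{m\in\NU\mid m\le\val_P(a)\}$; when $a\neq 0$ we have $\val_P(a)\in\NU$ (each $\val_{P_n}(a_n)$ being a non-negative integer on an $\mathcal{N}$-large set), so this set has maximum $\val_P(a)$, and when $a=0$ both sides are $\infty$ under the stated conventions. The only spots needing genuine care are the identification $[I_n^{m_n}]_n\in\mathcal{I}_{\QU}$ (where Dedekind-ness, via $2$-generation of ideals, is used) and fixing the meaning of ``$a\in P^m$'' for $a\in\iaU$ in the last item, i.e.\ extending the ideal $P^m\subseteq\ZU$ to a subset of $\iaU$; beyond that I expect no real obstacle.
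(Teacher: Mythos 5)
Your proof is correct and is exactly the ``simple coordinate-wise argument'' that the paper invokes and omits for this lemma, with the two genuinely non-automatic points (that $[I_n^{m_n}]_n$ is again two-generated, hence lies in $\mathcal{I}_{\QU}$, and the interpretation of $a\in P^m$ for $a\in\iaU$ via the extension of the ideal) correctly identified and handled. Nothing further is needed.
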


\begin{proof}
A coordinate-wise argument. 
\end{proof}

\begin{definition} For $I \in \mathcal{I}^{K^*}$, let $\mathbf{I}:=\{x \in \mathbf{A}^* \mid  (\forall P \in \mathcal{P}^{K^*})\ \val_P(x) \ge \val_P(I)\}$.
\end{definition}

The ring $O^*$ is not Noetherian and ideals in $\mathcal{I}^{K^*}$ are not necessarily products of $\mathbb{N} ^*$-powers of ideals in $\mathcal{P}^{K^*}$. Nevertheless, the following lemma shows that ideals are determined by their valuations.  

\begin{lemma}\label{lemma:equal_ideals_2} Let $I \in \mathcal{I}^{K^*}$.
\begin{enumerate}
	\item  $I=\{x \in O^* \mid (\forall P \in \mathcal{P}^{K^*})\ \val_P(x) \ge \val_P(I) \}$. In particular:
	\begin{itemize} 
		\item $I=\mathbf{I} \cap K^*$.
		\item For every $I_1,I_2 \in \mathcal{I}^{K^*}$, $I_1 \subseteq I_2$ if and only if, for every $P \in \mathcal{P}^{K^*}$, $\val_P(I_1) \leq \val_P(I_2)$. 
	\end{itemize}
	\item $\mathbf{I}$ is an ideal of $\mathbf{O}^*$.
\end{enumerate}
\end{lemma}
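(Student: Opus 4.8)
The plan is to establish the set-theoretic equality in part~(1) first, deduce the two ``in particular'' bullets from it, and then dispose of part~(2) by a short valuation computation. Throughout I would fix a representative $I=[I_n]_n$ with each $I_n$ a nonzero ideal of $O$ (discarding a set not in $\mathcal N$). Since $O$ is a Dedekind domain, for $0\ne x\in O$ one has $x\in I_n$ if and only if $\val_{P}(x)\ge\val_{P}(I_n)$ for all $P\in\mathcal P_K$, and, when $I_n\ne O$, $x\notin I_n$ forces some prime to occur in $I_n$ to a strictly larger power than in the principal ideal $(x)$.

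For the inclusion ``$\subseteq$'' in (1): if $x=[x_n]_n\in I$ then $x_n\in I_n$ for $\mathcal N$-almost all $n$, and the coordinate-wise inequalities $\val_{P_n}(x_n)\ge\val_{P_n}(I_n)$ pass to the ultralimit to give $\val_P(x)\ge\val_P(I)$ for every $P=[P_n]_n\in\mathcal P_{\QU}$. The reverse inclusion is the substance of the lemma and is where I expect the only real difficulty. Given $x=[x_n]_n\in\ZU$ with $\val_P(x)\ge\val_P(I)$ for all internal primes $P$, I would argue by contradiction: if $x\notin I$ then $Y:=\{n:x_n\notin I_n\}\in\mathcal N$, and for each $n\in Y$ the ideal $I_n$ is proper and nonzero, so, $O$ being Dedekind, I may pick a prime $P_n\in\mathcal P_K$ with $\val_{P_n}(x_n)<\val_{P_n}(I_n)$ (choosing $P_n$ arbitrarily for $n\notin Y$). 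Then $P:=[P_n]_n\in\mathcal P_{\QU}$ and $\val_P(x)<\val_P(I)$, a contradiction. The point to be careful about is that this ``bad prime'' is selected \emph{coordinate-wise}: it is precisely here that one exploits that each $I_n$ is a genuine ideal of a Dedekind domain, which makes the argument go through even though $\ZU$ is not Noetherian and $I$ need not be a product of $\NU$-powers of primes in $\mathcal P_{\QU}$.

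For the ``in particular'' bullets, observe first that $\val_P(I)\ge 0$ for every $P$ because $I_n\subseteq O$. Combining this with the description $\ZU=\{x\in\QU\mid(\forall P\in\mathcal P_{\QU})\ \val_P(x)\ge 0\}$ --- a one-line coordinate-wise consequence of the analogous description of $O$ --- part~(1) rewrites as $I=\{x\in\QU\mid(\forall P)\ \val_P(x)\ge\val_P(I)\}={\bf I}\cap\QU$. For the comparison of ideals, the implication ``$\val_P(I_1)\ge\val_P(I_2)$ for all $P$ $\Rightarrow$ $I_1\subseteq I_2$'' is immediate from part~(1); for the converse I would, given $\val_{P_0}(I_1)<\val_{P_0}(I_2)$ for some $P_0=[P_{0,n}]_n$, choose coordinate-wise an element $x_n\in I_{1,n}$ with $\val_{P_{0,n}}(x_n)=\val_{P_{0,n}}(I_{1,n})$ (possible in a Dedekind domain), so that $x:=[x_n]_n\in I_1$ but $\val_{P_0}(x)<\val_{P_0}(I_2)$, whence $x\notin I_2$. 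Thus $I_1\subseteq I_2$ if and only if $\val_P(I_1)\ge\val_P(I_2)$ for every $P$.

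Finally, for part~(2): ${\bf I}\subseteq{\bf O}^*$ since $\val_P(x)\ge\val_P(I)\ge 0$ for every $P$ puts $x$ in ${\bf O}^*$ by the description of ${\bf O}^*$ just used; ${\bf I}$ contains $0$ (as $\val_P(0)=\infty$), is stable under negation ($\val_P(-x)=\val_P(x)$), and is closed under addition by the ultrametric inequality $\val_P(x+y)\ge\min(\val_P(x),\val_P(y))\ge\val_P(I)$; and for $r\in{\bf O}^*$, $x\in{\bf I}$ one has $\val_P(rx)=\val_P(r)+\val_P(x)\ge\val_P(I)$ by multiplicativity of $\val_P$, so ${\bf I}$ absorbs multiplication by ${\bf O}^*$. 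All the displayed valuation facts are parts of Lemma~\ref{lemma:val_properties}. Hence ${\bf I}$ is an ideal of ${\bf O}^*$.
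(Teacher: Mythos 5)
Your proof is correct and takes exactly the route the paper intends: the paper's own proof is just the remark that ``all claims follow from coordinate-wise arguments,'' and what you have written is the honest expansion of that remark, with the key point correctly identified --- the reverse inclusion in (1) works because a ``bad prime'' witnessing $x_n\notin I_n$ can be chosen in each coordinate using that $O$ is Dedekind, and the resulting internal prime $[P_n]_n$ lies in $\mathcal P_{\QU}$. One remark: in the second bullet you prove $I_1\subseteq I_2$ if and only if $\val_P(I_1)\ge\val_P(I_2)$ for all $P$, which is the mathematically correct statement (e.g.\ $P^2\subseteq P$ with $\val_P(P^2)=2>1$); the inequality in the lemma as printed is reversed and appears to be a typo, so your version is the one to keep.
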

\begin{proof} A coordinate-wise argument.
\end{proof}

\begin{lemma}\label{lemmma:congruence_topolgy_ring}  The set $\mathcal{C}:=\{ \mathbf{I} \mid I \in \mathcal{I}^{K^*}\}$ is a base of open neighborhoods of $0$ for a Hausdorff topology on $\mathbf{A} ^*$ under which $\mathbf{A} ^*$ is a  topological ring. Under this topology, for every $I \in \mathcal{I}^{K^*}$,  $\mathbf{I}$ is closed.
\end{lemma}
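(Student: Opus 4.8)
The plan is to verify the axioms for a base of neighborhoods of $0$ defining a topological ring structure, using the multiplicative and valuation-theoretic bookkeeping established in Lemmas \ref{lem:val.ideals} and \ref{lemma:equal_ideals_2}, and then to check that the resulting topology is Hausdorff and that each ${\bf I}$ is closed. First I would record the elementary properties of the sets ${\bf I}$: for $I,J \in \mathcal{I}_{\QU}$ we have ${\bf I} \cap {\bf J} = {\bf I+J}$ (or more simply ${\bf I}\cap{\bf J}\supseteq \mathbf{(I\cap J)}$, which suffices for a base), since the condition $\val_P(x)\ge \val_P(I)$ and $\val_P(x)\ge\val_P(J)$ for all $P$ is equivalent to $\val_P(x)\ge\max(\val_P(I),\val_P(J))=\val_P(I\cap J)$, using that $\mathcal{I}_\QU$ is closed under finite intersections and that $\val_P(I\cap J)=\max(\val_P(I),\val_P(J))$ by a coordinate-wise argument. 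This gives that $\mathcal{C}$ is a filter base; that each ${\bf I}$ is an additive subgroup is immediate from the ultrametric inequality $\val_P(a+b)\ge\min(\val_P(a),\val_P(b))$ in Lemma \ref{lemma:val_properties}\ref{item:mac2} and from ${\bf I}$ being an ideal of ${\bf O}^*$ (Lemma \ref{lemma:equal_ideals_2}(2)). So translation-invariance of the would-be topology is automatic, and I only need to check the conditions at $0$.

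Next I would verify the ring-continuity axioms. Continuity of addition and negation follow at once from ${\bf I}$ being a subgroup. For continuity of multiplication at $(0,0)$, given ${\bf I}$ note ${\bf I}\cdot{\bf I}\subseteq {\bf I}$ since ${\bf I}$ is an ideal; for continuity at a general point $(a,b)$, given ${\bf I}$ I would use Lemma \ref{lem:val.ideals}(2) to find $m\in\NU$ with $-m<\val_Q(a),\val_Q(b)<m$ for all $Q$, and then observe that if $a'\in a+{\bf I^{m}I}$ and $b'\in b+{\bf I^m I}$ then $a'b'-ab = a(b'-b)+(a'-a)b'$ lies in ${\bf I}$, because $\val_P\big(a(b'-b)\big)=\val_P(a)+\val_P(b'-b)\ge -m + (m+\val_P(I)) = \val_P(I)$, using $\val_P(I^mI)=m\val_P(I)+\val_P(I)$ — wait, I should instead take the neighborhood ${\bf J}$ with $\val_P(J)=m+\val_P(I)$, i.e. $J=I_0^m I$ where $I_0$ is any ideal with $\val_P(I_0)=1$ for all $P$ (e.g. $I_0=O$ gives $\val_P=0$, so more care is needed): the clean fix is to pick $c\in\QU\setminus\{0\}$ with large valuations everywhere — but $\QU$ has no such element. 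The correct device is to replace $m$ by working with the ideal $I$ directly: choose the neighborhood ${\bf I'}$ where $I'=I\cdot I$ won't shift valuations enough either. I will instead argue: since $\val_P(a)>-m$ for all $P$, and $\mathcal{P}_\QU(I)$ with $\val_P(I)$ bounded need not hold — here I would simply use that $\val_P(I)\to\infty$ is not available, so the honest route is a coordinate-wise argument, realizing ${\bf I}$ as $[{\bf I_n}]_n$ where ${\bf I_n}$ is the closure of $I_n$ in $\faU$ (finite adeles over $K$), where these facts are classical, and transporting them via the ultrafilter.

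Indeed, the cleanest overall strategy is coordinate-wise: write $I=[I_n]_n$, let $\tau_n$ be the standard congruence topology on the finite $S$-adele ring $(\fa)$ over $K$ for which $\{\overline{I_n}\}_{I_n\in\mathcal{I}_K}$ is a base at $0$ (a topological ring, Hausdorff, with each $\overline{I_n}$ closed), and then transfer each required first-order/elementary property through $\mathcal{N}$. Concretely, the statements ``$\mathcal{C}$ is a filter base of subgroups'', ``${\bf I}\cdot{\bf I}\subseteq{\bf I}$'', ``for every $a$ and every ${\bf I}$ there is ${\bf J}$ with $a{\bf J}\subseteq{\bf I}$'' (this last one holds in each coordinate because $\fa$ over $K$ is a topological ring, and the choice of $J_n$ depending on $a_n$ and $I_n$ can be made uniformly, e.g. $J_n = I_n\cdot(a_n\mathcal{O}+\mathcal{O})^{-1}$-type construction), ``$\bigcap_{I}{\bf I}=\{0\}$'' (Hausdorffness: if $x\ne 0$ then $\val_P(x)<\infty$ for some $P$ by Lemma \ref{lemma:val_properties}, or coordinate-wise $x_n\notin \overline{I_n}$ for suitable $I_n$ on an $\mathcal{N}$-large set), and ``${\bf I}$ is closed'' (its complement is a union of cosets of subgroups from $\mathcal{C}$, or coordinate-wise $\overline{I_n}$ is closed) all descend from their coordinate versions. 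The main obstacle I anticipate is the continuity of multiplication \emph{at points other than the origin}: unlike in a discrete valuation ring, $\QU$ and $\faU$ contain elements with unboundedly negative valuations, so one cannot absorb a factor $a$ into a single shift of ideal power uniformly over $P$; the resolution is either Lemma \ref{lem:val.ideals}(2) (which does give a \emph{global} bound $\val_Q(a)>-m$, so ${\bf J}$ defined by $\val_P(J)=\val_P(I)+m\cdot$(something)—but again needs an ideal whose valuations are uniformly $\ge 1$, which doesn't exist) or, reliably, the coordinate-wise transfer, which is why I would present the whole lemma as a coordinate-wise argument citing the classical theory of the adele ring, exactly in the style the authors announced in the Remark following Lemma \ref{lemma:val_properties}. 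The only genuinely new content is the definition of ${\bf I}$ via valuations and Lemma \ref{lemma:equal_ideals_2}, which have already been set up, so the proof should be short.
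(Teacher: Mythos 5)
Your final plan --- reduce everything to a coordinate-wise transfer from the classical adele ring --- would work and is in the spirit the authors announce after Lemma \ref{lemma:val_properties}, but it is not the route the paper takes, and you abandoned the intrinsic argument one step too early. The paper's proof is exactly the valuation-theoretic argument you gave up on: the obstruction you ran into (``needs an ideal whose valuations are uniformly $\ge 1$, which doesn't exist'') is illusory, because you do not need $\val_P(J')\ge 1$ at \emph{every} prime, only at the primes where $\val_P(a)<0$. That set is small internal (Lemma \ref{lemma:val_properties}\ref{item:mac3}), hence equals $\mathcal{P}_{\QU}(J')$ for some $J'\in\mathcal{I}_{\QU}$ by Lemma \ref{lemma:small_int}; taking $m\in\NU$ with $\val_P(a)>-m$ for all $P$ (Lemma \ref{lem:val.ideals}) and $J:=(J')^mI$, one gets $\val_P(ax)\ge\val_P(a)+m\val_P(J')+\val_P(I)\ge\val_P(I)$ for every $P$ and every $x\in\mathbf{J}$, i.e.\ $a\mathbf{J}\subseteq\mathbf{I}$. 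That single estimate is the whole content of the lemma beyond the observations that $\mathcal{C}$ is a filter base of additive subgroups and that an open subgroup is closed because its complement is a union of cosets (the paper does not separately discuss continuity of multiplication at the origin or Hausdorffness). If you do run the coordinate-wise version instead, the one point you must check, and currently don't, is that $\mathbf{I}$ --- which is defined by valuation conditions at all $P\in\mathcal{P}_{\QU}$, not as an internal set --- coincides with $[\,\overline{I_n}\,]_n$ where $\overline{I_n}=I_n\ia$; this is short (a failure of membership is witnessed by a single prime $P=[P_n]_n$), and afterwards choosing $J_n$ coordinate-wise (the denominator ideal of $a_n$ times $I_n$) and setting $J=[J_n]_n$ gives the same conclusion. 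Finally, the identity $\mathbf{I}\cap\mathbf{J}=\mathbf{I+J}$ at the start of your argument is a slip: the correct statement is $\mathbf{I}\cap\mathbf{J}=\mathbf{I\cap J}$, via $\val_P(I\cap J)=\max(\val_P(I),\val_P(J))$, which is what you actually use in the next clause.
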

\begin{proof} $\mathcal{C}$ is closed under intersections and every $\mathbf{I} \in \mathcal{C}$ is an additive subgroup of $\mathbf{A} ^*$. Thus, in order to show the first claim, it is enough to show that if $a \in \mathbf{A} ^*$ and $I \in \mathcal{I}^{K^*}$ then there exists $J \in \mathcal{I}^{K^*}$ such that $a \mathbf{J} \subseteq \mathbf{I}$. By Lemmas \ref{lem:small.is.P(I)} and \ref{lemma:val_properties}, there exists an ideal $J'\in \mathcal{I}^{K^*}$ such that $\mathcal{P}^{K^*}(J')=\{P \in \mathcal{P}^{K^*} \mid \val_P(a)<0\}$. By Lemma \ref{lemma:val_properties}, there exists $m\in \mathbb{N}^*$ such that $\val_P(a)>-m$, for every $P\in \mathcal{P}^{K^*}$. Denote $J=(J')^mI$. For every $x\in \mathbf{J}$, $\val_P(ax) \geq \val_P(a)+m\val_P(J')+\val_P(I) \geq \val_P(I)$, so $a \mathbf{J} \subseteq \mathbf{I}$.

Since $\mathbf{I}$ is an ideal of $\mathbf{A} ^*$, $\mathbf{I}=\mathbf{A} ^* \smallsetminus \bigcup_{a \in \mathbf{A} ^* \setminus \mathbf{I}}(a+\mathbf{I})$ is closed. 
\end{proof}

\begin{definition}\label{def:congruence topology} We call the topology on $\mathbf{A} ^*$ defined in Lemma \ref{lemmma:congruence_topolgy_ring} and the induced subset topologies on $\mathbf{O} ^*$, $O^*$, and $K^*$ the {\bf congruence topologies}. 
\end{definition}

\begin{remark} By Lemma \ref{lem:internal.ideals}, every non-zero ideal of $O^*$ contains an ideal in $\mathcal{I}^{K^*}$. Thus, the congruence topology on $O^*$ and $K^*$ is equal to the ultrapower of the congruence topologies on $O$ and $K$, respectively.
\end{remark}  
  
\begin{lemma} Let $I \in \mathcal{I}^{K^*}$. Then:
\begin{enumerate}
	\item $O^*$ is dense in $\mathbf{O}^*$.
	\item $K^*$ is dense in  $\mathbf{A} ^*$.
	\item $I$ is  dense in $\mathbf{I}$
\end{enumerate}
\end{lemma}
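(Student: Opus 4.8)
The plan is to prove part (2) by a coordinate-wise argument and then deduce (1) and (3) formally. For (2), write $a=[a_n]_n\in\faU$ with each $a_n\in\fa$ and fix a basic neighborhood ${\bf I}$ of $0$, where $I=[I_n]_n$ with each $I_n\in\mathcal{I}_K$. For every $n$, the coset $a_n+I_n\ia$ is open in $\fa$, because $I_n\ia$ is a finite-index subgroup of the open subgroup $(\ia,+)$ of $(\fa,+)$; since $K$ is dense in $\fa$, we may choose $q_n\in K$ with $\val_P(a_n-q_n)\ge\val_P(I_n)$ for every $P\in\mathcal{P}_K$. Putting $q:=[q_n]_n\in\QU$, a coordinate-wise comparison of valuations gives $\val_P(a-q)\ge\val_P(I)$ for all $P\in\mathcal{P}_{\QU}$, i.e. $a-q\in{\bf I}$. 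As the sets ${\bf I}$ form a base of neighborhoods of $0$, this shows $\QU$ is dense in $\faU$.

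To obtain (1) and (3), I would first record three facts: by Lemma~\ref{lemmma:congruence_topolgy_ring} each ${\bf I}$ is open in $\faU$; taking $I=O$ (so $\val_P(O)=0$ for all $P$) and using the description $\ia=\{x\in\fa\mid(\forall P\in\mathcal{P}_K)\ \val_P(x)\ge0\}$ from the setting, a coordinate-wise argument yields $\iaU={\bf O}$, hence $\iaU$ is open in $\faU$; and $\ZU=\QU\cap\iaU$ while $I={\bf I}\cap\QU$ by Lemma~\ref{lemma:equal_ideals_2}. Now I would invoke the elementary fact that if $D$ is dense in a topological space $X$ and $U\subseteq X$ is open, then $D\cap U$ is dense in $U$. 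Applied with $D=\QU$, $X=\faU$ and $U=\iaU$ it gives (1); applied with $U={\bf I}$ it gives (3).

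The only point requiring care is the coordinate-wise identification $\iaU={\bf O}$ above, i.e. the passage between ``$\val_P(x)\ge0$ for all $P\in\mathcal{P}_{\QU}$'' and its coordinatewise counterpart: were the coordinatewise statement to fail on an $\mathcal{N}$-large index set, picking a witnessing prime $P_n\in\mathcal{P}_K$ on each such index would produce $P=[P_n]_n\in\mathcal{P}_{\QU}$ contradicting the hypothesis (the same overspill device used in the proof of Lemma~\ref{lemma:equal_ideals_2}). If one prefers to avoid this identification, (1) and (3) can instead be proved directly by the coordinate-wise scheme of (2), using the density of $O$ in $\ia$ for (1) and the density of the ideal $I_n$ in its closure $I_n\ia$ in $\fa$ for (3).
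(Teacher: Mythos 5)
Your proof is correct and follows essentially the same route as the paper, which disposes of all three parts by citing the Strong Approximation Theorem (i.e.\ the density of $K$ in $\fa$ transferred coordinate-wise to the ultrapower); your write-up simply makes explicit the coordinate-wise transfer for part (2) and then packages (1) and (3) via the standard ``dense set intersected with an open set'' observation together with the identifications $\iaU={\bf O}$, $\ZU=\QU\cap\iaU$, and $I={\bf I}\cap\QU$. All of these identifications are justified as you indicate, so no gap remains.
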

\begin{proof}
The claim follows from the Strong Approximation Theorem for $K^*$ (Proposition \ref{prop:strong_approximation_K}). \end{proof}

By \cite[III. \S5 Proposition 6]{Bour98}, every Hausdorff topological ring has a completion as a topological ring.

\begin{definition} Denote the completion of $\mathbf{A}^*$ with respect to the congruence topology by $\mathbb{K}$ and denote the closure of $\mathbf{O} ^*$ in $\mathbb{K}$ by $\mathbb{O}$. For $I\in \mathcal{I}^{K^*}$, denote the closure of $I$ in $\mathbb{K}$ by $\mathbb{I}$.
\end{definition} 

\begin{remark} \begin{enumerate}
\item Since $K^*$ is dense in $\mathbf{A} ^*$,  $\mathbb{K}$ is also the completion of $K^*$ with respect to the congruence topology. 
\item Since $O^*$ is dense in $\mathbf{O} ^*$ , $\mathbb{O}$ is also the closure of $O^*$.  Since a closed subspace of a complete space is complete, $\mathbb{O}$ is the completion of $\mathbf{O} ^*$ and of $O^*$. 
\item $\{\mathbb{I}\mid I \in \mathcal{I}^{K^*}\}$ is a base of open neighborhoods of $0$ for $\mathbb{K}$.
\end{enumerate} 
\end{remark}

\begin{remark} The set $\mathbb{Z} ^* \cup \left\{ \infty \right\}$ has a complete uniform structure given by the base $\left\{ V_n \right\}_{n\in \mathbb{Z} ^*}$, where $V_n=\left\{ (x,y) \mid \text{either $x,y>n$ or $x=y$} \right\}$. For every $P\in \mathcal{P}^{K^*}$, the map $\val_P:\mathbf{A} ^* \rightarrow \mathbb{Z} ^* \cup \{\infty\}$ extends continuously to a map $\val_P:\mathbb{K} \rightarrow \mathbb{Z} ^* \cup \{\infty\}$ with similar properties. In particular, for every $I \in \mathcal{I}^{K^*}$, $\mathbb{I}=\{x \in \mathbb{K} \mid (\forall P \in \mathcal{P}^{K^*})\ \val_P(x) \ge \val_P(I)\}$.
\end{remark}

\begin{lemma}\label{lemma:isom_to_inverse} The natural embedding of $i:O^* \rightarrow \underset{\longleftarrow}\lim (O^*/I)$ has a unique  extension to an isomorphism of the topological rings $\mathbb{O} \cong \underset{\longleftarrow}\lim (O^*/I)$, where the inverse limit is taken over $I\in \mathcal{I}^{K^*}$ and every $O^*/I$ is given the discrete topology. 
\end{lemma}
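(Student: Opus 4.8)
The plan is to identify $\ZC$ with the prodiscrete completion of $\ZU$ with respect to the filter base of open ideals $\mathcal{I}_{\QU}$, and then invoke the standard description of such a completion as an inverse limit. First I would pin down the congruence topology on $\ZU$: by Lemma \ref{lemma:equal_ideals_2}, for every $I \in \mathcal{I}_{\QU}$ one has $I = {\bf I} \cap \QU = {\bf I} \cap \ZU$, so $\{I \mid I \in \mathcal{I}_{\QU}\}$ is a base of neighborhoods of $0$ for the congruence topology on $\ZU$. Since $\mathcal{I}_{\QU}$ is closed under finite intersections (noted at the end of Section \ref{setting:1}), this family is directed by reverse inclusion; since the congruence topology is Hausdorff (Lemma \ref{lemmma:congruence_topolgy_ring}), $\bigcap_{I \in \mathcal{I}_{\QU}} I = \{0\}$; and each $I$, being an additive subgroup of $\ZU$ that is a neighborhood of $0$, is open and closed, so each $\ZU/I$ is a discrete ring.

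Next I would check that $L := \underset{\longleftarrow}{\lim}\, \ZU/I$ --- the limit over $\mathcal{I}_{\QU}$ ordered by reverse inclusion, with transition maps the canonical surjections $\ZU/J \to \ZU/I$ for $J \subseteq I$, and with the subspace topology from the product of the discrete rings $\ZU/I$ --- is a Hausdorff completion of $\ZU$ via the natural map $i$. It is a complete Hausdorff topological ring, being a closed subring of the complete Hausdorff product ring $\prod_I \ZU/I$. The map $i$ is injective because $\bigcap_I I = \{0\}$, and it is a topological embedding because the preimage under $i$ of the kernel of the $I$-th projection of $L$ is exactly $I$ (so $i$ pulls the standard base of neighborhoods of $0$ in $L$ back to $\{I\}$, and carries $I$ onto $i(\ZU)\cap\ker(\mathrm{proj}_I)$). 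Finally $i$ has dense image: given a compatible family $(x_I+I)_I \in L$ and finitely many indices $I_1,\dots,I_k$, set $J=I_1\cap\cdots\cap I_k\in \mathcal{I}_{\QU}$; compatibility gives $x_J\equiv x_{I_j}\pmod{I_j}$ for every $j$, so $i(x_J)$ agrees with $(x_I+I)_I$ on the coordinates $I_1,\dots,I_k$.

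It then remains to conclude. As noted in the Remark following the definition of $\ZC$, the ring $\ZC$ --- the closure of $\ZU$ in the complete space $\QC$ --- is itself a Hausdorff completion of $\ZU$, so by uniqueness of Hausdorff completions (see Chapter 3 of \cite{Bour98}) the embedding $i$ extends uniquely to a topological-group isomorphism $\ZC \cong L$; uniqueness of the extension is immediate from the density of $\ZU$ in $\ZC$ and the Hausdorff property of $L$. Since $i$ is a ring homomorphism and multiplication extends continuously to both completions, this isomorphism is automatically multiplicative by density, hence an isomorphism of topological rings. I do not expect a genuine obstacle here; the only points requiring some care are the identity $I = {\bf I}\cap\ZU$, supplied by Lemma \ref{lemma:equal_ideals_2}, and the (trivial) surjectivity of $\ZU \to \ZU/I$ used in the density argument.
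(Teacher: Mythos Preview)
Your proposal is correct and follows essentially the same approach as the paper's proof: both verify that the inverse limit is complete, that $i$ is a topological embedding with dense image (since $\{I \mid I\in\mathcal{I}_{\QU}\}$ is a base of neighborhoods of $0$ in $\ZU$), and then invoke the universal property of completions. Your version simply spells out in more detail the steps the paper leaves implicit, including the explicit density argument and the passage from additive to ring isomorphism.
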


\begin{proof} This is a special case of Lemma \ref{lem:Bourbaki.completion.inverse.limit}.
\end{proof}

\subsection{The congruence completion of $G(K^*)$}

Let $G$ be an algebraic group and fix an embedding $G\hookrightarrow \Mat_D$ defined over $O$. If $R$ is a Hausdorff topological ring containing $O$, the topology on $R$ induces a topology on $G(R) \subseteq \Mat_D(R)$ in which $G(R)$ is a Hausdorff topological group. Unless stated otherwise, we equip $G(R)$ with this topology. This applies, in particular, to $G(K^*)$, $G(\mathbf{A} ^*)$, and $G(\mathbb{K})$. We refer to these topologies as the congruence topologies.

\begin{definition} Let $I\in \mathcal{I}^{K^*}$. Denote
\begin{enumerate}
\item $\pi_I:G(O^*) \rightarrow G(O^*/I)$, $\pi_{\bf I}:G(\mathbf{O} ^*) \rightarrow G(\mathbf{O} ^*/{\bf I})$ and $\pi_{\mathbb I}:G(\mathbb{K}) \rightarrow G(\mathbb{K}/{\mathbb I})$ the modulo-$I$, modulo-${\bf I}$, and modulo-$\mathbb{I}$ reduction maps, respectively. 
\item Let $G(O^*)[I]:=\ker \pi_I$, $G(\mathbf{O} ^*)[I]:=\ker \pi_{\bf I}$, and $G(\mathbb{K})[I]:=\ker \pi_{\mathbb I}$.
\end{enumerate}
A principal congruence subgroup is a subgroup of $G(K^*)$ of the form $G(O^*)[I]$. A congruence subgroup is a subgroup of $G(K^*)$ that contains a principal congruence subgroup.
\end{definition}

\begin{definition} The collections of principal congruence subgroups of $G(K)$, $G(K^*)$, $G(\mathbf{A}^*)$, and $G(\mathbb{K})$ are denoted by $\mathcal{C}_{G,O}$, $\mathcal{C}_{G,O^*}$, $\mathcal{C}_{G,\mathbf{O} ^*}$, and $\mathcal{C}_{G,\mathbb{O}}$, respectively.
\end{definition} 

By Remark \ref{rem:internal.sets}, $\mathcal{C}_{G,O^*}$ is identified with $\mathcal{C}_{G,O}^*$.

The following lemma is clear.

\begin{lemma}
The collections $\mathcal{C}_{G,O^*}$, $\mathcal{C}_{G,\mathbf{O} ^*}$, and $\mathcal{C}_{G,\mathbb{O}}$ are bases for open neighborhoods of the identity for the congruence topologies of $G(K^*)$, $G(\mathbf{A} ^*)$, and $G(\mathbb{K})$, respectively. 
\end{lemma}

The standard Strong Approximation Theorem states if $G$ is semisimple and 1-connected then $G(K)$ is dense in $G(\mathbf{A})$ or, more precisely, that the embedding of $G(K)$ into $G(\mathbf{A})$ uniquely extends to a topological isomorphism between the congruence completion of $G(K)$ and $G(\mathbf{A})$. Our next goal is to prove the analogous statement: the embedding of $G(K^*)$ in $G(\mathbb{K})$ uniquely extends  to a topological isomorphism from the congruence completion of $G(K^*)$ to $G(\mathbb{K})$. 

\begin{proposition}[Strong approximation in $X(K^*)$]\label{prop:dense} Let $X$ be an affine scheme over $K$. If $X(K)$ is dense in $X(\mathbf{A})$ then $X(K^*)$ is dense in $X(\mathbb{K})$.  
\end{proposition}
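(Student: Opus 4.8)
The plan is to deduce Strong Approximation for $X(\QU)$ from the hypothesis that $X(K)$ is dense in $X(\fa)$ by a coordinate-wise (Łoś-type) argument, transferring the density statement through the ultrapower and then through the completion. First I would unwind the definitions: since $X$ is an affine $O$-scheme, fix a closed embedding $X \hookrightarrow \mathbb{A}^N_O$, so that $X(R) \subseteq R^N$ for any $O$-algebra $R$, and the congruence topology on $X(R)$ is the subspace topology from $R^N$ with its congruence topology (this is consistent with the topologies defined earlier in \S\ref{sec:ultra}). It suffices to show that for every $x = [x_n]_n \in X(\QC)$ and every basic open neighborhood, i.e. every $I \in \mathcal{I}_{\QU}$, there is $y \in X(\QU)$ with $x - y \in {\bf I}^N$ (coordinatewise), since $\{\mathbb{I} \mid I \in \mathcal{I}_{\QU}\}$ is a base at $0$ for $\QC$ and $X(\QU)$ is dense in $X(\faU)$ would follow, and then $X(\faU)$ is dense in $X(\QC)$ because $\faU$ is dense in $\QC$.

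The key reduction is: it is enough to prove that $X(\QU)$ is dense in $X(\faU)$, because $\QU$ is dense in $\faU$ (by the Strong Approximation Theorem for $\QU$, Proposition \ref{prop:strong_approximation_K}), hence $X(\QU)$ is dense in $X(\faU)$ would combine with the density of $\faU$ in $\QC$ — more precisely, $X(\faU)$ sits densely in $X(\QC)$ since $X$ is defined by polynomial equations with coefficients in $O$ and $\faU$ is dense in $\QC$, so any point of $X(\QC)$ is approximated by points of $\faU^N$ satisfying the defining equations approximately, and one upgrades to exact solutions using that $X(K)$ is dense in $X(\fa)$ at each coordinate. So the heart of the matter is density of $X(\QU)$ in $X(\faU)$. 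For this, take $a = [a_n]_n \in X(\faU)$ with $a_n \in X(\fa)$ for $\mathcal{N}$-almost all $n$, and $I = [I_n]_n \in \mathcal{I}_{\QU}$. For each such $n$, the standard Strong Approximation Theorem (the hypothesis: $X(K)$ dense in $X(\fa)$) furnishes $b_n \in X(K)$ with $b_n \equiv a_n$ modulo $I_n$ in the appropriate sense, i.e. $\val_P(b_n - a_n) \ge \val_P(I_n)$ for all $P \in \mathcal{P}_K$. Setting $b := [b_n]_n$, Łoś's theorem (applied to the first-order statement "$b \in X$", which is a conjunction of polynomial equations) gives $b \in X(\QU)$, and the coordinate-wise description of $\val_P$ from Lemma \ref{lemma:val_properties} together with Lemma \ref{lem:val.ideals} gives $\val_P(b - a) \ge \val_P(I)$ for every $P \in \mathcal{P}_{\QU}$, i.e. $b - a \in {\bf I}^N$, which is the desired approximation.

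The step I expect to be the main obstacle — really the only nontrivial point — is the passage from $X(\faU)$ to $X(\QC)$, i.e. showing that a point of $X$ over the completion $\QC$ can be approximated by points of $X$ over $\faU$. Over $\faU$ one has honest ultrapower coordinates to which Łoś applies, but an element of $\QC$ is only a limit of such. The resolution is: given $x \in X(\QC)$ and $I \in \mathcal{I}_{\QU}$, pick $c \in \faU^N$ with $x - c \in \mathbb{I}^N$ (possible since $\faU^N$ is dense in $\QC^N$); then $c$ satisfies the defining equations of $X$ only modulo $\mathbb{I}$, but since those equations have coefficients in $O \subseteq \ZU$, working at each coordinate and invoking the standard Strong Approximation Theorem for $X$ over $\fa$ lets one perturb $c_n \in \fa^N$ to a genuine point of $X(\fa)$ still congruent to $c_n$ mod $I_n$; ultraproducting back gives a genuine point of $X(\faU)$ within ${\bf I}^N$ of $c$, hence within $\mathbb{I}^N$ of $x$ after enlarging $I$ suitably. (If one prefers to avoid this, an alternative is to first prove $G(\QU) = G(\faU) \cap$ something and push everything through the group structure, but the scheme-theoretic coordinate-wise argument above is cleaner and is the one I would write out.) All the remaining verifications — that congruence mod $I$ in $\QU$ is detected coordinate-wise, that $\mathbb{I} \cap \QU = I$ and $\mathbb{I} = \{x \mid \val_P(x) \ge \val_P(I)\ \forall P\}$, that the topologies match — are exactly the coordinate-wise facts already established in Lemmas \ref{lemma:equal_ideals_2}, \ref{lemmma:congruence_topolgy_ring}, and the valuation lemmas, so I would cite them rather than redo them.
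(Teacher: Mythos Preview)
Your coordinate-wise argument that $X(\QU)$ is dense in $X(\faU)$ is fine: given $a=[a_n]_n\in X(\faU)$ and $I=[I_n]_n$, the hypothesis produces $b_n\in X(K)$ with $b_n\equiv a_n\pmod{I_n}$, and \L o\'s gives $b=[b_n]_n\in X(\QU)$ with $b-a\in{\bf I}^N$. The gap is in your treatment of the passage from $X(\faU)$ to $X(\QC)$. You approximate $x\in X(\QC)$ by $c=[c_n]_n\in(\faU)^N$ and then propose to ``perturb $c_n\in\fa^N$ to a genuine point of $X(\fa)$ still congruent to $c_n$ mod $I_n$.'' But at each fixed $n$ all you know is that $p(c_n)$ lies in some ideal of $\fa$; nothing in the hypothesis (density of $X(K)$ in $X(\fa)$) lets you lift an \emph{approximate} solution over $\fa$ to an \emph{exact} one nearby. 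That lifting would require a Hensel-type input (e.g.\ smoothness of $X$), which is not assumed. So the sketched resolution of the obstacle does not go through.

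The paper sidesteps this entirely by never factoring through $X(\faU)$. The key observation is that, because $\ia$ is compact, the condition ``$(x+I\fa)^m$ meets $X(\fa)$'' is equivalent to the first-order (in $K$) formula
\[
\phi_K(x,I):\quad (\forall J\in\mathcal{I}_K)\ (\exists y\in K^m)\ \big(x-y\in I^m\ \wedge\ p(y)\in J^k\big).
\]
The density hypothesis then becomes the first-order sentence $(\forall x,I)\big(\phi_K(x,I)\to\psi_K(x,I)\big)$, where $\psi_K$ says $(x+I^m)\cap X(K)\neq\emptyset$; this transfers verbatim to $\QU$ by \L o\'s. Now given $x\in X(\QC)$ and $I$, one approximates $x$ in $(\QU)^m$ (not $(\faU)^m$) by $b_J\in(x+\mathbb{J}^m)$ for each $J\subseteq I$; the family $\{b_J\}$ witnesses $\phi_{\QU}(b_I,I)$ directly, and the transferred implication yields a point of $X(\QU)$ in $b_I+I^m\subseteq x+\mathbb{I}^m$. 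The compactness of $\ia$ is what makes ``there is an adelic point nearby'' expressible without quantifying over $\fa$, and this is precisely the ingredient your resolution is missing.
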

\begin{proof} Suppose that $X$ is the common zero locus of the polynomials $p_1,\ldots,p_k\in K[x_1,\ldots,x_m]$ and denote the polynomial map $x \mapsto (p_1(x),\ldots,p_k(x))$ by $f$. A coordinate-wise argument shows that $X(K^*)$ is dense in $X(\mathbf{A} ^*)$, so it is enough to prove that $X(\mathbf{A} ^*)$ is dense in $X(\mathbb{K})$. 

Let $a \in X(\mathbb{K})$ and let $I=[I_n]_n \in \mathcal{I}^{K^*}$. We have to show that there exists $c \in X(\mathbf{A}^*) \cap (a+\mathbb{I}^m)$. Identify $(K^*)^m$ with the ultrapower $(K^m)^*$ and choose a sequence $b_n\in K^m$ such that $[b_n]_n \in a + \mathbb{I}^m$. Denote
\[
Y:=\left\{ n \in \mathbb{N} \mid (\forall J \in \mathcal{I} ^K)\quad f \left( b_n+I_n^m \right) \cap J^k \neq \emptyset \right\}.
\]

We claim that $Y$ is $\mathcal{U}$-large. Otherwise, for $\mathcal{U}$-almost every $n$, there exists $J_n \in \mathcal{I} ^K$ such that $f \left( b_n+I_n^m \right) \cap J_n^k = \emptyset$. Letting $J=[J_n]_n$, it follows that $f([b_n]_n+I^m) \cap J^k=\emptyset$. By Lemma \ref{lemma:equal_ideals_2}, $f([b_n]_n+I^m) \cap \mathbb{J}^k=\emptyset$. Since $\mathbb{J}$ is open, we get that $f([b_n]_n+\mathbb{I} ^m)\cap \mathbb{J} ^k=\emptyset$, a contradiction to $a\in [b_n]_n+\mathbb{I} ^m$.

Choose a decreasing sequence of ideals $J_r\in \mathcal{I} ^K$ such that $\bigcap J_r \mathbf{O}=0$. For every $n\in Y$ and $r\in \mathbb{N}$ there is an element $c_{n,r}\in b_n+I_n^m$ such that $p_1(c_{n,r}),\ldots,p_k(c_{n,r})\in J_r$. Since $b_n+I_n \mathbf{O}^m$ is compact, there is an accumulation point $c_n\in b_n+I_n \mathbf{O}^m$ of $c_{n,1},c_{n,2},\ldots$. By the assumption on $J_r$, $c_n\in X(\mathbf{A})$. Then $c:=[c_n]\in X(\mathbf{A} ^*)$ and $c\in [b_n]_n+\mathbb{I}^m=a+\mathbb{I}^m$.
 \end{proof}

\begin{corollary}[Strong Approximation Theorem for $G(K^*)$] Suppose that $G$ is a semisimple and 1-connected group defined over $K$. The natural embedding of  $G(K^*)$ in $G(\mathbb{K})$ has a unique extension to a topological isomorphism from the congruence completion of $G(K^*)$ to $G(\mathbb{K})$. Similarly,  The natural embedding of  $G(O^*)$ in $G(\mathbb{O})$ has a unique extension to a topological isomorphism from the congruence completion of $G(O^*)$ to $G(\mathbb{O})$.\end{corollary}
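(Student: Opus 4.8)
The plan is to deduce the Strong Approximation Theorem for $G(\QU)$ from Proposition~\ref{prop:dense} together with the abstract nonsense about completions developed earlier in the section. First I would check the hypothesis of Proposition~\ref{prop:dense}: since $G_K$ is semisimple, connected, simply connected, and has $\rank_S G\geq 1$, the classical Strong Approximation Theorem (Kneser--Platonov) tells us that $G(K)$ is dense in $G(\fa)$. Applying Proposition~\ref{prop:dense} with $X=G$ (viewed as an affine scheme over $O$ via its fixed embedding into $\Mat_D$, so it is a zero locus of polynomials over $O$ as required) yields that $G(\QU)$ is dense in $G(\QC)$.

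Next I would verify that $G(\QC)$ is in fact a completion of $G(\QU)$, not merely a Hausdorff topological group containing it densely. The point is that $G(\QC)$ is complete: by Lemma~\ref{lemma:isom_to_inverse}, $\ZC\cong\varprojlim \ZU/I$, and $\QC$ is built from $\ZC$ by the construction of \S\ref{subsec:sketch}; in any case $\QC$ is complete as the completion of $\faU$, and $G(\QC)\subseteq \Mat_D(\QC)$ is closed (it is cut out by polynomial equations, hence by continuity of polynomials it is an intersection of closed sets), so $G(\QC)$ is complete in the subspace uniformity. To know that the subspace uniformity on $G(\QC)$ really is \emph{a} completion of $G(\QU)$, I also need that the left and right uniformities on $G(\QU)$ coincide, or at least that a completion exists: here I would invoke Lemma~\ref{lemma:left_right}, with $\Gamma=G(\QU)$ and $\Lambda=G(\ZU)$, using the base $\mathcal{C}_{\ZU}=\{G(\ZU)[I]\mid I\in\mathcal{I}_{\QU}\}$ of identity neighborhoods consisting of subgroups that are normal in $G(\ZU)$ — normality holds because $G(\ZU)[I]$ is the kernel of $\pi_I$ restricted to $G(\ZU)$, and every $G(\ZU)[I]$ is contained in $G(\ZU)$ (reducing to the identity mod $I$ forces the entries to lie in $\ZU$). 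So $G(\QU)$ has a completion and the left/right uniformities agree; the same remark applies on $G(\QC)$.

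Then the conclusion follows from uniqueness of completions: both $G(\QC)$ (with its congruence topology, which is complete, and contains $G(\QU)$ as a dense subgroup by the previous paragraph) and the abstract congruence completion $\widehat{G(\QU)}$ are Hausdorff completions of $G(\QU)$, so the identity embedding $G(\QU)\hookrightarrow G(\QC)$ extends uniquely to a topological isomorphism from $\widehat{G(\QU)}$ onto $G(\QC)$. For the second assertion, I would argue identically with $\ZU$ in place of $\QU$ and $\ZC$ in place of $\QC$: $G(\ZU)$ is dense in $G(\ZC)$ because $\ZU$ is dense in $\ZC$ and one applies Proposition~\ref{prop:dense} — or, more directly, one notes that $\ZC$ is the closure of $\iaU$, $G(\ZC)$ is closed in $\Mat_D(\ZC)$, and the density statement for $G(\ZU)$ in $G(\ZC)$ reduces to a coordinate-wise Strong Approximation argument as in the proof of Proposition~\ref{prop:dense}; $G(\ZC)$ is complete as a closed subspace of the complete space $\Mat_D(\ZC)$; and $G(\ZU)$ has a completion by Lemma~\ref{lemma:left_right} again.

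The main obstacle I anticipate is purely bookkeeping rather than conceptual: making sure that the topology $G(\QC)$ inherits as a subspace of $\Mat_D(\QC)$ genuinely agrees with the congruence topology on the completion, i.e.\ that the entourages $V_{\mathbb{I}}$ restrict correctly and that $G(\ZC)[\mathbb{I}]$ corresponds to $G(\ZU)[I]$ under the completion map; this is exactly the kind of verification the paper has been setting up with Lemmas~\ref{lemma:isom_to_inverse} and the structure of $\mathcal{C}_{\ZC}$, so it should go through smoothly. I expect no serious difficulty beyond assembling these pieces, since all the hard analytic input (Strong Approximation) is already packaged in Proposition~\ref{prop:dense} and the density of $\QU$ in $\faU$.
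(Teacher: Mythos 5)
Your proposal is correct and follows essentially the same route as the paper: existence of the completion of $G(\QU)$ via Lemma~\ref{lemma:left_right} (using that the $G(\ZU)[I]$ are normal in $G(\ZU)$), density of $G(\QU)$ in the complete group $G(\QC)$ via Proposition~\ref{prop:dense}, and then the universal property of completions. The only cosmetic difference is in the second assertion, where the paper deduces the $\ZU$-statement from the $\QU$-statement using that $G(\ZC)$ is open and closed in $G(\QC)$, whereas you rerun the argument directly; both are fine.
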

\begin{proof} Denote the congruence topology on $G(K^*)$ by $\kappa$. Since $G(O^*)$ is open in $G(K^*)$, Lemma \ref{lem:Bourbaki.completion.inverse.limit} implies that $G(K^*)$ has a completion with respect to $\kappa$. Let $i:G(K^*) \rightarrow G(\mathbb{K})$ be the embedding. $i$ induces an isomorphism between the topological groups $G(K^*)$ and $i(G(K^*))$, so by the universal property of completions, $i$ uniquely extends to an isomorphism of the topological groups $\widehat{G(K^*)}_{\kappa}$ and $G(\mathbb{K})$.  Since $G(O^*)$ is dense in $G(\mathbb{O})$ and $G(\mathbb{O})$ is closed and open, the second statement follows from the first.     
\end{proof}

\begin{remark}
From now on we view $G(\mathbb{O})$ and $G(\mathbb{K})$ as the congruence completions of $G(O^*)$ and $G(K^*)$, respectively. 
\end{remark}

\begin{lemma}\label{lemma:isom_to_inverse_gp} \label{cor:cong_com} The natural embedding of $G(O^*)$ in $\underset{\longleftarrow}\lim\ G(O^*/I)$ uniquely  extends to an isomorphism  of the topological groups  $G(\mathbb{O})$ and $\underset{\longleftarrow}\lim\ G(O^*/I)$ where the inverse limit is taken over $ \mathcal{I}^{J^*}$ and every $G(O^*/I)$ is given the discrete topology. 
\end{lemma}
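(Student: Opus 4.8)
The plan is to reduce the statement about $G(\ZC)$ to the already-established statement about the ring $\ZC$, namely Lemma~\ref{lemma:isom_to_inverse}, which identifies $\ZC$ with $\underset{\longleftarrow}\lim (\ZU/I)$ as topological rings. Since $G$ is an affine group scheme embedded in $\Mat_D$, the functor $R\mapsto G(R)$ is described by finitely many polynomial equations in the $D^2$ matrix entries (together with the polynomial expressing invertibility, i.e. that the determinant divides $1$ in the coordinate ring). First I would observe that, because the reductions $\ZC\to\ZU/I$ induce ring homomorphisms, they induce group homomorphisms $G(\ZC)\to G(\ZU/I)$ compatible with the transition maps, hence a continuous homomorphism $\Phi\colon G(\ZC)\to \underset{\longleftarrow}\lim G(\ZU/I)$ extending the natural embedding of $G(\ZU)$. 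The topologies match by construction: the subgroups $G(\ZC)[I]=\ker\pi_{\mathbb I}$ form a base of identity neighborhoods for the congruence topology on $G(\ZC)$ (by the ``clear'' Lemma on $\mathcal{C}_{\ZC}$), and these are precisely the preimages under $\Phi$ of the basic open sets of the inverse limit.

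Next I would check $\Phi$ is a bijection. Injectivity is immediate: if $g\in G(\ZC)$ maps to the identity in every $G(\ZU/I)$, then every matrix entry of $g-\Id$ lies in $\bigcap_I \mathbb{I}=0$ by Hausdorffness of $\ZC$ (using the description $\mathbb{I}=\{x\in\QC \mid (\forall P)\ \val_P(x)\ge\val_P(I)\}$), so $g=\Id$. For surjectivity, given a compatible system $(g_I)_I$ with $g_I\in G(\ZU/I)$, I would use Lemma~\ref{lemma:isom_to_inverse}: the inverse limit of the rings $\ZU/I$ is $\ZC$, so the compatible system of matrix entries assembles to a matrix $g\in\Mat_D(\ZC)$. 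It remains to see that $g$ actually lies in $G(\ZC)$, i.e. satisfies the defining polynomial equations. Each defining polynomial $p$ has coefficients in $\ZU\subseteq\ZC$, and $p(g)$ reduces to $p(g_I)=0$ in $\Mat_D(\ZU/I)$ for every $I$; hence $p(g)\in\bigcap_I \Mat_D(\mathbb{I})=0$, so $p(g)=0$. Likewise $\det(g)$ is a unit because it is a unit modulo every $I$ and the units of $\ZC=\underset{\longleftarrow}\lim\ZU/I$ are detected modulo the $I$'s. Thus $g\in G(\ZC)$ and $\Phi(g)=(g_I)_I$.

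Finally, since $\Phi$ is a continuous bijective homomorphism carrying the basic open subgroups $G(\ZC)[I]$ onto the basic open subgroups of the inverse limit, its inverse is continuous as well, so $\Phi$ is an isomorphism of topological groups. Uniqueness of the extension follows because $G(\ZU)$ is dense in $G(\ZC)$ (Corollary on the Strong Approximation Theorem for $G(\ZU)$) and the target is Hausdorff, so any two continuous homomorphisms agreeing on $G(\ZU)$ coincide; alternatively one quotes Corollary~\ref{lemma:con_isom}.

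I do not expect a serious obstacle here: the content is entirely formal once Lemma~\ref{lemma:isom_to_inverse} is in hand. The only point requiring a little care is surjectivity, specifically the verification that an inverse-limit matrix whose reductions all satisfy the defining equations genuinely defines a point of $G$ over $\ZC$ --- this uses that $\ZC$ is the honest inverse limit of the $\ZU/I$ (not merely a dense subring of it) so that vanishing modulo every $I$ forces vanishing, and that this applies simultaneously to the finitely many defining polynomials and to the determinant condition. This is exactly the same bookkeeping as in the classical proof that $G(\widehat{O})=\underset{\longleftarrow}\lim G(O/I)$ for an affine group scheme, transported verbatim to the ultrapower setting.
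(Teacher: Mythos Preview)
Your argument is correct. The paper's own proof is omitted with the remark that it is ``similar to the proof of Lemma~\ref{lemma:isom_to_inverse}'', meaning the intended route is the universal-property-of-completions argument used there: the inverse limit of the discrete groups $G(\ZU/I)$ is complete, the natural map from $G(\ZU)$ is a topological embedding because the congruence subgroups $G(\ZU)[I]$ form a base of identity neighborhoods, and its image is dense (this last point uses that each reduction $G(\ZU)\to G(\ZU/I)$ is surjective, which holds by a coordinate-wise application of classical Strong Approximation); since $G(\ZC)$ is already identified with the congruence completion of $G(\ZU)$, the result follows.

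Your approach is genuinely different: rather than rerunning the completion argument at the group level, you \emph{reduce} to the ring-level statement $\ZC\cong\underset{\longleftarrow}\lim\,\ZU/I$ and then transport it to $G$-points using that $G$ is an affine scheme cut out by polynomials in $\Mat_D$. The payoff is that your surjectivity argument---assembling matrix entries in $\ZC$ and checking the defining equations vanish because they vanish modulo every $I$---does not require knowing in advance that $G(\ZU)\to G(\ZU/I)$ is onto; it works for any affine $O$-scheme once the ring case is in hand. The paper's route is slightly shorter to state but tacitly relies on that surjectivity (equivalently, on density of $G(\ZU)$ in the inverse limit), which is true here but is a nontrivial input. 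Your remark about the determinant is harmless but unnecessary under the paper's convention that the embedding $G\hookrightarrow\Mat_D$ is closed.
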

\begin{proof} This is a special case of Lemma \ref{lem:Bourbaki.completion.inverse.limit}.
\end{proof}

\subsection{The $\mathcal{Q}$-projection maps}

We identify $G(\mathbf{A})$ with $\prod'_{P \in \mathcal{P}^{K}}G(K_P)$ in the natural way. More precisely, elements of $G(\mathbf{A})$ are identified with sequences $(g_P)_{P \in \mathcal{P}^{K}} \in \prod_{P\in \mathcal{P}^K} G(K_P)$ such that $g_P \in G(O_P)$ for all but finitely many $P$.

\begin{definition}
\begin{enumerate}
\item Let $\mathcal{Q} \subseteq \mathcal{P}^{K}$. For every $g=(g_P)_P \in G(\mathbf{A})$, define $g_{\mathcal{Q}}=(g_{\mathcal{Q},P})_P \in G(\mathbf{A})$ by:
\[
g_{\mathcal{Q},P}:=\begin{cases} g_P & P \in \mathcal{Q} \\ 1 & P \notin \mathcal{Q} \end{cases}.
\]
\item For every internal subset $\mathcal{Q}=[\mathcal{Q}_n]_n \subseteq \mathcal{P}^{K^*}$ and every $g=[g_n]_n \in G(\mathbf{A}^*)$, denote $g_\mathcal{Q}=[(g_n)_{\mathcal{Q}_n}]_n$. This definition does not depend on the choice of representatives.
 \item If $\mathcal{Q} \subseteq \mathcal{P}^{K^*}$ is internal, the $\mathcal{Q}$-projection map is the map $\pi_{\mathcal{Q}}:G(\mathbf{A}^*)\rightarrow G(\mathbf{A}^*)$ defined by $\pi_\mathcal{Q}(g)=g_\mathcal{Q}$. We denote the image of $\pi_{\mathcal{Q}}$ by $G(\mathbf{A}^*)_\mathcal{Q}$.
\end{enumerate}	
\end{definition}

\begin{lemma}
Let $\mathcal{Q} \subseteq \mathcal{P}^{K^*}$ be an internal subset.
\begin{enumerate}
\item $\pi_\mathcal{Q}$ is a continuous homomorphism.
\item $G(\mathbf{A}^*)_{\mathcal{Q}}=\ker(\pi_{\mathcal{Q}^c})$, where $\mathcal{Q}^c:=\mathcal{P}^{K^*}\smallsetminus \mathcal{Q}$. 
\item The map $(\pi_{\mathcal{Q}} \times \pi_{\mathcal{Q} ^c}):G(\mathbf{A}^*) \rightarrow G(\mathbf{A}^*)_\mathcal{Q} \times G(\mathbf{A}^*)_{\mathcal{Q}^c}$ is an isomorphism of topological groups.
\end{enumerate}
\end{lemma}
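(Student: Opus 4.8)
The plan is to reduce all three assertions to coordinate-wise statements about $G(\fa) = \prod'_{P \in \mathcal{P}_K} G(K_P)$ and then transfer them to the ultrapower via {\L}o\'s's theorem, exactly in the style of the coordinate-wise arguments used throughout this section. At the non-ultra level, for any $\mathcal{R} \subseteq \mathcal{P}_K$ the assignment $\sigma_\mathcal{R}\colon g \mapsto g_\mathcal{R}$ (keep the $P$-components with $P\in\mathcal{R}$, replace the others by the identity) is a group homomorphism of $G(\fa)$, since multiplication in the restricted product is componentwise; moreover $\sigma_\mathcal{R}(G(\ia)) \subseteq G(\ia)$ and, for every nonzero ideal $J$ of $O$, $\sigma_\mathcal{R}(G(\ia)[J]) \subseteq G(\ia)[J]$, because each matrix entry of $g_\mathcal{R}-1$ is either the corresponding entry of $g-1$ or $0$. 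Applying this with $\mathcal{Q}=[\mathcal{Q}_n]_n$ componentwise and invoking {\L}o\'s's theorem yields, for part~(1), that $\pi_\mathcal{Q}$ is a group homomorphism and that $\pi_\mathcal{Q}\bigl(G(\iaU)[I]\bigr) \subseteq G(\iaU)[I]$ for every $I \in \mathcal{I}_{\QU}$. Since $\{G(\iaU)[I] \mid I \in \mathcal{I}_{\QU}\}$ is a base of identity neighborhoods of $G(\faU)$, this says $\pi_\mathcal{Q}$ is continuous at $1$, and a homomorphism of topological groups continuous at the identity is continuous everywhere; this proves~(1).

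For~(2), first note that $\mathcal{Q}^c = \mathcal{P}_{\QU}\smallsetminus\mathcal{Q}$ is again internal by Lemma~\ref{lemma:bol_int}, so $\pi_{\mathcal{Q}^c}$ is defined and, by~(1), is a continuous homomorphism. A coordinate-wise inspection shows that $g_\mathcal{Q}$ has trivial components at every place in $\mathcal{Q}^c$, hence $\pi_{\mathcal{Q}^c}(g_\mathcal{Q})=1$ and $G(\faU)_\mathcal{Q}=\Ima\pi_\mathcal{Q}\subseteq\ker\pi_{\mathcal{Q}^c}$. Conversely, if $\pi_{\mathcal{Q}^c}(h)=1$ then, coordinate-wise, $h$ has trivial components at every place in $\mathcal{Q}^c$, so $h_\mathcal{Q}=h$ and therefore $h=\pi_\mathcal{Q}(h)\in G(\faU)_\mathcal{Q}$. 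This gives~(2); in particular $G(\faU)_\mathcal{Q}$ is a closed subgroup of $G(\faU)$, being the kernel of a continuous homomorphism into a Hausdorff group.

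For~(3), set $F:=\pi_\mathcal{Q}\times\pi_{\mathcal{Q}^c}$, a continuous homomorphism by~(1). A coordinate-wise argument shows that elements of disjoint support multiply componentwise, so $g = g_\mathcal{Q}\cdot g_{\mathcal{Q}^c}$ for every $g \in G(\faU)$; hence $F(g)=1$ forces $g=1$ and $F$ is injective. For surjectivity, given $a \in G(\faU)_\mathcal{Q}$ and $b \in G(\faU)_{\mathcal{Q}^c}$, put $g:=ab$; then, coordinate-wise, $g_\mathcal{Q}=a$ and $g_{\mathcal{Q}^c}=b$, so $F(g)=(a,b)$. Thus $F$ is a continuous bijective homomorphism whose inverse is $(a,b)\mapsto ab$, and the latter is the restriction to $G(\faU)_\mathcal{Q}\times G(\faU)_{\mathcal{Q}^c}$ of the multiplication map of the topological group $G(\faU)$, hence continuous. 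Therefore $F$ is an isomorphism of topological groups.

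I do not expect a serious obstacle here: the whole argument is a chain of coordinate-wise transfers together with elementary topological-group bookkeeping. The only points that need a little care are the appeal to Lemma~\ref{lemma:bol_int} to know that $\mathcal{Q}^c$ is internal, so that $\pi_{\mathcal{Q}^c}$ is even defined, and, in~(3), identifying $F^{-1}$ with a restriction of the multiplication map so as to obtain its continuity for free rather than by a direct estimate.
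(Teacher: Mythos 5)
Your proposal is correct and is exactly the coordinate-wise argument that the paper invokes and omits: reduce each assertion to the obvious componentwise statement in $G(\fa)=\prod'_P G(K_P)$, transfer by {\L}o\'s, and handle continuity via the base $\{G(\iaU)[I]\}$ of identity neighborhoods. The two points you flag (internality of $\mathcal{Q}^c$ via Lemma \ref{lemma:bol_int}, and obtaining continuity of the inverse in (3) from the multiplication map) are the right ones to check, and your treatment of them is fine.
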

\begin{proof} A coordinate-wise argument. 
\end{proof}

\section{Width and non-standard metaplectic extensions}

\subsection{Discrete metaplectic extensions}

Let $\Gamma$ be a topological group and $X \subseteq \Gamma$ be a subgroup. In this subsection we consider (in a special case) the extension of the topology obtained by adding $X$ as an open set. We first find a sufficient condition on this topology to be a group topology and then study the relation between the completions of $\Gamma$ with respect to these two topologies.

\begin{definition} Let $(\Gamma,\tau)$ be a topological group and let $X \subseteq \Gamma$ be a subgroup. Define the topology $\tau(X)$ to be the topology generated by left translates of sets of the form $X \cap \Lambda$, where $\Lambda$ ranges over the $\tau$-open subgroups of $\Gamma$.
\end{definition} 

\begin{definition} \label{def:good.subgroup.topology} Let $(\Gamma,\tau)$ be a topological group and let $X \subseteq \Gamma$ be a subgroup. We say that $X$ is {\bf good} if there is a $\tau$-open subgroup $\Delta_X$ such that \begin{enumerate}
\item $\Delta_X \cap X$ is a dense normal subgroup of $\Delta_X$.
\item For every $g\in \Gamma$ there is a $\tau$-open subgroup $\Delta'$ with $g[\Delta_X,\Delta']g ^{-1} \subseteq X$.
\end{enumerate}
\end{definition} 

\begin{theorem} \label{thm:good.implies.topological.group} Assume that $X$ is good and that $\Gamma$ is normally generated by any of its $\tau$-open subgroups. Then $\tau(X)$ is a group topology on $\Gamma$.
\end{theorem} 

\begin{proof} We verify the conditions of Lemma \ref{lemma:top_base}. The first two conditions follow from the corresponding claims about $\tau$. The third condition is that, for every $g\in \Gamma$ and every $\tau$-open subgroup $\Lambda$, there is a $\tau$-open subgroup $\Lambda'$ such that 
\begin{equation} \label{eq:good.implies.topological.group} 
g \left( \Lambda' \cap X \right) g ^{-1} \subseteq \Lambda \cap X.
\end{equation}
Let $\Delta_X$ be as in Definition \ref{def:good.subgroup.topology}. \\

\noindent {\bf Case 1:} $g$ can be conjugated to $\Delta_X$, i.e. $g=f h f ^{-1}$ with $f\in \Gamma$ and $h\in \Delta_X$. By Definition \ref{def:good.subgroup.topology}, there is a $\tau$-open subgroup $\Delta'$ such that $f[\Delta_X,\Delta']f ^{-1} \subseteq X$. We claim that \eqref{eq:good.implies.topological.group} holds if $\Lambda'$ is any $\tau$-open subgroup for which $g \Lambda' g ^{-1} \subseteq \Lambda$ and $f ^{-1} \Lambda' f \subseteq \Delta'$. Indeed, if $x\in \Lambda' \cap X$ then $g x g ^{-1} \in \Lambda$ and
\[
g x g ^{-1} = f h f ^{-1} x f h ^{-1} f ^{-1} = f\underbrace{[h,f ^{-1} x f]}_{\in [\Delta_X,\Delta']} f ^{-1} x \in X \cdot x = X.
\]

\noindent {\bf Case 2:} $g$ is general. By assumption, $g$ can be written as $g=g_1 \cdots g_n$, where $g_i$ can be conjugated into $\Delta_X$. By Case 1, there is a sequence $\Lambda_0,\ldots,\Lambda_n$ of $\tau$-open subgroups such that $\Lambda_0=\Lambda$ and such that $g_k \left( \Lambda_{k} \cap X \right) g_k ^{-1} \subseteq \Lambda_{k-1} \cap X$, for $k=1,\ldots,n$. Then \eqref{eq:good.implies.topological.group} holds for $\Lambda'=\Lambda_n$
\end{proof} 

\begin{notation} Let $(\Gamma,\tau)$ be a topological group and let $X \subseteq \Gamma$ be a good subgroup. By the universal property of completions, the identity map extends to a continuous homomorphism $\widehat{\Gamma_{\tau(X)}} \rightarrow \widehat{\Gamma_\tau}$. Denote this homomorphism by $\rho_{\tau,X}$.
\end{notation} 

\begin{lemma} \label{lem:ses.compact} Let $(\Gamma,\tau)$ be a topological group that has a base of identity neighborhoods consisting of normal subgroups and let $X \triangleleft \Gamma$ be a dense normal subgroup such that $[\Gamma , \Gamma] \subseteq X$. Then $X$ is good, $\ker \rho_{\tau,X}$ is a discrete subgroup of $\widehat{\Gamma_{\tau,X}}$ isomorphic to $\frac{\Gamma}{X}$, and the short exact sequence
\[
1 \rightarrow \ker \rho_{\tau,X} \rightarrow \widehat{\Gamma_{\tau(X)}} \stackrel{\rho_{\tau,X}}{\rightarrow} \widehat{\Gamma_\tau} \rightarrow 1
\]
is a topological central extension that has a continuous splitting.
\end{lemma} 

\begin{proof} Clearly, $X$ is good. If $\Lambda \subseteq \Gamma$ is a $\tau$-open and normal subgroup then $X \cdot \Lambda = \Gamma$. It follows that the map $\frac{\Gamma}{\Lambda \cap X} \rightarrow \frac{\Gamma}{\Lambda} \times \frac{\Gamma}{X}$ is onto and, thus, an isomorphism of discrete groups. If $\Lambda' \subseteq \Lambda$ is a smaller $\tau$-open normal subgroup then the diagram
\[
\begin{tikzcd}
\frac{\Gamma}{X \cap \Lambda'} \arrow[d] \arrow[r] & \frac{\Gamma}{ X } \times \frac{\Gamma}{\Lambda'} \arrow[d] \\ \frac{\Gamma}{ X \cap \Lambda} \arrow[r] & \frac{\Gamma}{ X } \times \frac{\Gamma}{\Lambda}
\end{tikzcd}
\]
commutes. By Lemma \ref{lem:Bourbaki.completion.inverse.limit}, we get an isomorphism of topological groups $i:\widehat{\Gamma_{\tau(X)}} \rightarrow \frac{\Gamma}{X} \times \widehat{\Gamma_\tau}$ where $\frac{\Gamma}{X}$ is given the discrete topology. Moreover, the composition of $i$ with the projection to the second coordinate is $\rho_{\tau,X}$. Since $[\Gamma , \Gamma] \subseteq X$, the group $\frac{\Gamma}{X}$ is abelian and the lemma follows.
\end{proof} 

\begin{lemma} \label{lem:ses.general} Let $(\Gamma,\tau)$ be a topological group and let $X \subseteq \Gamma$ be a subgroup. Assume that \begin{enumerate}
\item $X$ is good.
\item $\tau$ is subnormally generated.
\item $\Gamma$ is normally generated by every $\tau$-open subgroup of $\Gamma$.
\end{enumerate}
Then the short exact sequence
\begin{equation} \label{eq:ses.general}
1 \rightarrow \ker \rho_{\tau,X} \rightarrow \widehat{\Gamma_{\tau(X)}} \rightarrow \widehat{\Gamma_\tau} \rightarrow 1
\end{equation}
is a topological central extension and the following hold:
\begin{enumerate}
\item $\ker \rho_{\tau,X}$ is discrete.
\item $\rho$ has a section over $\Gamma$.
\item $\rho$ has a continuous section over a $\tau$-open subgroup of $\widehat{\Gamma_\tau}$.
\item There is a $\tau$-open subgroup $\Lambda$ such that $\ker \rho_{\tau,X}\cong \frac{\Lambda}{\Lambda \cap X}$.
\end{enumerate} 
\end{lemma} 

\begin{proof} Let $\Delta_X \subseteq \Gamma$ be as in Definition \ref{def:good.subgroup.topology}. By shrinking $\Delta_X$, we can assume that $\tau$ has a basis of normal subgroups of $\Delta_X$ and that $[\Delta_X,\Delta_X] \subseteq X$. Let $\Delta=\Delta_X$ and $Y=\Delta_X \cap X$. Applying Lemma \ref{lem:ses.compact} to $\Delta$ and $Y$, we get that $\ker \rho_{\tau,Y}$ is discrete in $\widehat{\Delta_{\tau,Y}}$ and the short exact sequence 
\[
1 \rightarrow \ker \rho_{\tau,Y} \rightarrow \widehat{\Delta_{\tau,Y}} \rightarrow \widehat{\Delta_\tau} \rightarrow 1
\]
is a split topological central extension.

The inclusion $\Delta_X \hookrightarrow \Gamma$ extends to inclusions $\widehat{\Delta_\tau} \hookrightarrow \widehat{\Gamma_\tau}$ and $\widehat{\Delta_{\tau,Y}} \hookrightarrow \widehat{\Gamma_{\tau,X}}$. Since $\Delta$ is open in both $\tau$ and $\tau(X)$, the subgroups $\widehat{\Delta_\tau}$ and $\widehat{\Delta_{\tau,Y}}$ are open in $\widehat{\Gamma_\tau}$ and $\widehat{\Gamma_{\tau,X}}$, respectively. It follows that $\rho_{\tau,X} ^{-1} \left( \widehat{\Delta_\tau} \right)=\widehat{\Delta_{\tau,Y}}$ and $\rho_{\tau,X}\restriction_{\widehat{\Delta_{\tau,Y}}}=\rho_{\tau,Y}$. In particular, $\ker \rho_{\tau,X}=\ker \rho_{\tau,Y}$ is discrete and the short exact sequence \eqref{eq:ses.general} is a topological extension. 

It remains to prove that $\ker \rho_{\tau,X}$ is central. The centralizer $C_{\Gamma} \left( \ker \rho_{\tau,X} \right)$ of $\ker \rho_{\tau,X}$ in $\Gamma$ is normal and contains $\Delta$. By the assumptions, $C_{\Gamma} \left( \ker \rho_{\tau,X} \right)=\Gamma$, so the centralizer $C_{\widehat{\Gamma_{\tau,X}}} \left( \ker \rho_{\tau,X} \right)$ is dense. Since centralizers are closed, $C_{\widehat{\Gamma_{\tau,X}}} \left( \ker \rho_{\tau,X} \right)=\widehat{\Gamma_{\tau,X}}$ and $\ker \rho_{\tau,X}$ is central.
\end{proof} 

We summarize the properties above with the following

\begin{definition} Let $\Gamma$ be a group and $\tau$ be a group topology on $\Gamma$ that admits a completion. A discrete $(\Gamma,\tau)$-metaplectic extension is a topological central extension
\[
1 \rightarrow C \rightarrow E \stackrel{\rho}{\rightarrow} \widehat{\Gamma}_\tau \rightarrow 1
\]
such that \begin{enumerate}
\item $C$ is discrete.
\item There is a (possibly discontinuous) splitting of $\rho$ over $\Gamma$ such that $\rho(\Gamma)$ is dense in $E$.
\item There is a continuous splitting of $\rho$ over an open subgroup of $\widehat{\Gamma}_\tau$.
\end{enumerate} 
When $\tau$ is clear from the context, we omit it and call the extension $\Gamma$-metaplectic.
\end{definition} 

\subsection{Sifters and width}

\begin{definition} \label{def:sifter} Let $\Gamma$ be a group and let $\mathcal{B}$ be a base of identity neighborhoods for a group topology on $\Gamma$. A $\mathcal{B}$-sifter on $\Gamma$ is an assignment $\Delta \mapsto X_\Delta$, taking a subgroup $\Delta \in \mathcal{B}$ to a symmetric subset $X_\Delta \subseteq \Delta$ that contains $1$, such that the following holds: for every $\Delta_1,\Delta_2 \in \mathcal{B}$ and every $g\in \Gamma$,
\[
g \Delta_1 g ^{-1} \subseteq \Delta_2 \implies g X_{\Delta_1} g ^{-1} \subseteq X_{\Delta_2}.
\]
\end{definition} 

Equivalently, a $\mathcal{B}$-sifter is an assignment $\Delta \mapsto X_\Delta$ taking a subgroup $\Delta$ that is conjugate to some subgroup in $\mathcal{B}$ to a subset $X_\Delta \subseteq \Delta$ such that the following two conditions hold: \begin{enumerate}
\item $X$ is monotone: $\Delta_1 \subseteq \Delta_2 \implies X_{\Delta_1} \subseteq X_{\Delta_2}$.
\item $X$ is conjugation invariant: $g X_\Delta g ^{-1} = X_{g \Delta g ^{-1}}$
\end{enumerate} 

\begin{example} Let $\Gamma,\mathcal{B}$ be as in Definition \ref{def:sifter}. \begin{enumerate}
\item The assignment $\Delta \mapsto [\Delta,\Delta]$ is a $\mathcal{B}$-sifter.
\item More generally, if $w$ is a word then the assignment $\Delta \mapsto w(\Delta) \cup w(\Delta) ^{-1}$ is a $\mathcal{B}$-sifter.
\item If $\Gamma \subseteq \GL_n$ is a linear group, the assignment $\Delta \mapsto \left\{ g\in \Delta \mid \text{$g$ is unipotent} \right\}$ is a $\mathcal{B}$-sifter.
\end{enumerate}
\end{example}

\begin{example} Let $X$ be a $\mathcal{B}$-sifter on $\Gamma$. \begin{enumerate}
\item For every natural number $n$, the assignment 
\[
X^n_\Delta := \left\{ x_1 \cdots x_n \mid x_i \in X_\Delta \right\}
\]
is a $\mathcal{B}$-sifter. 
\item The assignment $\langle X \rangle_\Delta := \langle X_\Delta \rangle$ is a $\mathcal{B}$-sifter.
\item The assignment $X^*_{[\Delta_n]} := \left[X_{\Delta_n} \right]$ is a $\mathcal{B} ^*$-sifter on $\Gamma ^*$ (with the ultraproduct topology---see Lemma \ref{lem:ultraproduct.topology.base}).
\end{enumerate}
\end{example} 

\begin{definition} \label{def:thick.sifter} Let $X$ be a $\mathcal{B}$-sifter on a topological group $\Gamma$. We say that $X$ is {\bf thick} if the following conditions hold: \begin{enumerate}
\item For every $\Delta \in \mathcal{B}$, the closure of $X_\Delta$ has non-empty interior.
\item There is $\Delta_1\in \mathcal{B}$ such that for every $\Delta_2 \in \mathcal{B}$ there is $\Delta_3\in \mathcal{B}$ with $[\Delta_1,\Delta_3] \subseteq X_{\Delta_2}$.
\end{enumerate} 
\end{definition} 

\begin{lemma} \label{lem:thick.implies.thick} Let $X$ be a thick $\mathcal{B}$-sifter on a topological group $\Gamma$. Then \begin{enumerate}
\item For every $n$, $X^n$ is thick.
\item $\langle X \rangle$ is thick.
\item For every $\Delta\in \mathcal{B}$, the set $\langle X_\Delta \rangle$ is good in the sense of Definition \ref{def:good.subgroup.topology}.
\item $X^*$ is thick.
\end{enumerate} 
\end{lemma} 

\begin{proof} \begin{enumerate}
\item Clear.
\item Clear.
\item By definition, $X_\Delta$ is normal in $\Delta$ so $\langle X_\Delta \rangle$ is a normal subgroup of $\Delta$. In addition, the closure of $\langle X_\Delta \rangle$ contains an open subgroup $\Lambda$. By shrinking $\Lambda$ we can assume that $\Lambda \subseteq \Delta$, so $\langle X_\Delta \rangle \cap \Lambda$ is a dense normal subgroup of $\Lambda$.

Let $\Delta_1$ be as in Definition \ref{def:thick.sifter}. By shrinking $\Lambda$, we can assume that $\Lambda \subseteq \Delta_1$. We show that the second condition of Definition \ref{def:good.subgroup.topology} holds for $\Delta_X=\Lambda$. Let $x\in \Gamma$. By Definition \ref{def:thick.sifter} there is an open subgroup $\Delta_3$ such that $\left[ \Delta_1 , \Delta_3 \right] \subseteq X_{x ^{-1} \Lambda x}=x ^{-1} X_\Lambda x$. Thus,
\[
x \left[ \Lambda, \Delta_3\right] x ^{-1} \subseteq x \left[ \Delta_1 , \Delta_3 \right] x ^{-1} \subseteq X_\Lambda \subseteq X_\Delta,
\]
as required.

\item The claim follows from Lemma \ref{lem:closure.in.ultraproduct.topology} and a coordinate-wise argument.
\end{enumerate} 
\end{proof} 


We now specialize to the congruence topology.

\begin{notation} If $G \subseteq \GL_n$ is an algebraic subgroup defined over $K$, we denote the collection of principal congruence subgroups of $G(K)$ by $\mathcal{C}_{G,O}$.
\end{notation} 

By Lemma \ref{lem:ultraproduct.topology.base}, the ultrapower $\mathcal{C}_{G,O}^*$ is a base of neighborhoods of identity to the ultraproduct of the congruence topology on $G(K^*)$.


\begin{theorem} \label{thm:width.to.ultrapower} Let $G$ be a semisimple group defined over $K$. Assume that the $S$-congruence kernel of $G$ is finite. Let $X$ be a thick $\mathcal{C}_{G,O}$-sifter on $G(K)$. The following conditions are equivalent: \begin{enumerate}
\item There is a constant $A$ such that $\wid(X_\Delta) < A$ for every $\Delta \in \mathcal{C}_{G,O}$.
\item There is a constant $B$ such that for every $\Delta \in \mathcal{C}_{G,O}$ there is $\Lambda \in \mathcal{C}_{G,O}$ satisfying $\left[ \Lambda : \Lambda \cap X_{\Delta}^B \right] < B$.
\item For every $\Delta \in \mathcal{C}_{G,O}^*$ there is $\Lambda \in \mathcal{C}_{G,O}^*$ such that $\left[ \Lambda : \Lambda \cap \langle X_{\Delta}^* \rangle \right] < \infty$.
\end{enumerate} 
\end{theorem} 

For the proof, we will use the following:

\begin{definition}\label{def:separating} Let $\Gamma$ be a group and let $X$ be a symmetric subset of $\Gamma$ which contains the identity. 
\begin{enumerate}
\item A {\bf $(\Gamma,X)$-separating set} is a subset $T$ of $\Gamma$ which contains the identity element and such that for every distinct  $t_1,t_2 \in T$, $t_1X \cap t_2X=\emptyset$.
\item A  $(\Gamma,X)$-separating set is called maximal if it is not properly contained in any other $(\Gamma,X)$-separating set. We denote the minimal size of a maximal $(\Gamma,X)$-separating set by $[\Gamma : X]$.
\item A {\bf $(\Gamma,X)$-covering set} is a subset $T$ of $\Gamma$ which contains the identity element  and such that $XT=\Gamma$. Note that is $T$ is a maximal $(\Gamma,X)$-separating set then $T$ is a $(\Gamma,X^2)$-covering  set \end{enumerate}
\end{definition} 

\begin{lemma} \label{lem:expansion} Let $H \triangleleft G$ be groups, let $X \subseteq G$ be a symmetric subset such that $\langle X \rangle = X \cdot H$. Then $H \subseteq X^{2 \cdot 6^{[H : H \cap X]}}$.
\end{lemma} 

\begin{proof} Let $T$ be a maximal $(H,H\cap X)$-separating set and denote the size of $T$ by $t$. We can assume that $T \ne \{1 \}$. Denote $Y:=X^2$. Then $Y \supseteq X$, $\langle Y\rangle=Y \cdot H$ and 
$T$ is a $(H,Y\cap H)$-covering set. By induction on the size of a covering set, it is enough to prove that there exists a proper subset $T'$ of $T$ which is a $(H,Y^6\cap H)$-covering set. 
We divide the proof into cases.
\begin{enumerate}
	\item If $Y^2\supseteq H$ then $T'=\{1\}$ is an $(H,Y^2 \cap H)$-covering set. 
		\item Assume that $(Y^4 \cap H) \setminus Y \ne \emptyset$ and choose 	$g \in (Y^4 \cap H) \setminus Y$. Since $T$ is an a $(H,Y\cap H)$-covering set and $g \notin Y$, there exists $1 \ne t \in T$ such that $g \in tY$. Thus, $t \in Y^5$, $tY\subseteq Y^6$ and $T'=T \setminus \{t\}$ is an $(H,Y^6\cap H)$-covering set.

\item Assume that  $Y \cap H \ne H$.  If $Y^2\supseteq H$ we are back in case (1) so we can assume that $Y^2\not\supseteq H$.  Since $\langle Y\rangle=YH $, $Y^3\supsetneq Y^2$ and there are $g_1 \in Y$ and $g_2 \in H$ such that $g_1g_2\in Y^3\setminus Y^2$. Thus, $g_2 \in (Y^4\cap H) \setminus Y$ and we are back at step (2). 	
\end{enumerate}
\end{proof}

\begin{proof}[Proof of Theorem \ref{thm:width.to.ultrapower}] Denote the size of the $S$-congruence kernel of $G(K)$ by $C$. \begin{enumerate}
\item[$1. \implies 2.$] We show that the claim holds with $B=\max \left\{ A,C+1 \right\}$. Given $\Delta$, the normal subgroup $\langle X_{\Delta} \rangle$ has finite index in $\Delta$, so there is a principal congruence subgroup $\Lambda$ satisfying $\left[ \Lambda : \Lambda \cap \langle X_\Delta \rangle \right] \leq C$. Thus,
\[
\left[ \Lambda : \Lambda \cap X_\Delta ^B \right] = \left[ \Lambda : \Lambda \cap \langle X_\Delta \rangle \right] \leq C <B
\]
\item[$2. \implies 1.$] We show the claim for $A=2B \cdot 6^B+1$. Given $\Delta\in \mathcal{C}_{G,O}$, let $\Lambda\in \mathcal{C}_{G,O}$ be such that $\left[ \Lambda : \Lambda \cap X_\Delta ^B \right] < B$. 

We claim that the conditions of Lemma \ref{lem:expansion} hold for $H:=\Lambda \cap \langle X_\Delta \rangle$, $G:=\langle X_\Delta \rangle$, and $X:=X_\Delta^B$. Since $H \subseteq \langle X \rangle$, it is enough to prove that $X \cdot H$ is closed to left multiplications by $X$. Let $x_1,x_2\in X$ and let $h\in H$. Since the closure of $X$ is a subgroup, there is $x_3\in X$ and $h'\in \Lambda$ such that $x_1x_2=x_3h'$. This implies that $h'\in H$, so $x_1x_2h=x_3h'h\in X \cdot H$.

We now show that $X_\Delta^{A+1}=X_\Delta ^A$. Let $g\in X_\Delta ^{A+1}$. Since the closure of $X_\Delta$ is a group, $g=xh$, where $x\in X_\Delta$ and $h\in \Lambda$. Since $\left[ H : H \cap X \right] \leq \left[ \Lambda : \Lambda \cap X_\Delta^B \right] < B$, Lemma \ref{lem:expansion} implies that $h\in \Lambda \cap \langle X_\Delta \rangle = H \subseteq X^{2 \cdot 6^B}=X_\Delta^{A-1}$. This implies that $g\in X_\Delta ^A$.

\item[$2. \implies 3.$] A coordinate-wise argument shows that for every $\Delta \in \mathcal{C}_{G,O}^*$ there is $\Lambda \in \mathcal{C}_{G,O}^*$ such that $\left[ \Lambda : \Lambda \cap X_{\Delta}^B \right] < \infty$, which is a stronger statement than 3.

\item[$3. \implies 2.$] Arguing by contradiction, suppose that for every $n\in \mathbb{N}$ there is $\Delta_n\in \mathcal{C}_{G,O}$ for which $\left[ \Lambda : \Lambda \cap X_{\Delta_n}^n \right] > n$, for every $\Lambda \in \mathcal{C}_{G,O}$. Let $\Delta=[\Delta_n]\in \mathcal{C}_{G,O}^*$ and let $\Lambda=[\Lambda_n]\in \mathcal{C}_{G,O}^*$ be such that $\left[ \Lambda : \Lambda \cap \langle X_\Delta ^* \rangle \right] < \infty$. 

Let $N:=\left[ \Lambda : \Lambda \cap \langle X_\Delta ^* \rangle \right]$ and let $g_1,\ldots,g_N$ be coset representatives of $\Lambda \cap \langle X^*_\Delta \rangle$ in $\Lambda$. Since
\[
\Lambda=\bigsqcup_{i=1}^N g_i \left( \Lambda \cap \langle X^*_{\Delta} \rangle \right) = \bigcup_{n=1}^\infty \left( \bigsqcup_{i=1}^N g_i \left( \Lambda \cap (X^*_{\Delta})^n \right) \right),
\]
Lemma \ref{lem:internal.saturation} implies that there is a natural number $M$ such that 
\[
\Lambda=\bigsqcup_{i=1}^N g_i \left( \Lambda \cap \left( X^*_{\Delta} \right) ^M \right).
\]
Writing $g_i=[g_{i,n}]$, we get that
\[
\Lambda_n=\bigsqcup_{i=1}^N g_{i,n} \left( \Lambda_n \cap X_{\Delta_n}^M \right) 
\]
for $\mathcal{U}$-almost all natural numbers $n$. For those natural numbers, $\left[ \Lambda_n : \Lambda_n \cap X_{\Delta_n}^M \right] \leq N$, a contradiction.
\end{enumerate} 
\end{proof} 

\begin{definition} Let $G$ be a semisimple group over $K$ and let $\Gamma \subseteq G(K)$ be an $S$-arithmetic subgroup. We say that $\Gamma$ has bounded commutator width if there is a constant $C$ such that $\wid_{\Gamma}\left( \left[ \Gamma[I] , \Gamma[I] \right] \right) < C$, for every $I\in \mathcal{I} ^K$.
\end{definition} 

\begin{theorem} \label{thm:commutator.width.finite.metaplectic} Let $G$ be a semisimple group over $K$. Assume that \begin{enumerate}
\item The $S$-congruence kernel of $G$ is finite.
\item There is $n$ such that the $\mathcal{C}_{G,O}$-sifter $X_\Delta := \left[ \Delta , \Delta \right] ^n$ is thick.
\item Every congruence subgroup of $G(K^*)$ normally generates.
\item Every discrete metaplectic extension of $G(K^*)$ has finite kernel.
\end{enumerate} 
Then $G(O)$ has bounded commutator width.
\end{theorem} 

\begin{proof} By \ref{thm:width.to.ultrapower} it is enough to prove that for every $\Delta \in \mathcal{C}_{G,O}^*$ there is $\Lambda \in \mathcal{C}_{G,O}^*$ for which $\left[ \Lambda : \Lambda \cap \langle X^*_\Delta \rangle \right]$. 

Let $\tau$ be the congruence topology on $G(K)$ and let $\Delta \in \mathcal{C}_{G,O}^*$. By Lemma \ref{lem:thick.implies.thick} the $\mathcal{C}_{G,O}^*$-sifter $\langle X^* \rangle$ is thick and by the same lemma, the subgroup $Y:=\langle X^*_{\Delta} \rangle$ is good. By Lemma \ref{lem:ses.general} and the third assumption, the short exact sequence
\[
1 \rightarrow \ker \rho_{\tau ^*,Y} \rightarrow \widehat{\Spin_{f_a}(K^*)}_{\tau^*(Y)} \rightarrow \widehat{\Spin_{f_a}(K^*)}_{\tau ^*} \rightarrow 1
\]
is a discrete $\Spin_{f_a}(K^*)$-metaplectic extension. By the assumption, $\ker \rho_{\tau ^*,Y}$ is finite, but by Lemma \ref{lem:ses.general}, there is a subgroup $\Lambda \in \mathcal{C}_{G,O}^*$ such that $\ker \rho_{\tau ^*,Y}=\frac{\Lambda}{\Lambda \cap \langle X^*_\Delta \rangle}$.
\end{proof} 

\section{Approximation in tori} \label{sec:approximation.tori}

\subsection{The tori $T_t$} \label{subsec:T_t.standard}

\begin{definition} Recall that $S_{def}$ is the set of real places $v$ of $K$ for which $G(K_v)$ is compact. Define $O_+:=\left\{ x\in O \mid \left( \forall v\in S_{def} \right) \, v(x)>0 \right\}$.
\end{definition} 
By our assumption, $S_{def}\neq \emptyset$, so if $t\in O_+$ then $-t$ is not a square in $K$.

\begin{definition} Let $\mathbb{G}_m=\Spec \left( O[x,y]/(xy-1) \right)$ be the split 1-dimensional torus over $O$. Given a prime $Q\in \mathcal{P} ^K$ and an element $(x,x ^{-1})\in \mathbb{G}_m(K_Q)$, we denote $\val_Q \left( (x,x ^{-1}) \right) := \min \left\{ \val_Q(x),\val_Q(x ^{-1}) \right\}$.
\end{definition} 

\begin{definition} For a field extension $F/K$, let $S_F$ be the set of places of $F$ whose restrictions to $K$ belong to $S$ and let $O_F$ be the ring of $S_F$-integers in $F$.
\end{definition} 

\begin{definition} Suppose that $t\in O_+$. \begin{enumerate}
\item Let $\widetilde{T_t}$ be the restriction of scalars of $\mathbb{G}_m \times \Spec \left( O_{K(\sqrt{-t})} \right)$ from $O_{K(\sqrt{-t})}$ to $O$. By definition, $\widetilde{T_t}$ is an algebraic group over $\Spec(O_K)$ and, for every $O$-algebra $R$, $\widetilde{T_t}(R)=\mathbb{G}_m(R \otimes_O O_{K(\sqrt{-t})})$. For the proof of existence of restrictions of scalars and their basic properties, see \cite[\S7.6]{BLR90}.
\item For $\alpha \in \widetilde{T_t}(K)$, denote $\supp(\alpha)=\left\{ Q\in \mathcal{P} ^K \mid \alpha \notin \widetilde{T_t}(O_Q) \right\}$.
\item Let $N_{\widetilde{T_t}/ \mathbb{G}_m}: \widetilde{T_t} \rightarrow \mathbb{G}_m$ be the norm homomorphism defined as follows: for every $O$-algebra $R$, the Galois involution of $K(\sqrt{-t})$ induces an involution $\alpha \mapsto \overline{\alpha}$ of $R \otimes_O O_{K(\sqrt{-t})}$ and $N_{\widetilde{T_t}/ \mathbb{G}_m}(\alpha)=\alpha \overline{\alpha}\in R^ \times \cong \mathbb{G}_m(R)$. If $\alpha=a \otimes 1+b \otimes \sqrt{-t}$, then $N_{\widetilde{T_t}/ \mathbb{G}_m}(\alpha)=a^2+tb^2$.
\item We denote the kernel of $N_{\widetilde{T_t}/ \mathbb{G}_m}$ by $T_t$. It is an anisotropic 1-dimensional torus defined over $O$.
\end{enumerate} 
\end{definition} 

We will use the following properties of $T_t$:

\begin{lemma} \label{lem:properties.T_t} Let $t\in O_+$. \begin{enumerate}
\item The image of $T_t(O)\hookrightarrow \mathbb{G}_m(O_{K(\sqrt{-t})}) \hookrightarrow O_{K(\sqrt{-t})}$ is the set of norm-one elements in $O_{K(\sqrt{-t})}$.
\item $T_t$ is $K$-isomorphic to the zero locus of the polynomial $x^2+ty^2-1$ in $\mathbb{A}_K^2$. In particular, $T_t$ has weak approximation and $T_t(K)=\left\{ x\in K(\sqrt{-t}) \mid N_{K(\sqrt{-t})/K}(x)=1 \right\}$.
\item If $Q\in \mathcal{P}^K$ is ramified or inert in $K(\sqrt{-t})$ then $T_t(K_Q)=T_t(O_Q)$.
\item If $Q\in \mathcal{P}^K$ is split in $K(\sqrt{-t})$ then there is an $O_Q$-isomorphism $\rho_Q :T_t \stackrel{\cong}{\rightarrow} \mathbb{G}_m$. Such an isomorphism is unique up to post-composition with the flip $(x,y)\mapsto (y,x)$ and is determined by a choice of a square root of $-t$ in $K_Q$. If $P$ is a prime of $K(\sqrt{-t})$ lying above $Q$ and $z\in T_t(K)$, then
\[
\val_Q \left( \rho_Q(z) \right) = \min \left\{ \val_P(z) , -\val_P(z) \right\},
\]
where, in the RHS, we identify $T_t(K)$ with the norm-one elements in $K(\sqrt{-t})$.
\end{enumerate} 
\end{lemma} 

\begin{proof} By definition, $\widetilde{T_t}(O)=\mathbb{G}_m(O_{K(\sqrt{-t})})$, so the first claim follows.

If $R$ is a PID containing $O$, then there is an isomorphism of $R$-modules $R \otimes_O O_{K(\sqrt{-t})}\cong R \cdot 1 \oplus R \cdot \alpha$, for some $\alpha$. If $R$ is a field, one can take $\alpha=\sqrt{-t}$. Claims 2,3,4 follow from this.
\end{proof} 

\begin{lemma} \label{lem:T_t.bounded.complexity} There is a constant $C$ such that all the schemes $T_t$ have complexity bounded by $C$.
\end{lemma} 

\begin{proof} Suppose first that $O_K$ is a PID. In this case, $O_{K(\sqrt{-t})}$ is a free $O_K$-module to which we can choose an $O_K$ basis of the form $1,\alpha$. Write $\alpha^2=a+b \alpha$, with $a,b\in O_K$. Writing the equation $(x_1+x_2 \alpha)(y_1+y_2 \alpha)=1$ in coordinates, we get that the restriction of scalars $\widetilde{T_t}$ is isomorphic to the affine scheme of 4-tuples $(x_1,x_2,y_1,y_2)$ satisfying the equations
\[
x_1y_1+ ax_2y_2=1 \quad \text{ and } \quad x_1y_2+y_1x_2+bx_2y_2=0.
\]
Since the norm map $N_{\widetilde{T_t}/ \mathbb{G}_m}$ is a quadratic polynomial in the variables $x_1,x_2,y_1,y_2$, the scheme $T_t$ is the zero locus of three quadratic polynomials in 4 variables.

If $O_K$ is not a PID, choose two disjoint sets $T_1,T_2 \subseteq \mathcal{P} ^K \smallsetminus S$ of representatives of the elements of $\Cl_K$. Let $R_1$ be the ring of $S\cup T_1$-integers in $K$, let $R_2$ be the ring of $S\cup T_2$-integers in $K$, and let $R_3$ be the ring of $S\cup T_1\cup T_2$-integers in $K$. Then $R_1,R_2,R_3$ are PIDs and $T_t$ is a gluing of $T_t \otimes \Spec(R_1)$ and $T_t \otimes \Spec(R_2)$ along $T_t \otimes \Spec(R_3)$, which, by the first part of the proof, are all schemes of bounded complexity.
\end{proof} 

\begin{definition} Let $t\in O_+$. \begin{enumerate}
\item Let $\mathcal{S}(T_t)=\left\{ Q\in \mathcal{P} ^K \mid \text{$Q$ splits in $K(\sqrt{-t})$} \right\}=\left\{ Q\in \mathcal{P} ^K \mid T_t\cong \mathbb{G}_m \text{ over $O_Q$}\right\}$.
\item A trivialization of $T_t$ is a morphism $\rho:T_t \times \Spec(\mathbf{A}) \rightarrow \mathbb{G}_m \times \Spec(\mathbf{A})$ such that $\rho \times \Spec(\mathbf{A}_{\mathcal{S}(T_t)})$ is an isomorphism and $\rho \times \Spec(\mathbf{A}_{\mathcal{P} ^K \smallsetminus \mathcal{S}(T_t)})$ is the trivial homomorphism.
\item If $\rho$ is a trivialization and $Q\in \mathcal{P} ^K$, we denote $\rho_Q: T_t(K_Q) \rightarrow \mathbb{G}_m(K_Q)$ the map induced by $\rho \times \Spec(K_Q)$.
\item If $\rho$ is a trivialization and $Q\in \mathcal{S}(T_t)$, then $\rho_Q$ gives rise to an embedding $K(\sqrt{-t}) \hookrightarrow K_Q$. The preimage of $Q \cdot O_Q$ is a prime in $\mathcal{P} ^{K(\sqrt{-t})}$ lying over $Q$. We denote this prime by $Q^ \rho$.
\end{enumerate} 
\end{definition} 

\subsection{Approximation in $\mathbb{G}_m$}

\begin{lemma} \label{lem:Hensel} Let $P\in \mathcal{P}^K$, let $N\in \mathbb{N}$, and let $a,b\in O_P$. Suppose that $N>2\val_P(2)$ and $a\equiv b^2\text{ (mod $P^{2\val_P(b)+N}$)}$. Then there is an element $c\in O_P$ such that $c\equiv b\text{ (mod $P^{\val_P(b)+N-\val_P(2)}$)}$ and $a=c^2$.
\end{lemma} 

\begin{proof} Let $\pi\in O_P$ be a uniformizer and let $e=\frac{b^2-a}{\pi^{2\val_P(b)+N}}\in O_P$. For $y\in O_P$, 
\[
(b+\pi^{\val_P(b)+N-\val_P(2)}y)^2-a=b^2+2b \pi^{\val_P(b)+N-\val_P(2)}y+\pi ^{2\val_P(b)+2N-2\val_P(2)}y^2-a=
\]
\[
=\pi^{2\val_P(b)+N} \left( e+b'y+\pi^{N-2\val_P(2)}y^2 \right),
\]
where $b'\in O_P^ \times$. By Hensel's lemma, there is $y\in O_P$ such that $e+b'y+\pi^{N-2\val_P(2)}y^2=0$, and we can take $c=b+\pi^{\val_P(b)+N-\val_P(2)}y$.
\end{proof} 

\begin{lemma} \label{lem:a2.approximation.basic} Let $a\in \mathbb{G}_m(\mathbf{O})[2]$ and let $I\in \mathcal{I}^K$. There is an element $t\in O_+$, a trivialization $\rho$ of $T_t$, and an element $\alpha \in T_t(O)$ such that \begin{enumerate}
\item $\mathcal{P}^K(I) \subseteq \mathcal{S}(T_t)$.
\item $\left( a \rho(\alpha) ^{-1} \right)_{\mathcal{P}^K(I)}\in \mathbb{G}_m(\mathbf{O})[I]$.
\item If $\sqrt{-3}\notin K$, then $\sqrt{-3}\notin K(\sqrt{-t})$.
\end{enumerate} 
\end{lemma} 

\begin{proof}  We can assume that $2|I$. Let $N=\max \left\{ \val_Q(I)+\val_Q(2) \mid Q\in \mathcal{P} ^K(I) \right\}$. Changing $a$ by some element in $\mathbb{G}_m(\mathbf{O})[I]$, we can also assume that $a_Q\not\equiv \pm1\text{ (mod $Q^{2\val_Q(I)}$)}$, for every $Q\in \mathcal{P} ^K(I)$. This implies that $a_Q\not\equiv a_Q ^{-1} \text{ (mod $Q^{4\val_Q(I)}$)}$, for every $Q\in \mathcal{P} ^K(I)$. 

Since $\Spin_{f_a}(O)$ is infinite, $S \neq S_{def}$. Dirichlet's unit theorem implies that there is a unit $u\in \mathbb{G}_m(O)$ such that $u_v > 1$, for every $v\in S_{def}$. By assumption, $a+a ^{-1} \in 2 \mathbf{O}$, so there is an element $x\in O$ such that $x\equiv \frac{a+a ^{-1}}{2}\text{ (mod $I^{8N}$)}$. Choose $k\in \mathbb{N}$ such that \begin{enumerate}
\item $t:=u^{2k}-x^2 \in O_+$ (this holds for $k$ large enough).
\item $3t$ is not a square in $O$ (this holds for almost all $k\in 5 \mathbb{N}$ because, by Siegel's theorem, the polynomial $P(X,Y):=3(X^{10}-x^2)-Y^2$ has finitely many solutions).
\item $u^k\equiv 1\text{ (mod $I^{8N}$)}$ (this holds for all $k\in D \mathbb{N}$, for some $D$).
\end{enumerate} 
The element $-t$ is not a square in $K$ because $t\in O_+$. Moreover, if $\sqrt{-3}\in K(\sqrt{-t}) \smallsetminus K$, then $3t$ is a square in $O$, a contradiction. Let $\alpha := \frac{x}{u^k}+\frac{1}{u^k}\sqrt{-t}\in K(\sqrt{-t})$. Since $N_{K(\sqrt{-t})/K}(\alpha)=\frac{x^2}{u^{2k}}+\frac{t}{u^{2k}}=1$, we get that $\alpha \in T_t(O)$. 

For every $Q\in \mathcal{P} ^K(I)$, 
\[
-t=x^2-u^{2k} \equiv \left( \frac{a_Q+a_Q ^{-1}}{2} \right)^2-1 = \left( \frac{a_Q-a_Q ^{-1}}{2} \right) ^2 \quad \text{ (mod $Q^{8\val_Q(I)}$)}.
\]
Since $\val_Q (a_Q-a_Q ^{-1})<2\val_Q(I)+2 \leq 4\val_Q(I)$, Lemma \ref{lem:Hensel} implies that there an element $\xi_Q\in O_Q$ such that $\xi_Q^2=-t$ and $\xi_Q\equiv \frac{a_Q-a_Q ^{-1}}{2}\text{ (mod $Q^{\val_Q(I)}$)}$. Letting $\rho$ be a trivialization of $T_t$ such that, for every $Q\in \mathcal{P} ^K(I)$, $\rho_Q$ corresponds to $\xi_Q$ (see Lemma \ref{lem:properties.T_t}), we get that
\[
\rho_Q(\alpha)=\frac{x}{u^k}+\frac{\xi_Q}{u^k}\equiv x+ \xi_Q\equiv \frac{a_Q+a_Q ^{-1}}{2}+ \frac{a_Q-a_Q ^{-1}}{2} = a_Q \quad \text{ (mod $Q^{\val_Q(I)}$)},
\]
so $a_Q \rho_Q(\alpha) ^{-1} \in \mathbb{G}_m(O_Q)[Q^{\val_Q(I)}]$.
\end{proof} 



Before stating the next result, we recall a few notions from Class Field Theory. Given a number field $L$ and an ideal $I$ of the ring of integers of $L$, the ray class group $\Cl_L^J$ is the quotient
\[
\Cl_L^J:= \left\{ \text{fractional ideals prime to $J$} \right\} / \left\{ \text{principal ideals $(a)$ with $a\equiv1\text{ (mod $J$)}$} \right\}.
\]
The ray class field is the unique abelian extension $R/L$ unramified outside $\mathcal{P} ^L(J)$ for which the Artin map $P \mapsto \left( \frac{R/L}{P} \right)$ descends to an isomorphism $\Cl_L^J \rightarrow \Gal(R/L)$. If $J=1$, the ray class group is the (ordinary) class group and the ray class field is called the Hilbert class field.

\begin{lemma} \label{lem:a1.approximation.basic} Let $a\in \mathbb{G}_m(\mathbf{A})$, let $t\in O_+$, let $\rho$ be a trivialization of $T_t$, and let $J\in \mathcal{I} ^K$. Suppose that $\mathcal{P} ^K(J) \subseteq \mathcal{S}(T_t)$, that $\supp(a)=\left\{Q_1,\ldots,Q_n \right\} \subseteq  \mathcal{S}(T_t) \smallsetminus \mathcal{P} ^K(J)$, and that the fractional ideal $(Q_1^ \rho)^{\val_{Q_1}(a)} \cdots (Q_n^ \rho)^{\val_{Q_n}(a)}$ of $K(\sqrt{-t})$ is trivial in the ray class group $\Cl_{K(\sqrt{-t})}^J$. Then there is an element $\alpha \in T_t(K)$ such that $\rho(\alpha)_{\mathcal{P} ^K(J)}\in \mathbb{G}_m(\mathbf{O})[J]$ and $a \rho(\alpha) ^{-1}\in \mathbb{G}_m(\mathbf{O})$.
\end{lemma} 

\begin{proof} Let $\sigma\in \Gal(K(\sqrt{-t})/K)$ be the non-trivial element. By the assumption, there is an element $\beta \in K(\sqrt{-t})$ such that $\beta \equiv 1\text{ (mod $J$)}$ and $\beta \cdot O_{K(\sqrt{-t})}=(Q_1^ \rho)^{\val_{Q_1}(a)} \cdots (Q_n^ \rho)^{\val_{Q_n}(a)}$. For $P\in \mathcal{P}^{K(\sqrt{-t})}$, 
\[
\val_P(\beta)=\begin{cases} \val_{Q_i}(a) & P=Q_i^\rho \\ 0 & \text{else} \end{cases}.
\]
Define $\gamma=\frac{\beta}{\sigma(\beta)}\in K(\sqrt{-t})^ \times$. Since $N_{K(\sqrt{-t})/K}(\gamma)=1$, $\gamma$ corresponds to an element of $T_t(K)$ which we denote by $\alpha$. Since $\beta \equiv 1\text{ (mod $J$)}$, we get that $\gamma\equiv1\text{ (mod $J$)}$, so $\alpha_{\mathcal{P} ^K(J)}\in T_t(\mathbf{O})[J]$ and the first claim holds.

For $i=1,\ldots,n$, $Q_i$ splits in $K(\sqrt{-t})$ (since $Q_i\in \mathcal{S}(T_t)$), so $\sigma(Q_i^ \rho)\neq Q_i^ \rho$. Thus, for $P\in \mathcal{P}^{K(\sqrt{-t})}$,
\[
\val_P(\gamma)=\begin{cases} \val_{Q_i}(a) & P=Q_i^ \rho \\ -\val_{Q_i}(a) & P=\sigma(Q_i^ \rho) \\ 0 & \text{else} \end{cases}.
\]
For every $Q\in \mathcal{P}^K$, since $\val_Q(a) \leq 0$ and by Lemma \ref{lem:properties.T_t},
\[
\val_Q(\rho(\alpha))=\begin{cases} \val_{Q_i}(a) & Q=Q_i \\ 0 & \text{else} \end{cases},
\]
which implies that $a \rho(\alpha) ^{-1}\in \mathbb{G}_m(\mathbf{O})$.
\end{proof} 

\subsection{Approximation in $\mathbb{G}_m \times \mathbb{G}_m$ I}

\begin{lemma} \label{lem:weak.approx.standard} Let $I\in \mathcal{I}^K$ and let $a_1,a_2\in \mathbb{G}_m(\mathbf{A})$. There is an element $t\in O_+$, a trivialization $\rho$ of $T_t$, and elements $\alpha_1,\alpha_2\in T_t(K)$ such that \begin{enumerate}
\item $\mathcal{P} ^K(I) \subseteq \mathcal{S}(T_t)$ and $\left( a_i \rho(\alpha_i) ^{-1} \right)_{\mathcal{P}^{K}(I)} \in \mathbb{G}_m(\mathbf{O})[I]$, for $i=1,2$.
\item The sets $\mathcal{P} ^{K}(I), \mathcal{P} ^K(tO), \supp(\alpha_1) \smallsetminus \mathcal{P} ^{K}(I),\supp(\alpha_2) \smallsetminus \mathcal{P} ^{K}(I)$ are pairwise disjoint.
\end{enumerate} 
\end{lemma} 

\begin{proof} Let $t\in O_+$ be an element such that $t\equiv -1\text{ (mod I)}$ and let $\rho$ be any trivialization of $T_t$. The congruence condition implies that the torus $T_t$ splits over $\mathbf{A}_{\mathcal{P} ^K(I)}$, i.e., $\mathcal{P} ^K(I) \subseteq \mathcal{S}(T_t)$.

By the congruence condition on $t$, the sets $\mathcal{P} ^K(I)$ and $\mathcal{P} ^K(tO)$ are disjoint. By the weak approximation theorem, there is an element $\alpha_1\in T_t(K)$ such that $\left( a_1 \rho(\alpha_1) ^{-1} \right)_{\mathcal{P}^{K}(I)} \in \mathbb{G}_m(\mathbf{O})[I]$ and such that $( \alpha_1)_{\mathcal{P} ^K(tO)}\in T_t(\mathbf{O})$.

Let $\mathcal{R}_1 :=\supp(\alpha_1) \smallsetminus \mathcal{P}^K(I)$. By construction, $\mathcal{R}_1$ is disjoint from $\mathcal{P} ^K(tO)$ and by Lemma \ref{lem:properties.T_t}, $\mathcal{R}_1 \subseteq \mathcal{S}(T_t)$.

Since $\mathcal{P} ^K(I)$ and $\mathcal{R}_1 \cup \mathcal{P} ^K(tO)$ are disjoint, the weak approximation theorem implies that there is an element $\alpha_2\in T_t(K)$ such that $\left( a_2 \rho(\alpha_2) ^{-1} \right)_{\mathcal{P} ^K(I)}\in \mathbb{G}_m(\mathbf{O})[I]$ and such that $\rho(\alpha_2)_{\mathcal{R}_1 \cup \mathcal{P} ^K(tO)}\in T_t(\mathbf{O})$. By construction, $\supp(\alpha_2) \smallsetminus \mathcal{P} ^K(I)$ is disjoint from $\mathcal{R}_1$ and $\mathcal{P} ^K(tO)$.
\end{proof} 

\begin{lemma} \label{lem:approx.single.support.standard} Let $a_1\in \mathbb{G}_m(\mathbf{A})$, $a_2\in \mathbb{G}_m(\mathbf{O})[2]$, and $I\in \mathcal{I} ^K$. There is a prime $Q\in \mathcal{P} ^K \smallsetminus \mathcal{P} ^K(I)$, an element $t\in O_+$, a trivialization $\rho$ of $T_t$, and elements $\alpha_1\in T_t(K), \alpha_2\in T_t(O)$ such that \begin{enumerate}
\item \label{claim:approx.single.support.standard.a1} $\left( a_1 \rho(\alpha_1) ^{-1} \right)_{\mathcal{P} ^K(I)}\in \mathbb{G}_m(\mathbf{O})$.
\item \label{claim:approx.single.support.standard.a2} $\left( a_2 \rho(\alpha_2) ^{-1} \right)_{\mathcal{P} ^K(I)}\in \mathbb{G}_m(\mathbf{O})[I]$.
\item \label{claim:approx.single.support.standard.supp} $\supp(\alpha_1) \smallsetminus \mathcal{P} ^K(I)=\left\{ Q \right\}$ and $\val_Q(\alpha_1)=-1$.
\end{enumerate} 
\end{lemma} 

\begin{proof} Without loss of generality, we can assume that $\supp(a_1) \subseteq \mathcal{P}^K(I)$. Let $t,\rho,\alpha_2$ be obtained by applying Lemma \ref{lem:a2.approximation.basic} to $a_2,I$. Claim \eqref{claim:approx.single.support.standard.a2} holds by construction. 

Let $H_t$ be the Hilbert class field of $K(\sqrt{-t})$ and let $\sigma \in \Gal(H_t/K(\sqrt{-t}))$ be the element corresponding to the ideal $\prod_{Q\in \mathcal{P}^K(I)} \left( Q^ \rho \right)^{\val_Q(a_1)}$ under the Artin isomorphism $\Gal(H_t/K(\sqrt{-t}))\cong \Cl_{K(\sqrt{-t})}$. By Chebotarev's density theorem, there is an ideal $P\in \mathcal{P} ^{K(\sqrt{-t})}$ relatively prime to $I$ such that $\left( \frac{H_t/K(\sqrt{-t})}{P} \right) = \sigma$. Since the set of ideals of $O_{K(\sqrt{-t})}$ whose intersection with $K$ splits in $K(\sqrt{-t})$ has density 1, we can further require that $P\cap K$ splits in $K(\sqrt{-t})$. Let $Q:= P \cap K$ and note that $Q\in \mathcal{S}(T_t) \smallsetminus \mathcal{P} ^K(I)$.

Let $b\in \mathbb{G}_m(\mathbf{A})$ be such that $b_{\mathcal{P} ^K(I)}=a_{\mathcal{P} ^K(I)}$, such that $\val_Q(b)=1$, and such that $\supp(b) \subseteq \mathcal{P} ^K(I) \cup \left\{ Q \right\}$. It follows that $\prod_{Q\in \supp(b)} \left( Q^ \rho \right)^{\val_Q(b)}$ is principal so Lemma \ref{lem:a1.approximation.basic} implies that there is an element $\alpha_1\in T_t(K)$ such that $b \rho (\alpha_1) ^{-1}\in \mathbb{G}_m(\mathbf{O})$. This implies Claims \eqref{claim:approx.single.support.standard.a1} and \eqref{claim:approx.single.support.standard.supp}.
\end{proof} 

\subsection{Approximation in $\mathbb{G}_m \times \mathbb{G}_m$ II}

\begin{lemma} \label{lem:Artin.ext} Let $K \subseteq L \subseteq M$ be finite Galois extensions of $K$ and let $P\in \mathcal{P}^L$. Assume that $P\cap K$ splits completely in $L$ and is unramified in $M$. Then $\left( \frac{M/L}{P} \right)=\left( \frac{M/K}{P\cap K} \right)\cap \Gal(M/L)$.
\end{lemma} 

\begin{proof} Let $Q\in \mathcal{P}^M$ lie above $P$ and let $\Delta_Q \subseteq \Gal(M/L)$ be the decomposition group of $Q$. Since $P\cap K$ splits completely in $L$, $\left( \frac{M/K}{P\cap K} \right)\cap \Gal(M/L)$ is a single conjugacy class in $\Gal(M/L)$. Since both $\left( \frac{M/L}{P} \right)\cap \Delta_Q$ and $\left( \frac{M/K}{P\cap K} \right)\cap \Gal(M/L)\cap \Delta_Q$ consist of the unique element in $\Gal(M/L)$ that fixes $Q$ and acts on $O_M/Q$ as the map $x \mapsto x^{|O_L/P|}=x^{|O_K/P\cap K|}$, we get $\left( \frac{M/L}{P} \right)=\left( \frac{M/K}{P\cap K} \right)\cap \Gal(M/L)$.
\end{proof} 

For a group $G$ and an integer $N$, denote the set of elements in $G$ that are $N$th powers by $\,^N G$.

\begin{lemma} \label{lem:prod.groups} Let $G_1,\ldots,G_n$ be groups, let $G=\prod_1^n G_i$, let $H \triangleleft G$ be a normal subgroup, and assume that $[G:H]<|G_i|$, for all $i$. Denote $G^*=\prod_1 ^n (G_i \smallsetminus \left\{ 1 \right\})$. Then $(G^*\cap H) \cdot (G^*\cap H) \supseteq \,^{2[G:H]}G$.
\end{lemma}

\begin{proof} For every $A \subseteq \left\{ 1,\ldots,n \right\}$, let $G_A=\prod_{i\in A} G_i$, let $G^*_A=\prod_{i\in A} \left( G_i \smallsetminus \left\{ 1 \right\} \right)$, and let $1_A\in G_A$ be the identity element. If $A,B$ are disjoint, then $G_{A\cup B}=G_A \times G_B$ and $G^*_{A\cup B}=G^*_A \times G^*_B$. 

Assume first that $H=G$. For every $i$, $G^*_{\left\{ i \right\}}\neq \emptyset$, and if $G_{\left\{ i \right\}}\neq C_2$ then $G^*_{\left\{ i \right\}} \cdot G^*_{\left\{ i \right\}}=G_{\left\{ i \right\}}$. Thus $G^* \cdot G^* \supseteq \,^{2}G$.

In the general case, let $A=\left\{ i \mid \,^{[G:H]}G_i=1 \right\}$. By the assumption that $[G:H]<|G_i|$ and a pigeonhole argument, there is $x_A\in G^*_A$ such that $x_A \times 1_{A^c}\in H$. Since $H \supseteq \,^{[G:H]}G \supseteq 1_A \times\,^{[G:H]}G_{A^c}$, we get that $\left\{ x_A,x_A ^{-1} \right\} \times \left( G^*_{A^c} \cap \,^{[G:H]}G_{A^c} \right) \subseteq G^* \cap H$. Using the special case of the claim for the group $\,^{[G:H]}G_{A^c}$, we get that 
\[
(G^*\cap H) \cdot (G^*\cap H) \supseteq \Big( x_A \times \left( G^*_{A^c} \cap \,^{[G:H]}G_{A^c} \right) \Big) \cdot \Big( x_A ^{-1} \times \left( G^*_{A^c} \cap \,^{[G:H]}G_{A^c} \right) \Big) \supseteq 
\]
\[
\supseteq 1_A \times \left( \,^{2[G:H]}G_{A^c} \right)=\,^{2[G:H]}G
\]
\end{proof}  

\begin{lemma} \label{lem:cyclotomic.moving} For every number field $E$ and for every natural number $N$ divisible by $\left( [E: \mathbb{Q} ]! \right)^3$, the set
\[
\Xi := \left\{ \sigma \in \Gal(E(\zeta_N)/E) \mid \left( \forall \ell |N \right) \, \sigma(\zeta_{\ell^{\val_\ell(N)}}) \neq \zeta_{\ell^{\val_\ell(N)}} \right\} 
\]
satisfies $\Xi \cdot \Xi \supseteq \,^{2[E: \mathbb{Q} ]}\Gal(E(\zeta_N)/E)$.
\end{lemma} 

\begin{proof} Let $M=\left( [E: \mathbb{Q} ]! \right)^3$. If $\ell \leq [E: \mathbb{Q} ]$ is a prime number, then $\ell^{[E: \mathbb{Q} ]} \geq [E: \mathbb{Q} ]^\ell$. Using the inequality $3\lfloor x \rfloor>x+1$, valid for $x \geq 1$, we get 
\begin{equation} \label{eq:cyclotomic.moving.val}
\ell^{\val_\ell(M)} \geq \ell^{3\left\lfloor \frac{[E : \mathbb{Q} ]}{\ell} \right\rfloor} > \ell^{\frac{[E: \mathbb{Q} ]}{\ell}+1} > [E: \mathbb{Q} ]+1.
\end{equation}

Suppose that $N$ is divisible by $M$. Let $\varphi:\Gal(E(\zeta_N)/E) \rightarrow \Gal(\mathbb{Q}(\zeta_N)/ \mathbb{Q})$ be the restriction map. The group $H:=\varphi \left( \Gal(E(\zeta_N)/E) \right)$ is a subgroup of index at most $[E: \mathbb{Q} ]$ of $G:=\Gal(\mathbb{Q}(\zeta_N)/ \mathbb{Q})=\mathbb{G}_m(\mathbb{Z} / N)=\prod_{\ell | N} \mathbb{G}_m(\mathbb{Z} / \ell^{\val_\ell(N)})$. By \eqref{eq:cyclotomic.moving.val}, the conditions of Lemma \ref{lem:prod.groups} hold and since $\Xi=\varphi ^{-1} \left( G^* \cap H \right)$, the claim follows.
\end{proof} 

The following follows from \cite[Theorems 3.1 and 4.1]{Lens77}

\begin{theorem} \label{thm:Lenstra.GRH} Let $F/E$ be a Galois extension of number fields, let $\sigma\in \Gal(F/E)$ be a conjugacy class, let $\gamma \in \mathbb{G}_m(O_E)$ be a primitive unit, and let $M$ be an integer. Assume that, for every prime $\ell$, if $\zeta_{\ell^{\val_\ell(M)+1}} \in F$ then $\sigma(\zeta_{\ell^{\val_\ell(M)+1}}) \neq \zeta_{\ell^{\val_\ell(M)+1}}$. Assuming the Generalized Riemann Hypothesis, the set
\[
\left\{ P\in \mathcal{P}^E \mid \sigma \in \left( \frac{F/E}{P} \right) \text{ and }\Big[ \mathbb{G}_m(O_E/P) : \left \langle \gamma \text{ (mod $P$)} \right \rangle \Big] \text{ divides } M \right\}
\]
has positive density.
\end{theorem} 

\begin{lemma} \label{lem:two.quadratic.extensions} Assume the GRH. Let $F/E$ be a quadratic extension of number fields and let $\gamma \in O_F$ be a primitive unit. There are natural numbers $M_0,M_1,M_2$ such that, for every quadratic extension $L/E$, every ideal $I\in \mathcal{I}^L$, and every finite set $T \subseteq \mathcal{P}^\mathbb{Q}$ of prime numbers, there are prime ideals $Q_1,Q_2\in \mathcal{P}^E$, $P_1,P_2\in \mathcal{P}^F$, $R_1,R_2\in \mathcal{P}^L$ such that \begin{enumerate}
\item \label{cond:2.quad.ext.split} For $i=1,2$, $Q_i=P_i\cap E=R_i\cap E$ and $Q_i$ splits in both $F$ and $L$.
\item \label{cond:2.quad.ext.principal} $P_1,P_2$ are principal.
\item \label{cond:2.quad.ext.class} $R_1 \cdot R_2=I^{M_0}$ as elements in $\Cl_L$.
\item \label{cond:2.quad.ext.T} For every $\ell \in T$, $\val_\ell(|\mathbb{G}_m(O_E/Q_1)|),\val_\ell(|\mathbb{G}_m(O_E/Q_2)|) \leq \val_\ell(M_1)$.
\item \label{cond:2.quad.ext.index} For $i=1,2$, $\Big[ \mathbb{G}_m(O_F/P_i): \langle \gamma \text{ (mod $P_i$)} \rangle \Big]$ divides $M_2$.
\end{enumerate} 
\end{lemma} 

\begin{proof} Denote the Hilbert class field of $F$ by $H_F$. We show that the claim holds with $M_0=M_1=2[H_F: \mathbb{Q} ]$ and $M_2=(M_0!)^4$.

Given $L,I,T$, let $H_L$ be the Hilbert class field of $L$, let $\tau_I\in \Gal(H_L/L)$ be the element corresponding to $I$ under the isomorphism $\Gal(H_L/L)\cong \Cl_L$, let $\Delta_{F \cdot L}$ be the discriminant of $F \cdot L$ and let 
\[
A=\left( M_0! \right) ^3 \cdot \prod_{\ell \in T \cup \mathcal{P}^\mathbb{Q}(\Delta_{F \cdot L})} \ell.
\]

Since $[H_F \cdot L: \mathbb{Q} ] \leq M_0$, the conditions of Lemma \ref{lem:cyclotomic.moving} hold for the field $H_F \cdot L$ and the integer $A$. Thus, the set
\[
\Xi_1 := \left\{ \sigma \in \Gal(H_F \cdot L(\zeta_A) / H_F \cdot L) \mid \left( \forall \ell |A \right) \, \sigma(\zeta_{\ell^{\val_\ell(A)}}) \neq \zeta_{\ell^{\val_\ell(A)}} \right\} 
\]
satisfies that $\Xi_1 \cdot \Xi_1 \supseteq \,^{2M_0}\Gal(H_F \cdot L(\zeta_A) / H_F \cdot L)$. It follows that the set
\[
\Xi_2:= \left\{ \sigma \in \Gal(H_F \cdot H_L(\zeta_A) / H_F \cdot L) \mid \left( \forall \ell |A \right) \, \sigma(\zeta_{\ell^{\val_\ell(A)}}) \neq \zeta_{\ell^{\val_\ell(A)}} \right\}
\]
satisfies that $\Xi_2 \cdot \Xi_2 \supseteq \,^{2M_0}\Gal(H_F \cdot H_L(\zeta_A) / H_F \cdot L)$. Thus there are two elements $\sigma_1,\sigma_2\in \Xi_2$ such that $(\sigma_1 \sigma_2)\restriction_{H_L}=\tau_I^{2M_0}$.

Since $[H_F \cdot L:F] \leq M_0$, there is a primitive unit $\delta \in \mathbb{G}_m(O_{H_F \cdot L})$ such that $\langle \gamma \rangle \supseteq \langle \delta^{M_0!} \rangle$. Let $\sigma \in \Xi_2$. We show that the condition of Theorem \ref{thm:Lenstra.GRH} holds for the extension $H_F \cdot H_L(\zeta_A)/H_F \cdot L$, the element $\sigma$, the element $\delta$, and the integer $(M_0!)^3$. Indeed, let $\ell$ be a prime number. If $\ell \leq M_0$, then $\val_\ell(A) \leq \val_\ell((M_0!)^3)+1$ and since $\sigma$ moves $\zeta_{\ell^{\val_\ell(A)}}$, it also moves $\zeta_{\ell^{\val_\ell((M_0!)^3)+1}}$. If $\ell > M_0$ and belongs to $T \cup \mathcal{P}^\mathbb{Q}(\Delta_{F \cdot L})$, then $\val_\ell((M_0!)^3)+1=1$ and $\sigma$ moves $\zeta_{\ell}$. Finally, if $\ell > M_0$ and does not belong to $T\cup \mathcal{P}_\mathbb{Q}(\Delta_{F \cdot L})$, then $\ell$ does not ramify in $H_F \cdot H_L(\zeta_A)$, so $\zeta_\ell \notin H_F \cdot H_L(\zeta_A)$.

By Theorem \ref{thm:Lenstra.GRH}, there are primes $S_1,S_2\in \mathcal{P}^{H_F \cdot L}$ such that $\sigma_i\in \left( \frac{H_F \cdot H_L(\zeta_A)/H_F \cdot L}{S_i} \right)$, such that $\Big[ \mathbb{G}_m(O_{H_F \cdot L}/S_i) : \left \langle \delta \text{ (mod $S_i$)} \right \rangle \Big]$ divides $(M_0!)^3$, and such that $S_i\cap E$ split completely in $H_F \cdot L$. Let $Q_i=S_i \cap E$, $P_i=S_i\cap F$, and $R_i=S_i \cap L$.

Claim \eqref{cond:2.quad.ext.split} clearly holds. Claim \eqref{cond:2.quad.ext.principal} hold since 
\[
\left( \frac{H_F/F}{P_i} \right) = \left( \frac{H_F \cdot H_L(\zeta_A) / F}{P_i} \right) \Big |_{H_F} = \left( \frac{H_F \cdot H_L(\zeta_A)/H_F \cdot L}{S_i} \right) \Big|_{H_F}=\sigma_i |_{H_F}=1.
\]
Similarly,
\[
\left( \frac{H_L/L}{R_1} \right) \cdot \left( \frac{H_L/L}{R_2} \right) = (\sigma_1 \sigma_2)\restriction_{H_L}=\tau_I^{2M_0},
\]
proving Claim \eqref{cond:2.quad.ext.class}. If $\ell \in T$ then 
\[
\left( \frac{H_F \cdot H_L(\zeta_A)/E}{Q_i} \right) \left( \zeta_{\ell^{\val_\ell(M_0)+1}} \right) = \sigma_i(\zeta_{\ell^{\val_\ell(M_0)+1}}) \neq \zeta_{\ell^{\val_\ell(M_0)+1}},
\]
so $\zeta_{\ell^{\val_\ell(M_0)+1}}\notin O_E/Q_i$, so $\val_\ell(|O_E / Q_i|)\leq \val_{M_0}+1$, proving Claim \eqref{cond:2.quad.ext.T}. Finally, since $O_{H_F \cdot L}/S_i \cong O_F/P_i$, Claim \eqref{cond:2.quad.ext.index} holds. 

\end{proof}

\begin{lemma} \label{lem:approximation.divisible} Assume the GRH. There are natural numbers $N_1,N_2$ such that for every two elements $a_1\in \mathbb{G}_m(\mathbf{A})$, $a_2\in \mathbb{G}_m(\mathbf{O})[2]$ and every two ideals $I,J\in \mathcal{I}^K$ there are \begin{enumerate}
\item Prime ideals $Q_1,Q_2\in \mathcal{P}^K \smallsetminus \mathcal{P} ^K(IJ)$.
\item An element $t\in O_+$, a trivialization $\rho$ of $T_t$, and elements $\alpha_1\in T_t(K)$, $\alpha_2\in T_t(O)$.
\item An element $s\in O_+$, a trivialization $\tau$ of $T_s$, and elements $\beta_1^{(1)},\beta_1^{(2)}\in T_s(K)$, $\beta_2^{(1)},\beta_2^{(2)}\in T_s(O)$.
\end{enumerate} 
such that the following hold \begin{enumerate}
\item \label{cond:approximation.divisible.alpha1} $\supp(\alpha_1) \subseteq \mathcal{P}_K(I)\cup \left\{ Q_1,Q_2 \right\}$, $\left( a_1^{N_1} \rho(\alpha_1) ^{-1} \right)_{\mathcal{P}_K(I)}\in \mathbb{G}_m(\mathbf{O})$, and $\val_{Q_1}(\rho(\alpha_1))=\val_{Q_2}(\rho(\alpha_1))=-1$.
\item \label{cond:approximation.divisible.alpha2} $\left( a_2 \rho(\alpha_2) ^{-1} \right)_{\mathcal{P}_K(I)}\in \mathbb{G}_m(\mathbf{O})[I]$.
\item \label{cond:approximation.divisible.beta1} $\supp(\beta_1^{(i)})=\left\{ Q_i \right\}$ and $\val_{Q_i}( \tau(\beta_1^{(i)}) )=-1$.
\item \label{cond:approximation.divisible.beta2} $\left( \rho(\alpha_2)^{N_2} \tau(\beta_2^{(i)}) ^{-1} \right)_{Q_i}\in \mathbb{G}_m(\mathbf{O})[Q_i]$ and $\left( \tau(\beta_2^{(i)}) \right)_{\mathcal{P}_K(J)}\in \mathbb{G}_m(\mathbf{O})[J^2]$.
\end{enumerate} 
\end{lemma} 

\begin{proof} We first claim that there is an element $s\in O_+$ such that the torsion-free rank of $\mathbb{G}_m(O_{K(\sqrt{-s})})$ is greater than the torsion-free rank of $\mathbb{G}_m(O)$. Indeed, if $S$ contains a non-real place, Dirichlet's unit theorem implies this for every $s\in O_+$. If this is not the case, then $K$ is totally real and $S$ is the set of real valuations, so $S\neq S_{def}$. Taking $v\in S \smallsetminus S_{def}$ and $s\in O_+$ such that $v(s)<0$, the claim follows from Dirichlet's unit theorem. It follows that there is a primitive element $\gamma \in O_{K(\sqrt{-s})}$ such that $N_{K(\sqrt{-s})/K}(\gamma)=1$. Let $M_0,M_1,M_2$ be the integers obtained by applying Lemma \ref{lem:two.quadratic.extensions} to the extension $K(\sqrt{-s})/K$ and $\gamma$. We will show that the claim of the lemma holds for $N_1=M_0$ and $N_2=M_1M_2$.

Let $a_1\in \mathbb{G}_m(\mathbf{A})$, $a_2\in \mathbb{G}_m(\mathbf{O})$, and $I\in \mathcal{I}^K$. By Lemma \ref{lem:a2.approximation.basic}, there is an element $t\in O_+$, a trivialization $\rho$ of $T_t$, and an element $\alpha_2$ such that Claim \eqref{cond:approximation.divisible.alpha2} holds.

By Lemma \ref{lem:two.quadratic.extensions}, there are prime ideals $Q_1,Q_2\in \mathcal{P}^K$, $P_1,P_2\in \mathcal{P}^{K(\sqrt{-s})}$, and $R_1,R_2\in \mathcal{P}^{K(\sqrt{-t})}$ such that \begin{enumerate}[label=(\alph*)]
\item \label{cond:approximation.divisible.split} For $i=1,2$, $Q_i=P_i\cap K=R_i\cap K$ and $Q_i$ splits in both $K(\sqrt{-s})$ and $K(\sqrt{-t})$.
\item \label{cond:approximation.divisible.P} $P_1,P_2$ are principal.
\item \label{cond:approximation.divisible.R} The fractional ideal $R_1 ^{-1} \cdot R_2 ^{-1} \cdot \prod_{Q\in \mathcal{S}(T_t)} (Q^\rho)^{M_0}$ is principal.
\item \label{cond:approximation.divisible.val} If $\ell$ is a prime number dividing $| \mathbb{G}_m(O_K/J^2) |$ then $\val_\ell(|\mathbb{G}_m(O_K/Q_1)|),\val_\ell(|\mathbb{G}_m(O_K/Q_2)|) \leq \val_\ell(M_1)$.
\item \label{cond:approximation.divisible.index} For $i=1,2$, $\Big[ \mathbb{G}_m(O_{K(\sqrt{-s})}/P_i): \langle \gamma \text{ (mod $P_i$)} \rangle \Big]$ divides $M_2$.
\end{enumerate}

By Claims \eqref{cond:approximation.divisible.split}, \eqref{cond:approximation.divisible.R}, and Lemma \ref{lem:a1.approximation.basic}, there is an element $\alpha_1\in T_t(K)$ such that Claim \eqref{cond:approximation.divisible.alpha1} holds. Similarly, using Claim \eqref{cond:approximation.divisible.P}, there are elements $\beta_1^{(i)}\in T_s(K)$ such that Claim \eqref{cond:approximation.divisible.beta1} holds.

Claim \eqref{cond:approximation.divisible.val} implies that 
\[
\gamma ^{M_1} \text{ (mod $P_i$)} \in \left \langle \gamma ^{| \mathbb{G}_m(O_K/J^2)|} \text{ (mod $P_i$)} \right \rangle.
\]
By Claim \eqref{cond:approximation.divisible.index},
\[
\Big[ \mathbb{G}_m(O_{K(\sqrt{-s})}/P_i): \left \langle \gamma^{| \mathbb{G}_m(O_K/J^2)|} \text{ (mod $P_i$)} \right \rangle \Big]\text{ divides $N_2$},
\]
and one can choose $\beta_2^{(i)}\in \left \langle \gamma ^{| \mathbb{G}_m(O_K/J^2)|} \right \rangle$ such that Claim \eqref{cond:approximation.divisible.beta2} holds.
\end{proof} 

\subsection{Approximation in $\mathbb{G}_m \times \mathbb{G}_m$ III}


For a natural number $n$ and a field $F$, denote the group of $n$-th roots of unity in $F$ by $\mu_n(F)$.

\begin{lemma} \label{lem:divisible.no.roots.of.1} Let $\ell$ be a prime number such that $\mu_\ell(K)=1$, let $I,J\in \mathcal{I}^K$, and let $a_1\in \mathbb{G}_m(\mathbf{A})$, $a_2\in \mathbb{G}_m(\mathbf{O})[2]$. Assume that $\mathcal{P} ^K(I) \cap \mathcal{P} ^K(\ell J)=\emptyset$ and that $\supp(a_1) \subseteq \mathcal{P} ^K(I)$. Then there are \begin{enumerate}
\item Prime ideals $Q_1,Q_2\in \mathcal{P} ^{K} \smallsetminus \mathcal{P} ^K(IJ\ell)$.
\item An element $t\in O_+$, a trivialization $\rho$ of $T_t$, and elements $\alpha_1\in T_t(K)$, $\alpha_2\in T_t(O)$.
\item An element $\beta \in T_t(\mathbf{O})$.
\end{enumerate}
such that \begin{enumerate}
\item \label{lem:divisible.no.roots.of.1.alpha.1} $\supp(\alpha_1) \subseteq \mathcal{P}^{K}(I) \cup \left\{ Q_1,Q_2 \right\}$, $\left( a_1 \rho(\alpha_1) ^{-1} \right)_{\mathcal{P}^{K}(I)} \in \mathbb{G}_m(\mathbf{O})$, $\val_{Q_1}(\rho(\alpha_1))=\val_{Q_2}(\rho(\alpha_1))=-1$, and $\left( \rho(\alpha_1) \right)_{\mathcal{P} ^K(J)}\in \mathbb{G}_m(\mathbf{O})[J]$.
\item \label{lem:divisible.no.roots.of.1.alpha.2} $\left( a_2 \rho(\alpha_2) ^{-1} \right)_{\mathcal{P} ^K(I)} \in \mathbb{G}_m(\mathbf{O})[I]$.
\item \label{lem:divisible.no.roots.of.1.beta} For $i=1,2$, $\rho(\alpha_2)_{\left\{ Q_i \right\}} \beta ^{-\ell}\in \mathbb{G}_m(\mathbf{O})[Q_i]$.
\end{enumerate} 
\end{lemma} 

\begin{proof} By Lemma \ref{lem:a2.approximation.basic}, there is an element $t\in O_+$, a trivialization $\rho$ of $T_t$, and an element $\alpha_2\in T_t(O)$ such that $\mathcal{P} ^K(IJ) \subseteq \mathcal{S}(T_t)$, such that Claim \ref{lem:divisible.no.roots.of.1.alpha.2} holds, and such that $\zeta_3\notin K(\sqrt{-t}) \smallsetminus K$. If $\ell=3$, the assumption $\zeta_\ell\notin K$ implies that $\zeta_3\notin K(\sqrt{-t})$. If $\ell>3$, then clearly $\zeta_\ell \notin K(\sqrt{-t})$.

Let $R_t$ be the ray class field of $K(\sqrt{-t})$ with respect to $J$. Since $\zeta_\ell \notin K(\sqrt{-t})$, the set
\[
\Omega := \left\{ \sigma \in \Gal \left( R_t(\zeta_\ell) / K(\sqrt{-t}) \right) \mid \sigma(\zeta_\ell)\neq \zeta_\ell \right\} 
\]
is the complement of a proper subgroup. Thus, $\Omega \cup \Omega ^2=\Gal \left( H_t(\zeta_\ell) / K(\sqrt{-t}) \right)$.

Let $\sigma \in \Gal(H_t/K(\sqrt{-t}))\cong \Cl_{K(\sqrt{-t})}$ be the element corresponding to $\prod_{Q\in \mathcal{P}^K(I)} \left( Q^ \rho \right) ^{\val_{Q}(a_1)}$. Then either there is  an element $\omega \in \Omega$ with $\omega |_{H_t}=\sigma$ or there are two elements $\omega_1,\omega_2$ such that $\omega_1|_{H_t} \cdot \omega_2|_{H_t}=\sigma$. We will deal with the second case. The first case is analogous and the proof will give $Q_1=Q_2$.

By Chebotarev's density theorem, there are primes $P_1,P_2\in \mathcal{P} ^{K(\sqrt{-t})} \smallsetminus \mathcal{P}^{K(\sqrt{-t})}(IJ\ell)$ such that \begin{enumerate}
\item $\left( \frac{H_t(\zeta_\ell)/K(\sqrt{-t})}{P_1} \right),\left( \frac{H_t(\zeta_\ell)/K(\sqrt{-t})}{P_2} \right) \in \Omega$.
\item $Q_1:=K\cap P_1$ and $Q_1:=K\cap P_2$ split in $K(\sqrt{-t})$.
\item $P_1 \cdot P_2 \cdot \prod_{Q\in \mathcal{P}^K(I)} \left( Q^ \rho \right) ^{\val_{Q}(a_1)}$ is principal.
\end{enumerate} 

By Lemma \ref{lem:a1.approximation.basic}, there is an element $\alpha_1\in T_t(K)$ for which Claim \ref{lem:divisible.no.roots.of.1.alpha.1} holds. Finally, since $Q_i$ split, $\mathbb{G}_m(O/Q_i) \cong \mathbb{G}_m(O_{K(\sqrt{-t})}/P_i)$. By the definition of $\Omega$, the group $\mathbb{G}_m(O_{K(\sqrt{-t})}/P_i)$ is $\ell$-divisible, so Claim \ref{lem:divisible.no.roots.of.1.beta} holds.
\end{proof} 

If $L$ is a number field, $P\in \mathcal{P} ^L$ is a prime ideal, and $\ell$ is a prime number such that $\mu_\ell(L)\neq 1$, let $n_{\ell,P}: \mathbb{G}_m(L_P)\times \mathbb{G}_m(L_P) \rightarrow \mu_\ell(L)$ be the local $\ell$th norm residue symbol (see e.g. \cite[\S15]{Mil71}). We will use the following properties of $n_{\ell,P}$: \begin{enumerate}
\item $n_{\ell,P}$ is anti symmetric and bimultiplicative.
\item $n_{\ell,P}(\mathbb{G}_m(L_P) \times \mathbb{G}_m(O_P))=\mu_\ell(L)$.
\item If $P\not|\; \ell$, then $n_{\ell,P}$ is trivial on $\mathbb{G}_m(O_P) \times \mathbb{G}_m(O_P)$.
\item If $P\not|\; \ell$, $\pi\in O_P$ is a uniformizer, and $a\in \mathbb{G}_m(O_P)$, then 
\[
n_{\ell,P}(\pi,a)=1 \iff \text{ $a$ is an $\ell$th power in $\mathbb{G}_m(O_P)$}.
\]
\item There is an open set $1\in U_P \subseteq \mathbb{G}_m(O_P)$ such that $n_{\ell,P} \left( U_P \times \mathbb{G}_m(K_P) \right)=1$.
\item If $a,b\in \mathbb{G}_m(L)$, then $n_{\ell,P}(a,b)=1$ for all but finitely many prime ideals $P$ and
\begin{equation} \label{eq:reciprocity.norm.residue}
\prod_{P\in \mathcal{P} ^L} n_{\ell,P}(a,b)=1.
\end{equation}
\end{enumerate} 

\begin{lemma} \label{lem:inverse.reciprocity} Let $J\in \mathcal{I}^K$, let $\ell$ be a prime number such that $\mu_\ell(K)\neq 1$, let $Q_1,Q_2\in \mathcal{P} ^K \smallsetminus \mathcal{P} ^K(J\ell)$ be two prime ideals, and let $a_1\in \mathbb{G}_m(\mathbf{A}),a_2\in \mathbb{G}_m(\mathbf{O})$ be such that $n_{\ell,Q_1}(a_1,a_2) \cdot n_{\ell,Q_2}(a_1,a_2)=1$. Then there are \begin{enumerate}
\item An element $t\in O_+$.
\item A trivialization $\rho$ of $T_t$ such that $\mathcal{P} ^K(J \ell Q_1 Q_2) \subseteq \mathcal{S}(T_t)$
\item Elements $\alpha_1\in T_t(K)$ and $\alpha_2\in T_t(O)$.
\item A prime $Q_3\in \mathcal{P} ^K \smallsetminus \mathcal{P} ^K(J\ell Q_1 Q_2)$.
\end{enumerate} 
such that \begin{enumerate}
\item \label{cond:inverse.reciprocity.alpha2} $\rho(\alpha_2) ^{-1} a_2 \in \mathbb{G}_m(\mathbf{O})[Q_1Q_2]$.
\item \label{cond:inverse.reciprocity.alpha1} $\supp(\rho(\alpha_1)) \subseteq \left\{ Q_1,Q_2,Q_3 \right\}$, $\val_{Q_1}(\rho(\alpha_1))=\val_{Q_1}(a_1)$, $\val_{Q_2}(\rho(\alpha_1))=\val_{Q_2}(a_1)$, $\val_{Q_3}(\rho(\alpha_1))=-1$, and $\rho(\alpha_1)_{\mathcal{P} ^K(J)}\in \mathbb{G}_m(\mathbf{O})[J]$.
\item \label{cond:inverse.reciprocity.Q3} $n_{\ell,Q_3}(\rho(\alpha_1),\rho(\alpha_2))=1$.
\end{enumerate} 
\end{lemma} 

\begin{proof} Without loss of generality, we can assume that $a_2\in \mathbb{G}_m(\mathbf{O})[J \ell]$. By Lemma \ref{lem:a2.approximation.basic}, there are $t\in O_+$, a trivialization $\rho$ of $T_t$ such that $\mathcal{P} ^K(J\ell Q_1Q_2) \subseteq \mathcal{S}(T_t)$ and an element $\alpha_2\in T_t(O)$ such that $\rho(\alpha_2) ^{-1} a_2\in \mathbb{G}_m(\mathbf{O})[\ell Q_1 Q_2]$. In particular, Claim \eqref{cond:inverse.reciprocity.alpha2} holds.

Choose $m$ such that, for every prime $Q\in \mathcal{P} ^{K}$ that divides $\ell$, the norm residue symbol $n_{\ell,Q}(a,b)$ vanishes if $a\equiv 1\text{ (mod $Q^m$)}$. Let $F/K(\sqrt{-t})$ be the ray class field of modulus $J \cdot \ell^m$ of $K(\sqrt{-t})$. Since $Q_1,Q_2$ are prime to $J \cdot \ell$, the fractional ideal $(Q_1 ^ \rho )^{\val_{Q_1}(a_1)} \cdot (Q_2 ^ \rho)^{\val_{Q_2}(a_1)}$ defines an element in the ray class group $\Cl^{J \cdot \ell^m}(K(\sqrt{-t}))$. By Chebotarev, there is a prime ideal $P_3\in \mathcal{P} ^{K(\sqrt{-t})} \smallsetminus \mathcal{P} ^{K(\sqrt{-t})}(J\ell Q_1Q_2)$ and an element $\beta \in \mathbb{G}_m(K(\sqrt{-t}))$ such that $P_3 \cap K$ splits in $K(\sqrt{-t})$, $\beta \equiv 1\text{ (mod $J \cdot \ell^m$)}$, and $(Q_1 ^ \rho )^{\val_{Q_1}(a_1)} \cdot (Q_2 ^ \rho)^{\val_{Q_2}(a_1)}=P_3 \cdot (\beta)$. Let $\alpha_1=\frac{\beta}{\sigma(\beta)}$ and $Q_3=P_3\cap K$. It is clear that Claim \eqref{cond:inverse.reciprocity.alpha1} holds.

From the product formula \eqref{eq:reciprocity.norm.residue},
\[
1=\prod_{P\in \mathcal{P}^{K(\sqrt{-t})}} n_{\ell,P}(\alpha_1,\alpha_2)=
\]
\[
=\prod_{P\in \mathcal{P}^{K(\sqrt{-t})}(\ell)} n_{\ell,P}(\alpha_1,\alpha_2) \cdot \prod_{P\in \left\{ P_1,P_2,P_3 \right\}} n_{\ell,P}(\alpha_1,\alpha_2) \cdot \prod_{P\in \mathcal{P}^{K(\sqrt{-t})} \smallsetminus \mathcal{P} ^{K(\sqrt{-t})}(\ell P_1P_2P_3)} n_{\ell,P}(\alpha_1,\alpha_2).
\]
Let $P\in \mathcal{P} ^{K(\sqrt{-t})}(\ell)$. By assumption, $\beta \equiv 1\text{ (mod $P^m$)}$ and $\beta \equiv 1\text{ (mod $\sigma(P)^m$)}$. Thus, $\alpha_1\equiv 1\text{ (mod $P^m$)}$ and $n_{\ell,P}(\alpha_1,\alpha_2)=1$. Thus, the first product is trivial. 

If $P\in \mathcal{P}^{K(\sqrt{-t})} \smallsetminus \mathcal{P} ^{K(\sqrt{-t})}(\ell P_1P_2P_3)$ then $(\alpha_1)_
P, (\alpha_2)_P\in \mathbb{G}_m(O_P)$, so the third product is trivial. 

It follows that
\[
1=n_{\ell,P_1}(\alpha_1,\alpha_2) \cdot n_{\ell,P_2}(\alpha_1,\alpha_2) \cdot n_{\ell,P_3}(\alpha_1,\alpha_2)=
\]
\[
=n_{\ell,Q_1}(\rho(\alpha_1),\rho(\alpha_2)) \cdot n_{\ell,Q_2}(\rho(\alpha_1),\rho(\alpha_2)) \cdot n_{\ell,Q_3}(\rho(\alpha_1),\rho(\alpha_2))=
\]
\[
=n_{\ell,Q_1}(a_1,a_2) \cdot n_{\ell,Q_2}(a_1,a_2) \cdot n_{\ell,Q_3}(\rho(\alpha_1),\rho(\alpha_2))=n_{\ell,Q_3}(\rho(\alpha_1),\rho(\alpha_2)),
\]
proving Claim \eqref{cond:inverse.reciprocity.Q3}.
\end{proof} 

\subsection{Approximation in the ultrapower}

We first extend the definitions in \S\ref{subsec:T_t.standard} to the non-standard model. All the definitions below are taken with respect to the fixed ultrafilter $\mathcal{N}$. \begin{enumerate}
\item Define $O_+^*$ to be the ultrapower of the semiring $O_+$. If $t\in O_+^*$, we choose a representative $t=[t_n]$, where $t_n\in O_+$, and define $T_t$ as the ultraproduct of the schemes $T_{t_n}$ (this ultraproduct exists because of Lemma \ref{lem:T_t.bounded.complexity}). This definition does not depend on the choice of representative. 
\item Given $t\in O_+^*$, let $\mathcal{S}(T_t)=\left\{ Q\in \mathcal{P} ^{K^*} \mid T_t \times \Spec(K^*_Q) \cong \mathbb{G}_m \right\}$. Choosing a representative $t=[t_n]$ with $t_n\in O_+$, we have $\mathcal{S}(T_t)=\left[ \mathcal{S}(T_{t_n})\right]$. 
\item A trivialization of $T_t$ is a morphism $\rho:T_t \times \Spec(\mathbf{A}^*) \rightarrow \mathbb{G}_m \times \Spec(\mathbf{A}^*)$ such that $\rho \times \Spec(\mathbf{A}^*_{\mathcal{S}(T_t)})$ is an isomorphism and $\rho \times \Spec(\mathbf{A}^*_{\mathcal{P} ^K \smallsetminus \mathcal{S}(T_t)})$ is the trivial homomorphism. If $t=[t_n]$ with $t_n\in O_+$ and $\rho_n$ are trivializations of $T_{t_n}$, then $\left[ \rho_n \right]$ is a trivialization of $T_t$.
\item If $\ell$ is a prime number such that $\mu_\ell(K)\neq1$, $P=[P_n]\in \mathcal{P} ^{K^*}$, and $a=[a_n],b=[b_n]\in K^*_P$, define $n_{\ell,P}(a,b)=[n_{\ell,P_n}(a_n,b_n)]\in [\mu_\ell(K)]=\mu_\ell(K^*)$.
\end{enumerate} 

Using the definitions above, the approximation lemmas \ref{lem:weak.approx.standard}, \ref{lem:approx.single.support.standard}, \ref{lem:approximation.divisible}, and \ref{lem:inverse.reciprocity} have obvious versions in the non-standard model obtained by changing $K$ to $K^*$, $O$ to $O^*$, etc. For example, the non-standard Lemma \ref{lem:weak.approx.standard} is that if $I\in \mathcal{I}^{K^*}$ and $a_1,a_2\in \mathbb{G}_m(\mathbf{A}^*)$ then there is an element $t\in O^*_+$, a trivialization $\rho$ of $T_t$, and elements $\alpha_1,\alpha_2\in T_t(K^*)$ such that \begin{enumerate}
\item $\mathcal{P} ^{K^*}(I) \subseteq \mathcal{S}(T_t)$ and $\left( a_i \rho(\alpha_i) ^{-1} \right)_{\mathcal{P}^{K^*}(I)} \in \mathbb{G}_m(\mathbf{O}^*)[I]$, for $i=1,2$.
\item The sets $\mathcal{P} ^{K^*}(I), \mathcal{P} ^{K^*}(tO^*), \supp(\alpha_1) \smallsetminus \mathcal{P} ^{K^*}(I),\supp(\alpha_2) \smallsetminus \mathcal{P} ^{K^*}(I)$ are pairwise disjoint.
\end{enumerate} 
The non-standard versions of Lemmas \ref{lem:weak.approx.standard}, \ref{lem:approx.single.support.standard}, \ref{lem:approximation.divisible}, and \ref{lem:inverse.reciprocity} all follow from the standard versions using coordinate-wise arguments.

\section{Split spin groups}

\begin{notation} Fix an integer $m \geq 3$ and let $f_s(x_1,\ldots,x_{2m})=\sum_{i=1}^m x_{2i-1}^2-x_{2i}^2$ be the split quadratic form of dimension $2m$.
\end{notation} 

\subsection{Generalized Steinberg symbols}

\begin{definition} Let $R$ be a ring in which 2 is invertible and let 
\[
1 \rightarrow D \rightarrow E \overset{\eta}{\rightarrow} \Spin_{f_s}(R) \rightarrow 1
\]
be a central extension. For every two commuting elements $g_1,g_2\in \Spin_{f_s}(R)$, choose $\widetilde{g_1}\in \eta ^{-1} (g_1)$ and $\widetilde{g_2}\in \eta ^{-1} (g_2)$ and denote $[g_1:g_2]:=[\widetilde{g_1},\widetilde{g_2}]$. The element $[g_1,g_2]\in D$ is independent of the choices of $\widetilde{g_i}$ and the map 
\[
[-:-]: \left\{ (g_1,g_2)\in \Spin_{f_s}(R)^2 \mid [g_1,g_2]=1 \right\} \rightarrow D
\]
is called the generalized Steinberg symbol.
\end{definition} 

\begin{lemma}\label{lemma:prop_stein_1}  Let $R$ be a ring in which 2 is invertible and let 
$$
1 \rightarrow D \rightarrow E \overset{\eta}\rightarrow \Spin{f_s}(R) \rightarrow 1
$$  
be a central extension. 

\begin{enumerate}
	\item\label{item:s00} For every $g\in \Spin_{f_s}(R)$, $[g:1]=[1:g]=1$.
	\item\label{item:s01} For every commuting $g_1,g_2\in \Spin_{f_s}(R)$, $[g_1:g_2]=[g_2:g_1]^{-1}$. 
	\item\label{item:s02}  For every $g_1,g_2,g_3 \in \Spin_{f_s}(R)$, if $g_1$ commutes with $g_2$ and $g_3$ then $[g_1:g_2g_3]=[g_1:g_2][g_1:g_3]$. 
	\item\label{item:s03}  For every $g_1,g_2,g_3 \in \Spin_{f_s}(R)$, if $g_1$ and $g_2$ commute with $g_3$  then $[g_1g_2:g_3]=[g_1:g_3][g_2:g_3]$. 
	\item\label{item:s03.5} For every commuting $g_1,g_2\in \Spin_{f_s}(R)$, $[g_1:g_2]^{-1}=[g_1^{-1}:g_2]=[g_1:g_2^{-1}]$.
	\item\label{item:s04}  For every commuting $g_1,g_2 \in \Spin_{f_s}(R)$ and every $l \in \Spin_{f_s}(R)$, $[g_1:g_2]=[lg_1l^{-1}:lg_2l^{-1}]$. 
	\item \label{item:s06} If $R$ is a topological ring and the extension is a topological  central extension then the generalized Steinberg symbol $[\cdot:\cdot]$ is continuous. 
\end{enumerate}
\end{lemma}

\begin{proof}
\begin{enumerate}
\item  Let $\widetilde{g}$ be a lift of $g$ to $E$. Then  $[g:1]=[\widetilde{g},1]=1 $ and similarly, $[1:g]=1$.
\item  For every group elements $x$ and $y$, $[x,y]=[y,x]^{-1}$.	
\item For every group elements $x$, $y$ and $z$, $[x,yz]=[x,y]y[x,z]y^{-1}$. The claim follows from the centrality of $D$. 
\item For every group elements $x$, $y$ and $z$, $[xy,z]=x[y,z]x^{-1}[x,z]$. The claim follows from the centrality of $D$. 
\item Follows from Items \ref{item:s00}, \ref{item:s02} and \ref{item:s03}.
\item The claim follows from the centrality of $D$. 
\item Let $g_1,g_2 \in \Spin_{f_s}$ be commutating elements. Fix lifts $\tilde{g}_1,\tilde{g_2}$ of $g_1,g_2$, respectively, and let $U \subseteq D$ be an open neighborhood of $[\tilde{g}_1,\tilde{g_2}]=[g_1:g_2]$. By the continuity of multiplication, there exist open subsets $U_1\ni \tilde{g}_1$ and $U_2 \ni \tilde{g}_2$ of $E$ such that $\{[l_1,l_2]\mid l_1 \in U_1 \land \ l_2 \in U_2 \}\subseteq U$. 
	  Since the extension is topological, $\eta$ is open and the induced isomorphism $\bar{\eta}:E/D \rightarrow \Spin_{f_s}(R)$ is a homeomorphism. Thus, every $g_3 \in \eta(U_1)$ and every $g_4 \in \eta(U_2)$  have lifts $\tilde{g}_3$ and $\tilde{g}_4$ such that $\tilde{g}_3\in U_1$ and $\tilde{g}_4 \in U_2$. If $g_3,g_4$ commute, then $[g_3:g_4]=[\tilde{g}_3,\tilde{g}_4]\in U$. 
\end{enumerate}
\end{proof}

\subsection{The coroots $h_1,h_2$ and a theorem of Matsumoto}

Using the embedding $\Spin_{f_s} \hookrightarrow \Cliff_{f_s}$, the Lie algebra of $\Spin_{f_s}$, which we denote by $\mathfrak{spin}_{f_s}$, is the span of the elements $e_ie_j$, where $i<j$. The subspace $\mathfrak{t}$ spanned by $e_1e_2,e_3e_4,\ldots,e_{2m-1}e_{2m}$ is a Cartan algebra in $\mathfrak{spin}_{f_s}$. The elements
\[
X=e_1e_3+e_2e_3-e_1e_4-e_2e_4 \text{ and } Y=e_3e_5+e_4e_5-e_3e_6-e_4e_6
\]
are root vectors: 
\[
\left[ e_1e_2,X \right]=-2X \quad\quad \left[ e_3e_4,X \right]=2X \quad\quad \left[e_{2k-1,2k},X\right]=0,
\]
for all $k\neq 1,2$, and
\[
\left[ e_3e_4,Y \right]=-2Y \quad\quad \left[ e_5e_6,Y \right]=2Y \quad\quad \left[e_{2l-1,2l},Y\right]=0,
\]
for all $l \neq 2,3$. We denote the root corresponding to $X$ by $\alpha_1$ and the root corresponding to $Y$ by $\alpha_2$.

Let $T=\exp \mathfrak{t}$ be the corresponding maximal torus. The map 
\[
(t_1,\ldots,t_n) \mapsto \prod_i \left( \frac{t_i+t_i ^{-1}}{2}+\frac{t_i-t_i ^{-1}}{2}e_{2i-1}e_{2i} \right)
\]
is an isogeny between $\mathbb{G}_m^m$ and $T$ whose kernel is the subgroup $\left\{ v\in \left\{ \pm1 \right\} ^m \mid v_1 \cdots v_m=1 \right\}$. We have 
\[
Ad \left( e^{t e_1e_2} \right) X= \exp \left( t \cdot ad(e_1e_2) \right) X= e^{-2t}X.
\]
and 
\[
Ad \left( e^{t e_3e_4} \right) X= \exp \left( t \cdot ad(e_3e_4) \right) X= e^{2t}X.
\]

Since $e^{t e_1e_2}=\left( \frac{e^t+e^{-t}}{2} \right) + \left( \frac{e^t-e^{-t}}{2} \right) e_1e_2$ and $e^{t e_3e_4}=\left( \frac{e^t+e^{-t}}{2} \right) + \left( \frac{e^t-e^{-t}}{2} \right) e_3e_4$, we get that
\[
\alpha_1 \left( \prod_i \left( \frac{t_i+t_i ^{-1}}{2}+\frac{t_i-t_i ^{-1}}{2}e_{2i-1}e_{2i} \right) \right) =\left( \frac{t_2}{t_1} \right) ^2.
\]

The coroot $h_1$ corresponding to $\alpha_1$ is the map $h_1:\mathbb{G}_m \rightarrow T$ such that, for any $O$-algebra $R$ and any $t\in R^\times$,
\[
h_1(t)=\frac{t+2+t ^{-1}}{4}+\frac{t-t ^{-1}}{4}e_1e_2-\frac{t-t ^{-1}}{4}e_3e_4-\frac{t-2+t ^{-1}}{4}e_1e_2e_3e_4=
\]
\[
=\left( \frac{s+s ^{-1}}{2}+\frac{s-s ^{-1}}{2}e_1e_2 \right) \cdot \left( \frac{s+s ^{-1}}{2}-\frac{s-s ^{-1}}{2}e_3e_4 \right),
\]
where $s$ is a square root of $t$ (possibly in a quadratic extension). Similarly,
\[
h_2(t)=\frac{t+2+t ^{-1}}{4}+\frac{t-t ^{-1}}{4}e_3e_4-\frac{t-t ^{-1}}{4}e_5e_6-\frac{t-2+t ^{-1}}{4}e_3e_4e_5e_6=
\]
\[
=\left( \frac{s+s ^{-1}}{2}+\frac{s-s ^{-1}}{2}e_3e_4 \right) \cdot \left( \frac{s+s ^{-1}}{2}-\frac{s-s ^{-1}}{2}e_5e_6 \right)
\]
is the coroot corresponding to $\alpha_2$.\\

\begin{lemma} \label{lemma:prop_stein_2} Let $F$ be a field of characteristic different from 2 is invertible and let 
\[
1 \rightarrow D \rightarrow E \overset{\eta}\rightarrow \Spin{f_s}(F) \rightarrow 1
\]
be a central extension. If $\alpha,1-\alpha \in F^\times$ then $[h_1(\alpha,\alpha ^{-1}),h_2(1-\alpha,(1-\alpha) ^{-1})]=1$.
\end{lemma} 

\begin{proof} This a standard property of Steinberg symbols, see \cite[Lemma 5.5]{Mat69}.
\end{proof} 

The following is a special case of \cite[Lemma 5.4 and Theorem 8.2]{Mat69}

\begin{theorem} \label{thm:Matsumoto} Let $F$ be a field of characteristic different from 2 and let
\begin{equation} \label{eq:Matsumoto}
1 \rightarrow D \rightarrow E \overset{\eta}{\rightarrow} \Spin_{f_s}(F) \rightarrow 1
\end{equation}
be a central extension. The extension \eqref{eq:Matsumoto} splits if and only if, for every $a_1,a_2\in F^ \times$, $[h_1(a_1):h_2(a_2)]=1$.
\end{theorem} 

\subsection{Steinberg symbols for $\Spin_{f_s}(\mathbf{A} ^*)$}

\begin{lemma} \label{lem:derived.congruence} For every $I\in \mathcal{I} ^{K^*}$, the derived subgroup of $\Spin_{f_s}(\mathbf{O} ^*)[I]$ is $\Spin_{f_s}(\mathbf{O} ^*)[I^2]$.
\end{lemma} 

\begin{proof} By a coordinate-wise argument, it is enough to show that there is a natural number $N$ such that, for every $I\in \mathcal{I} ^K$, 
\[
\Spin_{f_s}(\mathbf{O})[I^2]=\left[ \Spin_{f_s}(\mathbf{O})[I],\Spin_{f_s}(\mathbf{O})[I] \right] ^N.
\]
For this, it is enough to show that, for every $P\in \mathcal{P} ^K$ and $k\in \mathbb{N}$,
\[
\Spin_{f_s}(\mathbf{O})[P^{2k}]=\left[ \Spin_{f_s}(\mathbf{O})[P^k],\Spin_{f_s}(\mathbf{O})[P^k] \right] ^N.
\]
This follows from \cite[Lemma 3.22]{AM22}.
\end{proof} 

\begin{lemma} \label{lem:simple.A*} Let $\mathcal{Q} \subseteq \mathcal{P} ^{K^*}$ be an internal set and let $g\in \Spin_{f_s}(\mathbf{A} ^*)$. Assume that $g_P\notin Z \left( \Spin_{f_s}(\mathbf{A}) \right)$, for every $P\in \mathcal{Q}$. Then $\gcl_{\Spin_{f_s}(\mathbf{A} ^*)}(g_\mathcal{Q})$ generates $\Spin_{f_s}(\mathbf{A})_\mathcal{Q}$.
\end{lemma} 

\begin{proof} By a coordinate-wise argument, it is enough to show that there is a natural number $N$ such that, for every $P\in \mathcal{P} ^K$, \begin{enumerate}
\item If $g\in \Spin_{f_s}(K_P) \smallsetminus Z \left( \Spin_{f_s}(K_P) \right)$, then $\left\langle \gcl_{\Spin_{f_s}(K_P)}(g) \right\rangle = \Spin_{f_s}(K_P)$ and $\wid \left( \gcl_{\Spin_{f_s}(K_P)}(g) \right) < N$.
\item If $g\in \Spin_{f_s}(O_P) \smallsetminus Z \left( \Spin_{f_s}(O_P) \right) \cdot \Spin_{f_s}(O_P)[P]$, then $\left\langle \gcl_{\Spin_{f_s}(O_P)}(g) \right\rangle = \Spin_{f_s}(O_P)$ and $\wid \left( \gcl_{\Spin_{f_s}(O_P)}(g) \right) <N$.
\end{enumerate}  

Let $g\in \Spin_{f_s}(K_P) \smallsetminus Z \left( \Spin_{f_s}(K_P) \right)$. By \cite[Lemma 3.16]{AM22}, $\gcl_{\Spin_{f_s}(K_P)}(g)^{2m^2}$ contains a non-trivial transvection, so $\gcl_{\Spin_{f_s}(K_P)}(g)^{8m^3}=\Spin_{f_s}(K_P)$, proving the first claim.

The second claim follows from \cite[Lemma 3.22]{AM22}.
\end{proof} 

\begin{definition} Let $1 \rightarrow D \rightarrow E \overset{\eta}\rightarrow \Spin{f_s}(\mathbf{A} ^*) \rightarrow 1$ be a central extension and let $\mathcal{Q} \subseteq \mathcal{P} ^{K^*}$ be an internal set. \begin{enumerate}
\item For $g_1,g_2\in \Spin_{f_s}(\mathbf{A}^*)$, denote $[g_1:g_2]_\mathcal{Q}:=[(g_1)_\mathcal{Q}:(g_2)_\mathcal{Q}]$.
\item For $a_1,a_2\in (\mathbf{A}^*) ^ \times$, denote $\{ a_1 : a_2 \}_\mathcal{Q}:=[h_1(a_1) : h_2(a_2)]_\mathcal{Q}$. If $\mathcal{Q}=\mathcal{P}^{K^*}$, we omit it from notation.
\end{enumerate}
\end{definition}

\begin{proposition} \label{prop:Steinberg.symbols} Let $1 \rightarrow D \rightarrow E \rightarrow \Spin_{f_s}(\mathbf{A} ^*) \rightarrow 1$ be a topological central extension with discrete center that has a continuous splitting over a congruence subgroup $\Spin_{f_s}(\mathbf{O} ^*)[J_0]$. Then \begin{enumerate}
\item \label{prop:Steinberg.symbols.disjoint.commute} Let $\mathcal{Q}_1,\mathcal{Q}_2 \subseteq \mathcal{P} ^{K^*}$ be disjoint internal sets. If $g_1\in \Spin_{f_s}(\mathbf{A}^*)_{\mathcal{Q}_1}$ and $g_2\in \Spin_{f_s}(\mathbf{A}^*)_{\mathcal{Q}_2}$, then $g_1$ and $g_2$ commute and $\left[ g_1 : g_2 \right]=1$.
\item \label{prop:Steinberg.symbols.disjoint.decompose} Let $\mathcal{Q}_1,\mathcal{Q}_2 \subseteq \mathcal{P} ^{K^*}$ be disjoint internal sets. If $g_1,g_2\in \Spin_{f_s}(\mathbf{A} ^*)$ commute, then $[g_1:g_2]_{\mathcal{Q}_1 \cup \mathcal{Q}_2}=[g_1:g_2]_{\mathcal{Q}_1} \cdot [g_1:g_2]_{\mathcal{Q}_2}$.
\item \label{prop:Steinberg.symbols.J_0} If $g_1\in \Spin_{f_s}(\mathbf{O}^*)$ and $g_2\in \Spin_{f_s}(\mathbf{O} ^*)[J_0^2]$ commute, then $[g_1:g_2]=1$.
\item \label{prop:Steinberg.symbols.continuity} Let $a_1\in \mathbb{G}_m(\mathbf{A}^*)$, let $a_2\in \mathbb{G}_m(\mathbf{O} ^*)$, and let $\mathcal{Q} \subseteq \mathcal{P}_{K^*}$ be an internal set. There is an ideal $I\in \mathcal{I}_{K^*}(\mathcal{Q})$ such that, for every $b_1,b_2\in \mathbb{G}_m(\mathbf{A}^*)$ such that $\left( a_1 b_1 ^{-1} \right)_{\mathcal{Q}}\in \mathbf{G}_m(\mathbf{O}^*)[J_0^2]$ and $\left( a_2 b_2 ^{-1} \right)_{\mathcal{Q}}\in \mathbf{G}_m(\mathbf{O}^*)[I]$, we have $\left\{ a_1 : a_2 \right\}_\mathcal{Q} = \left\{ b_1 : b_2 \right\}_\mathcal{Q}$.
\item \label{prop:Steinberg.symbols.val.-1} Let $P\in \mathcal{P}_{K^*} \smallsetminus \mathcal{P}_{K^*}(2J_0)$ and let $a_1,a_2,b_1,b_2\in \mathbb{G}_m(\mathbf{A}^*)$ such that $\val_P(a_1)=-1$, $a_1 b_1 ^{-1} \in \mathbb{G}_m(\mathbf{O} ^*)$, $(a_2)_P\in \mathbb{G}_m(\mathbf{O}^*)$, and $a_2 b_2 ^{-1} \in \mathbb{G}_m(\mathbf{O}^*)[P]$. Then $\left\{ a_1 : a_2 \right\}_P=\left\{ b_1 : b_2 \right\}_P$.
\end{enumerate} 
\end{proposition}

\begin{proof} \begin{enumerate}
\item It is clear that $\Spin_{f_s}(\mathbf{A} ^*)_{\mathcal{Q}_1}$ and $\Spin_{f_s}(\mathbf{A} ^*)_{\mathcal{Q}_2}$ commute. Choose $g_3',g_3''\in \Spin_{f_s}(\mathbf{A}^*)$ such that $[g_3',g_3'']_P\notin Z \left( \Spin_{f_s}(\mathbf{A} ^*)_P \right)$, for every $P\in \mathcal{Q}_2$. Denote $g_3=[g_3',g_3'']$. Since $D$ is abelian,
\[
[g_1:g_3]=[g_1:g_3'][g_1:g_3''][g_1:(g_3') ^{-1}][g_1:(g_3'') ^{-1}]=1.
\]
By Lemma \ref{lem:simple.A*}, $g_2=(h_1 g_3^{\epsilon_1} h_1 ^{-1})\cdots(h_N g_3^{\epsilon_N} h_N ^{-1})$, for some $N\in \mathbb{N}$, $h_i\in \Spin_{f_s}(\mathbf{A} ^*)_{\mathcal{Q}_2}$, and $\epsilon_i\in \left\{ \pm 1 \right\}$. Thus,
\[
[g_1:g_2]=\prod_1^N \left[ g_1:h_i g_3^{\epsilon_i} h_i ^{-1}\right]=\prod_1^N \left[ h_i g_1 h_i ^{-1}:h_i g_3^{\epsilon_i} h_i ^{-1}\right]=\prod_1^N [g_1:g_3 ^{\epsilon_i}]=1.
\]

\item This follows from \ref{prop:Steinberg.symbols.disjoint.commute} and the equality $g_{\mathcal{Q}_1 \cup \mathcal{Q}_2}=g_{\mathcal{Q}_1} \cdot g_{\mathcal{Q}_2}$.

\item Denote the section over $\Spin_{f_s}(\mathbf{O} ^*)[J_0]$ by $s_{J_0}$. Choose $\widetilde{g_1}\in \eta ^{-1}(g_1)$ and let $\chi_{g_1},\chi_{\widetilde{g_1}}$ be the conjugation maps $\chi_{g_1}(x)=g_1 x g_1 ^{-1}$ and $\chi_{\widetilde{g_1}}(x)=\widetilde{g_1} x \widetilde{g_1} ^{-1}$, respectively. The map $\chi_{\widetilde{g_1}} \circ s_{J_0} \circ \chi_{g_1} ^{-1}$ is a section of $\eta$ over $\Spin_{f_s}(\mathbf{O}^*)[J_0]$, so it coincides with $s_{J_0}$ on the derived subgroup of $\Spin_{f_s}(\mathbf{O} ^*)[J_0]$. By Lemma \ref{lem:derived.congruence}, $\chi_{\widetilde{g_1}} \circ s_{J_0} (g_2)=s_{J_0} \circ \chi_{g_1} (g_2)$, so
\[
[g_1:g_2]=[\widetilde{g_1},s_{J_0}(g_2)]=\chi_{\widetilde{g_1}} \left( s_{J_0}(g_2) \right) \cdot s_{J_0}(g_2) ^{-1} = 
\]
\[
=s_{J_0} \left( \chi_{g_1}(g_2) \right) \cdot s_{J_0}(g_2) ^{-1} =s_{J_0} \left( \chi_{g_1}(g_2) \cdot g_2 ^{-1} \right) = s_{J_0} \left( [g_1,g_2] \right) =1
\]
\item By continuity of generalized Steinberg symbols (Lemma \ref{lemma:prop_stein_1}), there is an ideal $I$ such that $\left\{ a_1 : c \right\}_\mathcal{Q} = 1$ if $c\in \mathbb{G}_m(\mathbf{O} ^*)[I]$. Thus,
\[
\left\{ a_1 : a_2 \right\}_\mathcal{Q} = \left\{ b_1 : b_2 \right\}_\mathcal{Q} \cdot \left\{ a_1 b_1 ^{-1} : b_2 \right\}_\mathcal{Q} \cdot \left\{ a_1 : a_2 b_2 ^{-1} \right\}_\mathcal{Q} = \left\{ b_1 : b_2 \right\}_\mathcal{Q} .
\]
\item Let $a_1=(\alpha_1,\alpha_1 ^{-1}), a_2=(\alpha_2,\alpha_2 ^{-1}), b_1=(\beta_1,\beta_1 ^{-1}), b_2=(\beta_2,\beta_2 ^{-1})$. Without loss of generality, $\val_P(\alpha_1)=1$. Then $\val_P(\alpha_2)=1$ and $\val_P(\alpha_2 ^{-1} \beta_2 -1) \geq 1$. 

Assume first that $\val_P(\alpha_2 ^{-1} \beta_2 -1)=1$. Denote $\gamma=1-\alpha_2 ^{-1} \beta_2$ and $c=(\gamma,\gamma ^{-1})\in \mathbb{G}_m(\mathbf{A} ^*)$. Since $\val_P(\gamma)=1$, we have $\left( a_1 c ^{-1} \right)_P, \left( b_1 c ^{-1} \right)_P \in \mathbb{G}_m(\mathbf{O} ^*)$. Thus,
\[
\left\{ a_1 : a_2 \right\}_P\overset{\ref{prop:Steinberg.symbols.J_0}}{=}\left\{ c : a_2 \right\}_P \overset{\ref{lemma:prop_stein_2}}{=} \left\{ c : (1-c)a_2 \right\}_P = \left\{ c : b_2 \right\}_P \overset{\ref{prop:Steinberg.symbols.J_0}}{=} \left\{ b_1 : b_2 \right\}_P.
\]

If $\val_P(\alpha_2 ^{-1} \beta_2-1) \geq 2$, then, since $P$ is not dyadic, there is $\delta \in (K_P^*)^ \times$ such that $\val_P(\alpha_2 \delta ^{-1} -1)=\val_P(\beta_2 \delta ^{-1} -1)=1$. Writing $d=(\delta,\delta ^{-1})\in \mathbb{G}_m(\mathbf{O})$, we have $\left\{ a_1 : a_2 \right\}_P=\left\{ a_1 : d \right\}_P=\left\{ b_1 : b_2 \right\}_P$.
\end{enumerate} 
\end{proof} 

\subsection{Commutators in $\Spin_{f_s}(O)$} \label{subsec:bdd.gen.f_s}

\begin{lemma} \label{lem:CKP} For every $I\in \mathcal{I}^{K^*}$, the commutator subgroup of $\SL_3(O^*)[I]$ contains $\SL_3(O^*)[I^2]$.
\end{lemma}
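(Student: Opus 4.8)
The plan is to reduce the statement, by a coordinate-wise argument, to the analogous uniform statement over $O=\Z[1/2]$, and then to derive that statement from the Carter--Keller bounded generation theorem together with the Steinberg commutator relation in $\SL_3$.

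First I would set up the reduction. Fix $I=[I_n]_n\in\mathcal{I}_{\QU}$. By Lemma \ref{lemma:ideal_pow} we have $I^2=[I_n^2]_n$, and by Los's theorem $\SL_3(\ZU)[I]=[\SL_3(O)[I_n]]_n$ and $\SL_3(\ZU)[I^2]=[\SL_3(O)[I_n^2]]_n$. Hence it suffices to produce a constant $N$, independent of the ideal, such that for every nonzero ideal $J\lhd O$ every element of $\SL_3(O)[J^2]$ is a product of at most $N$ commutators of elements of $\SL_3(O)[J]$. Indeed, given such $N$ and $g=[g_n]_n\in\SL_3(\ZU)[I^2]$, for $\mathcal{N}$-many $n$ one may write $g_n=\prod_{l=1}^N[a_{n,l},b_{n,l}]$ with $a_{n,l},b_{n,l}\in\SL_3(O)[I_n]$, and then $g=\prod_{l=1}^N\big[[a_{n,l}]_n,[b_{n,l}]_n\big]$ exhibits $g$ in the commutator subgroup of $\SL_3(\ZU)[I]$.

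Next I would prove the uniform statement over $O$. Write $E_{i,j}(a)$ for the elementary matrix with entry $a$ in position $(i,j)$. Since $\Z[1/2]$ is a principal ideal domain, a nonzero ideal is $J=(m)$ and $J^2=(m^2)$, so every $a\in J^2$ factors as $a=bc$ with $b\in J$ and $c\in J$ (take $c=m$ and $b=a/m$). For $\{i,j,k\}=\{1,2,3\}$ the Steinberg relation $[E_{i,k}(b),E_{k,j}(c)]=E_{i,j}(bc)$, valid over any commutative ring, gives
\[
E_{i,j}(a)=\big[E_{i,k}(b),E_{k,j}(c)\big],\qquad E_{i,k}(b),\,E_{k,j}(c)\in\SL_3(O)[J],
\]
so every elementary matrix with entry in $J^2$ is a single commutator of elements of $\SL_3(O)[J]$. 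On the other hand, by the Carter--Keller theorem \cite{CK83} in its relative form---using also that the relevant Mennicke symbols for $\Z[1/2]$ vanish, so that $\SL_3(O)[J^2]=E_3(O,J^2)$---there is a constant $\nu$, independent of $J$, such that every element of $\SL_3(O)[J^2]$ is a product of at most $\nu$ elementary matrices $E_{i,j}(a)$ with $a\in J^2$. Combining, every element of $\SL_3(O)[J^2]$ is a product of at most $N:=\nu$ commutators of elements of $\SL_3(O)[J]$, which is what the reduction step required.

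The only real content is the invocation of Carter--Keller in its ideal-uniform relative form; the Steinberg identity, the principal-ideal-domain structure of $\Z[1/2]$, and the transfer to the ultrapower are routine. As an alternative to citing the relative bounded generation directly, one could instead follow the proof of Lemma \ref{lemma:Nc}: prove the equality $[\SL_3(O)[J],\SL_3(O)[J]]=\SL_3(O)[J^2]$ by reducing via the Chinese Remainder Theorem to the local graded computation, in which the group commutator on $\SL_3(O_P)[P^k]/\SL_3(O_P)[P^{k+1}]$ is identified with the Lie bracket on $\mathfrak{sl}_3(O/P)$ and $\mathfrak{sl}_3$ is perfect over every residue field of $\Z[1/2]$, and then combine this with a uniform bound on the commutator widths of the groups $\SL_3(O)[J]$.
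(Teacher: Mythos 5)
Your main line is the same as the paper's: reduce by Los's theorem to a single uniform bound over $O$ (a constant $N$ such that every element of $\SL_3(O)[J^2]$ is a product of $N$ commutators of elements of $\SL_3(O)[J]$), realize elementary matrices with entries in $J^2$ as such commutators via the Steinberg relation and the fact that $\Z[\frac12]$ is a PID, and then invoke uniform relative bounded elementary generation together with the CSP. The one step that needs repair is your formulation of the bounded-generation input. The subgroup of $\SL_3(O)$ generated by the \emph{plain} elementary matrices $e_{i,j}(a)$ with $a\in J^2$ is, a priori, a proper subgroup of the relative elementary group $E_3(O,J^2)$, which is by definition the normal closure of that set; in general one only has the containments $E_3(O,J^4)\subseteq \langle e_{i,j}(a)\mid a\in J^2\rangle\subseteq E_3(O,J^2)$ (a result of Tits). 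So the statement ``every element of $\SL_3(O)[J^2]$ is a product of at most $\nu$ elementary matrices with entries in $J^2$'' is not the form in which the Carter--Keller/Witte Morris theorem is available, and it is not automatic. The fix is immediate and is exactly what the paper does: work instead with the set $X_{J^2}$ of $\SL_3(O)$-conjugates of $e_{1,3}(t)$, $t\in J^2$. By \cite{Morr07}, $E(K,J^2)=X_{J^2}^{C}$ with $C$ uniform in $J$; and since $\SL_3(O)[J]$ is normal in $\SL_3(O)$, the set $Y_J$ of commutators $[g,h]$ with $g,h\in\SL_3(O)[J]$ is conjugation-invariant, so your Steinberg-relation computation $e_{1,3}(t)=[e_{1,2}(t/m),e_{2,3}(m)]$ already places all of $X_{J^2}$ inside $Y_J$. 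With this substitution your argument closes as in the paper.

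A caution about your proposed alternative: the Chinese-Remainder/graded-Lie-algebra argument of Lemma \ref{lemma:Nc} takes place in the adelic group $\Spin_{f_s}(\ia)[I]$, which decomposes as a product of local groups; the discrete group $\SL_3(O)[J]$ admits no such decomposition, and the ``uniform bound on the commutator widths of the groups $\SL_3(O)[J]$'' that this route would additionally require is precisely the global bounded-generation input you were hoping to bypass. So that alternative is not independent of Carter--Keller and should not be presented as such.
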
 

\begin{proof} It is enough to show that there is a constant $C$ such that, for every ideal $I\in \mathcal{I}^{K}$, every element of $\SL_3(O)[I^2]$ is a product of $C$ commutators of elements of $\SL_3(O)[I]$. Let $e_{i,j}(t)$ be the elementary matrix with $t$ in the $(i,j)$th entry. For an ideal $I\in \mathcal{I}^K$, denote $X_I=\left\{ g ^{-1} e_{1,3}(t) g\mid t\in I , g\in \SL_3(O) \right\}$, $Y_I=\left\{ [g,h] \mid g,h\in \SL_3(O)[I] \right\}$, and let $E(K,I)= \langle X_I \rangle$. 

The identity $[e_{1,2}(t),e_{2,3}(s)]=e_{1,3}(st)$ implies that, if $t\in I^2$, then $e_{1,3}(t)\in Y_I$. By \cite{Morr07}, there is a constant $C$ such that, for every $I\in \mathcal{I}_K$, $E(K,I)=X_I^C$. Thus, for every $I\in \mathcal{I}_K$,
\[
E(K,I^2) \subseteq Y_I^C.
\]
By the Congruence Subgroup Property, $E(K,I^2)=\SL_3(O)[I^2]$, proving the claim.
\end{proof}

\begin{proposition}\label{prop:CKP} For every $I\in \mathcal{I}^{K^*}$, the commutator subgroup of $\Spin_{f_s}(O^*))[I]$, $\Spin_{f_s}(O^*)[I]'$, contains $\Spin_{f_s}(O^*)[I^4]$.
\end{proposition}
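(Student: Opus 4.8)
The plan is to reduce the statement about $\Spin_{f_s}$ to the statement about $\SL_3$ proved in Lemma~\ref{lem:CKP}, by exhibiting enough copies of $\SL_3$ inside $\Spin_{f_s}$. Recall that $\Spin_{f_s}$ is the simply connected Chevalley group of type $D_n$ with $n\ge 10$. Inside the $D_n$ root system one finds subsystems of type $A_2$, generated by pairs of roots at $120^\circ$ (e.g. the roots $\alpha_1,\alpha_2$ of \S\ref{sec:h_1}), and each such $A_2$-subsystem gives an embedding $\SL_3 \hookrightarrow \Spin_{f_s}$ defined over $O$, compatible with congruence subgroups: $\SL_3(\ZU)[I]$ maps into $\Spin_{f_s}(\ZU)[I]$ and root subgroups go to root subgroups. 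First I would fix such an embedding $\iota:\SL_3\hookrightarrow\Spin_{f_s}$ and note that, by Lemma~\ref{lem:CKP} and a coordinate-wise argument, for every $I\in\mathcal{I}_{\QU}$ the group $\iota(\SL_3(\ZU)[I])'$ contains all elements $\iota(g)$ with $g\in\SL_3(\ZU)[I^2]$; in particular the commutator subgroup of $\Spin_{f_s}(\ZU)[I]$ contains $\iota\big(e_{\gamma}(t)\big)$ for every root $\gamma$ of the $A_2$-subsystem and every $t\in I^2$, where $e_\gamma$ denotes the corresponding root element of $\Spin_{f_s}$.

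The next step is to show that the root elements $e_\gamma(t)$, for $\gamma$ ranging over \emph{all} roots of $D_n$ and $t\in I^2$, lie in $\Spin_{f_s}(\ZU)[I]'$. Since the Weyl group of $D_n$ acts transitively on roots (all roots of $D_n$ have the same length) and Weyl group elements are realized by elements of $\Spin_{f_s}(O)$ normalizing each congruence subgroup, conjugating the root elements coming from the fixed $A_2$ by these yields $e_\gamma(t)\in\Spin_{f_s}(\ZU)[I]'$ for all roots $\gamma$. Actually one only needs: every root of $D_n$ lies in some $A_2$-subsystem (true, since any root is part of a $120^\circ$ pair when $n\ge 3$), so in fact one can argue directly root-by-root without invoking the Weyl action, whichever is cleaner. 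Either way, we conclude that the subgroup $E(\ZU,I^2)$ generated by $\{e_\gamma(t)\mid \gamma \text{ a root},\ t\in I^2\}$ is contained in $\Spin_{f_s}(\ZU)[I]'$.

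It then remains to identify $E(\ZU,I^2)$ with the congruence subgroup $\Spin_{f_s}(\ZU)[I^{c}]$ for a suitable power $c$ — the statement claims $c=4$. Here I would use the standard commutator relations in a Chevalley group: for the structure constant $N_{\alpha,\beta}=\pm1,\pm2$ in type $D_n$, the identity $[e_\alpha(s),e_\beta(t)]=e_{\alpha+\beta}(N_{\alpha,\beta}st)\cdots$ shows that starting from root elements with entries in $I^2$ one generates root elements with entries in $I^4$ (the factor of $2$ from $N_{\alpha,\beta}$ being a unit in $O=\Z[1/2]$, hence harmless), and conversely, using that $2$ is invertible and $n$ is large, every element of the congruence subgroup $\Spin_{f_s}(\ZU)[I^4]$ is a bounded-length product of such root elements. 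This last fact is precisely a Chevalley-group version of Suslin's/Tits' normal generation of congruence subgroups by elementary (root) elements, together with the Congruence Subgroup Property for $\Spin_{f_s}(O)$; combined with the Bounded Generation input already available in Setting~\ref{setting:2} and a coordinate-wise argument, it passes to $\ZU$. The main obstacle is bookkeeping the exact power of $I$: one must check that the commutator identities and the normal-generation statement together yield $I^4$ (and not a larger power), which is where the precise form of the $D_n$ commutator relations and the invertibility of $2$ are essential; I expect the power $4$ arises as "$2\times 2$", one factor of $2$ from passing through a single commutator of $\SL_3$-type (Lemma~\ref{lem:CKP} gives $I\mapsto I^2$) and another from the $D_n$ commutator needed to reach roots outside the chosen $A_2$.
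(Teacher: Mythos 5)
Your first two paragraphs are sound and match two of the three ingredients in the paper's proof: the Chevalley commutator formula in the simply-laced system $D_n$ (where the structure constants are $\pm1$, not $\pm2$) gives $E(\ZU,I^2)\subseteq \Spin_{f_s}(\ZU)[I]'$ directly, since every root is a sum of two roots and the commutator subgroup is normal; and Lemma \ref{lem:CKP} plus an $O$-embedding $\rho:\SL_3\rightarrow\Spin_{f_s}$ gives $\rho(\SL_3(\ZU)[I^2])\subseteq \Spin_{f_s}(\ZU)[I]'$. (Your detour through Weyl-group conjugation is unnecessary but harmless.)

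The genuine gap is in your third step. You need a containment of the form $\Spin_{f_s}(\ZU)[I^4]\subseteq E(\ZU,I^2)\cdot\rho(\SL_3(\ZU)[I^2])$, and you propose to get it from ``normal generation by root elements + CSP + bounded generation.'' This does not work for two reasons. First, the CSP only says that the infinite normal subgroup $E(O,I^2)$ contains \emph{some} principal congruence subgroup $\Spin_{f_s}(O)[J]$, with no control of $J$ in terms of $I$; but the whole point of the proposition is the uniform exponent $4$, and uniformity (of both the exponent and the word length) is exactly what is needed for the coordinate-wise passage to the ultrapower $\ZU$ — a non-uniform statement evaporates in the ultraproduct. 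Second, the uniform statement that is actually available in the literature is not ``$\Spin_{f_s}(O)[I^2]\subseteq E(O,I)$'' but the Sinchuk--Smolensky decomposition \cite[Theorem 4.8]{SS18}: $\Spin_{f_s}(O)[I^2]\subseteq E(O,I)\cdot\rho(\SL_3(O)[I])$, which retains an $\SL_3(O)[I]$ correction factor. That factor is precisely what your first paragraph is equipped to absorb via Lemma \ref{lem:CKP}, so you are one citation away from the paper's argument: apply the decomposition with $I^2$ in place of $I$ to get $\Spin_{f_s}(\ZU)[I^4]\subseteq E(\ZU,I^2)\cdot\rho(\SL_3(\ZU)[I^2])\subseteq \Spin_{f_s}(\ZU)[I]'$. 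As written, however, the decisive decomposition is asserted rather than proved or cited, and the heuristic ``$4=2\times2$ from two layers of commutators'' does not substitute for it.
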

\begin{proof} Choosing a Chevalley basis, for any root $\alpha$ we get a 1-parameter subgroup $x_\alpha : \mathbb{G}_a \rightarrow \Spin_{f_s}$. Denote by $E(K^*,I)$ the normal subgroup of $\Spin_{f_s}(O^*)$ generated by the root elements $\left\{ x_\alpha(t) \mid t\in I , \alpha \in \Phi\right\}$. We use the following claims: \begin{enumerate}
\item \label{cond:CKP.roots} If $\alpha,\beta$ are roots such that $\alpha + \beta$ is also a root, then there is $\epsilon_{\alpha,\beta}\in \left\{ \pm 1 \right\}$ such that $[x_\alpha(t),x_\beta(s)]=x_{\alpha+\beta}(\epsilon_{\alpha,\beta}ts)$. Thus, $E(K^*,I^2) \subseteq \Spin_{f_s}(O^*)[I]'$.
\item \label{cond:CKP.CKP} By \cite[Theorem 4.8]{SS18}, there is a $K^*$-embedding $\rho : \SL_3 \rightarrow \Spin_{f_s}$ such that $\Spin_{f_s}(O^*)[I^2] \subseteq E(K^*,I) \cdot \rho(\SL_3(O^*,I))$.
\item \label{cond:CKP.lemma} By Lemma \ref{lem:CKP}, $\SL_3(O^*,I^2) \subseteq \SL_3(O^*,I)'$.
\end{enumerate} 
Thus,
\[
\Spin_{f_s}(O^*)[I^4] \overset{\eqref{cond:CKP.CKP}}\subseteq E(K^*,I^2) \cdot \rho \left( \SL_3(O^*,I^2) \right) \overset{\eqref{cond:CKP.roots},\eqref{cond:CKP.lemma}}\subseteq \Spin_{f_s}(O^*)[I]'.
\]
\end{proof}

\section{Finite conjugacy width} \label{sec:FCW}

In this section we prove Theorem \ref{thm:main}. An outline of the proof is as follows:
\begin{enumerate}
\item In \S\ref{subsec:FCW.split.finite.metaplectic}, we show that Theorem \ref{thm:main} follows from the following:


\begin{theorem} \label{thm:split.finite.metaplectic} Assume the Generalized Riemann Hypothesis. Let $K,S,n,f_a$ be as in \S\ref{sec:notations}. Assume that $n \geq 12$, that $f_a$ splits over $\mathbf{A}$, that the $S$-congruence kernel of $\Spin_{f_a}$ is trivial, and that there is a place $w_0\in S$ such that $i_{f_a}(K_{w_0}^n) \geq 2$. Then every $\Spin_{f_a}(K^*)$-metaplectic extension has a finite kernel.
\end{theorem} 

\item In \S\ref{subsec:FCW.criterion} we rephrase Theorem \ref{thm:split.finite.metaplectic} as a claim about central extensions of $\Spin_{f_s}(\mathbf{A} ^*)$ and find a criterion for triviality of Steinberg symbol $\left\{ a_1 : a_2 \right\}$.

\item In \S\ref{subsec:FCW.approximation} we use the approximation results from \S\ref{sec:approximation.tori} and the criterion from \S\ref{subsec:FCW.criterion} to prove Theorem \ref{thm:split.finite.metaplectic}.
\end{enumerate} 

\subsection{Reduction to Theorem \ref{thm:split.finite.metaplectic}} \label{subsec:FCW.split.finite.metaplectic} 

\begin{lemma} \label{lem:reduction.to.subspace} Assume that $n \geq 17$ and that there is nonarchimedean place $w_0$ in $S$. Then there is a subspace $V \subseteq K^n$ such that \begin{enumerate}
\item $\dim V=12$.
\item $f_a\restriction V$ splits over $\mathbf{A}$.
\item $i_{f_a}(V \otimes K_{w_0}) \geq 2$.
\item $i_{f_a}(V^\perp \otimes K_{w_0}) \geq 1$.
\end{enumerate}
\end{lemma} 

\begin{proof} For every place $v$ we define a 12-dimensional quadratic space $(W_v,q_v)$: \begin{enumerate}
\item If $v$ is non-real, let $(W_v,q_v)$ be the split 12-dimensional quadratic space.
\item If $v$ is real and $(K_v^n,f_a)$ has signature $(r,s)$ define $(W_v,q_v)$ as follows: \begin{enumerate}
\item If $r \leq 5$, let $(W_v,q_v)$ be the 12-dimensional negative definite form.
\item If $6 \leq r \leq 11$, let $(W_v,q_v)$ be the 12-dimensional form with signature $(6,6)$.
\item If $r \geq 12$, let $(W_v,q_v)$ be the 12-dimensional positive definite form.
\end{enumerate} 
\end{enumerate} 
The discriminants and Hasse invariants of $(W_v,q_v)$ are all 1. By \cite[Theorem 72:1]{Ome71}, there is a quadratic space $(U,g)$ over $K$ such that $(U \otimes K_v,g)$ is isometric to $(W_v,q_v)$, for every $v$. By construction, for every $v$ there is an isometric embedding $(U \otimes K_v,g)\cong (W_v,q_v) \hookrightarrow (K_v^n,f_a)$. By Hasse--Minkowski, there is an isometric embedding $(U,g) \hookrightarrow (K^n,f_a)$. Letting $V$ be the image of this embedding, the first two properties follow from the construction of $(W_v,q_v)$ and the last two properties follow from the universality of 5-dimensional forms over $K_{w_0}$.
\end{proof} 

\begin{notation} Let $(V,q)$ be a quadratic space and let $w_0\in S$. We denote by $ACW(q,w_0)$ the conjunction of the following conditions: \begin{enumerate}
\item $q$ is anisotropic.
\item The $S$-congruence kernel of $\Spin_q$ is trivial.
\item $i_q \left( V \otimes K_{w_0} \right) \geq 2$.
\end{enumerate} 
\end{notation} 

\begin{lemma} \label{lem:commutator.is.thick} Let $(V,q)$ be a quadratic space over $K$ and let $w_0\in S$. Assume that $ACW(q,w_0)$ holds. Let $X$ be the $\mathcal{C}_{\Spin_q,O}$-sifter $X_\Delta = [\Delta,\Delta]$. There is a natural number $n$ such that $C^n$ is a thick.
\end{lemma} 

\begin{proof} Denote $\Gamma=\Spin_q(O)$. Let $N,J$ be as in Theorem \ref{thm:Kneser.commutators} and let $n=\max \left\{ N,3 \right\}$. We show that Definition \ref{def:thick.sifter} holds for $X^n$ with $\Delta_1=\Gamma[J]$.

Let $\Delta \in \mathcal{C}_{\Spin_q,O}$. Strong Approximation and a local computation (see \cite[Corollary 2.8]{AGKS}) show that the congruence closure of $X_\Delta^3$ has non-empty interior.

Let $\Delta_2$ be a principal congruence subgroup. By strong approximation, there is a 1-separated element $g\in \left[ \Delta_2 , \Delta_2 \right]$. By Theorem \ref{thm:Kneser.commutators}, there is a principal congruence subgroup $\Delta_3$ such that
\[
\left[ \Delta_1,\Delta_3\right] \subseteq \gcl_{\Spin_{f_a}(O)}(g)^N \subseteq \left[ \Delta_2 , \Delta_2 \right]^N.
\]
\end{proof} 

\begin{lemma} \label{lem:finite.metaplectic.implies.BCW} Let $(V,q)$ be a nondegenerate quadratic space over $K$ and let $w_0\in S$. Assume that $ACW(q,w_0)$ holds and that every $\Spin_q(K^*)$-metaplectic extension has a finite kernel. Then every $S$-arithmetic subgroup of $\Spin_q(K)$ has bounded commutator width.
\end{lemma} 

\begin{proof} We verify the conditions of Theorem \ref{thm:commutator.width.finite.metaplectic}. \begin{enumerate}
\item The $S$-congruence kernel is trivial by assumption.
\item The condition on the commutator sifter holds by Lemma \ref{lem:commutator.is.thick}.
\item Normal generation by congruence subgroups holds by Corollary \ref{cor:spin*.perfect}.
\item Finite metaplectic kernel holds by assumption.
\end{enumerate} 
Thus, $\Spin_q(O)$ has bounded commutator width. By the strong CSP, every group commensurable to $\Spin_q(O)$ also has bounded commutator width.
\end{proof} 

\begin{lemma} \label{lem:profinite.BCW.weak} Let $G$ be a simply connected simple algebraic group over $K$ and let $H \subseteq G(\mathbf{A})$ be a compact and open subgroup. Then there is a constant $N$ such that \begin{enumerate}
\item For every $h\in H$, $\wid \left( \gcl_H(h) \right) < N$.
\item For every $I\in \mathcal{I} ^K$, $\wid\left( \left[ H[I],H[I] \right] \right) < N$.
\end{enumerate} 
\end{lemma} 

\begin{proof} \begin{enumerate}
\item This follows from \cite[Lemmas 3.20 and 3.22]{AM22}.
\item By definition $H[I]=H\cap G(\mathbf{O})[I]$, so we can assume that $H \subseteq G(\mathbf{O})$. Since $H$ is open there are natural numbers $(a_P)_{P\notin S}$ such that $\prod_{P \notin S} G(O_P)[P^{a_P}] \subseteq H$ and, moreover, only finitely many $a_P$ are nonzero. 

By \cite[Lemmas 3.20 and 3.22]{AM22} there is a constant $C$ and, for every $P\in \mathcal{I} ^K$, a constant $c_P$ such that \begin{enumerate}
\item For every $n\in \mathbb{N}$, 
\[
G(O_P)[P^{2n+c_P}] \subseteq \big[ G(O_P)[P^n] , G(O_P)[P^n] \big]^C \subseteq G(O_P)[P^{2n}].
\]
\item $c_P=0$ for all but finitely many $P$.
\end{enumerate} 

If $I=\prod_{P\notin S} P^{b_P}$, then
\[
\prod_{P\notin S} G(O_P)[P^{a_P+b_P}] \subseteq H[I] \subseteq \prod_{P\notin S} G(O_P)[P^{b_P}],
\]
so
\[
\prod_{P\notin S} G(O_P)[P^{2(a_P+b_P)+c_P}] \subseteq \left[ H[I] , H[I] \right]^C \subseteq \prod_{P\notin S} G(O_P)[P^{2b_P}].
\]
The claim follows since $\left[ \prod_{P\notin S} G(O_P)[P^{2b_P}] : \prod_{P\notin S} G(O_P)[P^{2(a_P+b_P)+c_P}] \right]$ is bounded uniformly in $I$.
\end{enumerate}
\end{proof} 

\begin{lemma} \label{lem:BCW.goes.up} Let $(W,q)$ be quadratic space over $K$, let $V \subseteq W$ be a subspace, let $w_0\in S$, and let $\Gamma \subseteq \Spin_q(K)$ be an $S$-arithmetic subgroup. Assume that both $ACW(q,w_0)$ and $ACW(q\restriction V,w_0)$ hold and that $i_{w_0}(V^\perp) \geq 1$. If $\Gamma \cap \Spin_{q\restriction V}(K)$ has bounded commutator width, then so does $\Gamma$.
\end{lemma} 

\begin{proof} Denote $\Delta=\Gamma \cap \Spin_{q\restriction V}(K)$. By assumption, there is $N_1$ such that $\left[ \Delta[I] , \Delta[I] \right]^{N_1}= \Delta[I]'$, for every $I\in \mathcal{I} ^K$. Let $N_2$ be the constant obtained by applying Lemma \ref{lem:Kneser.uff} to $q$ and $\Gamma$. Let $N_3$ be the constant obtained by applying Lemma \ref{lem:profinite.BCW.weak} to $\Spin_q$ and the congruence closure of $\Gamma$. We show that the commutator width of any congruence subgroup of $\Gamma$ is bounded by $N_1+N_2+N_3$.

Let $I\in \mathcal{I} ^K$. By the assumptions there is an ideal $I_1 \subseteq I$ such that $\left[ \Delta[I] , \Delta[I] \right]^{N_1} \supseteq \Delta[I_1]$. By strong approximation, there is a 1-separated element $\gamma \in \left[ \Gamma[I_1],\Gamma[I_1] \right]$. By \ref{lem:Kneser.uff}, there is an ideal $I_2 \subseteq I_1$ such that $\Gamma[I_2] \subseteq \gcl_\Gamma(\gamma)^{N_2} \cdot \Spin_{q\restriction V}(O)$, so 
\[
\Gamma[I_2] \subseteq \gcl_\Gamma(\gamma)^{N_2} \cdot \Delta[I_1] \subseteq \left[ \Gamma[I] , \Gamma[I] \right]^{N_1+N_2}.
\] 
By Lemma \ref{lem:profinite.BCW.weak}, the width of $\left[ \Gamma[I] , \Gamma[I] \right]$ is at most $N_1+N_2+N_3$.
\end{proof} 

\begin{lemma} \label{lem:commutator.width.to.conjugacy.width} Let $(V,q)$ be a quadratic space over $K$, let $w_0\in S$, and let $\Gamma \subseteq \Spin_q(K)$ be an $S$-arithmetic group. Assume that $ACW(q,w_0)$ holds and that $\Gamma$ has bounded commutator width. Then there is a constant $C$ such that, for every $\gamma \in \Gamma$,
\[
\wid_\Gamma (\gcl_\Gamma (\gamma)) \leq C \cdot \max \left\{ \wid_{\Spin_q(K_v)}\left( \gcl_{\Spin_q(K_v)} (\gamma) \right) \mid \text{$q_v$ is anisotropic} \right\}.
\]
\end{lemma} 

\begin{proof} Let $N_1$ be the constant obtained by applying Theorem \ref{thm:Kneser.commutators} to $\Gamma$ and let $N_2$ be the constant obtained by applying Lemma \ref{lem:profinite.BCW.weak} to $\Spin_q$ and the congruence closure of $\Gamma$. We show that the claim holds for $C=N_1+N_2$.

Let $\gamma \in \Gamma \smallsetminus Z(\Gamma)$. Denote $w=\max \left\{ \wid_{\Spin_q(K_v)}\left( \gcl_{\Spin_q(K_v)} (\gamma) \right) \mid \text{$q_v$ is anisotropic} \right\}$. By strong approximation, $\gcl_\Gamma(\gamma)^w$ is dense in $\prod_{\text{$q_v$ anisotropic}} \Spin_q(K_v)$, so it contains a 1-separated element $\delta$. By Theorem \ref{thm:Kneser.commutators} and the assumption that $\Gamma$ has the strong CSP, $\gcl_\Gamma(\gamma)^{N_1w} \supseteq \gcl_\Gamma(\delta)^{N_1}$ contains a congruence subgroup. By Lemma \ref{lem:profinite.BCW.weak}, $\gcl_\Gamma (\gamma)^{N_1w+N_2}$ is a subgroup and the claim follows.
\end{proof}

\begin{proof}[Proof that Theorem \ref{thm:split.finite.metaplectic} implies Theorem \ref{thm:main}] Assume that the condition of Theorem \ref{thm:main} hold. If $S$ contains a nonarchimedean place, let $w_0$ be that place. Otherwise, let $w_0$ be the place from the second condition. In both cases, $ACW(f_a,w_0)$ holds.

If $S$ contains a nonarchimedean place, let $V \subseteq K^n$ be the subspace given by Lemma \ref{lem:reduction.to.subspace}. If $S$ does not contain a nonarchimedean place, let $V=K^n$. In both cases, $f_a\restriction V$ splits over $\mathbf{A}$ and $ACW(f_a\restriction V,w_0)$ holds.

By Theorem \ref{thm:split.finite.metaplectic}, every $\Spin_{f_a\restriction V}(K^*)$-metaplectic extension has finite kernel. By Lemma \ref{lem:finite.metaplectic.implies.BCW}, $\Gamma \cap \Spin_{f_a\restriction V}(K)$ has bounded commutator width. By Lemma \ref{lem:BCW.goes.up}, $\Gamma$ has bounded commutator width, so Lemma \ref{lem:commutator.width.to.conjugacy.width} implies the claim of Theorem \ref{thm:main}.
\end{proof} 

\subsection{$\psi \left( \Spin_{f_a}(K^*) \right)$-metaplectic extensions and their Steinberg symbols} \label{subsec:FCW.criterion}

Assume now that $n,K,S,f_a$ are as in Theorem \ref{thm:split.finite.metaplectic} and fix an isometry $\psi:(\mathbf{A} ^n,f_a) \rightarrow (\mathbf{A} ^n,f_s)$. 

$\psi$ induces an isometry $\left( (\mathbf{A} ^*)^n,f_s) \right) \rightarrow \left( (\mathbf{A} ^*)^n,f_s \right)$ and an isomorphism of topological groups $\Spin_{f_a}(\mathbf{A} ^*) \rightarrow \Spin_{f_s}(\mathbf{A} ^*)$, both of which we continue to denote by $\psi$. We identify the completion of $\psi \left( \Spin_{f_a}(K^*) \right)$ with $\Spin_{f_s}(\mathbf{A} ^*)$.

If
\begin{equation} \label{eq:f_a.metaplectic.ext}
1 \rightarrow D \rightarrow E \stackrel{\xi}{\rightarrow} \Spin_{f_a}(\mathbf{A} ^*) \rightarrow 1
\end{equation}
is a $\Spin_{f_a}(K^*)$-metaplectic extension, then
\begin{equation} \label{eq:f_a.metaplectic.ext}
1 \rightarrow D \rightarrow E \stackrel{\eta:=\psi \circ \xi}{\rightarrow} \Spin_{f_s}(\mathbf{A} ^*) \rightarrow 1
\end{equation}
is a $\psi \left( \Spin_{f_a}(K^*) \right)$-metaplectic extension, namely, it is a topological central extension with a discrete kernel, a continuous splitting over an open subgroup of the form $\Spin_{f_s}(\mathbf{O} ^*)[J_0]$, and a (possibly discontinuous) splitting over $\psi \left( \Spin_{f_a}(K^*) \right)$ whose image is dense. \\

By the last paragraph, Theorem \ref{thm:split.finite.metaplectic} is equivalent to the statement that every $\psi \left( \Spin_{f_a}(K^*) \right)$-metaplectic extension splits. The proof uses properties of Steinberg symbols in such extensions. In the rest of this subsection, we prove one such property, Lemma \ref{lem:switcharoo}.\\

\begin{lemma} \label{lem:Witt} Let $R$ be either a local field or the ring of integers in a local field, let $(V,q)$ be a non-degenerate quadratic space of dimension at least 6 over $R$, and let $v_1,v_2\in V$. Assume that $q(v_1)=q(v_2)\in R ^ \times$ and that 2 is invertible in $R$. Then there is an element $r\in \Spin_q(R)$ such that $r(v_1)=v_2$.
\end{lemma}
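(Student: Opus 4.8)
The statement is exactly Witt's extension theorem combined with the surjectivity of the spin group onto the (appropriate) orthogonal group, so the plan is to reduce to those two classical facts. First I would observe that since $q(v_1)=q(v_2)\neq 0$, the vectors $v_1,v_2$ are anisotropic, so $Lv_1$ and $Lv_2$ are non-degenerate one-dimensional subspaces of $(V,q)$. By Witt's extension theorem (valid for any non-degenerate quadratic space over a field of characteristic $\neq 2$, in particular over a local field of characteristic zero), the isometry $Lv_1 \to Lv_2$ sending $v_1$ to $v_2$ extends to an isometry $g_0$ of the whole space $(V,q)$, i.e.\ $g_0\in \Or_q(L)$ with $g_0(v_1)=v_2$.

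The issue is that $g_0$ need not lie in the image of $\Spin_q(L)$; a priori it only lies in $\Or_q(L)$, and the image of $\Spin_q$ under the composition $\Spin_q \to \SO_q$ (the map $g\mapsto \tau_g$ recalled in \S\ref{setting:fcw}) has index dividing $4$ in $\Or_q(L)$ (cokernel controlled by $\{\pm1\}$ for the determinant and by the spinor norm $L^\times/(L^\times)^2$). So the plan is to modify $g_0$ within its coset, keeping the property $g(v_1)=v_2$, until it lands in the image of $\Spin_q$. The key tool is that the stabilizer of $v_1$ in $\Or_q(L)$ acts on $v_1^\perp$, which is a non-degenerate quadratic space of dimension at least $4\geq 3$; over a non-Archimedean local field such a space represents every element of $L^\times$ in enough directions that the reflections $\tau_w$ in anisotropic vectors $w\in v_1^\perp$ realize every spinor norm class and both determinant signs. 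Concretely: given $g_0$, pick an anisotropic $w\in v_1^\perp$ with $q(w)$ in a prescribed square class (possible since a non-degenerate quadratic form in $\geq 3$ variables over a local field is universal, representing all of $L^\times$); then $g_0 \circ \tau_w$ still sends $v_1$ to $v_2$ (as $\tau_w$ fixes $v_1$), has opposite determinant, and adjusted spinor norm. Composing with at most two such reflections we arrange that the resulting isometry $g_1$ has determinant $1$ and trivial spinor norm, hence lies in the image of $\Spin_q(L)\to \SO_q(L)$. Lift $g_1$ to $g\in\Spin_q(L)$; then $\tau_g=g_1$ satisfies $\tau_g(v_1)=v_2$, i.e.\ $gv_1g'=v_2$, which is the assertion.

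The main obstacle is purely bookkeeping about the cokernel of $\Spin_q\to\Or_q$: one must check that reflections in vectors of $v_1^\perp$ suffice to hit every class in $\{\pm1\}\times L^\times/(L^\times)^2$ relevant to landing in the spin image, which uses that $\dim v_1^\perp = \dim V - 1 \geq 4$ and the universality of non-degenerate local quadratic forms in $\geq 3$ variables (a standard consequence of, e.g., the classification of quadratic forms over local fields via Hasse invariant and discriminant). Once that is in place, the rest is immediate from Witt's theorem and the definition of $\Spin_q$. No deep input is needed; the dimension hypothesis $\dim V\geq 5$ is exactly what makes $v_1^\perp$ large enough for this maneuver (even $\dim V\geq 4$, i.e.\ $\dim v_1^\perp\geq 3$, would do, but $5$ gives room to spare and matches the form $f_s$ of rank $n\geq 10$).
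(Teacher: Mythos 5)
Your plan is correct and follows essentially the same route as the paper's proof: apply Witt's extension theorem to get an isometry in $\Or_q(L)$, then correct the determinant and the spinor norm by composing with elements of the orthogonal group of the perpendicular complement of one of the vectors, using the universality of non-degenerate quadratic forms of dimension at least $4$ over a non-Archimedean local field. The only cosmetic difference is that the paper makes the correction on the side of $v_2$ (via $\Or_{q\restriction_{(Lv_2)^\perp}}$) rather than $v_1$, which is immaterial.
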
 

\begin{proof} Assume first that $v_1$ and $v_2$ are orthogonal. Then $q(v_1+v_2)=2q(v_1)\in R^ \times$ and the restriction of $q$ to $\left\{ v_1,v_2 \right\}^\perp$ is non-degenerate. Since every non-degenerate quadratic form over $R$ of dimension at least 4 is universal, there is a vector $v_3\in \left\{ v_1,v_2 \right\}^\perp$ with $q(v_3)=\frac{1}{2q(v_1)}$. The element $r:=(v_1-v_2)v_3\in \Cliff_{q}(R)$ is in $\Spin_q(R)$ and $r(v_1)=rv_1r'=v_2$.

If $v_1$ and $v_2$ are not orthogonal, find a vector $u\in \left\{ v_1,v_2 \right\} ^\perp$ with $q(u)=q(v_1)$. By the proof above, there are elements $r_1,r_2\in \Spin_{q}(R)$ such that $r_i(v_i)=u$, for $i=1,2$. The element $r:=r_2 ^{-1} r_1$ maps $v_1$ to $v_2$.
\end{proof} 



\begin{lemma} \label{lem:non.split.tori.compact} Let $n \in \mathbb{N}_{\geq 7}$, let $t\in O_+$, let $Q\in \mathcal{P}^K \smallsetminus \mathcal{S}(T_t)$, and let $v_1,v_2\in K_Q^n$ be orthogonal vectors of norms $f_s(v_1)=1,f_s(v_2)=t$. Let $\varphi:T_t \rightarrow \Spin_{f_s}$ be the $K_Q$ morphism defined by $\varphi \left( x+y\sqrt{-t} \right)=x+yv_1v_2$. Then there is $r \in \Spin_{f_s}(K_Q)$ such that $r \varphi \left( T_t(K_Q) \right) r ^{-1} \subseteq \Spin_{f_s}(O_Q)$.
\end{lemma} 

\begin{proof} By rescaling, we can assume that either $\val_Q(t)=0$ (if $Q$ is inert) or $\val_Q(t)=1$ (if $Q$ is ramified). It follows that $T_t(K_Q)=\left\{ x+y\sqrt{-t} \mid x,y\in O_K \right\}$.

Let $u_1\in O_Q^n$ be a vector of norm 1 (for example, take $e_1$). By Lemma \ref{lem:Witt}, there is an element $r_1\in \Spin_{f_s}(K_Q)$ such that $r_1(v_1)=u_1$. The subspace $U:= u_1^\perp$ is non-degenerate and $r_1(v_2)\in U$. Moreover, the quadratic module $U\cap O^n$ is non-degenerate, so there is an element $u_2\in U\cap O^n$ with $f_s(u_2)=t$. By Lemma \ref{lem:Witt}, there is an element $r_2\in \Spin_{f_s\restriction_U}(K_Q)$ such that $r_2(r_1(v_2))=u_2$. Using the embedding $U \hookrightarrow K_Q^n$, we consider $r_2$ as an element of $\Spin_{f_s}(K_Q)$.

Denote $r:=r_2r_1$. Since $r_2(u_1)=u_1$, we have $r(v_1)=u_1$. By construction, $r(v_2)=u_2$. Thus, for $x+y\sqrt{-t}\in T_t(K_Q)$, 
\[
r \varphi(x+y\sqrt{-t}) r ^{-1} =x+yu_1u_2\in\Spin_{f_s}(O_Q).
\]
\end{proof} 



\begin{lemma} \label{lem:T_t.in.spin} Assume that $n \geq 10$. Let $t\in O_+$, let $\rho$ be a trivialization of $T_t$, and let $\alpha_1,\alpha_2 \in T_t(K)$.  Then there are elements $g_1,g_2\in \Spin_{f_a}(K)$ and an element $r \in \Spin_{f_s}(\mathbf{A})$ such that, for $i=1,2$, \begin{enumerate}
\item \label{item:T_t.in.spin.St} $\left( r \psi(g_i) r ^{-1} \right)_{\mathcal{S}(T_t)}=h_i(\rho(\alpha_i))$.
\item \label{item:T_t.in.spin.supp} $\left( r \psi(g_i) r ^{-1} \right)_{\mathcal{P} ^K \smallsetminus \supp(\alpha_i)}\in \Spin_{f_s}(\mathbf{O})$. 
\end{enumerate} 
\end{lemma} 

\begin{proof} Since $h_i$ are integral, it is enough to replace Claim \ref{item:T_t.in.spin.supp} by
\begin{enumerate}[start=3]
\item \label{item:T_t.in.spin.St.c} $\left( r \psi(g_i) r ^{-1} \right)_{\mathcal{P} ^K \smallsetminus \mathcal{S}(T_t)}\in \Spin_{f_s}(\mathbf{O})$. 
\end{enumerate} 

By Hasse--Minkowski (and the assumptions $n>10$ and $t\in O_+$), there are $f_a$-orthogonal vectors $v_1,v_2,v_3,u_1,u_2,u_3\in K^n$ such that $f_a(v_1)=f_a(v_2)=f_a(v_3)=1$ and $f_a(u_1)=f_a(u_2)=f_a(u_3)=t$. Choose elements $\beta_i=x_i'+\sqrt{-t}y_i'\in T_t(\overline{K})$ such that $\beta_i ^2=\alpha_i$ and define
\[
g_1=(x_1'+y_1' v_1u_1)(x_1'-y_1'v_2u_2)
\]
and
\[
g_2=(x_2'+y_2' v_2u_2)(x_2'-y_2'v_3u_3).
\]
For $i=1,2$, $\Gal(\overline{K}/K) \cdot \beta_i=\left\{ \pm \beta_i \right\}$, so $g_i\in \Spin_{f_a}(\overline{K})^{\Gal(\overline{K}/K)}=\Spin_{f_a}(K)$.

Let 
\[
\mathcal{Q} := \left\{ Q\in \mathcal{P} ^K \smallsetminus \mathcal{P} ^K(2J_f) \mid (\forall i) \Big( \alpha_i\in T_t(O_Q) \text{ and } u_i\in O_Q^n \text{ and } v_i\in O_Q^n \Big) \right\}.
\]
$\mathcal{Q}$ is cofinite and, for $i=1,2$, $\psi(g_i)_\mathcal{Q} \in \Spin_{f_s}(\mathbf{O})$.

For every $Q\in \mathcal{P} ^K$ we define an element $r_Q\in \Spin_{f_s}(K_Q)$ as follows: \begin{enumerate}[label=\alph*.]
\item \label{item:T_t.in.spin.split.bad} If $Q\in \mathcal{S}(T_t) \smallsetminus \mathcal{Q}$, apply Lemma \ref{lem:Witt} to get an element $r_Q\in \Spin_{f_s}(K_Q)$ such that 
\begin{equation} \label{eq:T_t.in.spin.v}
h_Q \left( \psi(v_1) \right) = e_1 \quad h_Q \left( \psi(v_2) \right) = e_3 \quad h_Q \left( \psi(v_3) \right) = e_5
\end{equation}
and
\begin{equation} \label{eq:T_t.in.spin.u}
h_Q \left( \psi(u_1) \right) = \rho_Q(\sqrt{-t})e_2 \quad h_Q \left( \psi(u_2) \right) = \rho_Q(\sqrt{-t})e_4 \quad h_Q \left( \psi(u_3) \right) = \rho_Q(\sqrt{-t})e_6.
\end{equation}
\item \label{item:T_t.in.spin.split.good} If $Q\in \mathcal{S}(T_t) \cap \mathcal{Q}$, apply Lemma \ref{lem:Witt} to get an element $r_Q\in \Spin_{f_s}(O_Q)$ such that \eqref{eq:T_t.in.spin.v} and \eqref{eq:T_t.in.spin.u} hold.
\item \label{item:T_t.in.spin.non.split.bad} If $Q\in \mathcal{S}(T_t)^c \smallsetminus \mathcal{Q}$, apply Lemma \ref{lem:non.split.tori.compact} to get an element $r_Q\in \Spin_{f_s}(K_Q)$ such that $r_Q \psi(g_1) r_Q ^{-1}, r_Q \psi(g_2) r_Q ^{-1} \in \Spin_{f_s}(O_Q)$.
\item If $Q\in \mathcal{S}(T_t) \cap \mathcal{Q}$, let $r_Q=1$.
\end{enumerate} 

Since $\mathcal{Q}$ is cofinite, the sequence $r:=(r_Q)_Q$ belongs to $\Spin_{f_s}(\mathbf{A})$. For $i=1,2$, if $Q\notin \mathcal{S}(T_t)$, then $\left( r \psi(g_i) r ^{-1} \right)_Q=r_Q \psi(g_i) r_Q ^{-1} \in \Spin_{f_s}(O_Q)$ either by Claim \ref{item:T_t.in.spin.non.split.bad} or by $\psi(g_i)_\mathcal{Q} \in \Spin_{f_s}(\mathbf{O})$. This proves Claim \ref{item:T_t.in.spin.St.c}. If $Q \in \mathcal{S}(T_t)$, then Claims \ref{item:T_t.in.spin.split.bad} and \ref{item:T_t.in.spin.split.good} imply that
\[
\left( r \psi(g_1) r ^{-1} \right)_Q=r_Q\Big( (x_1'+y_1' \psi(v_1) \psi (u_1))(x_1'-y_1'\psi(v_2) \psi(u_2)) \Big)_Q r_Q ^{-1} =
\]
\[
=(x_1')^2+x_1'y_1' \rho_Q(\sqrt{-t}) e_1e_2-x_1'y_1'\rho_Q(\sqrt{-t})e_3e_4+t(y_1')^2e_1e_2e_3e_4=
\]
\[
\frac{x_1+1}{2}+\frac{y_1\rho_Q(\sqrt{-t})}{2}e_1e_2-\frac{y_1\rho_Q(\sqrt{-t})}{2}e_3e_4+(y_1')^2te_1e_2e_3e_4=h_1(\rho_Q(\alpha_1)).
\]
Similarly, $\left( r \psi(g_2) r ^{-1} \right)_Q=h_2(\rho_Q(\alpha_2))$, proving Claim \ref{item:T_t.in.spin.St}.

\end{proof} 

\begin{lemma} \label{lem:switcharoo} Let \eqref{eq:f_a.metaplectic.ext} be a $\psi \left( \Spin_{f_a}(K^*) \right)$-metaplectic extension. Let $t\in O^*_+$, let $\mathcal{Q} \subseteq \mathcal{P}_{K^*}$ be an internal subset, let $\rho$ be a trivialization of $T_t$, and let $\alpha_1,\alpha_2\in T_t(K^*)$. Assume that $\mathcal{P}^{K^*}(J_0),\mathcal{Q} \subseteq \mathcal{S}(T_t)$ and that either $\mathcal{P}^{K^*}(J_0) \subseteq \mathcal{Q}$ or $\rho(\alpha_2)_{\mathcal{P} ^{K^*}(J_0)}\in \mathbb{G}_m(\mathbf{O}^*)[J_0^2]$. Then 
\[
\left\{ \rho(\alpha_1) : \rho(\alpha_2) \right\}_{\mathcal{Q}}=\left\{ \rho(\alpha_1) : \rho(\alpha_2 ^{-1})\right\}_{\left( \supp(\alpha_1)\cup \supp(\alpha_2) \right) \smallsetminus \mathcal{Q}}.
\]
\end{lemma} 

\begin{proof} Denote $\mathcal{R}=\supp(\alpha_1)\cup \supp(\alpha_2)$. By Lemma \ref{lem:properties.T_t}, $\mathcal{R} \subseteq \mathcal{S}(T_t)$. Let $g_1,g_2\in \Spin_{f_a}(K^*)$ and $r\in \Spin_{f_s}(\mathbf{A} ^*)$ be the elements obtained by applying Lemma \ref{lem:T_t.in.spin} coordinate-wise to $t,\rho,\alpha_1,\alpha_2$. By Proposition \ref{prop:Steinberg.symbols},
\[
1=\left[ \psi(g_1) : \psi(g_2) \right] = \left[ r \psi(g_1) r ^{-1} : r \psi(g_2) r ^{-1} \right]=
\]
\[
=\left[ r \psi(g_1) r ^{-1} : r \psi(g_2) r ^{-1} \right]_{\mathcal{Q} \cup \mathcal{R}\cup \mathcal{P}^{K^*}(J_0)} \cdot \left[ r \psi(g_1) r ^{-1} : r \psi(g_2) r ^{-1} \right]_{\left( \mathcal{Q} \cup \mathcal{R} \cup \mathcal{P}^{K^*}(J_0) \right) ^c}.
\]
Since $\left( r \psi(g_i) r ^{-1}\right)_{\left( \mathcal{Q} \cup \mathcal{R} \cup \mathcal{P}^{K^*}(J_0) \right) ^c}\in \Spin_{f_s}(\mathbf{O} ^*)[J_0^2]$, Proposition \ref{prop:Steinberg.symbols} implies that $\left[ r \psi(g_1) r ^{-1} : r \psi(g_2) r ^{-1} \right]_{\left( \mathcal{Q} \cup \mathcal{R} \cup \mathcal{P}^{K^*}(J_0) \right) ^c}=1$. Since $\mathcal{Q} \cup \mathcal{R} \cup \mathcal{P}^{K^*}(J_0) \subseteq \mathcal{S}(T_t)$, we get
\[
1=\left[ r \psi(g_1) r ^{-1} : r \psi(g_2) r ^{-1} \right]_{\mathcal{Q} \cup \mathcal{R}\cup \mathcal{P}^{K^*}(J_0)}=\left\{  \rho(\alpha_1) : \rho(\alpha_2) \right]_{\mathcal{Q} \cup \mathcal{R}\cup \mathcal{P}^{K^*}(J_0)}.
\]
Thus,
\[
1=\left\{ \rho(\alpha_1) : \rho(\alpha_2) \right\}_{\mathcal{Q}} \cdot \left\{ \rho(\alpha_1) : \rho(\alpha_2) \right\}_{\mathcal{R} \smallsetminus \mathcal{Q}} \cdot \left\{ \rho(\alpha_1) : \rho(\alpha_2) \right\}_{\mathcal{P}^{K^*}(J_0) \smallsetminus \left( \mathcal{R} \cup \mathcal{Q} \right)}.
\]
In order to prove the lemma, it remains to show that $\left\{ \rho(\alpha_1) : \rho(\alpha_2) \right\}_{\mathcal{P}^{K^*}(J_0) \smallsetminus \left( \mathcal{R} \cup \mathcal{Q} \right)}=1$. If $\mathcal{P}^{K^*}(J_0) \subseteq \mathcal{Q}$, this is  tautologically true. If $\rho(\alpha_2)_{\mathcal{P} ^{K^*}(J_0)}\in \mathbb{G}_m(\mathbf{O})[J_0^2]$ then $h_2(\rho(\alpha_2))_{\mathcal{P} ^{K^*}(J_0)}\in \Spin_{f_s}(\mathbf{O} ^*)[J_0^2]$. Since $\mathcal{P}^{K^*}(J_0) \smallsetminus \left( \mathcal{R} \cup \mathcal{Q} \right) \subseteq \mathcal{S}(T_t) \smallsetminus  \supp(\alpha_1)$, we get that 
\[
h_1(\alpha_1)_{\mathcal{P}^{K^*}(J_0) \smallsetminus \left( \mathcal{R} \cup \mathcal{Q} \right)}=\left( \ell g_1 \ell ^{-1} \right)_{\mathcal{P}^{K^*}(J_0) \smallsetminus \left( \mathcal{R} \cup \mathcal{Q} \right)}\in \Spin_{f_s}(\mathbf{O} ^*).
\]
By Proposition \ref{prop:Steinberg.symbols}, $\left\{ \rho(\alpha_1) : \rho(\alpha_2) \right\}_{\mathcal{P}^{K^*}(J_0) \smallsetminus \left( \mathcal{R} \cup \mathcal{Q} \right)}=1$ in this case as well.
\end{proof}

\subsection{Proof of Theorem \ref{thm:split.finite.metaplectic}} \label{subsec:FCW.approximation}

Let $n,K,S,f_a$ be as in Theorem \ref{thm:split.finite.metaplectic}. Fix an isometry $\psi:(\mathbf{A} ^n,f_a) \rightarrow (\mathbf{A} ^n,f_s)$. By \S\ref{subsec:FCW.criterion}, it is enough to prove that every $\psi \left( \Spin_{f_a}(K^*) \right)$-metaplectic extension has a finite kernel.

Let
\begin{equation} \label{eq:FCW.split.finite.D_0.ses.f_s}
1 \rightarrow D \rightarrow E \overset{\eta}{\rightarrow} \Spin_{f_s}(\mathbf{A}^*) \rightarrow 1
\end{equation}
be a $\psi \left( \Spin_{f_a}(K^*) \right)$-metaplectic extension. By assumption, \eqref{eq:FCW.split.finite.D_0.ses.f_s} has a continuous section $s_{\mathbf{O} ^*}$ over $\Spin_{f_s}(\mathbf{O} ^*)[J_0]$ and a (possibly discontinuous) section $s_{K^*}$ over $\psi \left( \Spin_{f_a}(K^*) \right)$.

We define the following subgroups of $D$:

\begin{definition} \begin{enumerate}
\item For $Q\in \mathcal{P} ^{K^*}$, define
\[
D_Q:=\Big \langle \left\{ a_1 : a_2 \right\}_Q \mid a_1\in \mathbb{G}_m(\mathbf{A} ^*) \quad \val_Q(a_1)=-1 \quad a_2\in \mathbb{G}_m(\mathbf{O} ^*)[2] \Big \rangle .
\]
\item Let $D_0 \supseteq D_1 \supseteq D_2 \supseteq D_3$ be the following subgroups of $D$:
\[
D_0:=\Big \langle \left\{ a_1 : a_2 \right\} \mid a_1,a_2\in \mathbb{G}_m(\mathbf{A} ^*) \Big \rangle .
\]
\[
D_1:=\Big \langle \left\{ a_1 : a_2 \right\}_\mathcal{Q} \mid a_1\in \mathbb{G}_m(\mathbf{A} ^*) \quad \mathcal{Q} \cap \mathcal{P} ^{K^*}(2J_0)=\emptyset \quad a_2\in \mathbb{G}_m(\mathbf{O} ^*) \Big \rangle .
\]
\[
D_2:=\Big \langle \left\{ a_1 : a_2 \right\}_\mathcal{Q} \mid a_1\in \mathbb{G}_m(\mathbf{A} ^*) \quad \mathcal{Q} \cap \mathcal{P} ^{K^*}(2J_0)=\emptyset \quad a_2\in \mathbb{G}_m(\mathbf{O} ^*)[2] \Big \rangle .
\]
\[
D_3:=\Big \langle D_Q \mid Q\in \mathcal{P} ^{K^*} \smallsetminus (\mathcal{P} ^K \cup \mathcal{P} ^{K^*}(J_0) \Big \rangle .
\]
\end{enumerate} 
\end{definition} 

If $A$ is an abelian group and $M$ is a natural number, denote 
\[
^M A:= \left\{ Mx \mid x\in A \right\}.
\] 
Theorem \ref{thm:split.finite.metaplectic} follows from the following claims: \begin{enumerate}
\item $D=D_0$.
\item $D_0=D_1$ and $D_2=D_3$.
\item $|D_1 / D_2|$ is finite.
\item For every natural number $M$, $|D_3/\,^{M}D_3| \leq M$.
\item $D_3$ has finite exponent: there is $M\in \mathbb{N}$ such that $\,^{M}D_3=1$.
\end{enumerate} 
We prove these claims in the following subsections.

\subsubsection{$D=D_0$}
Consider the extension
\begin{equation} \label{eq:ses.D/D_0}
1 \rightarrow D/D_0 \rightarrow E/D_0 \overset{\eta}{\rightarrow} \Spin_{f_s}(\mathbf{A}^*) \rightarrow 1
\end{equation}
induced from \eqref{eq:FCW.split.finite.D_0.ses.f_s}. By definition, the Steinberg symbols for \eqref{eq:ses.D/D_0} vanish. By Theorem \ref{thm:Matsumoto}, there is a (possibly discontinuous) section $\sigma$ of \eqref{eq:ses.D/D_0} over the subgroup $\Spin_{f_s}(K^*)$. In addition, the sections $s_{\mathbf{O} ^*}$ and $s_{K^*}$ induce corresponding sections $\overline{s_{\mathbf{O}^*}}$ and $\overline{s_{K^*}}$ of \eqref{eq:ses.D/D_0} over $\Spin_{f_s}(\mathbf{O} ^*)[J_0]$ and $\psi \left( \Spin_{f_a}(K^*) \right)$, respectively. The section $\overline{s_{\mathbf{O} ^*}}$ is continuous and the section $\overline{s_{K^*}}$ has a dense image.

Since $\Spin_{f_s}(K^*) \cap \Spin_{f_s}(\mathbf{O} ^*[J_0])=\Spin_{f_s}(O^*[J_0])$ and by Proposition \ref{prop:CKP}, the restrictions of $\sigma$ and $\overline{s_{\mathbf{O} ^*}}$ to $\Spin_{f_s}(O^*)[J_0^4]$ coincide. By Lemma \ref{lemma:con_hom}, there is a continuous splitting $s$ of \eqref{eq:ses.D/D_0} over $\Spin_{f_s}(\mathbf{A} ^*)$. By Corollary \ref{cor:spin*.perfect}, $\psi \left( \Spin_{f_a}(K^*) \right)$ is perfect, so the restriction of $s$ to $\psi \left( \Spin_{f_a}(K^*) \right)$ coincides with $\overline{s_{K^*}}$. It follows that $\overline{s_{K^*}}$ is a continuous section with dense image, so $D/D_0=1$, as we wanted to prove.

\subsubsection{$D_0=D_1$ and $D_2=D_3$.}

$D_0=D_1$ follows from the following lemma:
\begin{lemma} \label{lem:weak.approx.ultraproduct} Let $a_1,a_2\in \mathbb{G}_m(\mathbf{A}^*)$. Then there are elements $b_1,c_1\in \mathbb{G}_m(\mathbf{A} ^*)$ and $b_2,c_2\in \mathbb{G}_m(\mathbf{O} ^*)$ such that $\supp(b_1)$, $\supp(c_1)$, and $\mathcal{P}^{K^*}(2 J_0)$ are pairwise disjoint and 
\[
\left\{ a_1:a_2 \right\} = \left\{ b_1:b_2 \right\}_{\supp(b_1)} \cdot \left\{ c_1 : c_2 \right\}_{\supp(c_1)}.
\]
\end{lemma} 

\begin{proof} By Proposition \ref{prop:Steinberg.symbols}, there is a small internal set $\mathcal{Q} \subseteq \mathcal{P} ^{K^*}$ such that $\left\{ a_1 : a_2 \right\}=\left\{ a_1 : a_2 \right\}_\mathcal{Q}$. Without loss of generality we can assume that $\mathcal{P} ^{K^*}(2J_0) \subseteq \mathcal{Q}$. 

By Proposition \ref{prop:Steinberg.symbols}\eqref{prop:Steinberg.symbols.continuity} there is an ideal $I\in \mathcal{I} ^{K^*}(\mathcal{Q})$ such that if $x_1,x_2\in \mathbb{G}_m(\mathbf{A} ^*)$ satisfy $(a_1 x_1 ^{-1} )_{\mathcal{Q}}\in \mathbb{G}_m(\mathbf{O} ^*)[J_0^2]$ and $(a_2 x_2 ^{-1} )_\mathcal{Q} \in \mathbb{G}_m(\mathbf{O} ^*)[I]$, then $\left\{ a_1 : a_2 \right\}_\mathcal{Q} = \left\{ x_1 : x_2 \right\}_\mathcal{Q}$. Without loss of generality we can assume that $\mathcal{P} ^{K^*}(I)=\mathcal{Q}$.

By Lemma \ref{lem:weak.approx.standard} and a coordinate-wise argument, there are $t\in O_+^*$, a trivialization $\rho$ of $T_t$, and elements $\alpha_1,\alpha_2\in T_t(K^*)$ such that \begin{enumerate}[label=\alph*.]
\item \label{claim:weak.approx.ultraproduct.t} $\mathcal{Q} \subseteq \mathcal{S}(T_t)$.
\item \label{claim:weak.approx.ultraproduct.a_i} $\left( a_i \rho(\alpha_i) ^{-1} \right)_\mathcal{Q}\in \mathbb{G}_m(\mathbf{O} ^*)[IJ_0^2]$, for $i=1,2$.
\item \label{claim:weak.approx.ultraproduct.supp} $\supp(\alpha_1) \smallsetminus \mathcal{Q}$ and $\supp(\alpha_2) \smallsetminus \mathcal{Q}$ are disjoint.
\end{enumerate} 

Let $b_1:=\rho(\alpha_1)_{\supp(\alpha_1) \smallsetminus \mathcal{Q}}$, $b_2:=\rho(\alpha_2)_{\supp(\alpha_1) \smallsetminus \mathcal{Q}}$, $c_1:=\rho(\alpha_2)_{\supp(\alpha_2) \smallsetminus \mathcal{Q}}$, and $c_2:=\rho(\alpha_1)_{\supp(\alpha_2) \smallsetminus \mathcal{Q}}$. By Claim \ref{claim:weak.approx.ultraproduct.supp}, $b_2,c_2\in \mathbb{G}_m(\mathbf{O} ^*)$ and $\supp(b_1)$, $\supp(c_2)$, and $\mathcal{Q}$ are disjoint. Finally,
\[
\left\{ a_1 : a_2 \right\} = \left\{ a_1 : a_2 \right\}_\mathcal{Q} \overset{\ref{claim:weak.approx.ultraproduct.a_i}}{=} \left\{ \rho(\alpha_1) : \rho(\alpha_2) \right\}_\mathcal{Q} \overset{\ref{lem:switcharoo}}{=} \left\{ \rho(\alpha_1) : \rho(\alpha_2) \right\}_{\supp(\alpha_1)\cup \supp(\alpha_2) \smallsetminus \mathcal{Q}}\overset{\ref{claim:weak.approx.ultraproduct.supp}}{=}
\]
\[
=\left\{ \rho(\alpha_1) : \rho(\alpha_2) \right\}_{\supp(\alpha_1)\smallsetminus \mathcal{Q}} \cdot \left\{ \rho(\alpha_1) : \rho(\alpha_2) \right\}_{\supp(\alpha_2) \smallsetminus \mathcal{Q}}=\left\{ b_1 : b_2 \right\}_{\supp(b_1)} \cdot \left\{ c_2 : c_1 \right\}_{\supp(c_1)}.
\]
\end{proof} 

$D_2=D_3$ follows from the following lemma:

\begin{lemma} \label{lem:approx.single.support.ultraproduct} Let $a_1\in \mathbb{G}_m(\mathbf{A}^*)$, let $a_2\in \mathbb{G}_m(\mathbf{O} ^*)$, and let $\mathcal{Q} \subseteq \mathcal{P} ^{K^*}$ be an internal set disjoint from $\mathcal{P} ^{K^*}(2J_0)$. Then there is a prime ideal $Q\in \mathcal{P}^{K^*} \smallsetminus (\mathcal{P} ^{K} \cup \mathcal{P} ^{K^*}(J_0))$ and elements $b_1\in \mathbb{G}_m(\mathbf{A} ^*), b_2\in \mathbb{G}_m(\mathbf{O} ^*)$ such that $\val_Q(b_1)=-1$ and $\left\{ a_1 : a_2 \right\}_\mathcal{Q} = \left\{ b_1 : b_2 \right\}_{Q}$.
\end{lemma} 

\begin{proof} By Proposition \ref{prop:Steinberg.symbols}, we can assume that $\mathcal{Q}$ is small. Since $\mathcal{Q}$ is disjoint from $\mathcal{P}^{K^*}(2J_0)$, we can further assume that $(a_1)_{\mathcal{P} ^{K^*}(J_0)}\in \mathbb{G}_m(\mathbf{O} ^*)$ and $a_2\in \mathbb{G}_m(\mathbf{O} ^*)[2J_0^2]$. By Proposition \ref{prop:Steinberg.symbols}, $\left\{ a_1 : a_2 \right\}_{\mathcal{P} ^{K^*}(J_0)}=1$. 

By Proposition \ref{prop:Steinberg.symbols}\eqref{prop:Steinberg.symbols.continuity} there is an ideal $I\in \mathcal{I} ^{K^*}(\mathcal{Q}\cup \mathcal{P} ^{K^*}(J_0))$ such that if $x_1,x_2\in \mathbb{G}_m(\mathbf{A} ^*)$ satisfy $(a_1 x_1 ^{-1} )_{\mathcal{Q}\cup \mathcal{P} ^{K^*}(J_0)}\in \mathbb{G}_m(\mathbf{O} ^*)$ and $(a_2 x_2 ^{-1} )_{\mathcal{Q}\cup \mathcal{P} ^{K^*}(J_0)} \in \mathbb{G}_m(\mathbf{O} ^*)[I]$, then $\left\{ a_1 : a_2 \right\}_{\mathcal{Q}\cup \mathcal{P} ^{K^*}(J_0)} = \left\{ x_1 : x_2 \right\}_{\mathcal{Q}\cup \mathcal{P} ^{K^*}(J_0)}$. Without loss of generality we can assume that $\mathcal{P} ^{K^*}(I)=\mathcal{Q}\cup \mathcal{P} ^{K^*}(J_0)$.

Write $a_1=[a_{1,n}],a_2=[a_{2,n}],I=[I_n]$. Applying Lemma \ref{lem:approx.single.support.standard} to $a_{1,n},a_{2,n},n! \cdot J_{0,n}^2 \cdot I_n$ and using a diagonal argument, there is an ideal $Q\in \mathcal{P} ^{K^*} \smallsetminus (\mathcal{P} ^K \cup \mathcal{P} ^{K^*}(J_0))$, an element $t\in O_+^*$, a trivialization $\rho$ of $T_t$, and elements $\alpha_1\in T_t(K^*)$, $\alpha_2\in T_t(O^*)$ such that \begin{enumerate} [label=\alph*.]
\item \label{claim:approx.single.support.ultraproduct.a1} $\left( a_1 \rho(\alpha_1) ^{-1} \right)_{\mathcal{Q}\cup \mathcal{P} ^{K^*}(J_0)}\in \mathbb{G}_m(\mathbf{O})$.
\item \label{claim:approx.single.support.ultraproduct.a2} $\left( a_2 \rho(\alpha_2) ^{-1} \right)_{\mathcal{Q}\cup \mathcal{P} ^{K^*}(J_0)}\in \mathbb{G}_m(\mathbf{O})[IJ_0^2]$.
\item \label{claim:approx.single.support.ultraproduct.supp} $\supp(\alpha_1) \smallsetminus \mathcal{Q}=\left\{ Q \right\}$ and $\val_Q(\alpha_1)=-1$.
\end{enumerate} 
Thus,
\[
\left\{ a_1 : a_2 \right\}_\mathcal{Q} = \left\{ a_1 : a_2 \right\}_{\mathcal{Q} \cup \mathcal{P} ^{K^*}(J_0)}\overset{\ref{claim:approx.single.support.ultraproduct.a1},\ref{claim:approx.single.support.ultraproduct.a2}}{=}\left\{ \rho(\alpha_1) : \rho(\alpha_2) \right\}_{\mathcal{Q}\cup \mathcal{P} ^{K^*}(J_0)} \overset{\ref{lem:switcharoo}}{=} \left\{ \rho_(\alpha_1) : \rho(\alpha_2) \right\}_{\supp(\alpha_1) \smallsetminus \mathcal{Q}}=
\]
\[
=\left\{ \rho(\alpha_1) : \rho(\alpha_2) \right\}_Q.
\]
\end{proof} 

\subsubsection{$D_1/D_2$ is finite}

This follows from the following lemma:

\begin{lemma} Let $k:=| \mathbb{G}_m(\mathbf{O} ^*) / \mathbb{G}_m(\mathbf{O} ^*)[2] |<2^{[K:\mathbb{Q}]}$. Then $|D_1/D_2| \leq k^{2k}$.
\end{lemma} 

\begin{proof} Let $x_1,\ldots,x_k$ be representatives of $\mathbb{G}_m(\mathbf{O} ^*) / \mathbb{G}_m(\mathbf{O} ^*)[2]$. For every $a_1\in \mathbb{G}_m(\mathbf{A} ^*)$ such that $\supp(a_1)\cap \mathcal{P} ^{K^*}(2)=\emptyset$ and every $a_2\in \mathbb{G}_m(\mathbf{O} ^*)$ there are $i,j$ such that $\left\{ a_1 : a_2 \right\} \in \left\{ x_i : x_j \right\}_{\mathcal{P} ^{K^*}(2)} \cdot D_2$. Thus, every element of $D_1/D_2$ can be written as a product of cosets of the form $\left\{ x_i : x_j \right\}_{\mathcal{P} ^{K^*}(2)} \cdot D_2$. Using the bilinearity relations, it can also be written as a product of at most $k$ such cosets.
\end{proof} 

\subsubsection{$D_3$ has finite exponent}

This follows from the following proposition:

\begin{proposition} \label{prop:D.bounded.exponent} Assume the GRH. There is a natural number $M$ such that if $a_1\in \mathbb{G}_m(\mathbf{A}^*)$, $a_2\in \mathbb{G}_m(\mathbf{O}^*)[2]$, and $\mathcal{Q} \subseteq \mathcal{P}_{K^*}$ is a small internal set disjoint from $\mathcal{P}^{K^*}(2J_0)$, then $\left\{ a_1 : a_2 \right\}_\mathcal{Q} ^M=1$.
\end{proposition} 

\begin{proof} We will show that the claim holds with $M=N_1N_2$, where $N_1,N_2$ are the constants provided by Lemma \ref{lem:approximation.divisible}.

We can assume, without loss of generality, that $2|J_0$, that $\left( a_1 \right)_{\mathcal{P} ^{K^*}(J_0)}\in \mathbb{G}_m(\mathbf{O} ^*)$, and that $a_2\in \mathbb{G}_m(\mathbf{O} ^*)[J_0^2]$. 

By Proposition \ref{prop:Steinberg.symbols}\eqref{prop:Steinberg.symbols.continuity}, there is an ideal $I\in \mathcal{I}^{K^*}(\mathcal{Q})$ such that if $b_1,b_2\in \mathbb{G}_m(\mathbf{A}^*)$ satisfy $\left( a_1^{N_1} b_1 ^{-1} \right)_{\mathcal{Q}} \in \mathbb{G}_m(\mathbf{O}^*)$ and $ \left( a_2 b_2 ^{-1} \right)_{\mathcal{Q}} \in \mathbb{G}_m(\mathbf{O}^*)[I]$, then $\left\{ a_1^{N_1} : a_2 \right\}_{\mathcal{Q}} = \left\{ b_1 : b_2 \right\}_{\mathcal{Q}}$. We can assume, without loss of generality, that $\mathcal{P}_{K^*}(I)=\mathcal{Q}$.

Let $Q_1,Q_2,t,\rho,\alpha_1,\alpha_2,s,\tau,\beta_1^{(1)},\beta_1^{(2)},\beta_2$ be the elements obtained by applying Lemma \ref{lem:approximation.divisible} and a coordinate-wise argument to the elements $a_1,a_2$ and the ideals $IJ_0^2,J_0$. By \ref{lem:approximation.divisible}\eqref{cond:approximation.divisible.alpha1}, $\rho(\alpha_1)_{\mathcal{P} ^{K^*}(J_0)}\in \mathbb{G}_m(\mathbf{O} ^*)$; by \ref{lem:approximation.divisible}\eqref{cond:approximation.divisible.alpha2}, $\rho(\alpha_2)_{\mathcal{P} ^{K^*}(J_0)}\in \mathbb{G}_m(\mathbf{O} ^*)[J_0^2]$. Thus, by Proposition \ref{prop:Steinberg.symbols}, $\left\{ \rho(\alpha_1) : \rho(\alpha_2) \right\}_{\mathcal{P} ^{K^*}(J_0)}=1$.

We have
\[
\left\{ a_1^{N_1} : a_2 \right\}_\mathcal{Q} \overset{\ref{lem:approximation.divisible},\eqref{cond:approximation.divisible.alpha1}\eqref{cond:approximation.divisible.alpha2}}{=} \left\{ \rho(\alpha_1) : \rho(\alpha_2) \right\}_\mathcal{Q} = \left\{ \rho(\alpha_1) : \rho(\alpha_2) \right\}_{\mathcal{Q}\cup \mathcal{P}^{K^*}(J_0)} \overset{\ref{lem:switcharoo}}{=} 
\]
\[
=\left\{ \rho(\alpha_1) : \rho(\alpha_2) \right\}_{\supp(\alpha_1) \smallsetminus \left( \mathcal{Q}\cup \mathcal{P}^{K^*}(J_0) \right) }\overset{\ref{lem:approximation.divisible}\eqref{cond:approximation.divisible.alpha1}}{=}\left\{ \rho(\alpha_1) : \rho(\alpha_2) \right\}_{Q_1} \cdot \left\{ \rho(\alpha_1) : \rho(\alpha_2) \right\}_{Q_2}.
\]

By Proposition \ref{prop:Steinberg.symbols} and Lemma \ref{lem:switcharoo}, for $i=1,2$,
\[
\left\{ \rho(\alpha_1) : \rho(\alpha_2)^{N_2} \right\}_{Q_i} \overset{\ref{lem:approximation.divisible},\eqref{cond:approximation.divisible.beta1}\eqref{cond:approximation.divisible.beta2}}{=} \left\{ \tau(\beta^{(i)}_1) : \tau(\beta_2) \right\}_{Q_i} \overset{\ref{lem:approximation.divisible}\eqref{cond:approximation.divisible.beta2}}{=} \left\{ \tau(\beta^{(i)}_1) : \tau(\beta_2) \right\}_{\supp(\beta^{(i)}_1) \smallsetminus \left\{ Q_i \right\}}\overset{\ref{lem:approximation.divisible}\eqref{cond:approximation.divisible.beta1}}=1.
\]

Thus,

\[
\left\{ a_1 : a_2 \right\}_\mathcal{Q} ^{N_1N_2}=\left\{ a_1^{N_1} : a_2 \right\}_\mathcal{Q} ^{N_2}=\left\{ \rho(\alpha_1) : \rho(\alpha_2) \right\}_{Q_1}^{N_2} \cdot \left\{ \rho(\alpha_1) : \rho(\alpha_2) \right\}_{Q_2}^{N_2}=
\]
\[
=\left\{ \rho(\alpha_1) : \rho(\alpha_2)^{N_2} \right\}_{Q_1}\cdot \left\{ \rho(\alpha_1) : \rho(\alpha_2)^{N_2} \right\}_{Q_2}=1.
\]
\end{proof} 

\subsubsection{For every $M$, $[D_3:\,^MD_3] \leq M$}

\begin{definition} If $Q\in \mathcal{P}^{K^*}$ is a prime ideal, $\ell$ is a prime number, and $a_1,a_2\in \mathbb{G}_m(\mathbf{A} ^*)$, let $n_{Q,\ell}(a_1,a_2)=n_{Q,\ell}(\pi_{Q}(a_1),\pi_{Q}(a_2))$, where $\pi_Q$ is the projection $\mathbf{A}^* \rightarrow K^*_{Q}$.
\end{definition} 

\begin{lemma} \label{lem:ell.divisible} Let $\ell$ be a prime number such that $\mu_\ell(K)=1$. Then $D_3$ is $\ell$-divisible.
\end{lemma} 

\begin{proof} It is enough to show that every generator of $D_3$ is an $\ell$th power. Let $Q\in \mathcal{P} ^{K^*} \smallsetminus (\mathcal{P} ^K \cup \mathcal{P} ^{K^*}(J_0)$, let $a_1\in \mathbb{G}_m(\mathbf{A} ^*)$ be such that $\val_Q(a_1)=-1$, and let $a_2\in \mathbb{G}_m(\mathbf{O} ^*)[2]$. By Proposition \ref{prop:Steinberg.symbols}\eqref{prop:Steinberg.symbols.continuity}, there is an ideal $I\in \mathcal{I} ^{K^*}(\left\{ Q \right\})$ such that $\left\{ a_1 : a_2 \right\}_Q = \left\{ b_1 : b_2 \right\}_Q$ whenever $a_1 b_1 ^{-1} \in \mathbb{G}_m(\mathbf{O} ^*)$ and $a_2 b_2 ^{-1} \in \mathbb{G}_m(\mathbf{O} ^*)[I]$. 

By Lemma \ref{lem:divisible.no.roots.of.1} and a coordinate-wise argument similar to the one in the proof of Lemma \ref{lem:approx.single.support.ultraproduct}, there are ideals $Q_1,Q_2\in \mathcal{P}^{K^*} \smallsetminus \left( \mathcal{P}^{K^*}(IJ_0\ell) \cup \mathcal{P} ^K \right)$, an element $t\in O_+^*$, a trivialization $\rho$ of $T_t$, and elements $\alpha_1\in T_t(K^*),\alpha_2\in T_t(O^*),\beta\in T_t(\mathbf{O}^*)$ such that 
\begin{enumerate}[label=\alph*]
\item \label{lem:divisible.no.roots.of.1.alpha.1.ultraproduct} $\supp(\alpha_1) \subseteq \left\{ Q,Q_1,Q_2 \right\}$, $\left( a_1 \rho(\alpha_1) ^{-1} \right)_{Q} \in \mathbb{G}_m(\mathbf{O}^*)$, $\val_{Q_1}(\rho(\alpha_1))=\val_{Q_2}(\rho(\alpha_1))=-1$, and $\left( \rho(\alpha_1) \right)_{\mathcal{P} ^K(J_0)}\in \mathbb{G}_m(\mathbf{O}^*)[J_0]$.
\item \label{lem:divisible.no.roots.of.1.alpha.2.ultraproduct} $\left( a_2 \rho(\alpha_2) ^{-1} \right)_{Q} \in \mathbb{G}_m(\mathbf{O}^*)[I]$.
\item \label{lem:divisible.no.roots.of.1.beta.ultraproduct} $\rho(\alpha_2)_{\left\{ Q_1,Q_2 \right\}} \beta ^{-\ell}\in \mathbb{G}_m(\mathbf{O}^*)[Q_1Q_2]$.
\end{enumerate}
We have
\[
\left\{ a_1 : a_2 \right\}_Q=\left\{ \rho(\alpha_1) : \rho(\alpha_2) \right\}_Q\overset{\ref{lem:switcharoo}}{=} \left\{ \rho(\alpha_1) : \rho(\alpha_2) \right\}_{Q_1} \cdot \left\{ \rho(\alpha_1) : \rho(\alpha_2) \right\}_{Q_2}\overset{\ref{prop:Steinberg.symbols}\eqref{prop:Steinberg.symbols.val.-1}}{=}
\]
\[
=\left\{ \rho(\alpha_1) : \beta^\ell \right\}_{Q_1} \cdot \left\{ \rho(\alpha_1) : \beta^\ell \right\}_{Q_2}.
\]
\end{proof}

\begin{lemma} \label{lem:trivial.n.trivial.Steinberg} Let $\ell$ be a prime number, let $Q\in \mathcal{P}^{K^*} \smallsetminus \mathcal{P}^{K^*}(2J_0 \ell)$, and let $a\in \mathbb{G}_m(\mathbf{A} ^*)$ and $b\in \mathbb{G}_m(\mathbf{O}^*)$ be such that $\val_Q(a)=-1$ and $n_{\ell,Q}(a,b)=1$. Then $\left\{ a : b \right\}_{Q}\in D_3^\ell$.
\end{lemma} 

\begin{proof} We can assume that $\left( a \right)_{\mathcal{P} ^{K^*} \smallsetminus \left\{ Q \right\}}=\left( b \right)_{\mathcal{P} ^{K^*} \smallsetminus \left\{ Q \right\}}=1$. Let $\pi\in \mathbb{G}_m(K^*_Q)$ be a uniformizer, and write $a=\pi ^{-1}  a_0$, where $a_0\in \mathbb{G}_m(\mathbf{O} ^*)$. By assumption,
\[
1=n_{\ell,Q}(a,b)=n_{\ell,Q}(\pi,b) \cdot n_{\ell,Q}(a_0,b)=n_{\ell,Q}(\pi,b).
\]
Thus, $b$ is an $\ell$-th power, $b=b_0^\ell$, and
\[
\left\{ a : b \right\}_{Q}=\left\{ \pi : b \right\}_Q \cdot \left\{ a_0 : b\right\}_Q=\left\{ \pi : b_0 \right\}_Q^{\ell}.
\]
\end{proof}

\begin{lemma} \label{lem:Moore} Let $\ell$ be a prime number such that $\mu_\ell(K)\neq 1$, let $Q_1,Q_2\in \mathcal{P} ^{K^*} \smallsetminus \mathcal{P} ^{K^*}(J_0 \ell)$, and let $a_1\in \mathbb{G}_m(\mathbf{A} ^*)$ and $a_2\in \mathbb{G}_m(\mathbf{O} ^*)$. Assume that $\val_{Q_1}(a_1)=-1$. Then there exist $b_1\in \mathbb{G}_m(\mathbf{A} ^*)$ and $b_2\in \mathbb{G}_m(\mathbf{O} ^*)$ such that $\val_{Q_2}(b_1)=-1$ and $\left\{ a_1 : a_2 \right\}_{Q_1} \in \left\{ b_1 : b_2 \right\}_{Q_2} \cdot \,^\ell D_3$.
\end{lemma} 

\begin{proof} We can assume that $Q_1\neq Q_2$. By changing $(a_1)_{Q_2}$ and $(a_2)_{Q_2}$, we can assume that $\val_{Q_2}(a_2)=-1$ and that $n_{\ell,Q_1}(a_1,a_2) \cdot n_{\ell,Q_2}(a_1,a_2)=1$. In this case, we show that the claim holds for $b_1=a_1$ and $b_2=a_2$.

By Lemma \ref{lem:inverse.reciprocity} and a coordinate-wise argument similar to the one in the proof of Lemma \ref{lem:approx.single.support.ultraproduct}, there is an element $t\in O_+^*$, a trivialization $\rho$ of $T_t$, elements $\alpha_1\in T_t(K^*)$, $\alpha_2\in T_t(O^*)$, and a prime ideal $Q_3\in \mathcal{P} ^{K^*} \smallsetminus (\mathcal{P} ^K \cup \mathcal{P} ^{K^*}(Q_1Q_2J_0)$ such that 
\begin{enumerate}[label=\alph*.]
\item $a_2 \rho(\alpha_2) ^{-1} \in \mathbb{G}_m(\mathbf{O} ^*)[Q_1Q_2]$.
\item $\supp(\rho(\alpha_1)) \subseteq \left\{ Q_1,Q_2,Q_3 \right\}$, $(a_1 \rho(\alpha_1) ^{-1})_{\mathcal{P} ^{K^*}(J_0) \cup \left\{ Q_1,Q_2 \right\} }\in \mathbb{G}_m(\mathbf{O} ^*)[J_0]$, and $\val_{Q_3}(\rho(\alpha_1))=-1$.
\item $n_{\ell,Q_3}(\rho(\alpha_1),\rho(\alpha_2))=1$.
\end{enumerate} 
We have
\[
\left\{ a_1 : a_2 \right\}_{Q_1}\overset{\ref{prop:Steinberg.symbols}\eqref{prop:Steinberg.symbols.val.-1}}{=}\left\{ \rho(\alpha_1) : \rho(\alpha_2) \right\}_{Q_1}\overset{\ref{lem:switcharoo}}{=}\left\{ \rho(\alpha_1) : \rho(\alpha_2) \right\}_{\left\{ Q_2,Q_3 \right\}}\overset{\ref{prop:Steinberg.symbols}}{=}\left\{ a_1 : a_2 \right\}_{Q_2} \cdot \left\{ \rho(\alpha_1) : \rho(\alpha_2) \right\}_{Q_3}.
\]
By Lemma \ref{lem:trivial.n.trivial.Steinberg}, $\left\{ \rho(\alpha_1) : \rho(\alpha_2) \right\}_{Q_3}\in \,^\ell D_3$ and the claim follows.
\end{proof} 

\begin{corollary} For every $M$, $[D_3:\,^MD_3] \leq M$.
\end{corollary} 

\begin{proof} Induction on $M$. Let $\ell | M$. Then $[D_3:\,^MD_3] \leq [D_3:\,^\ell D_3] \cdot [D_3:\,^{\frac{M}{\ell}}D_3]$, so, by induction, it is enough to prove that $[D_3:\,^\ell D_3] \leq \ell$.

If $\mu_\ell(K)=1$ this is Lemma \ref{lem:ell.divisible}. Else, fix a $Q$. By \ref{lem:Moore}, $D_3=D_Q \cdot \,^{\ell}D_3$. By Lemma \ref{lem:trivial.n.trivial.Steinberg}, $|D_Q \cdot \,^{\ell}D_3 / ^{\ell}D_3| \leq \ell$. Thus $|D_3/\,^{\ell}D_3| \leq \ell$.
\end{proof}

\section{Proof of Theorem \ref{thm:main2}}

\subsection{Preliminary lemmas} We will use two lemmas in the proof of Theorem \ref{thm:main2}. The first is a version of Rapinchuk's lemma (see \cite[\S3]{PlRa93} or \cite[Lemma 2.6]{Lub95}):

\begin{lemma} \label{lem:Rapinchuk} Suppose that $\Gamma$ does not satisfy the Congruence Subgroup Property. Then, there is a surjection $\widehat{\Gamma} \twoheadrightarrow H$, a finite simple group $F$, and a short exact sequence $$1 \rightarrow F^\infty \rightarrow H \rightarrow X \rightarrow 1$$ such that $\Gamma$ acts faithfully on $F^\infty$. If $F$ is abelian, then there are $v_0\notin S$ and a subgroup $L \subseteq X$ that is commensurable to $G(O_{v_0})$ such that $\left[ \rho(X) : \rho(L) \right]<\infty$, where $\rho:X \rightarrow \Aut(F^\infty)$ is induced by the conjugation.
\end{lemma}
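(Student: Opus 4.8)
The plan is to derive this version of Rapinchuk's lemma from the classical one by carefully tracking the structure of the failure of CSP. First I would recall the classical statement: if $\Gamma$ does not have the Congruence Subgroup Property, then there is a finite-index subgroup $\Delta \subseteq \Gamma$ and a non-trivial finite simple group $F$ such that $\Delta$ has an infinite quotient which is a direct power $F^\infty$ on which $\Delta$ acts (by conjugation in some extension); more precisely, one uses the fact that the congruence kernel $C$ is infinite and, being a profinite group normalized by $\widehat{\Gamma}$, contains a $\widehat{\Gamma}$-invariant closed subgroup $N$ such that $C/N$ is isomorphic to a (possibly twisted) product of copies of a fixed finite simple group $F$, with $\widehat{\Gamma}$ acting on the index set. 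Replacing $\widehat{\Gamma}$ by $H := \widehat{\Gamma}/N$, we get the short exact sequence $1 \to F^\infty \to H \to X \to 1$ with $X = \widehat{\Gamma}/C$ the congruence completion $\mathbf{G}(\widehat{O_S})$ (up to finite stuff), and $\Gamma$ (hence $X$) permuting the factors.

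Next I would address faithfulness of the action of $\Gamma$ on $F^\infty$. This is where one uses that $\Gamma$ is an arithmetic lattice of higher $S$-rank, so by the Margulis Normal Subgroup Theorem every normal subgroup of $\Gamma$ is either central (hence finite) or of finite index. The kernel of the action $\Gamma \to \Aut(F^\infty)$ is normal in $\Gamma$; if it had finite index it would act trivially on a finite-index subgroup's worth of the quotient, contradicting that $C$ (and hence the image we built) is genuinely non-congruence and infinite; and after passing to a finite cover one may assume $\Gamma$ is centerless (or quotient by the center), so the kernel must be trivial. I would spell this out by noting that a non-trivial action factors appropriately and that the minimality in the choice of $N$ forces faithfulness; this is a standard argument but needs to be stated with the lattice hypothesis invoked explicitly.

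The remaining, and genuinely new, content is the last sentence concerning the abelian case: when $F$ is abelian, i.e., $F = \Z/p$ for some prime $p$, one wants a place $v_0 \notin S$ and a subgroup $L \subseteq X$ commensurable to $\mathbf{G}(O_{v_0})$ with $[\rho(X):\rho(L)] < \infty$. Here I would use the structure of $X = \mathbf{G}(\widehat{O_S}) = \prod'_{v\notin S}\mathbf{G}(O_v)$ together with the classification of which non-congruence extensions occur: by the work of Prasad--Rapinchuk and the analysis underlying the proof of CSP in the abelian case, a metabelian (elementary abelian kernel) non-congruence extension of $X$ must be ``supported at a single place'' — the action $\rho: X \to \Aut(F^\infty)$ factors, up to finite index, through a single local factor $\mathbf{G}(O_{v_0})$, because $\mathbf{G}(O_v)$ is perfect for all but finitely many $v$ (so it cannot act non-trivially on an abelian group through an abelian quotient) and has no small non-trivial actions; thus the restriction of $\rho$ to all factors other than $v_0$ has finite image, and taking $L$ to be the $v_0$-factor (intersected with a suitable finite-index subgroup to make it commensurable to $\mathbf{G}(O_{v_0})$) does the job.

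The main obstacle I anticipate is precisely this last step: justifying that in the abelian (metaplectic) case the extension is concentrated at one place. This requires knowing that for all but finitely many $v\notin S$, the group $\mathbf{G}(O_v)$ is perfect and even that $H^1$ or the relevant Hom-group $\Hom(\mathbf{G}(O_v), \Z/p)$ vanishes, and for the finitely many exceptional places a direct argument that they can be absorbed into the ``finite index'' slack. Once this localization is in hand, identifying $L$ with something commensurable to $\mathbf{G}(O_{v_0})$ and checking $[\rho(X):\rho(L)]<\infty$ is bookkeeping. I would therefore organize the proof as: (1) extract $H, F^\infty, X$ from the congruence kernel via classical Rapinchuk; (2) invoke Margulis NST and centerlessness for faithfulness; (3) in the abelian case, use perfectness/vanishing of abelian quotients of almost all local factors to localize $\rho$ to a single $v_0$ and set $L := \mathbf{G}(O_{v_0})$ (up to commensurability), citing \cite{PlRa93,Lub95} for the template.
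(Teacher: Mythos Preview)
Your outline for steps (1) and (2) is broadly in the right spirit, though the paper works with $\widehat{G(K)}$ rather than $\widehat{\Gamma}$ throughout and derives faithfulness from the abstract simplicity of $G(K)$ (passing through the larger group) rather than from the Normal Subgroup Theorem; these are minor differences.

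The genuine gap is in step (3). Your proposed mechanism --- that perfectness of $\mathbf{G}(O_v)$ for almost all $v$ forces the action on the abelian kernel to be trivial at those places --- does not work: the target $\Aut(C_p^\infty)$ is highly non-abelian (it contains $\GL_N(\mathbb{F}_p)$ for every $N$), so a perfect group can act on $C_p^\infty$ nontrivially. There is no reason the action $\rho\restriction_{\mathbf{G}(O_v)}$ should factor through an abelian quotient, and compact $p$-adic groups do have rich linear representations over $\mathbb{F}_p$. So the ``concentration at one place'' does not follow from perfectness, and the $F^\infty$ you start with need not have this property at all.

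The paper's argument is substantially different and, crucially, \emph{changes} the kernel rather than analyzing the given one. After producing $C_0/L\cong C_p^\infty$ as in the classical step, one uses Pontrjagin duality and compactness of $U=\prod_{v\notin S}G(O_v)$ to find a $U$-invariant open (finite-index) subgroup $V\subseteq C_0/L$. Since the $G(K)$-orbit of $V$ is infinite and $G(K_v)$ has no proper open subgroup of finite index, there is some $v_0\notin S$ for which the $G(K_{v_0})$-orbit of $V$ is already infinite. One then sets $W=\bigcap_{g\in G(K_{v_0})} g^{-1}Vg$ and \emph{replaces} the kernel by $C_0/W\cong C_p^\infty$; the short exact sequence in the statement is built from this new $W$, not the original $L$. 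The subgroup $L\subseteq X$ commensurable to $G(O_{v_0})$ then arises because one can choose an open $M\subseteq\prod_{v\ne v_0}G(O_v)$ that (i) lifts through the finite central extension $X\to U$, (ii) acts trivially on the finite quotient $(C_0/L)/V$, and (iii) commutes with $G(K_{v_0})$; conditions (ii)--(iii) together force $M$ to fix $C_0/W$, so $\rho(X)$ is covered by finitely many translates of $\rho(G(O_{v_0}))$. None of this is visible from perfectness of the local factors --- the key inputs are the Pontrjagin-dual/compactness argument producing $V$, and the interplay between $V$ and the single place $v_0$ that moves it infinitely.
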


\begin{proof} Consider the short exact sequence $1 \rightarrow C \rightarrow \widehat{G(K)} \rightarrow G(\mathbb{A}_S) \rightarrow 1$ from \S\ref{subsec:sketch} and note that $C \subseteq \widehat{G(O)}$. By assumption, $C$ is infinite. Let $C_0=\left[ \widehat{G(K)},C \right]$. Since
\[
1 \rightarrow C/C_0 \rightarrow \widehat{G(K)}/C_0 \rightarrow G(\mathbb{A}_S) \rightarrow 1
\]
is a central metaplectic extension, we get that $C/C_0$ is finite.

Since $C$ is infinite, there is a normal subgroup $C_1 \triangleleft C_0$ such that $F:=C_0/C_1$ is a finite simple group. Let $L=\bigcap_{g\in \widehat{G(K)}} g ^{-1} C_1 g$. Clearly, $L$ is a normal subgroup of $\widehat{G(K)}$. If the conjugation action of $\widehat{G(K)}$ on $C/L$ was trivial, the extension
\begin{equation} \label{eq:Rapinchuk.0}
1 \rightarrow C/L \rightarrow \widehat{G(K)}/L \rightarrow G(\mathbb{A}_S) \rightarrow 1
\end{equation}
would be central, contradicting $L \subsetneq C_0$. Hence, the conjugation action of $\widehat{G(K)}$ on $C/L$ is nontrivial. Since $\widehat{G(K)}$ has no nontrivial finite quotients, it follows that $C/L$ is infinite. Because $C_0/L$ is both infinite and embeds in $F^\infty$, we get that $C_0/L \cong F^\infty$.

Consider the short exact sequence
\begin{equation} \label{eq:Rapinchuk.1}
1 \rightarrow C_0/L\cong F^\infty \rightarrow \widehat{G(K)}/L \rightarrow \widehat{G(K)}/C_0 \rightarrow 1.
\end{equation}

We claim that \eqref{eq:Rapinchuk.1} is not central (this is clear if $F$ is non-abelian). Indeed, if it were central, then $\widehat{G(K)}$ would act trivially on $C_0/L$ and on $C/C_0$. For any $z\in C/L$, the map $\widehat{G(K)} \rightarrow C_0/L$ given by $g \mapsto [z,g]$ would be a homomorphism. Since $\widehat{G(K)}$ has no finite quotients, we would get that $C/L$ is central in $\widehat{G(K)}$, which contradicts $L \subsetneq C_0$.

If $F$ is non-abelian, the short exact sequence 
\[
1 \rightarrow F^\infty \rightarrow \widehat{G(O)}/L \rightarrow \widehat{G(O)}/C_0 \rightarrow 1,
\]
contained in \eqref{eq:Rapinchuk.1}, satisfies the requirements of the lemma: if $\Gamma$ does not act faithfully on $F^\infty$, then the action of $G(K)$ on $F^\infty$ in \eqref{eq:Rapinchuk.1} is not faithful, hence (by the simplicity of $G(K)$) trivial, contradicting the claim that \eqref{eq:Rapinchuk.1} is not central.



Now suppose that $F=C_p$. The group $\widetilde{\widetilde{G}}:=\widehat{G(K)}/C_0$ is a central extension of $G(\mathbb{A}_S)=\widehat{G(K)}/C$ by the finite group $D:=C/C_0$. We denote $U:=\prod_{v\notin S}G(O_v)$ and, for a subgroup $H \subseteq G(\mathbb{A}_S)$, we denote the pre-image of $H$ in $\widetilde{\widetilde{G}}$ by $\widetilde{\widetilde{H}}$.

Returning to \eqref{eq:Rapinchuk.1}, we claim that there is a $\widetilde{\widetilde{U}}$-invariant subgroup $V \subseteq C_0/L$ of finite index. Indeed, $\widetilde{\widetilde{G}}$ acts continuously on $C_0/L$ and, therefore, acts continuously on the Pontrjagin dual $\left( C_0/L \right) ^\vee$, which is a discrete space. For any $\xi \in \left( C_0/L \right) ^\vee$, the orbit $\widetilde{\widetilde{U}} \cdot \xi$ is compact and hence finite. Taking the intersection of the kernels of the elements in $\widetilde{\widetilde{U}} \cdot \xi$, we get the required subgroup $V$.

We now claim that there is a valuation $v_0\notin S$ such that the orbit of $V$ under $\widetilde{G(K_{v_0})}$ is infinite. Indeed, since $G(K)$ acts faithfully on $C_0/L$, by a similar argument to the argument we used to prove that \eqref{eq:Rapinchuk.1} is not central, the set $\left\{ g^{-1} V g \mid g\in \widetilde{\widetilde{G}}\right\}$ is infinite. By continuity, the set-wise  stabilizer $St({\widetilde{\widetilde{G}}},V)$ of $V$ in $\widetilde{\widetilde{G}}$ is open,   so $St({\widetilde{\widetilde{G(K_v)}}},V)=St({\widetilde{\widetilde{G}}},V) \cap \widetilde{\widetilde{G(K_v)}}$ is open. Since  $G(K_v) $ does not have open finite index subgroups, if $St({\widetilde{\widetilde{G(K_v)}}},V) $ does  not of infinite index in $\widetilde{\widetilde{G(K_v)}}$, its index divides $|D|$. Since $G(K) $  is generated by its $|D|^{th}$ powers and is dense in  $\widetilde{\widetilde{G}}$, it follows that there is $v_0\notin S$ such that $\widetilde{\widetilde{G(K_{v_0})}} \cdot V$ is infinite.

Let $W=\bigcap_{g\in \widetilde{\widetilde{G(K_{v_0})}}} g^{-1} V g$. We will show that the short exact sequence
\begin{equation} \label{eq:Rapinchuk.Cp}
1 \rightarrow C_0/W \rightarrow \widehat{G(O)}/W \rightarrow \widetilde{\widetilde{U}} \rightarrow 1
\end{equation}
satisfies the claims of the lemma.

Firstly, $\widetilde{\widetilde{G(K_{v_0})}} \cdot V$ is infinite, so $[C_0:W]=\infty$. It follows that $C_0/W \cong C_p^\infty$.

Next, $\Gamma$ acts faithfully in \eqref{eq:Rapinchuk.Cp}. Indeed, \eqref{eq:Rapinchuk.Cp} extends to the short exact sequence
\begin{equation} \label{eq:Rapinchuk.Kv0}
1 \rightarrow C_0/W\cong C_p^\infty \rightarrow Z/W \rightarrow \widetilde{\widetilde{U}} \cdot \widetilde{\widetilde{G(K_{v_0})}} \rightarrow 1,
\end{equation}
where $Z \subseteq \widehat{G(K)}/L$ is the preimage of $\widetilde{\widetilde{U}} \cdot \widetilde{\widetilde{G(K_{v_0})}}$. If $\gamma \in \Gamma \smallsetminus \left\{ 1 \right\}$ fixes $C_0/W$ in \eqref{eq:Rapinchuk.Cp}, then $G(K)$ would fix $C_0/W$ in \eqref{eq:Rapinchuk.Kv0}, a contradiction. 

Finally, let $U'=\prod_{v\neq v_0} G(O_v)$. There is an open subgroup $M$ of $U'$ for which \begin{enumerate}
\item There is a section $s:M \rightarrow \widetilde{\widetilde{M}}$ of the central extension $1 \rightarrow D \rightarrow \widetilde{\widetilde{M}} \rightarrow M \rightarrow 1$ (because $D$ is finite).
\item $s(M)$ acts trivially on $C_0/V$ (because $C_0/V$ is finite).
\item $s(M)$ commutes with $\widetilde{\widetilde{G(K_{v_0})}}$ (because $\widetilde{\widetilde{G(K_{v_0})}}$ is compactly generated and, for every compact set $X \subseteq \widetilde{\widetilde{G(K_{v_0})}}$, $[s(M),X]=1$, if $M$ is small enough).
\end{enumerate} 
It follows that $s(M)$ fixes $C_0/W$, as required by the lemma.
\end{proof} 

\begin{lemma} \label{lem:small.p.adic}  Let $\mathbf{G}$ be a non-isotropic semisimple group over a non-archimedean local field $F$, let $L \subseteq \mathbf{G}(F)$ be a compact open subgroup, and let $p$ be a prime number. There is a constant $D$ such that, for every homomorphism $\rho: L \rightarrow \GL_N(\mathbb{F}_p)$, $|\rho(L)| \leq D N^{\dim \mathbf{G}}$.
\end{lemma}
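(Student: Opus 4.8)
\textbf{Proof plan for Lemma \ref{lem:small.p.adic}.}

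The plan is to exploit the rigidity of compact $p$-adic analytic groups acting on $\mathbb{F}_p$-vector spaces, the key point being that a non-isotropic (anisotropic) semisimple group over a non-archimedean local field $F$ is compact, so $\mathbf{G}(F)$ itself is a compact $p$-adic analytic group of dimension $\dim \mathbf{G}$ over $\mathbb{Q}_\ell$, where $\ell$ is the residue characteristic of $F$. First I would reduce to a uniform pro-$\ell$ subgroup: by standard theory (Lazard, or \cite{DdSMS}), $L$ contains an open normal uniform pro-$\ell$ subgroup $L_0$ of dimension $d=\dim\mathbf{G}$, with $[L:L_0]$ bounded by a constant depending only on $\mathbf{G}$ and $F$ (not on $\rho$). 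It then suffices to bound $|\rho(L_0)|$ by $C\cdot N^{d}$ for a constant $C$, since $|\rho(L)|\le [L:L_0]\cdot|\rho(L_0)|$.

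The heart of the argument is the case $\ell\ne p$ versus $\ell = p$. When $\ell\ne p$, the group $\rho(L_0)$ is a finite $\ell$-group (since $L_0$ is pro-$\ell$) sitting inside $\GL_N(\mathbb{F}_p)$; a finite $\ell$-subgroup of $\GL_N(\mathbb{F}_p)$ with $\ell\ne p$ has order bounded by $\ell^{c\log_\ell N}$-type bounds coming from the fact that it embeds, after semisimplification, into a product of $\GL_{n_i}(\overline{\mathbb{F}_p})$ and an $\ell$-group there has order at most roughly $N^{O(1)}$ — in fact one gets $|\rho(L_0)| \le N!$ crudely, but more usefully $|\rho(L_0)|\le C N^{d}$ once one uses that $\rho(L_0)$ is a quotient of the $d$-dimensional group $L_0$, hence has a generating set of size $d$ and is finite $\ell$; combining "generated by $d$ elements" with the Jordan-type / Hall–Higman bounds on $\ell'$-groups in characteristic $p$ gives the exponent $d$. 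When $\ell=p$: here $L_0$ is a uniform pro-$p$ group, $\rho(L_0)$ is a finite $p$-subgroup of $\GL_N(\mathbb{F}_p)$, hence (up to conjugacy) upper unitriangular, so it is nilpotent of class $<N$; but being a quotient of a $d$-dimensional uniform pro-$p$ group, its lower $p$-series has all quotients generated by $\le d$ elements, so $|\rho(L_0)| \le p^{d\cdot(\text{nilpotency length})}\le p^{d\log_p N + O(d)}$, i.e.\ $|\rho(L_0)|\le C N^{d}$. In both cases the constant $C$ absorbs $[L:L_0]$ and the finitely many choices, and depends only on $\mathbf{G}$, $F$, and $p$, as required.

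I expect the main obstacle to be making the "bounded generation of the image" step precise and uniform: one must argue that $\rho(L_0)$, as a finite quotient of the uniform group $L_0$ of dimension $d$, has the property that every term of its lower central (or lower $p$-) series is generated as a normal subgroup by at most $d$ elements — this is where the dimension $d=\dim\mathbf{G}$ enters the exponent. This follows from the fact that $L_0/\Phi(L_0)$ has rank $d$ and from the compatibility of the power-commutator structure of a uniform group with that of its finite quotients (\cite[Ch. 4]{DdSMS}); combining this with the Hall–Higman / Kovács–Robinson bounds for linear $p'$-groups in characteristic $p$, or with the unitriangularization in the $\ell=p$ case, yields the claim. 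The rest of the argument — compactness of $\mathbf{G}(F)$ in the anisotropic case, existence of the uniform open subgroup with index bounded independently of $N$, and the reduction $|\rho(L)|\le[L:L_0]|\rho(L_0)|$ — is routine.
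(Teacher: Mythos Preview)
Your $\ell=p$ case is correct and matches the paper's Case~1: both use that $\rho(L_0)$ is unipotent in $\GL_N(\mathbb{F}_p)$, hence of exponent at most $p^M$ with $M\approx\log_p N$, and that for a uniform pro-$p$ group of dimension $d$ one has $[L_0:L_0^{p^M}]=p^{dM}$.

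Your $\ell\ne p$ case has a real gap. The assertion that an arbitrary $\ell$-subgroup of $\GL_N(\mathbb{F}_p)$ has order $\ell^{O(\log N)}$ is false (the Sylow $\ell$-subgroup has order $\ell^{\Theta(N)}$), and ``$d$-generated'' alone does \emph{not} force $|H|\le CN^d$: the wreath product $H=\mathbb{Z}/\ell\wr\mathbb{Z}/\ell^{b}$ is $2$-generated, embeds in $\GL_{\ell^b}(\mathbb{F}_p)$ whenever $\ell\mid p-1$, and has order $\ell^{\ell^b+b}$, which is exponential in $N=\ell^b$. So ``$d$ generators plus Hall--Higman/Kov\'acs--Robinson'' is not the right mechanism. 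What you actually need is an \emph{exponent} bound: if $g\in\GL_N(\mathbb{F}_p)$ has order $\ell^e$ with $\ell\ne p$, then $g$ is semisimple and has an eigenvalue which is a primitive $\ell^e$-th root of unity, whose degree over $\mathbb{F}_p$ is $\mathrm{ord}_{\ell^e}(p)\ge c(\ell,p)\,\ell^e$; hence $N\ge c\,\ell^e$. Now uniformity gives $L_0^{\ell^e}\subseteq\ker\rho$ and $[L_0:L_0^{\ell^e}]=\ell^{de}$, so $|\rho(L_0)|\le\ell^{de}\le (CN)^d$. This is the exact analogue of your $\ell=p$ argument, and it is what your final paragraph is groping toward; but the filtration length must be bounded via the exponent, not via nilpotency class or generation.

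The paper's Case~2 takes a different route: it chooses a split torus $\mathbf{T}\cong\mathbb{G}_m\subset\mathbf{G}$ and a root unipotent $u$, then uses the torus action $t^{-1}ut=u^{\lambda(t)}$ to show $\rho(u)$ has on the order of $\ell^{\delta(\ker\rho)}$ distinct eigenvalues, forcing $N$ large. This visibly requires $\mathbf{G}$ to be \emph{isotropic}, so the word ``non-isotropic'' in the statement is evidently a slip. Once patched as above, your uniform-group argument has the pleasant feature of working regardless of isotropy.
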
 

\begin{proof} Without loss of generality, we can assume that $F=\mathbb{Q}_\ell$ and that $L \subset \mathbf{G}(\mathbb{Z}_\ell)[\ell]$. For an open subgroup $H \subseteq \mathbf{G}(\mathbb{Z}_\ell)$, we denote $\delta(H)=\min \left\{ k \mid \mathbf{G}(\mathbb{Z}_\ell)[\ell^{k}] \subseteq H\right\}$. For any normal and open subgroup $H \triangleleft L$, 
\begin{equation} \label{eq:normal.in.L}
\mathbf{G}(\mathbb{Z}_\ell)[\ell^{\delta(H)}] \subseteq H \subseteq \mathbf{G}(\mathbb{Z}_\ell)[\ell^{\delta(H)-\delta(L)}].
\end{equation}

We distinguish between two cases:
\begin{enumerate}
\item[Case 1:] $\ell=p$. $L$ is a pro-$p$ group, so, for every $g\in L$, $\rho(g)$ is nilpotent and, hence, $\rho(g)^{p^M}=1$, where $M=\lfloor \log_p(N)\rfloor$. Denote the word $x^{p^M}$ by $w$. Since $\mathbf{G}(\mathbb{Z}_p)[p^{\delta(L)}]$ is a powerful pro-$p$ group, 
\[
\ker(\rho) \supseteq \left\langle w(L) \right\rangle \supseteq \left\langle w\left( G(\mathbb{Z}_p)[p^{\delta(L)}] \right) \right\rangle \supseteq G(\mathbb{Z}_p)[p^{\delta(L)+M}],
\]
so
\[
|\rho(L)| \leq \left[ G(\mathbb{Z}_p): G(\mathbb{Z}_p)[p^{\delta(L)+M}]\right] \leq \left[ G(\mathbb{Z}_p): G(\mathbb{Z}_p)[p^{\delta(L)}]\right] \cdot N^{\dim \mathbf{G}}.
\]

\item[Case 2:] $\ell\neq p$. Let $\mathbf{T} \cong \mathbb{G}_m \subset \mathbf{G}$ be a split torus in $\mathbf{G}$. There is a unipotent element $u\in L$ that is an eigenvector of $\mathbf{T}$: for some character $\lambda:\mathbf{T} \rightarrow \mathbb{G}_m$, we have $t ^{-1} u t=u^{\lambda(t)}$, for every $t\in \mathbf{T}(\mathbb{Z}_\ell)\cap L$. We can assume that $u\notin \mathbf{G}(\mathbb{Z}_\ell)[\ell^{\delta(L)+1}]$. The subgroup $\lambda(\mathbf{T}(\mathbb{Z}_\ell) \cap L)$ has finite index in $\mathbb{Z}_\ell^ \times$. We denote this index by $c$.

Since $\mathbf{G}(\mathbb{Z}_\ell)[\ell]$ is powerful and since $u\notin \mathbf{G}(\mathbb{Z}_\ell)[\ell^{\delta(L)+1}]$, we get that $u^{\ell^{\delta(\ker \rho)-2\delta(L)-1}}\notin \mathbf{G}(\mathbb{Z}_\ell)[\ell^{\delta(\ker\rho)-\delta(L)}]$. By \eqref{eq:normal.in.L}, $u^{\ell^{\delta(\ker \rho)-2\delta(L)-1}}\notin \ker(\rho)$. 

Since $L$ is a pro-$\ell$ group, the order of $\rho(u)$ is a power of $\ell$ and is at least $\ell^{\delta(\ker \rho)-2\delta(L)}$. Since $\ell\neq p$, this means that $\rho(u)$ is semisimple and one of its eigenvalues is a root of 1 of order at least $\ell^{\delta(\ker \rho)-2\delta(L)}$. Using conjugation and by the definition of $c$, $\rho(u)$ has at least $\frac1c \ell^{\delta(\ker \rho)-2\delta(L)}$ different eigenvalues, so $N \geq \frac1c \ell^{\delta(\ker \rho)-2\delta(L)}$. It follows that
\[
| \rho(L) | \leq [\mathbf{G}(\mathbb{Z}_\ell)[\ell]:\mathbf{G}(\mathbb{Z}_\ell)[\ell^{\delta(\ker \rho)}]] \leq \ell^{\delta(\ker\rho) \cdot \dim \mathbf{G}} \leq \left( c \ell^{2\delta(L)}\right)^{\dim G} \cdot N^{\dim \mathbf{G}}.
\]

\end{enumerate} 
\end{proof} 

\subsection{Proof of Theorem \ref{thm:main2}}

\begin{proof}[Proof of Theorem \ref{thm:main2}] The implication $CSP \implies FCW$ follows from \cite[Lemma 3.20]{AM22} and \cite[Lemma 3.22]{AM22}. It remains to prove the converse. 

Without loss of generality, we can assume that $\Gamma=\mathbf{G}(O)$ does not have CSP. Applying Lemma \ref{lem:Rapinchuk}, we get a short exact sequence $1 \rightarrow F^\infty \rightarrow H \rightarrow X \rightarrow 1$ and, if $F$ is abelian, a place $v_0\notin S$ and a subgroup $L \subseteq X$ satisfying the conditions of  Lemma \ref{lem:Rapinchuk}. Denote the characteristic of the residue field of $K_{v_0}$ by $\ell$. We will show that $H$ has infinite conjugacy width and, therefore, so does $\widetilde{\Gamma}$. 

Suppose first that $F$ is non-abelian. Consider the composition 
\[
\widehat{\Gamma} \rightarrow H \rightarrow \Aut \left( F^\infty \right) \cong \Sym(\infty) \ltimes \Aut(F)^\infty \rightarrow \Sym(\infty),
\]
where $\Sym(\infty)$ denotes the group of all permutations of $\mathbb{N}$ with the topology of pointwise convergence.

By compactness, the orbits of $\widehat{\Gamma}$ on $\mathbb{N}$ are finite. Since $\Aut(F)^\infty$ has a finite exponent, the sizes of the orbits of $\widehat{\Gamma}$ are unbounded (otherwise, some non-trivial elements of $\Gamma$ would act trivially). 

Let $O_1,O_2,\ldots \subset \mathbb{N}$ be the $\widehat{\Gamma}$-orbits. Choose representatives $i_j \in O_j$, choose a non-trivial element $f\in F$, and let $v\in F^\infty$ be the element
\[
v_k=\begin{cases} f & k\in \left\{ i_1,i_2, \ldots, \right\} \\ 1 & k\notin \left\{ i_1,i_2, \ldots, \right\} \end{cases}.
\]
We claim that the conjugacy class of $v$ has infinite width in $H$. Indeed, it is easy to see that $\left \langle \gcl_H(v) \right \rangle=F^\infty$, but, for any two natural numbers $C,n$, if $g\in \gcl_H(v) ^C$, then $|\supp(g) \cap O_n| \leq C$. Since $|O_n|$ are unbounded, $\gcl_H(v) ^C \neq F^\infty$, for any $C$.\\

Next, suppose that $F$ is abelian. Then $F=C_p$, for some prime $p$. We first claim that there is an element $u\in C_p^\infty$ such that $\rho(X) \cdot u$ is infinite.  Assume that $\rho(X) \cdot x$ is finite, for every $x\in C_p^\infty$. The function $x \mapsto | \rho(X) \cdot x|$ is unbounded since, otherwise, a non-trivial subgroup of $\Gamma$ would act trivially. For every $x,y\in C_p^\infty$, we have 
\begin{equation} \label{eq:triangle.ineq}
| \rho(X)(x+y) | \leq | \rho(X)x| \cdot | \rho(X)y|.
\end{equation}
It follows that the function $x \mapsto | \rho(X) \cdot x |$ is unbounded on every open subgroup of $C_p^\infty$. We recursively construct a decreasing sequence $W_k$ of $\rho(X)$-invariant open subgroups of $C_p^\infty$ and a sequence $u_k$ of elements of $C_p^\infty$ such that \begin{enumerate}
\item $u_{k+1}\in W_k$.
\item $\bigcap W_k=0$.
\item The $X$-orbit of the element $(u_0+\cdots+u_k)+W_k \in C_p^\infty / W_k$ has size greater than $k$.
\end{enumerate} 
For $k=0$, take $W_0=C_p^\infty,u_k=0$. Given $W_0,\ldots,W_k,u_0,\ldots,u_k$, choose $u_{k+1}\in W_k$ such that $\left| \rho(X)(u_1+\cdots+u_k)\right|(k+1)<\left| \rho(X)(u_{k+1})\right|$. Since $u_{k+1}=(u_1+\ldots+ u_{k+1})-(u_1+\ldots+ u_{k})$,
by \eqref{eq:triangle.ineq}, $\left| \rho(X)(u_1+\cdots+u_{k+1})\right|>k+1$, so there is an open subgroup $W_{k+1}$ such that the orbit $\rho(X)(u_1+\cdots+u_{k+1}+W_{k+1})$ has size greater than $k+1$. Shrinking $W_{k+1}$, we can further require that $W_{k+1} \subseteq W_k$, that the first $k$ coordinates of every element in $W_{k+1}$ are zero (so $\bigcap W_k=0$), and that $W_{k+1}$ is $\rho(X)$-invariant.

It follows from the first two properties that the series $u_1+u_2+\cdots$ converges. Denote the sum by $u$. Since the projection of $\rho(X) \cdot u$ to $C_p^\infty / V_k$ has size greater than $k$, for every $k$, it follows that $\rho(X) \cdot u$ is infinite. We will show that the width of the conjugacy class of $u$ in $H$ is infinite.

Denote $V=\overline{\langle \rho(X) \cdot u \rangle}$. By compactness, there is a decreasing sequence $V=V_0 \supseteq V_1 \supseteq V_2 \supseteq \cdots$ of $\rho(X)$-invariant open subgroups of $V$ such that $\cap V_i=0$. Denote the projection $V \rightarrow V/V_n$ by $\pi_n$ and denote the representation of $X$ on $V/V_n$ by $\rho_n$. We have
\[
\left| \pi_n \left( \langle \rho(X) \cdot u \rangle \right) \right| = \left| \langle \rho_n(X) \cdot \pi_n(u) \rangle \right| = \left| V/V_n \right| \geq 2^{\dim V/V_n},
\]
since $\pi_n(u)$ generates $V/V_n$.

On the other hand, by the assumption on $L$, there are finitely many elements $g_1,\ldots,g_m\in X$ such that $\rho(X)=\cup g_i \rho(L)$. Thus, by Lemma \ref{lem:small.p.adic}, there exists a constant $D$ such that for every $C$, 
\[
\left| \pi_n \left( \left( \rho(X) \cdot u \right)^C \right) \right| = \left| \left( \rho_n(X) \cdot \pi_n(u) \right)^C \right| = \left| \left( \bigcup_1^m \rho_n(g_i) \rho_n(L) \cdot \pi_n(u) \right)^C \right| \leq 
\]
\[
\leq m^C\left| \rho_n(L) \cdot \pi_n(u) \right|^{C} \leq \left( m \left| \rho_n(L)\right|\right)^{C} \leq \left( D \cdot \dim V/V_n \right)^{2C}.
\]

Since $\dim V/V_n$ is unbounded, we get 
\[
\langle \gcl_H(u) \rangle = \langle \rho(X) \cdot u \rangle \not\subseteq \left( \rho(X) \cdot u \right)^C = \gcl_H(u)^C,
\]
so the width of the conjugacy class of $u$ in $H$ is infinite.
\end{proof}

\begin{proposition} \label{prop:Conj.implies.FCW} Suppose that Conjecture \ref{conj:local.to.global.one.group} holds for $(K,S,\mathbf{G},\Gamma)$. Then the pro-finite completion of $\Gamma$ has finite conjugacy width.
\end{proposition}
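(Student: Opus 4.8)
The plan is to combine the hypothesis of Conjecture \ref{conj:local.to.global.one.group} with the model-theoretic machinery of Section \ref{sec:word_comp}, reducing the finiteness of conjugacy width in $\widehat{\Gamma}$ to the finiteness of width of conjugacy classes in suitable ultrapowers. First I would recall that, by the remark after Conjecture \ref{conj:local.to.global.one.group} (Remark \ref{rem:CSPandFCW}(1)) and the results of \cite{AM22}, the right-hand side of Inequality \eqref{eq:local.to.global.conj} is finite; thus the hypothesis directly gives a single constant $C'$ bounding $\wid_\Gamma(\gcl_\Gamma(\gamma))$ independently of $\gamma$, i.e. $\Gamma$ itself has uniformly finite conjugacy width. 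So the content of the proposition is really the passage from finite conjugacy width in $\Gamma$ to finite conjugacy width in $\widehat{\Gamma}$, and this is where CSP re-enters: a finite-index subgroup of $\widehat{\Gamma}$ is, up to bounded index, a congruence subgroup, and the congruence completion of $\Gamma$ is $\mathbf{G}(\widehat{O_S})$, whose conjugacy classes are controlled locally.

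The key steps, in order, would be: (i) observe that since Conjecture \ref{conj:local.to.global.one.group} implies CSP (again by Remark \ref{rem:CSPandFCW}(2), via Proposition \ref{prop:Conj.implies.FCW} applied in the uniform case --- or more precisely, one runs the argument directly), we may identify $\widehat{\Gamma}$, up to a finite central kernel, with the congruence completion $\mathbf{G}(\widehat{O_S})$; (ii) a generalized conjugacy class $\gcl_{\widehat{\Gamma}}(g)$ of an element $g$ projects to a generalized conjugacy class in $\mathbf{G}(\widehat{O_S})=\prod'_{v\notin S}\mathbf{G}(O_v)$, and one reduces to bounding the width of $\gcl$ in each compact group $\mathbf{G}(O_v)$ and in the finite central part uniformly in $v$ and $g$; (iii) invoke \cite[Lemma 3.20]{AM22} and \cite[Lemma 3.22]{AM22} --- exactly the tools used for the $CSP\Rightarrow FCW$ direction of Theorem \ref{thm:main2} --- which give a constant bounding the width of every generalized conjugacy class in $\mathbf{G}(O_v)$ and in every finite quotient $\mathbf{G}(O_S/I)$, uniformly; (iv) assemble these local bounds together with the uniform bound on $\Gamma$ coming from the hypothesis, handling the finite central extension by absorbing its order into the constant. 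The uniform local bounds are what make the product $\prod'_v \mathbf{G}(O_v)$ behave well: an element $g$ of $\widehat{\Gamma}$ is a bounded product of conjugates of any noncentral generator in each factor, and the bounded-index subgroup issue from CSP only costs another bounded factor.

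The main obstacle I anticipate is the interaction between the congruence kernel and the conjugacy-width estimate: CSP as stated only asserts that the congruence kernel is finite (and, under the stronger hypotheses here, central of bounded order), so one must check that lifting a short bounded product of conjugates from $\mathbf{G}(\widehat{O_S})$ back to $\widehat{\Gamma}$ does not blow up the width --- this requires knowing the kernel is central and finite, so that correcting by a central element costs at most a bounded number of additional conjugates (using that $\Gamma$, hence $\widehat{\Gamma}$, is perfect modulo a bounded subgroup, which follows from Property (T) or from the uniform commutator-width bound implicit in the hypothesis). A secondary technical point is that the hypothesis of Conjecture \ref{conj:local.to.global.one.group} is phrased for all congruence subgroups $\Gamma$ of $\mathbf{G}(O_S)$, not just $\mathbf{G}(O_S)$ itself, and one should make sure the uniform constant can be taken over the whole family; but this is exactly the kind of uniformity that \cite[Lemma 3.20, Lemma 3.22]{AM22} are designed to deliver, so the proof should go through by carefully chaining these uniform estimates.

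\begin{proof}[Proof of Proposition \ref{prop:Conj.implies.FCW}] By Remark \ref{rem:CSPandFCW}(1) and \cite{AM22}, the right-hand side of Inequality \eqref{eq:local.to.global.conj} is bounded by a constant $C'$ independent of $\gamma\in \Gamma$; hence the conjectured inequality gives a constant $C''$ with $\wid_\Gamma(\gcl_\Gamma(\gamma))\le C''$ for all $\gamma\in \Gamma$. In particular the commutator word has finite width in $\Gamma$, so, arguing as in the proof of Theorem \ref{thm:main2}, $\Gamma$ has CSP: indeed if it did not, Lemma \ref{lem:Rapinchuk} would produce a quotient $H$ of $\widehat{\Gamma}$ of infinite conjugacy width, and the construction there exhibits an element whose generalized conjugacy class has infinite width, contradicting (via Los's theorem applied to an ultrapower of $\Gamma$, as in Section \ref{sec:word_comp}) the uniform bound $C''$ transported to $\widehat\Gamma$.

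Granting CSP, the congruence kernel $C=\ker(\widehat{\Gamma}\to \mathbf{G}(\widehat{O_S}))$ is finite and central, and $\mathbf{G}(\widehat{O_S})=\prod'_{v\notin S}\mathbf{G}(O_v)$. Fix $g\in \widehat{\Gamma}$ and a noncentral $x\in \widehat\Gamma$; write $\bar g,\bar x$ for their images in $\mathbf{G}(\widehat{O_S})$. By \cite[Lemma 3.20]{AM22} and \cite[Lemma 3.22]{AM22} there is a constant $N$, independent of $v$, of $g$, and of the ideal $I$, such that every element of each compact group $\mathbf{G}(O_v)$ lies in $\gcl_{\mathbf{G}(O_v)}(y)^N$ for every noncentral $y$, and such that the analogous bound holds in each finite quotient $\mathbf{G}(O_S/I)$. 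Consequently $\bar g$ is a product of at most $N$ conjugates of $\bar x^{\pm1}$ in $\mathbf{G}(\widehat{O_S})$, provided $\bar x$ is noncentral in every factor; in general one first replaces $x$ by a bounded product of conjugates so that its image is noncentral in the relevant factors, using again the uniform bound $N$ together with the fact that the image of $\gcl_{\widehat\Gamma}(x)$ generates an open (hence finite-index) subgroup. Thus there are $h_1,\dots,h_N\in \widehat\Gamma$ with $g=c\cdot\prod_{i=1}^N h_i x^{\varepsilon_i} h_i^{-1}$ for some $c\in C$ and $\varepsilon_i\in\{\pm1\}$.

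It remains to absorb the central element $c$. Since $C$ is finite and $\widehat\Gamma$ is perfect modulo a subgroup of bounded order --- a consequence of the uniform commutator-width bound from the first paragraph --- there is a constant $M$, depending only on $|C|$ and $C''$, such that every element of $C$ is a product of at most $M$ conjugates of $x^{\pm1}$ (alternatively, one applies the bound $C''$ inside $\Gamma$ to a preimage of $c$). Combining, $g\in \gcl_{\widehat\Gamma}(x)^{N+M}$. Since $N+M$ does not depend on $g$ or on $x$, and since every generalized conjugacy class of $\widehat\Gamma$ whose normal closure is proper is handled by the same argument after passing to the finite-index subgroup it generates (at the cost of another bounded factor), we conclude that $\widehat\Gamma$ has finite conjugacy width.
\end{proof}
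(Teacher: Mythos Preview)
Your proof has a fatal gap at the very first step. You claim that Remark \ref{rem:CSPandFCW}(1) and \cite{AM22} give a constant $C'$ bounding $\max_{v\in S}\wid_{\mathbf{G}(K_v)}(\gcl_{\mathbf{G}(K_v)}(\gamma))$ independently of $\gamma$. The remark asserts only finiteness for each fixed $\gamma$. When some $v\in S$ is a real place with $\mathbf{G}(K_v)$ compact --- precisely the anisotropic situation the paper is built around --- the conjugacy width of $\gamma$ in $\mathbf{G}(K_v)$ is of order $\dist_v(\gamma, Z(\mathbf{G}(K_v)))^{-1}$ (compare Lemma \ref{lem:gcl.K} and Lemma \ref{lemma:wid_to_eps}) and hence unbounded as $\gamma$ approaches the center. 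So the Conjecture does \emph{not} yield uniform finite conjugacy width in $\Gamma$, and everything that follows collapses: you no longer have the uniform $C''$, you cannot derive CSP from it, and the subsequent work in $\mathbf{G}(\widehat{O_S})$ is moot. (Incidentally, the passage ``transported to $\widehat{\Gamma}$ via \L o\'s'' is also off: \L o\'s controls ultrapowers, not profinite completions. The correct transport --- uniform width in $\Gamma$ descends to every finite quotient --- is elementary, but again requires the uniform bound you do not have.)

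The paper's argument confronts exactly this obstacle, and is both shorter and avoids CSP entirely. To bound conjugacy width in $\widehat{\Gamma}$ one bounds, uniformly in $g\in\Gamma$ and in finite-index normal $\Delta\triangleleft\Gamma$, the width of $\gcl_{\Gamma/\Delta}(g\Delta)$. The key point is that one may replace $g$ by any $g'\in g\Delta$ without changing the coset. Since $\Delta$ (being commensurable with $\mathbf{G}(O_S)$) is dense in the product $\prod_{v\in S_{\mathbb{R},\mathrm{cpt}}}\mathbf{G}(K_v)$ of compact real factors, one can translate $g$ by an element of $\Delta$ so that $g'$ lands in a fixed open set $U_v\subseteq\mathbf{G}(K_v)$ (supplied by \cite[Lemma 3.17]{AM22}) on which the conjugacy width is bounded by some $N_v$; for the remaining $v\in S$ the width is uniformly bounded anyway by \cite[Lemmas 3.18, 3.20]{AM22}. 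Now apply the Conjecture to $g'$: $\wid_\Gamma(\gcl_\Gamma(g'))\le C\cdot\max_v N_v$, and since $g'\Delta=g\Delta$ this bound descends to $\Gamma/\Delta$. No CSP, no congruence kernel, and no structure of $\mathbf{G}(\widehat{O_S})$ are needed.
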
 

\begin{proof} Let $T \subseteq S$ be the set of real places $v\in S$ for which $\mathbf{G}(K_v)$ is compact. If $v\in T$ then $\mathbf{G}(K_v)$ is connected so \cite[Lemma 3.17]{AM22} implies that there is an open set $U_v \subseteq \mathbf{G}(K_v)$ and a natural number $N_v$ such that $\wid_{\mathbf{G}(K_v)}\left( \gcl_{\mathbf{G}(K_v)}(g) \right) < N_v$, for every $g\in U_v$. Let $N=\max \left\{ N_v \right\}$.

Assume that \eqref{eq:local.to.global.conj} holds. We will show that the width of every conjugacy class in $\widehat{\Gamma}$ is bounded by $CN$. For this, it is enough to show that, for every finite index normal subgroup $\Delta \triangleleft \Gamma$ and every $\gamma \in \Gamma$, the width of $\gcl_{\Gamma / \Delta}(\gamma \Delta)$ is bounded by $CN$. 

Let $\Delta$ and $\gamma$ as above. Since $\Delta$ is dense in $\prod_T \mathbf{G}(K_v)$, there is $\delta \in \Delta$ such that $\gamma \delta \in U_v$, for every $v\in T$. This implies that $\wid \left( \gcl_{\mathbf{G}(K_v)}(\gamma \delta) \right) < N$. By \eqref{eq:local.to.global.conj}, $\wid \left( \gcl_\Gamma(\gamma \delta) \right) < CN$, which implies that $\wid \left( \gcl_{\Gamma / \Delta}(\gamma \Delta) \right) <CN$.

\end{proof} 

\section{Proof of Theorem \ref{thm:stability}} \label{sec:stability}

\begin{lemma} \label{lem:c-separated} Let $\Gamma$ be as in Theorem \ref{thm:main}. There is a constant $D>0$ such that if $\gamma \in \Gamma $ is an element of infinite index, then $\langle \gamma \rangle$ contains a $D$-separated element.
\end{lemma}

\begin{proof} One of the eigenvalues of $\gamma$ is not a root of unity; call it $a$. Every $v\in S_{def}$ defines an embedding $\varphi_v:K(a) \hookrightarrow \mathbb{C}$ such that $\varphi_v(a)\in U(1)$. Let $H \subseteq U(1)^{|S_{def}|}$ be the closure of the subgroup generated by the element $(\varphi_v(a))_{v\in S_{def}}$. Since $a$ is not a root of unity, every coordinate projection $p_v:H \rightarrow U(1)$ is onto. Denoting the Haar measure on a group $G$ by $\mu_G$, we get that $\mu_H \left( p_v ^{-1}(A) \right) = \mu_{U(1)}(A)$, for every $A \subseteq U(1)$. It follows that there is an element $h\in H$ such that the angle between $(p_v(h)$ and $\pm 1)$ is greater than $\frac{\pi}{|S_{def}|}$, for every $v\in S_{def}$. Thus, there is $m$ such that the angle between $p_v(\gamma ^m)$ and $\pm1$ is greater than $\frac{\pi}{2|S_{def}|}$, which implies that $\gamma ^m$ is $\frac{\pi}{2|S_{def}|}$-separated.
\end{proof} 

\begin{proof}[Proof of Theorem \ref{thm:stability}] 

Suppose that $f: \Gamma \rightarrow S_N$ is a function such that $d_N(f(xy),f(x)f(y))< \delta$, for every $x,y\in \Gamma$.

Choose $\gamma \in \Gamma$ of infinite order. By \cite[Theorem 1.2]{BeCh}, there is $N \leq N' \leq (1+10^4 \delta)N$ and a homomorphism $\tau : \Gamma \rightarrow S_{N'}$ such that $d_{N'}(f(\gamma ^n),\tau(\gamma ^n))< 10^4 \delta$, for every $n$. In particular, $d_{N'}(f(\gamma ^{m \cdot N'!},1)<10^4 \delta$, for every $m$.

By Lemma \ref{lem:c-separated}, there is $m$ such that $\gamma ^{m \cdot N'!}$ is $c$-separated. By Theorem \ref{thm:main}, the width of $\gcl_\Gamma \left( \gamma ^{m \cdot M!} \right)$ is bounded by $Cc$. Denoting the normal subgroup generated by $\gamma ^{m \cdot M!}$ by $\Delta$, we have $d_{N'}(f(\Delta),1)<10^5Cc \delta$. By \cite[Theorem 2.20]{BeCh}, there is $N'<N''<(1+10^9Cc \delta)N'$ and a homomorphism $\rho:\Gamma \rightarrow S_{N''}$ with $d(f(x),\rho(x))<10^9Cc \delta$, for every $x\in \Gamma$.

\end{proof}

\appendix

\section{Conjugacy classes in spin groups}

\subsection{Setting} \label{subsec:Setting.Appendix}

We keep $K,S,O$ as in \S\ref{sec:notations}. Fix an integer $m \geq 6$ and a nondegenerate quadratic form $q$ on $K^m$. We assume that $q$ is anisotropic, that the $S$-congruence kernel of $\Spin_q$ is trivial, and that there is $w_0\in S$ for which the Witt index of $(K_{w_0}^m,q)$ is at least 2. Finally, let $\Delta \subseteq \Spin_q(K)$ be an $S$-arithmetic subgroup.

\subsection{$\Spin_q(K_v)$}

\begin{notation} \begin{enumerate}
\item $d_{S^{m-1}}$ is the round metric on the unit sphere $S^{m-1} \subseteq \mathbb{R} ^m$ normalized so $d_{S^{m-1}}(x,y)$ is the angle between $x$ and $y$. 
\item $d_{\SO(m)}$ is the metric on $\SO(m)$ given by $d_{\SO(m)}(g,h)=\max \left\{ d_{S^{m-1}}(gx,hx) \mid x\in S^{m-1} \right\}$.
\item $d_{\SO(m)}$ is a Riemannian metric on $\SO(m)$. We denote by $d_{\Spin(m)}$ the induced Riemannian metric on $\Spin(m)$.
\end{enumerate} 
\end{notation} 



\begin{lemma} \label{lem:width.in.K} There is a constant $M$ such that for every place $v$ and every element $g\in \Spin_q(K_v) \smallsetminus Z(\Spin_q(K_v))$, \begin{enumerate}
\item $\gcl_{\Spin_q(K_v)}(g)$ generates $\Spin_q(K_v)$.
\item If $v$ is not real or $v$ is real and $(K_v^m,q)$ is not definite, then $\wid_{\Spin_q(K_v)} \left( \gcl_{\Spin_q(K_v)}(g) \right) \leq M$.
\item If $v$ is real and $(K_v^m,q)$ is definite, then
\[
\frac{1}{d_{\Spin_q(K_v)}(g,Z(\Spin_q(K_v))} \leq \wid_{\Spin_q(K_v)}\left( \gcl_{\Spin_q(K_v)}(g)\right) \leq \frac{M}{d_{\Spin_q(K_v)}(g,Z(\Spin_q(K_v)))}.
\]
\end{enumerate} 
\end{lemma} 

\begin{proof} \begin{enumerate}
\item By \cite[Lemma 3.16]{AM22}, the normal subgroup generated by $g$ is open, so it contains all unipotents. These generate $\Spin_q(K_v)$.
\item This follows from \cite[Lemma 3.18]{AM22} if $\Spin_q(K_v)$ is not compact and from \cite[Lemma 3.20]{AM22} if $\Spin_q(K_v)$ is compact.
\item In this case, $\Spin_q(K_v)=\Spin(m)$. Denote $w=\wid_{\Spin(m)} \left( \gcl_{\Spin(m)}(g) \right)$, $Z=Z(\Spin(m))$, and $d=d_{\Spin(m)} \left( g,Z \right)$.

If $A,B \subseteq \Spin(m)$ then $d_{\Spin(m)}\left( A \cdot B,Z \right) \leq d_{\Spin(m)}\left( A,Z \right) + d_{\Spin(m)}\left(B,Z \right)$. Thus
\[
\frac{\pi}{2} \leq d_{\Spin(m)}\left( \gcl_{\Spin(m)}(g)^w,Z \right) \leq w \cdot d,
\]
and the first inequality follows. 

Denote the image of $g$ in $\SO(m)$ by $\overline{g}$. By \cite[Lemma 3.17]{AM22} there is a constant $c$ such that if $d>\frac{1}{10}$ then $w<c$. If $d<\frac{1}{10}$, then one of the eigenvalues of $\overline{g^2}$ is not in the ball of radius $\frac{d}{2}$ around 1. This implies that, for some $N \leq \frac{2}{d}$, $d(g^N,Z)>\frac{1}{10}$, so $w < \frac{2c}{d}$.
\end{enumerate} 
\end{proof} 

\subsection{$\Spin_q(O)$}

\begin{definition} Let $\epsilon >0$. \begin{enumerate}
\item We say that $g\in \SO_q(K)$ is $\epsilon$-separated if, for every real place $v\in S$ such that $q_v$ is definite, $d_{\SO_q(K_v)} \left( g,Z(\SO_q(K_v)) \right) > \epsilon$.
\item We say that $g\in \Spin_q(K)$ is $\epsilon$-separated if its image in $\SO_q(K)$ is $\epsilon$-separated.
\end{enumerate} 
\end{definition} 

\begin{lemma} \label{lem:Kneser.uff} Under Setting \ref{subsec:Setting.Appendix} there is an integer $N$ such that the following holds: if $c_1,\ldots,c_m \in O^m$ are non-isotropic orthogonal vectors and $6 \leq k \leq m-6$ satisfy
\[
i_q \left( K_{w_0} c_1+\cdots+K_{w_0} c_k \right) \geq 1 \text{ and } i_q \left( K_{w_0} c_k + \cdots +K_{w_0} c_{m}  \right) \geq 2,
\]
then, denoting $U:=K c_1 + \cdots +K c_k$, \begin{enumerate}
\item For every 1-separated $\delta \in \Delta$, the set $\gcl_\Delta(\delta) \cdot (c_1,\ldots,c_k)$ contains an adelic neighborhood of $(c_1,\ldots,c_k)$ inside $\Spin_q(O) \cdot (c_1,\ldots,c_k)$. In particular, there is $I\in \mathcal{I} ^K$ such that $\Delta[I] \subseteq \gcl_{\Delta}(\delta)^{N} \cdot \Spin_{q\restriction U^\perp}(O)$.
\item There is $J\in \mathcal{I} ^K$ such that $\Delta[J] \subseteq \left( \gcl_{\Delta} \left( \Delta \cap \Spin_{q\restriction U^\perp}(O) \right) \right)^{N+1}$.
\end{enumerate} 
\end{lemma} 

\begin{proof} \begin{enumerate}
\item The proof is almost identical to \cite[Lemma 4.16]{AM22} and is omitted; see Remark \ref{rem:4.16.AM22}. 
\item By weak approximation, there is an element $\delta \in \Delta \cap \Spin_{q\restriction U}(O)$ that is 1-separated as an element of $\Spin_{q\restriction U}(O)$ and thus also as an element of $\Spin_q(O)$. By part 1, there is an ideal $J$ such that $\Delta[J] \subseteq \gcl_{\Delta}(\delta)^{N} \cdot \Spin_{q\restriction U^\perp}(O)$. This implies that $\Delta[J] \subseteq \gcl_\Delta(\delta)^N \cdot \left( \Delta \cap \Spin_{q\restriction U^\perp}(O) \right) \subseteq \left( \gcl_{\Delta} \left( \Delta \cap \Spin_{q\restriction U^\perp}(O) \right) \right)^{N+1}$.
\end{enumerate} 
\end{proof} 

\begin{remark} \label{rem:4.16.AM22} The differences between \cite[Lemma 4.16]{AM22} and the first part of Lemma \ref{lem:Kneser.uff} are:
\begin{itemize}
\item \cite[Lemma 4.16]{AM22} is stated for $k=3$. The generalization to $1 \le k \le n-6$ follows from a simple induction.
\item \cite[Lemma 4.16]{AM22} is stated for centerless congruence subgroups of $\Theta_q(K)$. The assumption that the center of the congruence subgroup is trivial is used in a different part of \cite{AM22} and is not needed for \cite[Lemma 4.16]{AM22}. 
\end{itemize}
\end{remark}

\begin{theorem} \label{thm:Kneser.commutators} For every $S$-arithmetic subgroup $\Delta \subseteq \Spin_q(K)$ there is a natural number $N$ and an ideal $J\in \mathcal{I} ^K$ such that, for every 1-separated element $\delta \in \Delta$ there exists $I\in \mathcal{I} ^K$ for which 
\[
\left[ \Spin_q(O)[I] , \Spin_q(O)[J] \right] \subseteq \gcl_{\Delta}(\delta)^N.
\]
\end{theorem} 

\begin{proof} Choose non-isotropic and orthogonal vectors $c_1,\ldots,c_m\in O^m$ such that 
\[
i_q \left( K_{w_0} c_1+\cdots+K_{w_0} c_7 \right) \geq 2 \text{ and } i_q \left( K_{w_0} c_6 + \cdots +K_{w_0} c_{m}  \right) \geq 2.
\]
Denote $U:=K c_1 + \cdots +K c_6$, $\Delta_U:=\Delta \cap \Spin_{q\restriction U}(O)$, and $\Delta_{U^\perp}:=\Delta \cap \Spin_{q\restriction U^\perp}(O)$. Lemma \ref{lem:Kneser.uff} implies that there are $N,J$ such that $\Delta[I] \subseteq \gcl_{\Delta}(\delta)^N \cdot \Delta_{U}$ and $\Delta[J] \subseteq \gcl_{\Delta} \left( \Delta_{U^\perp} \right)^{N+1}$.

Since $\left[ \Delta , \delta \right] \subseteq \gcl_\Delta(\delta)^2$, it follows that $\left[ \Delta , \gcl_\Delta(\delta) \right] \subseteq \gcl_\Delta(\delta)^2$. The identity $[xy,z]=x[y,z]x ^{-1} [x,z]$ implies that 
\[
\left[ \Delta[I] , \Delta_{U^\perp} \right] \subseteq \left[ \Delta_U \cdot \gcl_\Delta(\delta)^N , \Delta_{U^\perp} \right] \subseteq \left[ \gcl_\Delta (\delta) , \Delta \right] ^N \cdot \left[ \Delta_{U} , \Delta_{U^\perp} \right] \subseteq \gcl_\Delta (\delta)^{2N},
\]
so $\left[ \Delta[I] , \gcl_\Delta \left( \Delta_{U^\perp} \right) \right] \subseteq \gcl_\Delta(\delta)^{2N}$. Thus,
\[
\left[ \Delta[I] , \Delta[J] \right] \subseteq \left[ \Delta[I] , \gcl_\Delta \left( \Delta_{U^\perp} \right) ^{N+1} \right] \subseteq \left[ \Delta[I] , \gcl_\Delta \left( \Delta_{U^\perp} \right) \right] ^{N+1} \subseteq \gcl_\Delta(\delta)^{2N(N+1)}.
\]
\end{proof}

\subsection{$\Spin_q(K)$}

The proofs in this subsection mimic the ones of \cite{Knes56}. 

\begin{notation} \begin{enumerate}
\item We denote the image of $\Spin_q(K) \rightarrow \SO_q(K)$ by $\Theta_q(K)$. Equivalently, $\Theta_q(K)$ is the kernel of the Spinor norm $\theta:\SO_q(K) \rightarrow K^ \times / (^2K^ \times)$.
\item If $U \subseteq K^m$ is a subspace, denote $\Theta_{q\restriction U}(K)=\Theta_q(K) \cap \SO_{q\restriction U}(K)$.
\item For every non-isotropic $a \in K^n$, denote the reflection in the hyperplane $a^\perp$ by $\tau_a \in \O_q(K)$. 
\item For $a \in K^n$ denote the image of $a$ in $\Cliff_q$ by $e_a$.
\end{enumerate} 
\end{notation}

Note that, for every non-isotropic $a \in K^m$ and every $b \in K^m$, $\frac{1}{q(a)}e_ae_be'_{a}=e_{-\tau_a(b)}$ where $':\Cliff_q \rightarrow \Cliff_q$ is the canonical involution.  

\begin{definition}
Let $E$ be a field and let $f:E^n \rightarrow E$	a regular quadratic from.
\begin{enumerate}
	\item $f$ is called universal if $f(E^n)=E$. 
	\item A  subspace $V$ of $E^n$ is called a $f$-universal subspace of $E$  if it is regular and $f(V)=f(E^n)$.
\end{enumerate}
\end{definition}

\begin{lemma}\label{lemma:rrr} Let  $U$ be a  $q$-universal subspace of $K^m$ such that $\dim U \ge 5$. Then for every $\alpha \in O_q(K)$, there exist $\beta,\gamma \in O_{q \restriction_U}(K)$ such that $\alpha\beta$ and $\gamma\alpha$ belong to $\Theta_q(K)$.
\end{lemma}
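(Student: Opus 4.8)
The statement to prove is Lemma \ref{lemma:rrr}: given a $q$-universal subspace $U$ of $K^n$ with $\dim U \geq 5$, every $\alpha \in \Or_q(K)$ can be multiplied on the right (resp. left) by an element of $\Or_{q\restriction_U}(K)$ so that the product lies in $\Theta_q(K)$, i.e. has trivial spinor norm and lies in $\SO_q$. Since $\Theta_q(K) = \ker(\theta) \cap \SO_q(K)$ and $\Or_q(K)/\Theta_q(K)$ is abelian of exponent $2$ (it is a quotient of $\{\pm 1\} \times K^\times/(K^\times)^2$ via determinant and spinor norm), it suffices to exhibit, for each of the two characters $\det$ and $\theta$, enough elements of $\Or_{q\restriction_U}(K)$ realizing all possible values; then one multiplies $\alpha$ by a suitable product to kill both invariants. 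Because left and right multiplication have symmetric treatments (conjugating by the involution, or simply repeating the argument), I will focus on the right-multiplication statement.

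\textbf{Key steps.} First I would reduce to producing, inside $\Or_{q\restriction_U}(K)$, (a) an element of determinant $-1$ and (b) for each square class $c \in K^\times/(K^\times)^2$, an element of $\Theta_q$-coset with spinor norm $c$. For (a): since $\dim U \geq 5 \geq 1$, $q\restriction_U$ is nonzero, so there is a non-isotropic vector $a \in U$, and the reflection $\tau_a$ lies in $\Or_{q\restriction_U}(K)$ with determinant $-1$ and spinor norm $q(a)(K^\times)^2$. For (b): given $c \in K^\times$, I must find a non-isotropic $a \in U$ with $q(a) \in c(K^\times)^2$; then $\tau_a \in \Or_{q\restriction_U}(K)$ has spinor norm $c$. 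This is exactly where $q$-universality of $U$ is used: $q(U) = q(K^n)$, and I claim $q(K^n) \supseteq$ all square classes represented by $q$ — more precisely, for the argument to close I need that for a given $\alpha$, the target square class $\theta(\alpha\beta_0)$ for some initial correction $\beta_0$ is actually represented by $q\restriction_U$. Since $U$ is $q$-universal, $q\restriction_U$ represents every element of $K^\times$ that $q$ represents over $K$; the spinor norm of any element of $\Or_q(K)$ is a product of values $q(a_i)$ with $a_i$ anisotropic vectors in $K^n$ (by Cartan–Dieudonné, writing $\alpha$ as a product of reflections), hence a product of values represented by $q$. Thus any spinor norm value arising is of the form $\prod q(a_i)$ with each $q(a_i) \in q(K^n) = q(U)$, so each factor — and hence, if the number of factors has the right parity, the whole product — can be matched by reflections supported in $U$.

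\textbf{Assembling the correction.} Concretely: write $\alpha = \tau_{a_1}\cdots\tau_{a_r}$ with $a_i \in K^n$ anisotropic (Cartan–Dieudonné over the field $K$; here I need $q$ regular, which is part of Setting \ref{nota:Gamma}). Then $\det \alpha = (-1)^r$ and $\theta(\alpha) = \prod_i q(a_i) (K^\times)^2$. For each $i$, $q(a_i) \in q(K^n) = q(U)$, so pick $b_i \in U$ anisotropic with $q(b_i) \in q(a_i)(K^\times)^2$, and set $\beta := \tau_{b_r}\cdots\tau_{b_1} \in \Or_{q\restriction_U}(K)$. Then $\alpha\beta$ has determinant $(-1)^{2r} = 1$, so $\alpha\beta \in \SO_q(K)$, and $\theta(\alpha\beta) = \theta(\alpha)\theta(\beta) = \prod_i q(a_i) \cdot \prod_i q(b_i) = \prod_i q(a_i)^2 (K^\times)^2 = (K^\times)^2$, so $\alpha\beta \in \Theta_q(K)$. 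The left-multiplication case is identical after replacing $\alpha$ by $\alpha^{-1}$ (then $\gamma := \beta^{-1}$ where $\beta$ is the right-corrector for $\alpha^{-1}$, noting $\gamma\alpha = (\alpha^{-1}\beta)^{-1} \in \Theta_q(K)$ since $\Theta_q(K)$ is a group).

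\textbf{Main obstacle.} The one point requiring care is the identity $q(U) = q(K^n)$ as \emph{sets}, which is the literal meaning of $q$-universality in Definition above; I should double-check the definition only asserts $f(V) = f(E^n)$ (it does), so no square-class weakening is needed and the factor-by-factor matching above is legitimate. The dimension hypothesis $\dim U \geq 5$ is not strictly needed for this spinor-norm bookkeeping — a non-isotropic vector suffices — but it guarantees $q\restriction_U$ is itself regular and, combined with universality over $K$, makes the statement robust; I would simply invoke $\dim U \geq 5$ to ensure regularity of $q\restriction_U$ so that $\Or_{q\restriction_U}$ and its spinor norm are well-behaved, and otherwise the argument is the elementary Cartan–Dieudonné computation sketched above.
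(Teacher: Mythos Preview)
Your proof is correct and takes a genuinely different route from the paper's. The paper first corrects the determinant with some $\beta_1\in\Or_{q\restriction_U}(K)$, then writes the remaining spinor norm $a=\theta(\alpha\beta_1)$ as a product $a=bc$ with $b,c$ chosen to lie in $q(U_v)$ at every real place, uses $\dim U\ge 5$ to get local universality of $q\restriction_U$ at all non-real places, and invokes the Hasse principle to conclude $b,c\in q(U)$; then $\beta_2=\tau_{b'}\tau_{c'}$ kills the spinor norm. Your argument instead decomposes $\alpha=\tau_{a_1}\cdots\tau_{a_r}$ by Cartan--Dieudonn\'e and matches each value $q(a_i)$ directly inside $q(U)$ using nothing more than the global equality $q(U)=q(K^n)$ from the definition of $q$-universality. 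This is more elementary: it avoids the Hasse principle entirely and, as you observed, does not actually use the hypothesis $\dim U\ge 5$. The trade-off is that the paper produces a corrector $\beta$ that is a product of at most three reflections regardless of $\alpha$, whereas yours uses up to $r\le n$ reflections; for the lemma as stated this is immaterial.
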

\begin{proof}
Since $(\alpha^{-1}\gamma^{-1})^{-1}=\gamma\alpha$, it is enough to prove that existence of $\beta$. It is clear that there exists $\beta_1 \in O_{q \restriction_U}(K)$ such that $\alpha\beta_1 \in \SO_q(K)$. Let $a$ be the spinor norm of $\alpha\beta_1$. Let $b,c \in K$ be such that $a=bc$ and for every $v \in S_{real}$, $b,c \in q(U_v)$. Since $\dim U \ge 5$, for every non-real $v$, $q$ is universal over $U_v$ so $b,c \in q(U_v)$. The Hasse principle implies that $b,c \in q(U)$, say $b=q(b')$ and $c=q(c')$ for some $b',c'\in K^n$. Let $\beta_2=\tau_{b'}\tau_{c'}$. Then $\alpha\beta_1\beta_2 \in \Theta_q(K)$ and $\beta_1\beta_2 \in O_{q \restriction_U}(K)$.
\end{proof}

\begin{lemma}\label{lemma:equal_gcl} Let $U$ be a  $q$-universal subspace of $K^n$ of dimension at least 5 and let $\alpha \in \Theta_{q \restriction U^\perp}(K)$. Then $\gcl_{\Theta_q(K)}(\alpha)=\gcl_{O_q(K)}(\alpha)$.
\end{lemma}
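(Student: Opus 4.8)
\textbf{Proof proposal for Lemma \ref{lemma:equal_gcl}.}

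The statement asserts that for $\alpha \in \Theta_{q\restriction U^\perp}(K)$, conjugating $\alpha$ within the larger group $\Or_q(K)$ produces the same generalized conjugacy class as conjugating it only within the index-two subgroup $\Theta_q(K)$ of spinor-norm-one special orthogonal transformations. The plan is to show that every $\Or_q(K)$-conjugate of $\alpha$ is already a $\Theta_q(K)$-conjugate of $\alpha$. The inclusion $\gcl_{\Theta_q(K)}(\alpha) \subseteq \gcl_{\Or_q(K)}(\alpha)$ is immediate since $\Theta_q(K) \subseteq \Or_q(K)$, so the content is the reverse inclusion.

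Let $g \in \Or_q(K)$ and consider $g\alpha g^{-1}$. The idea is to correct $g$ by an element of $\Or_{q\restriction U^\perp}(K)$ — which centralizes $\alpha$ — so that the corrected element lies in $\Theta_q(K)$. Concretely, since $\alpha$ pointwise fixes $U$ (as $\alpha \in \Theta_{q\restriction U^\perp}(K)$), for any $\beta \in \Or_{q\restriction U^\perp}(K)$ we have $\beta$ commuting with... wait, more carefully: $\alpha$ acts trivially on $U$ and $\beta$ acts trivially on $U^\perp$, so $\alpha$ and $\beta$ commute. Hence $g\alpha g^{-1} = g\beta^{-1}\alpha\beta g^{-1} = (g\beta^{-1})\alpha(g\beta^{-1})^{-1}$. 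Now I want to choose $\beta \in \Or_{q\restriction U^\perp}(K)$ so that $g\beta^{-1} \in \Theta_q(K)$. Applying Lemma \ref{lemma:rrr} with the $q$-universal subspace $U^\perp$ (note $\dim U^\perp \geq 5$ should follow from the standing hypotheses on $n$ and the dimension bound on $U$; I should check $U^\perp$ is $q$-universal, which uses that $U$ is regular so $K^n = U \oplus U^\perp$ orthogonally, and over each completion the relevant universality of $q\restriction U^\perp$ holds by the same Hasse-principle argument as in Lemma \ref{lemma:rrr}) — actually Lemma \ref{lemma:rrr} as stated requires $U$ to be $q$-universal of dimension $\geq 5$; here I would apply it with the roles swapped, taking the "$U$" of that lemma to be our $U^\perp$. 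This gives $\gamma \in \Or_{q\restriction U^\perp}(K)$ with $\gamma g \in \Theta_q(K)$; rearranging, $g\alpha g^{-1} = g\gamma^{-1}\alpha\gamma g^{-1}$... hmm, I need the correction on the correct side. Since $\gamma g \in \Theta_q(K)$ and $\gamma$ commutes with $\alpha$, write $g\alpha g^{-1} = \gamma^{-1}(\gamma g)\alpha(\gamma g)^{-1}\gamma = \gamma^{-1}\big((\gamma g)\alpha(\gamma g)^{-1}\big)\gamma$. The element $(\gamma g)\alpha(\gamma g)^{-1}$ is a $\Theta_q(K)$-conjugate of $\alpha$, and conjugating it further by $\gamma \in \Or_{q\restriction U^\perp}(K)$: I need this to stay in $\gcl_{\Theta_q(K)}(\alpha)$. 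This is the delicate point.

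The main obstacle is precisely this last step: after conjugating by an element of $\Theta_q(K)$, I still have a leftover conjugation by $\gamma \in \Or_{q\restriction U^\perp}(K)$, which need not lie in $\Theta_q(K)$. To handle it, I would instead arrange the correction more symmetrically. Since $\gamma$ commutes with $\alpha$, we have $\gamma^{-1}\big((\gamma g)\alpha(\gamma g)^{-1}\big)\gamma = \gamma^{-1}(\gamma g)\,\alpha\,(\gamma g)^{-1}\gamma = g\alpha g^{-1}$, which is circular. The correct approach: choose $\gamma \in \Or_{q\restriction U^\perp}(K)$ with $\gamma g \in \Theta_q(K)$, and observe directly that $g\alpha g^{-1} = (\gamma g)\,(\gamma^{-1}\alpha\gamma)\,(\gamma g)^{-1} = (\gamma g)\,\alpha\,(\gamma g)^{-1}$, using $\gamma^{-1}\alpha\gamma = \alpha$. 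This is clean and requires no leftover conjugation — the element $\gamma g$ does all the work at once. So the key steps are: (1) verify $U^\perp$ is a $q$-universal subspace of dimension $\geq 5$ so that Lemma \ref{lemma:rrr} applies with $U$ replaced by $U^\perp$; (2) given $g \in \Or_q(K)$, use Lemma \ref{lemma:rrr} to find $\gamma \in \Or_{q\restriction U^\perp}(K)$ with $\gamma g \in \Theta_q(K)$; (3) use that $\alpha$ and $\gamma$ commute (disjoint supports) to write $g\alpha g^{-1} = (\gamma g)\alpha(\gamma g)^{-1} \in \gcl_{\Theta_q(K)}(\alpha)$; (4) note the argument applies verbatim to $\alpha^{-1}$, and the identity is in both sides trivially. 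The only real subtlety is establishing (1), and I expect it to follow from the regularity of $U$, the orthogonal decomposition $K^n = U \perp U^\perp$, the dimension hypothesis $\dim U \geq 5$ stated in the lemma (hence $\dim U^\perp = n - \dim U$, which under the standing assumption $n \geq 12$ is comfortably $\geq 5$ as long as $\dim U \leq n - 5$; if $\dim U$ could be larger I would need to argue $q\restriction U$ itself and note the lemma's hypothesis already forces what we need), together with the Hasse–Minkowski universality argument already used inside the proof of Lemma \ref{lemma:rrr}.
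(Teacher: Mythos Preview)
There is a genuine gap: you have swapped $U$ and $U^\perp$ throughout. You take $\gamma \in \Or_{q\restriction U^\perp}(K)$ and then claim $\gamma$ and $\alpha$ commute because of ``disjoint supports''. But $\alpha \in \Theta_{q\restriction U^\perp}(K)$ is supported on $U^\perp$, and so is your $\gamma$; they share the same support and there is no reason for them to commute. (Your own parenthetical ``$\beta$ acts trivially on $U^\perp$'' already contradicts your choice $\beta \in \Or_{q\restriction U^\perp}(K)$.)

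The paper's proof is exactly your idea with the roles corrected: apply Lemma~\ref{lemma:rrr} to $U$ itself (which is the subspace assumed to be $q$-universal of dimension $\geq 5$, so no extra verification is needed) to obtain $\gamma \in \Or_{q\restriction U}(K)$ with $\beta\gamma \in \Theta_q(K)$. Now $\gamma$ is supported on $U$ while $\alpha$ is supported on $U^\perp$, so $\gamma$ and $\alpha$ genuinely commute, and $\beta\alpha\beta^{-1} = (\beta\gamma)\alpha(\beta\gamma)^{-1} \in \gcl_{\Theta_q(K)}(\alpha)$. This also dissolves your ``only real subtlety'' about whether $U^\perp$ is $q$-universal: that question never arises.
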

\begin{proof}
	Let $\beta \in O_q(K)$. We have to show that $\beta \alpha\beta^{-1}\in \gcl_{\Theta_q(K)}(\alpha)$.  Lemma \ref{lemma:rrr} implies that there exists $\gamma \in O_{q \restriction_U}(K)$ such that $\beta\gamma \in \Theta_q(K)$. The claim follows from the fact that $\beta \alpha\beta^{-1}=(\beta \gamma)\alpha(\beta\gamma)^{-1}$.
\end{proof}

\begin{lemma}\label{lemma:eps_to_1} There exists a constant $N$ such that the following holds. For every $\epsilon >0$, every $\epsilon$-separated element $\alpha \in \Theta_q(K)$, and every 6-dimensional regular subspace $U$ of $K^m$,  $\left( \gcl_{\Theta_q(K)}(\alpha) \right) ^{\frac{N}{\epsilon}}\cap \Theta_{q\restriction_U}(K)$ is dense in $\prod_{v \in S\text{ is real}}\Theta_{q \restriction_U}(K_v)$.  In particular, there exists a 1-separated element in $\left( \gcl_{\Theta_q(K)}(\alpha)\right) ^\frac{N}{\epsilon} \cap \Theta_{q\restriction_U}(K)$.
\end{lemma}

\begin{proof} Let $M$ be the constant of Lemma \ref{lem:width.in.K}. We show that the claim holds with $N=2M$. 

Denote the set of real places in $S$ by $S_{real}$. Suppose that $\alpha$ is $\epsilon$-separated. By weak approximation, there is an element $\beta \in \left( \gcl_{\Theta_q(K)}(g) \right)^{\frac{M}{\epsilon}}$ and, for every $v\in S_{real}\cup \left\{ w_0 \right\}$, an element $a_v \in K_v^m$ such that \begin{enumerate}
\item \label{item:eps.real} If $v \in S_{real}$ then $\Span \left\{ a_v,\beta a_v,\beta ^2a_v \right\}$ is regular and embeds isometrically into $U \otimes K_v$.
\item \label{item:eps.indef} If $v \in S_{real}$ and $q_v$ is indefinite then $q_{\{ a_v,\beta a_v,\beta^2a_v {\}}^{\perp}}$ is indefinite. 
\item \label{item:eps.def} If $v\in S_{real}$ and $q_v$ is definite then the angles between $a_v,\beta a_v,\beta^2a_v$ are between $89^\circ$ and $91^\circ$.
\end{enumerate} 

Since $K^m$ is dense in $\prod_{v\in S_{real}\cup \left\{ w_0 \right\}} K_v^m$ and the conditions above are open, we can perturb $a_v$ keeping the above conditions and assume that there is a vector $b\in K^m$ such that $b_v=a_v$, for all $v\in S_{real}\cup \left\{ w_0 \right\}$. Since $\dim U = 6$, for every non-real place $v$, there is an isometric embedding $\Span \left\{ b_v,\beta b_v,\beta ^2 b_v \right\} \hookrightarrow U \otimes K_v$. By \eqref{item:eps.real} and Hasse--Minkowski, there is an isometric embedding $\Span \left\{ b,g_1b,g_1^2 b \right\} \hookrightarrow U$. By \eqref{item:eps.indef} and a similar argument to the one in the proof of Lemma \ref{lem:Witt}, there is  $\kappa \in \Theta_q(K)$ such that $\kappa \left( \Span \left\{ b,\beta b,\beta^2b \right\} \right) \subseteq U$. Let $c=k(b)$ and $\gamma =\kappa \beta \kappa ^{-1}$.

Denote $W=\Span \left\{ c, \gamma c, \gamma ^2c \right\} \subseteq U$. By weak approximation, there is an element $\eta \in \Theta_{q\restriction W}$ such that, for all $v\in S_{real}$ such that $q_v$ is definite, the angle between $\gamma \eta ^{-1}\gamma ^{-1}(c)$ and $\eta (c)$ is between $89^\circ$ and $91^\circ$. It follows that $[\eta ,\gamma ]\in \left( \gcl_{\Theta_q(K)}(\alpha) \right)^{\frac{2M}{\epsilon}} \cap \Theta_{q\restriction U}(K)$ is a 1-separated element.
\end{proof}

\begin{lemma}\label{lemma:rrrr21} Let $N$ be the constant from Lemma \ref{lemma:eps_to_1}. For every $1$-separated element $\alpha \in \Theta_q(K)$, there exists a $1$-separated element $\beta \in (\gcl_{\Theta_q(K)}(\alpha))^{N}$ such that $\gcl_{O_q(K)}(\beta)= \gcl_{\Theta_q(K)}(\beta)$.
\end{lemma}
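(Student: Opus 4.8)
\textbf{Proof plan for Lemma \ref{lemma:rrrr21}.}

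The plan is to use Lemma \ref{lemma:eps_to_1} to push the $1$-separated element $\alpha$ into a subgroup of the form $\Theta_{q\restriction_U}(K)$ for a sufficiently small regular subspace $U$, and then invoke Lemma \ref{lemma:equal_gcl} to conclude that on that piece the $\Or_q(K)$-generalized conjugacy class and the $\Theta_q(K)$-generalized conjugacy class coincide. Concretely, first I would fix a decomposition $K^n = U \oplus U^\perp$ where $U$ is a regular subspace of dimension at least $5$ which is moreover $q$-universal (over a number field this is automatic once $\dim U \geq 5$, since $q\restriction_U$ is universal at every completion and the Hasse principle applies, exactly as used in the proof of Lemma \ref{lemma:rrr}); and I would arrange that $U^\perp$ is itself regular of dimension at least $6$, which is possible since $n \geq 12$. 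Then $q\restriction_{U^\perp}$ is a regular form of dimension $\geq 6$, so Lemma \ref{lemma:eps_to_1} applies with this $U^\perp$ in the role of its ``$U$''.

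Next I would apply Lemma \ref{lemma:eps_to_1} to the $1$-separated element $\alpha$ (more precisely to its image in $\Theta_q(K)$) and the subspace $U^\perp$: taking $\epsilon = 1$ and $m = N$ (the constant $\nco$ in the statement, which is the constant $N$ of Lemma \ref{lemma:eps_to_1}), the lemma produces an element $\beta \in (\gcl_{\Theta_q(K)}(\alpha))^{\nco} \cap \Theta_{q\restriction_{U^\perp}}(K)$ which is strongly $1$-separated, hence in particular $1$-separated. Lifting back from $\Theta_q(K)$ to $\Spin_q(K)$ (using the central isogeny $\Spin_q \to \Theta_q$, under which generalized conjugacy classes and their products lift, and $1$-separatedness is defined via the image in $\Theta_q$) gives $\beta \in (\gcl_{\Spin_q(K)}(\alpha))^{\nco}$ that is $1$-separated. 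Since $\beta$ fixes $U$ pointwise, i.e. $\beta \in \Theta_{q\restriction_{U^\perp}}(K)$, and $U$ is a $q$-universal subspace of dimension $\geq 5$, Lemma \ref{lemma:equal_gcl} applied with this $U$ yields $\gcl_{\Theta_q(K)}(\beta) = \gcl_{\Or_q(K)}(\beta)$, which is exactly the desired conclusion (after the same harmless passage between $\Spin_q$, $\Theta_q$, and $\Or_q$ at the level of generalized conjugacy classes, noting that $\beta$ already lies in the image of $\Spin_q$, so its $\Or_q(K)$- and $\Theta_q(K)$-generalized conjugacy classes are the ones relevant to the statement).

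The only real point requiring care — and the step I expect to be the main obstacle — is bookkeeping across the three groups $\Spin_q(K)$, $\Theta_q(K)$, and $\Or_q(K)$: Lemma \ref{lemma:eps_to_1} and Lemma \ref{lemma:equal_gcl} are phrased for $\Theta_q(K)$ and $\Or_q(K)$, while the statement of Lemma \ref{lemma:rrrr21} is phrased for $\Spin_q(K)$. One has to check that products of generalized conjugacy classes behave well under the central isogeny $\Spin_q \to \Theta_q$ (they do, since the kernel is central, so a product of $\gcl$'s downstairs lifts to a product of $\gcl$'s upstairs of the same length), and that the hypothesis ``$1$-separated'' transfers correctly in both directions (it does, by definition, since $1$-separatedness of an element of $\Spin_q(K)$ is defined to be $1$-separatedness of its image in $\Theta_q(K)$). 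Once these identifications are in place, the proof is just the concatenation of Lemma \ref{lemma:eps_to_1} (to produce $\beta$ inside $\Theta_{q\restriction_{U^\perp}}(K) \cap (\gcl(\alpha))^{\nco}$, strongly $1$-separated) and Lemma \ref{lemma:equal_gcl} (to equate the two generalized conjugacy classes of $\beta$). I would also double-check that the dimension constraints are compatible: we need $\dim U \geq 5$ for Lemma \ref{lemma:equal_gcl} and $\dim U^\perp \geq 6$ for Lemma \ref{lemma:eps_to_1}, and $5 + 6 = 11 \leq 12 \leq n$, so there is room.
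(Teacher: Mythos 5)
Your proposal matches the paper's proof: one fixes a regular $q$-universal subspace $U$ (the paper takes $\dim U = 6$), applies Lemma \ref{lemma:eps_to_1} to $U^{\perp}$ to produce a strongly $1$-separated $\beta \in (\gcl_{\Spin_q(K)}(\alpha))^{\nco} \cap \Spin_{q\restriction_{U^\perp}}(K)$, and concludes with Lemma \ref{lemma:equal_gcl}. The only inaccuracy is your parenthetical claim that $q$-universality of $U$ is automatic once $\dim U \geq 5$: at a real place $v$ where $q$ is indefinite but $q\restriction_{U_v}$ is definite this fails, so, as in the paper, one must first choose $U$ by weak approximation at the finitely many real places so that each $U_v$ is $q$-universal there, and only then invoke the Hasse principle.
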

\begin{proof}
By the Hasse principal, a subspace $U$ of $K^n$ is $q$-universal if for every place $v$, $U_v$ is a $q$-universal subspace $K_v^n$. For every non-real place $v$, any regular  quadratic space of dimension at least five is universal. Thus, a regular six dimensional subspace $U$ of $K^n$ is $q$-universal if and only if for every real place $v$, $U_v$ is $q$-universal. A simple approximation argument with respect to the finitely many real places implies that there exists a regular six dimensional $q$-universal subspace $U$ of $K^n$. By the choice of $N$, $(\gcl_{\Theta_q(K)}(\alpha))^{N}$ contains a strongly $1$-separated element $\beta\in \Theta_{q \restriction_{U^\perp}}(K)$. Lemma \ref{lemma:equal_gcl} implies that $\gcl_{O_q(K)}(\beta)= \gcl_{\Theta_q(K)}(\beta)$.\end{proof}

\begin{lemma}\label{lemma:tri} Let $\gamma \in \Gamma$ and  $a,b,c \in K^n$ be non-isotropic vectors such that
 $q(a)=q(b)=q(c)$ and $(a,\gamma a)=(a,c)=(b,c)$. 
Then there exists  $\delta \in (\gcl_{O_q(K)}(\gamma))^2$ such that $b=\delta a $.
\end{lemma}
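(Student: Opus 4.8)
\textbf{Proof plan for Lemma \ref{lemma:tri}.}
The statement is a ``reflection-trick'' lemma in the spirit of Kneser \cite{Knes56}: given two vectors $a,b$ of the same nonzero norm whose inner products with a fixed vector $c$ (and with $\gamma a$) agree, we want to move $a$ to $b$ using only two elements from the generalized conjugacy class of $\gamma$. The plan is to produce $\delta$ as a product of two conjugates $\sigma_1 \gamma^{\pm 1} \sigma_1^{-1}$ and $\sigma_2 \gamma^{\pm 1}\sigma_2^{-1}$, where the $\sigma_i\in \Or_q(K)$ are carefully chosen reflections (or short products of reflections). The key observation is that the hypothesis $(a,\gamma a) = (a,c) = (b,c)$ is exactly what is needed to manufacture an element of $\Or_q(K)$ that conjugates $\gamma$ into something carrying $a$ to $c$, and another that carries $c$ to $b$.

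Concretely, first I would use Witt's theorem (or Lemma \ref{lem:Witt} in a global-field-friendly form; more precisely the Witt extension theorem over the field $K$) to find $\sigma_1\in \Or_q(K)$ such that $\sigma_1(a) = a$ and $\sigma_1(\gamma a) = c$: this is legitimate precisely because $q(\gamma a) = q(a) = q(c)$ and $(a,\gamma a) = (a,c)$, so the partial isometry sending the pair $(a,\gamma a)$ to $(a,c)$ is well defined and extends to $\Or_q(K)$. Then $\gamma_1 := \sigma_1 \gamma \sigma_1^{-1} \in \gcl_{\Or_q(K)}(\gamma)$ satisfies $\gamma_1(a) = \sigma_1(\gamma a) = c$, wait — one must be careful: $\gamma_1(a) = \sigma_1 \gamma \sigma_1^{-1}(a) = \sigma_1 \gamma(a) = \sigma_1(\gamma a) = c$, using $\sigma_1(a)=a$. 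Good. Symmetrically, find $\sigma_2 \in \Or_q(K)$ with $\sigma_2(b) = b$ and $\sigma_2(\gamma b') = $ something — here one needs a vector playing the role of ``$\gamma$-image of $b$''; since $\gamma$ is fixed, the cleaner route is to find $\sigma_2$ with $\sigma_2(\gamma a) = $ a vector $c'$ with $q(c')=q(a)$, $(b,c') = (b,c)$, and then set $\gamma_2^{-1}$ to carry $c$ to $b$. The inner-product hypotheses $(a,c) = (b,c)$ and $q(a)=q(b)=q(c)$ guarantee the existence of an isometry carrying $(c,a)$ to $(c,b)$; conjugating $\gamma$ (or $\gamma^{-1}$) by that isometry gives $\gamma_2 \in \gcl_{\Or_q(K)}(\gamma)$ with $\gamma_2(c) = b$. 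Then $\delta := \gamma_2 \gamma_1 \in (\gcl_{\Or_q(K)}(\gamma))^2$ satisfies $\delta(a) = \gamma_2(c) = b$, as desired.

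The main obstacle I anticipate is bookkeeping around the element $\gamma a$ (and its analogue for the second factor): one needs the Witt extension to be performed so that the conjugate of $\gamma$ actually does the right thing on $a$, which forces $\sigma_1$ to fix $a$ — and this is possible exactly because the prescribed images respect all inner products with $a$, which is where $(a,\gamma a)=(a,c)$ enters. A secondary subtlety is that $\gcl$ allows us to use $\gamma^{-1}$ as well as $\gamma$, and one should check whether the second factor really comes from $\gamma$ or from $\gamma^{-1}$; since $\gamma^{-1}$ sends $\gamma a$ to $a$, it is natural that $\gamma_2$ arises as a conjugate of $\gamma^{-1}$ (so that $\gamma_2$ carries $c$ back to $b$ along the reversed arrow). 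I would organize the proof as: (i) record the precise Witt extension statement over $K$ for pairs of vectors; (ii) build $\sigma_1$ and the first conjugate; (iii) build $\sigma_2$ and the second conjugate, using $\gamma^{-1}$; (iv) multiply and verify $\delta(a)=b$ and $\delta\in(\gcl_{\Or_q(K)}(\gamma))^2$. All dimension hypotheses needed for the Witt extensions are available since $n\geq 12$, so no case analysis on small dimensions is required.
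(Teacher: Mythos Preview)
Your overall strategy is exactly the paper's: two applications of Witt's extension theorem to produce $\delta=(\tau\gamma^{-1}\tau^{-1})(\sigma\gamma\sigma^{-1})$ with $\sigma,\tau\in\Or_q(K)$. Your first conjugate is right: with $\sigma(a)=a$, $\sigma(\gamma a)=c$ one gets $\sigma\gamma\sigma^{-1}(a)=c$.

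The gap is in the second step. The isometry you propose, carrying $(c,a)$ to $(c,b)$, fixes $c$ and sends $a$ to $b$; conjugating $\gamma^{\pm1}$ by it yields $\sigma_2\gamma^{\pm1}\sigma_2^{-1}(c)=\sigma_2(\gamma^{\pm1}(c))$, and you have no control over $\gamma(c)$. The correct choice (and the one the paper makes) is to take $\tau\in\Or_q(K)$ carrying the pair $(a,\gamma a)$ to $(b,c)$: this is legitimate by Witt since $q(a)=q(b)$, $q(\gamma a)=q(c)$, and $(a,\gamma a)=(b,c)$. Then $\tau\gamma^{-1}\tau^{-1}(c)=\tau\gamma^{-1}(\gamma a)=\tau(a)=b$, and $\delta:=\tau\gamma^{-1}\tau^{-1}\sigma\gamma\sigma^{-1}\in(\gcl_{\Or_q(K)}(\gamma))^2$ does the job. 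So the hypothesis $(a,\gamma a)=(b,c)$ is used for $\tau$, not $(a,c)=(b,c)$ (the latter is actually redundant once the other two equalities are in place, but it is the pairing with $\gamma a$ that drives both Witt extensions).
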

\begin{proof}
By Witt Lemma, there exist $\sigma,\tau \in O_q(K)$ such that $\sigma(a,\gamma a)=(a,c)$ and $\tau(a,\gamma a)=(b,c)$. Thus $\delta a=b$ where $\delta:=\tau \gamma^{-1} \tau^{-1} \sigma \gamma \sigma^{-1}\in (\gcl_{O_q(K)}(\gamma))^2$.
\end{proof}

\begin{lemma}\label{lemma:kneser_arg2}  Let $N$ is the constant from Lemma \ref{lemma:eps_to_1}. Let $a,b \in K^n$ be non-isotropic vectors such that, $b \ne \pm a$, $q(a)=q(b)$ and the hyperplane $U:=\{c \in K^n \mid (a,c)=(b,c) \}$ is $q$-universal. Then for every $1$-separated element $\alpha \in \Theta_q(K)$,  there exists $\delta \in \left(\gcl_{\Theta_q(K)}(\alpha)\right)^{2{N}^2}$ such that $\gcl_{O_q(K)}(\delta) \subseteq \left(\gcl_{\Theta_q(K)}(\alpha)\right)^{2
{N}^2}$ and $\delta a= b$.
\end{lemma}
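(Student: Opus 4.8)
The plan is to use the hypotheses to reduce the statement to a single application of the triangle-type argument from Lemma \ref{lemma:tri}, combined with the available supply of strongly separated conjugacy products produced by Lemma \ref{lemma:rrrr21} and Lemma \ref{lemma:kneser_arg2}'s companion lemmas. First I would record the setup: $a,b\in K^n$ are non-isotropic with $q(a)=q(b)$, $b\neq\pm a$, and the hyperplane $U=\{c\mid (a,c)=(b,c)\}$ is $q$-universal; note that $\dim U = n-1\geq 4$, and more importantly $U$ contains a regular $q$-universal subspace of dimension at least $5$ by the Hasse-principle argument used in the proof of Lemma \ref{lemma:rrrr21}. Writing $\alpha$ for the given $1$-separated element of $\Spin_q(K)$, Lemma \ref{lemma:rrrr21} supplies a $1$-separated element $\beta\in(\gcl_{\Spin_q(K)}(\alpha))^{\nco}$ whose $\Theta_q(K)$- and $\Or_q(K)$-conjugacy classes coincide, and moreover, by inspecting that proof, $\beta$ can be taken strongly $1$-separated supported inside $U^\perp$ — but here I instead want $\beta$ supported so that it moves $a$ appropriately, so the key will be to choose the $q$-universal subspace in Lemma \ref{lemma:rrrr21} to lie inside $U$.

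The core step is then the following. Apply Lemma \ref{lemma:eps_to_1} to $\beta$ and to a regular $q$-universal subspace $W\subseteq U$ of dimension $\geq 6$: this produces a strongly $1$-separated element $\gamma\in(\gcl_{\Theta_q(K)}(\beta))^{\nco}\cap\Theta_{q\restriction_W}(K)$, so that $\gamma\in(\gcl_{\Theta_q(K)}(\alpha))^{\nco^2}$ and $\gamma$ fixes $W^\perp$ pointwise — in particular $\gamma$ fixes $a$ and $b$ is not affected in a bad way, and $\gcl_{\Or_q(K)}(\gamma)=\gcl_{\Theta_q(K)}(\gamma)$ by Lemma \ref{lemma:equal_gcl}. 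Next I would exhibit a vector $c\in K^n$ with $q(c)=q(a)$ realizing the ``triangle condition'' $(a,\gamma a)=(a,c)=(b,c)$ required by Lemma \ref{lemma:tri}: since $\gamma$ acts nontrivially on $W$ one can arrange $(a,\gamma a)\ne q(a)$, and then Witt's lemma together with the $q$-universality of $U$ (which is exactly the hyperplane on which $a$ and $b$ have equal inner products) gives such a $c$ inside $U$. Feeding $a,b,c,\gamma$ into Lemma \ref{lemma:tri} yields $\delta\in(\gcl_{\Or_q(K)}(\gamma))^2$ with $\delta a=b$; since $\gcl_{\Or_q(K)}(\gamma)=\gcl_{\Theta_q(K)}(\gamma)\subseteq(\gcl_{\Theta_q(K)}(\alpha))^{\nco^2}$, we get $\delta\in(\gcl_{\Theta_q(K)}(\alpha))^{2\nco^2}$. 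The same containment, applied to a conjugate of $\gamma$ by an arbitrary element of $\Or_q(K)$ (here Lemma \ref{lemma:equal_gcl} is used again to stay inside $\Theta_q(K)$), shows $\gcl_{\Or_q(K)}(\delta)\subseteq(\gcl_{\Theta_q(K)}(\alpha))^{2\nco^2}$, which is the second assertion.

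I expect the main obstacle to be bookkeeping of where the various auxiliary subspaces live: one must simultaneously arrange that the $q$-universal subspace $W$ used to build $\gamma$ sits inside the hyperplane $U$ attached to the pair $(a,b)$, that $\gamma$ still moves $a$ (so that the inner product $(a,\gamma a)$ takes a value distinct enough to apply Witt's lemma for $c$), and that the separation is preserved through the two applications of Lemma \ref{lemma:eps_to_1} and Lemma \ref{lemma:rrrr21} so the exponent stays $2\nco^2$ rather than something larger. A secondary point is verifying the ``$\gcl_{\Or_q(K)}(\delta)\subseteq\cdots$'' clause with the correct exponent: this needs that conjugating $\gamma$ by any element of $\Or_q(K)$ lands in $\gcl_{\Theta_q(K)}(\gamma)$, which is precisely Lemma \ref{lemma:equal_gcl} applied to the support of $\gamma$, so it costs nothing extra. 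Apart from these dimension-counting checks the argument is a routine assembly of the three preceding lemmas.
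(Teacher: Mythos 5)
Your proposal correctly identifies the outer skeleton of the argument (pass from $\alpha$ to $\alpha^\star$ via Lemma \ref{lemma:rrrr21} so that $\Or_q$- and $\Theta_q$-conjugacy classes coincide, produce an auxiliary $\gamma$ in a bounded conjugacy power, and finish with the triangle trick of Lemma \ref{lemma:tri}), and the exponent bookkeeping and the derivation of the clause $\gcl_{\Or_q(K)}(\delta)\subseteq(\gcl_{\Theta_q(K)}(\alpha))^{2\nco^2}$ are fine. But the heart of the proof is missing. The step ``exhibit $c$ with $q(c)=q(a)$ and $(a,c)=(b,c)=(a,\gamma a)$; Witt's lemma together with the $q$-universality of $U$ gives such a $c$'' is a genuine gap. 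The condition $(a,c)=(b,c)$ forces $c\in U$, and you then need $c\in U$ with \emph{two} further prescribed values, $q(c)=q(a)$ and $(a,c)=(a,\gamma a)$, simultaneously. Universality of $U$ only controls the norm, and Witt's lemma extends isometries between subspaces; neither produces a vector with a prescribed Gram matrix against both $a$ and $b$. Whether such a $c$ exists is a representation problem: in the paper one forms the abstract ternary space $V_\gamma$ with the target Gram matrix, checks it is regular, and reduces the existence of the embedding to $f_\gamma(d')\in q\bigl((Ka+Kb)^\perp\bigr)$, which is verified by Hasse--Minkowski. This can genuinely fail at real places for a badly chosen $\gamma$ (the form on $(K_va+K_vb)^\perp$ may be definite), which is exactly why the paper first constructs a specific $\beta\in\Theta_q(K)$ for which the embedding exists, and only then uses the density statement of Lemma \ref{lemma:eps_to_1} to find $\gamma$ in $(\gcl_{\Theta_q(K)}(\alpha^\star))^{\nco}$ approximating $\beta$ at the real places, so that the open local conditions persist. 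Your proposal fixes $\gamma$ first and then hopes for $c$, which reverses the logic and cannot be justified as written.

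There is also an internal inconsistency in your construction of $\gamma$: you take $\gamma$ supported on a universal subspace $W\subseteq U$ and claim that $\gamma$ fixes $a$, but $a\in W^\perp$ would require $a$ to be proportional to $a-b$ (since $U^\perp=K(a-b)$), which is false in general; and if $\gamma$ \emph{did} fix $a$, then $(a,\gamma a)=q(a)$ and anisotropy of $q$ would force $c=a$ and then $b=a$, contradicting $b\neq\pm a$ and collapsing the triangle argument. A sentence later you require $(a,\gamma a)\neq q(a)$, i.e.\ that $\gamma$ moves $a$. These two requirements cannot both hold, and the tension is a symptom of the missing representability analysis described above.
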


\begin{proof} 
By Lemma \ref{lemma:rrrr21}, there exists a $1$-separated element $\alpha^\star \in \big(\gcl_{\Theta_q(K)}(\alpha)\big)^{N}$ such that $\gcl_{O_q(K)}(\alpha^\star)=\gcl_{\Theta_q(K)}(\alpha^\star)$.

	For every $\beta \in \Theta_q(K)$, define a three dimensional quadratic space $(V_\beta,f_\beta)$ in the following way: $V_\beta$ is spanned over $K$ by $a',b',c'$, $f_\beta(a')=f_\beta(b')=f_\beta(c')=q(a)$, $(a',b')_{f_\beta}=(a,b)_q$ and $(a',c')_{f_\beta}=(b',c')_{f_\beta}=(a,\beta a)_q$. Let $d'$ be the orthogonal projection of $c'$ on $(Ka'+Kb')^\perp$. For every $\beta \in \Theta_q(K)$ for which $f_\beta$ is regular the following conditions are equivalent:
	\begin{enumerate}
		\item There exists an isometric embedding $\iota_\beta$ of $V_\beta$ in $K^n$ under which $a'$ and $b'$ are mapped to $a$ and $b$.
		\item $f_\beta(d') \in q\left((Ka+Kb)^\perp\right)$.
	\end{enumerate}
	
	Assume for the moment that there exists $\beta \in \Theta_q(K)$ for which the embedding $\iota_\beta$ exists (and hence $f_\beta$ is regular). By the definition of $N$, there exists  $\gamma \in \gcl_{\Theta_q(K)}(\alpha^\star)^{N}$ such that for every real place $v$, $\gamma$ is close enough to $\beta$ so that $f_\gamma(d') \in q\left((K_va+K_vb)^\perp\right)$ and $f_\gamma$ is regular.  Since $n \ge 7$, for every non-real $v$, $(K_va'+K_vb')^\perp$ is a universal subspace so $f_\gamma(d') \in q\left((K_va+K_vb)^\perp\right)$. The Hasse principal implies that $f_\gamma(d') \in q\left((Ka+Kb)^\perp\right)$ so an isometric embedding $\iota_\gamma:V_\gamma\rightarrow K^n$ exists.  Lemma \ref{lemma:tri} applied to $c:=\iota_\gamma(c')$ implies that there exists $$\delta \in \left(\gcl_{O_q(K)}(\gamma)\right)^2 \subseteq \left(\gcl_{O_q(K)}\left((\gcl_{\Theta_q(K)}(\alpha^\star))^{N}\right)\right)^{2}\subseteq\left(\gcl_{\Theta_q(K)}(\alpha^\star)\right)^{2N}\subseteq \left(\gcl_{\Theta_q(K)}(\alpha)\right)^{2{N}^2}$$ such that $\delta a =b$.

We now prove the existence of $\beta$.  Since $U$ is $q$-universal, there exists $c_1 \in U$ such that $q(a)=q(c_1)$. By Witt's theorem. there exists $\beta_1 \in O_q(K)$ such that $\beta_1a=c_1$. Since $U$ is $q$-universal, Lemma \ref{lemma:rrr} implies that there exists $\beta_2 \in O_{q \restriction_U }(K)$ such that $\beta:=\beta_2\beta_1\in \Theta_q(K)$. Denote $c:=\beta a \in U$. Since $c \in U$, $(a,c)=(b,c)$ and  $(a,\beta a)=(a,c)=(b,c)$. Thus, there exists an embedding of $V_\beta$ in $K^n$ which sends $a'$, $b'$ and $c'$ to $a$, $b$ and $c$. Since $f$ is anisotropic, $V_\beta$ is regular. 
\end{proof}

\begin{lemma}\label{lemma:almdone2}  Let $N$ be the constant from Lemma \ref{lemma:eps_to_1}. Let $\alpha$ be a $1$-separated element and $a,b \in K^n$ be non-isotropic vectors such that $\tau_a\tau_b \in \Theta_q(K)$.  Then $\tau_a\tau_b \in \left(\gcl_{\Theta_q(K)}(\alpha)\right)^{4{N}^2}$.
\end{lemma}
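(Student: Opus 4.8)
The statement to prove is: for a $1$-separated element $\alpha$ and non-isotropic vectors $a,b$ with $\tau_a\tau_b\in\Theta_q(K)$, we have $\tau_a\tau_b\in(\gcl_{\Theta_q(K)}(\alpha))^{4N^2}$, where $N$ is the constant of Lemma \ref{lemma:eps_to_1}. The plan is to reduce the reflection-product $\tau_a\tau_b$ to an instance that Lemma \ref{lemma:kneser_arg2} can handle. The map $\tau_a\tau_b$ is the identity on the codimension-$2$ subspace $(Ka+Kb)^\perp$ and acts as a rotation in the plane $Ka+Kb$. Since it lies in $\Theta_q(K)$ and, by hypothesis, $b\neq\pm a$ (otherwise $\tau_a\tau_b$ is central or trivial and there is nothing to prove), the element $\tau_a\tau_b$ sends $a$ to some vector; more usefully, note that $\tau_a\tau_b(b)$ is a non-isotropic vector of norm $q(b)$, and $\tau_a(b)$ is another such vector. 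The key geometric identity is that $\tau_a\tau_b = \tau_{a}\tau_{b}$ can be rewritten via a vector carrying $a$ to $b$: if $\delta\in\mathrm{O}_q(K)$ satisfies $\delta a = b$, then $\delta\tau_a\delta^{-1}=\tau_b$, whence $\tau_a\tau_b = \tau_a\delta\tau_a\delta^{-1}\in(\gcl_{\mathrm{O}_q(K)}(\tau_a))$ ... but $\tau_a\notin\SO_q$. Instead I would use the cleaner route below.

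\textbf{Reduction via Lemma \ref{lemma:kneser_arg2}.} First, I record the elementary fact that if $a,b$ are non-isotropic with $q(a)=q(b)$ and $b\neq\pm a$, then the hyperplane $U:=\{c\in K^n\mid (a,c)=(b,c)\}=(K(a-b))^\perp$ is regular (because $q(a-b)=2q(a)-2(a,b)\neq 0$ follows once we arrange $(a,b)\neq q(a)$; if $(a,b)=q(a)$ then $a-b$ is isotropic and we must handle this degenerate sub-case separately — see the obstacle paragraph). Assuming $U$ is regular, I would show it is $q$-universal by the same Hasse-principle-plus-weak-approximation argument used in Lemmas \ref{lemma:rrr} and \ref{lemma:rrrr21}: for non-real places a regular space of dimension $n-1\geq 6$ is universal, and for the finitely many real places one adjusts $b$ within its orbit... actually $b$ is given, so instead one notes that since $\tau_a\tau_b\in\Theta_q(K)$ has trivial spinor norm, the relevant local conditions at real places are automatically met, or one invokes that $U$ has dimension $\geq 5$ and applies Lemma \ref{lemma:rrr} directly. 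Then Lemma \ref{lemma:kneser_arg2}, applied to this pair $(a,b)$ and the $1$-separated element $\alpha$, produces $\delta\in(\gcl_{\Theta_q(K)}(\alpha))^{2N^2}$ with $\gcl_{\mathrm{O}_q(K)}(\delta)\subseteq(\gcl_{\Theta_q(K)}(\alpha))^{2N^2}$ and $\delta a=b$. Now $\delta\tau_a\delta^{-1}=\tau_{\delta a}=\tau_b$, so
\[
\tau_a\tau_b=\tau_a(\delta\tau_a\delta^{-1})^{-1}\cdot(\tau_a\delta\tau_a\delta^{-1})=\cdots
\]
hmm — more carefully: $\tau_a\tau_b=\tau_a\delta\tau_a^{-1}\delta^{-1}=[\tau_a,\delta]$ since $\tau_a^{-1}=\tau_a$. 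Therefore $\tau_a\tau_b=[\tau_a,\delta]=\tau_a\delta\tau_a\delta^{-1}=(\tau_a\delta\tau_a)\delta^{-1}=\tau_{\tau_a(\cdot)}$... the cleanest statement: $[\tau_a,\delta]=(\tau_a\delta\tau_a^{-1})\delta^{-1}\in\gcl_{\mathrm{O}_q(K)}(\delta)\cdot\gcl_{\mathrm{O}_q(K)}(\delta)=(\gcl_{\mathrm{O}_q(K)}(\delta))^2\subseteq(\gcl_{\Theta_q(K)}(\alpha))^{4N^2}$, using that $\tau_a\delta\tau_a^{-1}$ is a conjugate of $\delta$ and $\delta^{-1}$ is in $\gcl_{\mathrm{O}_q(K)}(\delta)$, together with the containment $\gcl_{\mathrm{O}_q(K)}(\delta)\subseteq(\gcl_{\Theta_q(K)}(\alpha))^{2N^2}$ from Lemma \ref{lemma:kneser_arg2}. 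This gives exactly the bound $4N^2$ claimed.

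\textbf{Main obstacle.} The delicate point is the degenerate case where the naive choice makes $U$ fail to be regular, i.e.\ when $a-b$ is isotropic (equivalently $(a,b)=q(a)$). In that situation one cannot apply Lemma \ref{lemma:kneser_arg2} directly. The standard fix, following \cite{Knes56}, is to insert an intermediate vector: choose a non-isotropic $b'\in K^n$ with $q(b')=q(a)$ such that both $a-b'$ and $b-b'$ are non-isotropic, which is possible because the isotropic vectors of a fixed norm form a thin set and $n$ is large; then $\tau_a\tau_b=(\tau_a\tau_{b'})(\tau_{b'}\tau_b)$, and if one arranges $\tau_a\tau_{b'},\tau_{b'}\tau_b\in\Theta_q(K)$ (adjusting $b'$ by the spinor-norm bookkeeping of Lemma \ref{lemma:rrr}) each factor is handled by the regular case. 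Care is needed to keep the total word length at $4N^2$ rather than $8N^2$ — this is presumably why the lemma is stated with the precise constant, and one may need to be slightly more economical, e.g.\ by noting the commutator structure $[\tau_a,\delta]$ already costs only a factor of $2$ over $\gcl(\delta)$, leaving room. I expect the bulk of the write-up to be this case analysis plus the verification that $U$ (or the intermediate hyperplanes) is $q$-universal, both of which are routine given Lemmas \ref{lemma:rrr}, \ref{lemma:rrrr21}, \ref{lemma:tri}, and \ref{lemma:kneser_arg2}; the geometric reduction $\tau_a\tau_b=[\tau_a,\delta]$ is the conceptual heart and is short.
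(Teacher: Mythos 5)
Your final commutator computation is exactly right and is the same as the paper's: once you have $\delta$ with $\delta a=b$ and $\gcl_{\Or_q(K)}(\delta)\subseteq(\gcl_{\Theta_q(K)}(\alpha))^{2\nco^2}$, the identity $\tau_a\tau_b=\tau_a\,\delta\tau_a\delta^{-1}=[\tau_a,\delta]\in(\gcl_{\Or_q(K)}(\delta))^2$ gives the bound $4\nco^2$. The gap is in how you propose to obtain $\delta$. You want to apply Lemma \ref{lemma:kneser_arg2} directly to the pair $(a,b)$, but that lemma requires the hyperplane $U=\{x\mid (a,x)=(b,x)\}=(K(a-b))^\perp$ to be $q$-universal, and this can genuinely fail: the vector $a-b$ is forced by the data, and at an indefinite real place of signature $(1,n-1)$ (which is compatible with $q$ being anisotropic over a number field with another, definite, real place) the hyperplane $(K_v(a-b))^\perp$ can be negative definite and hence miss half of $q(K_v^n)$. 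Neither of your suggested fixes addresses this: triviality of the spinor norm of $\tau_a\tau_b$ says nothing about universality of $(K(a-b))^\perp$, and Lemma \ref{lemma:rrr} \emph{assumes} universality rather than providing it. Separately, the obstacle you single out as the main one ($a-b$ isotropic) is vacuous here, since $q$ is anisotropic throughout Setting \ref{nota:Gamma}.

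The paper's proof differs precisely at this point, and the difference is the content of the argument. It does not touch the pair $(a,b)$ directly. Instead it first chooses, by weak approximation at the real places plus the Hasse principle, a non-isotropic $c$ in the plane $Ka+Kb$ such that $(Kc)^\perp$ \emph{is} $q$-universal; since $\tau_a\tau_b\tau_c$ restricted to that plane has determinant $-1$, it is a reflection $\tau_d$ with $d\in Ka+Kb$, so $\tau_a\tau_b=\tau_c\tau_d$ with $q(c)=q(d)$ after rescaling. Writing $e=c-d$ (non-isotropic by anisotropy) one gets $d=\tau_e(c)$ and $\tau_a\tau_b=\tau_c\tau_{\tau_e(c)}=\tau_{\tau_c(e)}\tau_e$. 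The pair to which Lemma \ref{lemma:kneser_arg2} is then applied is $(e,\tau_c(e))$: its difference vector $e-\tau_c(e)$ is a scalar multiple of $c$, so the relevant hyperplane is exactly the $(Kc)^\perp$ that was engineered to be $q$-universal. Your fallback of inserting an intermediate vector $b'$ would, as you note, double the length to $8\nco^2$ and would still face the same universality problem for each factor, so the substitution $(a,b)\rightsquigarrow(e,\tau_c(e))$ is the missing idea rather than an optional refinement.
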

\begin{proof}  
We can assume that $\tau_a\tau_b\ne 1$ so, since $q$ is anisotropic, $U:=Ka+Kb$ is a regular subspace. We claim that there exists a non-isotropic $c \in Ka+Kb$ such that $(Kc)^\perp$ is $q$-universal. A simple approximation argument with respect to the finitely many real places implies that there exists a non-isotropic $c\in U$ such that for every real place $v$, $(K_vc)^\perp$ is $q$-universal. Since $n \ge 6$, for every non-real place $v$, $(K_vc)^\perp$ is universal. The Hasse principal implies   that $(Kc)^\perp$ is $q$-universal.
 
Since $\tau_a\tau_b\tau_c \in O_{q \restriction_U}(K)$ and every element in $O_{q \restriction_U}(K)$ of determinant $-1$ is a reflection, there exists a non-isotropic $d \in (Ka+Kb)$ such that  $\tau_a\tau_b=\tau_c\tau_d \in \Theta_q(K)$. Since $\tau_c\tau_d \in \Theta_q(K)$, by multiplying $d$ with a scalar, we can assume that $q(c)=q(d)$. Since $q$ is anisotropic,  $q(e)\ne 0$  and $d=\tau_e(c)$ where $e:=c-d$. Thus,

$$\tau_a\tau_b=\tau_c\tau_d =\tau_c\tau_{\tau_e(c)}=[\tau_c,\tau_e]=\tau_{\tau_c(e)}\tau_e.$$

 Since  $\tau_a\tau_b\ne 1$, $e \ne \pm\tau_c(e)$. Moreover, $\{x \in K^n \mid (e,x)=(\tau_c(e),x)\}=(Kc)^\perp$ is $q$-universal.  By Lemma \ref{lemma:kneser_arg2}, there exists $\beta \in \Theta_q(K)$ such that $ \gcl_{O_q(K)}(\beta)\subseteq \left(\gcl_{\Theta_q(K)}(\alpha)\right)^{2{N}^2}$ and $\beta (e)=\tau_c (e)$. It follows that 
  $$\tau_a\tau_b=\tau_{\tau_c(e)}\tau_e=\tau_{\beta(e)}\tau_e=[\beta,\tau_e]\in\left( \gcl_{O_q(K)}(\beta)\right)^2\subseteq \left(\gcl_{\Theta_q(K)}(\alpha)\right)^{4{N}^2}.$$
\end{proof}

\begin{proposition}\label{prop:spin_simple2} Let $N$ be the constant from Lemma \ref{lemma:eps_to_1}. Then for every $1$-separated element  $\alpha \in \Spin_q(K)$, $(\gcl_{\Spin_q(K)}(\alpha))^{12(n+1){N}^2}=\Spin_q(K)$.
\end{proposition}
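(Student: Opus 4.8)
The goal is to express an arbitrary element of $\Spin_q(K)$ as a bounded product of conjugates of $\alpha^{\pm 1}$, where $\alpha$ is a fixed $1$-separated element. The natural strategy is to pass to the orthogonal group: every element of $\SO_q(K)$ (and in particular every element whose spinor norm is trivial, i.e.\ every element of $\Theta_q(K)$) is a product of at most $n$ reflections $\tau_{a_i}$, hence a product of at most $\lceil n/2\rceil$ factors of the form $\tau_{a_i}\tau_{b_i}$, and we may group the reflections in pairs so that each $\tau_{a_i}\tau_{b_i}$ lies in $\Theta_q(K)$ (using Lemma \ref{lemma:rrr}-type spinor-norm corrections, throwing in at most one extra pair). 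By Lemma \ref{lemma:almdone2}, each such pair $\tau_{a_i}\tau_{b_i}\in\Theta_q(K)$ lies in $\left(\gcl_{\Theta_q(K)}(\alpha)\right)^{4N^2}$. Therefore every element of $\Theta_q(K)$ lies in $\left(\gcl_{\Theta_q(K)}(\alpha)\right)^{c\cdot N^2}$ for an explicit constant $c$ of size roughly $2n$.

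\textbf{Key steps, in order.} First I would make precise the reflection-counting bound: an element of $\Or_q(K)$ acting on an $n$-dimensional space is a product of at most $n$ reflections (Cartan--Dieudonn\'e), and pairing them up — inserting at most one auxiliary reflection to keep each consecutive pair inside $\Theta_q(K)$ — writes any $\gamma\in\Theta_q(K)$ as a product of at most $\lceil (n+1)/2\rceil$ elements of the form $\tau_{a}\tau_{b}\in\Theta_q(K)$. Second, apply Lemma \ref{lemma:almdone2} to each such pair: $\tau_a\tau_b\in\left(\gcl_{\Theta_q(K)}(\alpha)\right)^{4N^2}$. Multiplying, $\gamma\in\left(\gcl_{\Theta_q(K)}(\alpha)\right)^{\lceil (n+1)/2\rceil\cdot 4N^2}\subseteq\left(\gcl_{\Theta_q(K)}(\alpha)\right)^{2(n+1)\cdot 2N^2}$, so $\Theta_q(K)=\left(\gcl_{\Theta_q(K)}(\alpha)\right)^{4(n+1)N^2}$. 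Third, lift from $\Theta_q(K)$ to $\Spin_q(K)$: the covering map $\pi:\Spin_q(K)\to\Theta_q(K)$ has kernel $\{\pm1\}$ of order $2$, and $\pi$ restricts to a surjection of generalized conjugacy classes $\gcl_{\Spin_q(K)}(\alpha)\to\gcl_{\Theta_q(K)}(\pi(\alpha))$; thus a product of $m$ conjugates of $\pi(\alpha)^{\pm1}$ lifts to a product of $m$ conjugates of $\alpha^{\pm1}$ times a central element, and since $-1=\alpha^{?}$ is itself a short product of conjugates of $\alpha$ (or can be absorbed: $-1\in\Spin_q(K)$ is a commutator and, being $1$-separated-adjacent, lies in a bounded power of $\gcl_{\Spin_q(K)}(\alpha)$; more robustly, $\Spin_q(K)$ is perfect by Corollary \ref{cor:9}/the bounded-generation input, so $-1$ is a bounded product of commutators, each of which is a bounded product of conjugates of $\alpha^{\pm1}$). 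Tracking the constant gives $\left(\gcl_{\Spin_q(K)}(\alpha)\right)^{12(n+1)N^2}=\Spin_q(K)$, absorbing the factor-of-$2$ losses and the cost of hitting $-1$ into the jump from $4(n+1)N^2$ to $12(n+1)N^2$.

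\textbf{Main obstacle.} The delicate point is not the reflection count but the bookkeeping at the level of $\Spin$ versus $\Theta_q$: one must guarantee that lifting the $\Theta_q(K)$-factorization does not introduce an uncontrolled copy of the central element $-1$, and that $-1$ itself (if it appears) can be rewritten as a bounded-length product of conjugates of $\alpha^{\pm1}$. This is where $1$-separatedness of $\alpha$ is genuinely used — it is what makes Lemmas \ref{lemma:eps_to_1}, \ref{lemma:rrrr21}, \ref{lemma:kneser_arg2}, \ref{lemma:almdone2} applicable with a uniform constant $N$, and it also guarantees $\alpha$ is non-central so that $\gcl_{\Spin_q(K)}(\alpha)$ generates $\Spin_q(K)$ (by quasi-simplicity of $\Spin_q$ over the field $K$). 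A secondary, purely arithmetic nuisance is ensuring that at each pairing step the auxiliary reflection can be chosen so the relevant hyperplane is $q$-universal; this is handled exactly as in the proofs of Lemmas \ref{lemma:almdone2} and \ref{lemma:kneser_arg2} by a Hasse-principle argument over the finitely many real places, using $n\geq 12$. Once these are in place, the constant $12(n+1)N^2$ follows by multiplying the per-step bounds.
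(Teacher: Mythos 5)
Your overall route is the paper's route (pass to $\Theta_q(K)$ via the double cover, factor into products of two reflections, apply Lemma \ref{lemma:almdone2} to each factor, then correct by the central element $-1$), but there is a genuine gap at the pairing step. Lemma \ref{lemma:almdone2} applies only to a product of \emph{exactly two} reflections $\tau_a\tau_b$ that lies in $\Theta_q(K)$, i.e.\ with $q(a)q(b)\in (K^\times)^2$. If you take a Cartan--Dieudonn\'e factorization $\gamma=\tau_{a_1}\cdots\tau_{a_k}$ of $\gamma\in\Theta_q(K)$ and group consecutive reflections, the individual products $q(a_{2j-1})q(a_{2j})$ need not be squares --- only their total product is --- and since $\tau_a$ depends only on the line $Ka$, the classes $q(a_i)\bmod (K^\times)^2$ cannot be renormalized away. ``Throwing in at most one extra pair'' cannot kill the $k/2$ independent per-pair obstructions, and inserting a correcting reflection into each pair produces factors that are products of three or four reflections, to which Lemma \ref{lemma:almdone2} does not apply. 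The paper's proof supplies exactly the missing device: by O'Meara 101:11 one finds a regular $3$-dimensional subspace $U$ and vectors $b_1,\dots,b_k\in U$ with $q(b_i)=q(a_i)$; setting $\beta=\tau_{b_1}\cdots\tau_{b_k}\in\Theta_{q\restriction_U}(K)=\langle\tau_{c_1}\tau_{c_2}\rangle$-type element and writing $\gamma=\gamma\beta\cdot\beta^{-1}=(\tau_{a_1}\tau_{d_1})\cdots(\tau_{a_k}\tau_{d_k})(\tau_{c_2}\tau_{c_1})$ with $d_i=\tau_{a_{i+1}}\cdots\tau_{a_k}(b_i)$ yields $k+1\le n+1$ two-reflection factors each of spinor norm $q(a_i)^2$, hence each in $\Theta_q(K)$. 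This interleaving is why the paper's count is $n+1$ pairs rather than your $\lceil n/2\rceil$.

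Two smaller points. First, your fallback for handling $-1$ via ``$\Spin_q(K)$ is perfect by Corollary \ref{cor:9}'' is circular: Corollary \ref{cor:9} is deduced from Proposition \ref{prop:spin_simple2}. The non-circular argument is the one the paper uses: for pairwise orthogonal $a,b,c$ of equal norm, any lifts $\delta_1,\delta_2$ of $\tau_a\tau_b,\tau_b\tau_c$ satisfy $[\delta_1,\delta_2]=-1$; the commutator is insensitive to changing lifts by central elements, and each of $\tau_a\tau_b,\tau_b\tau_c$ lies in $\bigl(\gcl_{\Theta_q(K)}(\alpha^\star)\bigr)^{4N^2}$ by Lemma \ref{lemma:almdone2}, so $-1\in\bigl(\gcl_{\Spin_q(K)}(\alpha)\bigr)^{16N^2}\subseteq\bigl(\gcl_{\Spin_q(K)}(\alpha)\bigr)^{8(n+1)N^2}$, which with the $4(n+1)N^2$ for the image in $\Theta_q(K)$ accounts for the constant $12(n+1)N^2$. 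Second, the ``$q$-universal hyperplane'' issue you flag is already internal to Lemma \ref{lemma:almdone2} and needs no further attention at this stage.
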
 
\begin{proof} 
Let $\alpha \in \Spin_q(K)$ be a $1$-separated element and let $\alpha^\star$ be its image in $\Theta_q(K)$. 
For every non-isotropic pairwise orthogonal vectors $a,b,c \in K^n$ such that $q(a)=q(b)=q(c)$, and any lifts $\delta_1$ and $\delta_2$ of $\tau_a\tau_b$ and $\tau_b\tau_c$ to $\Spin_{f_s}(K)$, $[\delta_1,\delta_2]=-1$. Since $\{-1\}$ is the kernel of the epimorphism from $\Spin_q(K)$ to $\Theta_q(K)$, it is enough to prove that $\big(\gcl_{\Theta(K)}(\alpha^\star)\big)^{4(n+1){N}^2}=\Theta_q(K)$.

 Let  $\gamma \in \Theta_q(K)$. Since every element in $\SO_q(K)$ is the product of at most $n$ reflections, there are an even $2 \le k \le n$ and non-isotropic $a_1,\ldots,a_n \in K$ such that $\gamma=\tau_{a_1}\cdots \tau_{a_k}$. Lemma 101:11 of \cite{Ome71} implies that there exists a regular 3-dimensional subspace of $K^n$ and $b_1,\ldots,b_k \in U$ such that for every $1 \le i \le k$, $q(a_i)=q(b_i)$. Denote  $\beta:=\tau_{b_1}\cdots \tau_{b_k}$. Then $\beta \in \Theta_{q\restriction_U}(K)$ and since $\dim U =3$, there exist non-isotropic $c_1,c_2 \in U$ such that $\beta:=\tau_{c_1}\tau_{c_2}$. For every $1 \le i\le k$, denote $d_i=\tau_{a_{i+1}}\cdots \tau_{a_k}(b_i)$ (so $d_k=b_k$). Recall that 
for every $\delta \in \Spin_q(K)$ and every non-isotropic $x \in K^n$, $\delta\tau_x\delta^{-1}=\tau_{\delta(x)}$. By Lemma \ref{lemma:almdone2},
$$
\gamma=\gamma\beta\beta^{-1}=(\tau_{a_1}\cdots \tau_{a_k})(\tau_{b_1}\cdots \tau_{b_k})(\tau_{c_2}\tau_{c_1})=(\tau_{a_1}\tau_{d_1})\cdots (\tau_{a_k}\tau_{d_k})(\tau_{c_2}\tau_{c_1}) \in \left(\gcl_{\Theta_q(K)}(\alpha^\star)\right)^{4(k+1)N^2}.
$$

\end{proof}

\begin{corollary} \label{cor:spin*.perfect} \begin{enumerate}
\item For every $I\in \mathcal{I} ^{K^*}$, the congruence subgroup $\Spin_q(O^*)[I]$ normally generates $\Spin_q(K^*)$.
\item $\Spin_q(K^*)$ is perfect.
\end{enumerate} 
\end{corollary}

	

\begin{remark}
The proofs of Lemma \ref{lemma:almdone2} and  of Proposition \ref{prop:spin_simple2} show that every element of $\Spin_q(K)$ is the product of at most $n+2$ commutators. 
\end{remark}

\end{document}